\documentclass[12pt]{article}
\usepackage{amscd, amsmath}
\usepackage{amsthm, amssymb, mathrsfs}
\usepackage{mathtools}
\usepackage{sectsty}
\usepackage{slashed}
\usepackage{scalerel}
\usepackage[bottom,flushmargin]{footmisc}
\usepackage{mathrsfs}
\usepackage{float}
\usepackage{array}
\usepackage{accents}
\usepackage{empheq}
\usepackage[dvipsnames]{xcolor}
\usepackage[most]{tcolorbox}
\usepackage[british]{babel}
\usepackage{tikz-cd} 
\usepackage{hyperref}
\usepackage{tocloft}
\usepackage{titlesec}
\usepackage{cite} 
\usepackage{lmodern}
\usepackage{setspace}
\usepackage{enumitem}
\usepackage{appendix}
\usepackage{doi}

\titlelabel{\thetitle.\quad}
\titleformat*{\section}{\fontsize{16}{19}\bfseries\selectfont}
\titleformat*{\subsection}{\fontsize{13}{17}\bfseries\selectfont}
\subsubsectionfont{\normalfont\itshape}

\renewcommand{\baselinestretch}{1.2}
\newcommand{\linebr}{\\[0.2em]}

\DeclareMathAlphabet{\mathpzc}{OT1}{pzc}{m}{it}

\definecolor{myblue1}{RGB}{0, 0, 139}

\hypersetup{
    linktocpage,
    colorlinks,
    citecolor=myblue1,
    filecolor=black,
    linkcolor=myblue1,
    urlcolor=myblue1
}

\newtheorem{theorem}{Theorem}[section]
\newtheorem{lemma}[theorem]{Lemma}
\newtheorem{proposition}[theorem]{Proposition}
\newtheorem*{proposition*}{Proposition}
\newtheorem{definition}{Definition}[section]

\newtheorem{corollary}[theorem]{Corollary}

\newtheorem*{assumption(A)}{Condition (A)}

\theoremstyle{remark}
\newtheorem{remark}{{\bf Remark}}[section]

\numberwithin{equation}{section}

\newcommand{\Lie}{\mathcal L}

\newcommand{\HH}{\mathcal H}

\newcommand{\cL}{\mathscr{L}}

\newcommand{\SU}{\mathrm{SU}}
\newcommand{\su}{\mathfrak{su}}

\newcommand{\CC}{\mathbb{C}}

\newcommand{\dd}{{\mathrm d}}
\newcommand{\vol}{\mathrm{vol}}

\newcommand{\divergence}{\mathrm{div}}

\newcommand{\sD}{\slashed{D}}
\newcommand{\cD}{\mathcal{D}}

\newcommand{\beq}{\begin{equation}}
\newcommand{\eeq}{\end{equation}}
\newcommand{\bal}{\begin{align}}
\newcommand{\eal}{\end{align}}
\newcommand{\bmatr}{\begin{bmatrix}}
\newcommand{\ematr}{\end{bmatrix}}

\newcommand{\blangle}{\boldsymbol{\langle}}
\newcommand{\brangle}{{\boldsymbol{\rangle}}}

\newcommand{\ok}{\mathbin{\ast}}
\newcommand{\dok}{\mathbin{\scalebox{0.8}{$\circledast$}}}

 \newtcolorbox{empheqboxed}{ 
 opacityback=0,
 enhanced jigsaw,
 width=\textwidth,
 boxrule=.5pt,
 sharpish corners,
 left=0pt,
 right=2pt,
 top=-9pt, 
 bottom=3pt
}

\newcommand{\LL}{{\text{\scalebox{.9}{$L$}}}}
\newcommand{\RR}{{\text{\scalebox{.9}{$R$}}}}

\DeclareSymbolFont{TOneChars}{T1}{\familydefault}{m}{it}
\SetSymbolFont{TOneChars}{bold}{T1}{\familydefault}{bx}{it} 
\DeclareMathSymbol{\mathdh}{\mathord}{TOneChars}{"F0}

\makeatletter
\def\blfootnote{\xdef\@thefnmark{}\@footnotetext}
\makeatother

\begin{document}

\begin{titlepage}

\title{{\bf Triality rotation of connections \\ and Dirac operators on SU(3)}}

\author{João Baptista}
\date{\vspace{-.3cm}October 2026}

\maketitle

\thispagestyle{empty}
\vskip 10pt
{\centerline{{\large \bf{Abstract}}}}
\noindent
We show how triality, the symmetry permuting the three $8$-dimensional representations of $\mathrm{Spin}(8)$, can also appear as a symmetry of geometric objects such as connections and Dirac operators. Take any Riemannian metric $g$ on the manifold $M = \mathrm{SU}(3)$ and let $\nabla$ be a metric connection on $TM$. Using an explicit model of Clifford multiplication by deformed Okubo products, we combine spinorial lifts with isometries $TM \simeq S^+ \simeq S^-$ to show that $\nabla$ can be rotated into two new metric connections on $TM$. So there is a canonical $\mathbb{Z}_3$-orbit of metric connections determined by $(g, \nabla)$, with no other choices. The three connections coincide when $g$ is bi-invariant and $\nabla$ belongs to the Cartan-Schouten line, but in general they are different. The rotations of $\nabla$ preserve the norm of the curvature, so they define a new $\mathbb{Z}_3$-symmetry of the moduli space of Yang-Mills connections on $TM$. They also preserve the Chern-Simons 3-form and adjoint Yang-Mills-Higgs functionals. After choosing an orientation on $M$, the rotated connections define three self-adjoint, Dirac-type operators on the spinor bundle. We prove that these operators are natural, in the sense that they commute with the Kosmann derivatives of spinors along all invariant Killing vector fields that preserve $\nabla$. We also show that they are not isospectral, in general. This is verified by computing the simplest eigenvalue for the Levi-Civita connection and observing that it splits among the three operators as the bi-invariant metric on $M$ is deformed to one with isometry group $(\mathrm{SU}(3) \times \mathrm{SU}(2) \times \mathrm{U}(1)) / \mathbb{Z}_6$. Finally, we outline how the constructions in the paper may extend to general 8-manifolds with a $\mathrm{PSU}(3)$-structure.

\vspace{-1.0cm}
\let\thefootnote\relax\footnote{ 
\noindent
{\small {\sl \bf  Keywords:}  Connections with torsion; triality; Okubo product; Dirac operators; Yang-Mills-Higgs moduli spaces; $\mathrm{PSU}(3)$-structures. \vspace{0.15cm}
}
}

\addtocounter{footnote}{-1}

\end{titlepage}

\pagenumbering{roman}

\renewcommand{\baselinestretch}{1}\normalsize
\tableofcontents
\renewcommand{\baselinestretch}{1.19}\normalsize

\newpage

\pagenumbering{arabic}

\section{Introduction and overview of results}
\label{Introduction}

In representation theory, triality appears as an exceptional symmetry between the three 8-dimensional representations of $\mathrm{Spin}(8)$ \cite{Sapio, Baez, Harvey}. The motivating question for this paper is whether, on suitable 8-manifolds, triality can also appear as a symmetry between natural geometric objects, such as connections, Higgs fields and Dirac operators.

The  starting point of the study is the observation made by Hitchin in \cite{Hitchin} that, due to triality, the tangent bundle of the manifold $M= \SU(3)$ is isomorphic to both chiral spinor bundles, and that, moreover, Clifford multiplication of vectors and spinors can be modelled by an orthogonal product of vectors $\su(3) \times \su(3) \rightarrow \su(3)$.

The observation is not just that $TM$, $S^+$ and $S^-$ are isomorphic as smooth, rank-8, trivial vector bundles, but also that they have the same structure group $\mathrm{PSU}(3)$. The adjoint action on the Lie algebra $\su(3)$ equipped with the Ad-invariant inner product cuts out a subgroup $\mathrm{PSU}(3) \subset \mathrm{SO}(8)$, which then lifts to a subgroup $\mathrm{PSU}(3) \subset \mathrm{Spin}(8)$, whose two restricted spin representations again coincide with the adjoint one \cite{Hitchin, Witt, Witt2}.

This means that every connection on $TM$ compatible with the Ad-invariant inner product can be transported by the isomorphisms to connections on $S^+$ and $S^-$ compatible with the spinor inner product, and vice-versa. However, it is not obvious whether this transport coincides with the traditional spinorial lifts of connections with torsion. That coincidence is especially uncertain when the metric on $TM$ is deformed away from the bi-invariant one. It should be possible to use the techniques of Wang, Bourguignon and Gauduchon \cite{Wang, BG} to track what happens to the $\su(3) \times \su(3) \rightarrow \su(3)$ model of Clifford multiplication under that deformation. Therefore, the plan of the paper is to:
\begin{itemize}
\item[1)]  Investigate how the bi-invariant picture of \cite{Hitchin, Witt, Witt2} can be extended to an explicit model of Clifford multiplication for general metrics $g$ on $\SU(3)$;
\item[2)] Understand how the isomorphisms $TM \simeq S^+ \simeq S^-$ transport metric connections, and compare it to the standard procedure of spinorial lifts.
\item[3)] Study how gauge transformations interact with the triality transport of connections, and whether that transport descends to a map of moduli spaces.
\item[4)] Investigate the three Dirac operators of the triality-rotated connections, and find out if they are always isospectral. 
\end{itemize}
At the end, we find that the representation-theoretic triality can indeed be implemented as a symmetry of geometric objects. For example, as a cyclic group of automorphisms of the moduli spaces of connections and adjoint Higgs fields. In spin geometry, it relates three distinguished Dirac operators, each with its own spectrum. These spectra move around when the metric and connection are deformed, but collapse to a single spectrum at the bi-invariant, Cartan–Schouten points, where everything is algebraic again and all eigenvalues come from Casimirs \cite{Slebarski, Fegan, LT}. 

The study also has a physical motivation related to the Kaluza-Klein models in \cite{Bap1, Bap3}. There, the manifold $\SU(3)$ is the internal space and the Dirac eigenvalues are fermion masses. Separately, triality and the octonionic algebras have also been proposed as an algebraic origin of the three generations of fermions (e.g. \cite{DVT, FureyHughes, Boyle}).

We now give an overview of the main results and techniques in the paper. After that, we briefly note how these constructions may extend to more general 8-manifolds, such as manifolds with a $\mathrm{PSU}(3)$-structure, or manifolds whose spin bundle reduces to $G_2 \subset \mathrm{Spin}(8)$, the stabiliser of a pair of unit spinors of opposite chirality.

\subsection*{Overview of the main results}

\subsubsection*{Bundle Okubo products and Clifford multiplication}

Section \ref{BasicPropertiesOkubo} establishes notation and recalls the fundamental properties of the Okubo product $\ok$. It is an orthogonal product on $\su(3)$ equipped with the Ad-invariant metric $\langle , \rangle$ which satisfies nice Clifford-like relations \cite{Okubo78, OkuboBook, Elduque}. Appendix \ref{AppendixOkuboIdentities} derives several additional algebraic identities that are needed later in the paper.

Section \ref{FibrewiseOkubo} describes how the tangent bundle $TM$ and the real chiral spinor bundles $S^\pm$ can all be identified with $M \times \su(3)$. 
It explains how the Okubo product can be lifted to a set of bilinear bundle maps $\dok$ between $TM$, $S^+$ and $S^-$, and how those maps can be deformed to model Clifford multiplication for an arbitrary metric $g$ on $TM$, instead of only the bi-invariant metric induced by $\langle , \rangle$. The existence of a Clifford module structure on $M \times (\su(3) \oplus \su(3))$ is what justifies its identification with the spinor bundle $S_g = S^+ \oplus S^-$.

\subsubsection*{Triality-transported connections}

Sections \ref{ThreeNaturalConnections} and \ref{Section3ConnectionsSpinorialLifts} describe how a single metric connection $\nabla$ on $(TM, g)$ determines three different connections, $\nabla^1$, $\nabla^2$ and $\nabla^3$ on each of the bundles $TM$ and $S^\pm$. For example, the new connections on $TM$ are obtained by using natural isometries $TM \simeq S^+ \simeq S^-$ to transport back to $TM$ the spinorial lifts of $\nabla$ on $S^+$ and $S^-$. This procedure returns the original connection when $g$ is bi-invariant and $\nabla$ is in the Cartan-Schouten line, but in general produces three distinct connections on $TM$. The precise conditions under which the triality-rotated connections are distinct from each other are given in theorem \ref{PropertiesTrialityRotationConnections}.

Another way to represent the new connections is through the condensed formulas 
\begin{equation}
\nabla^r_X \, Y  \: = \:  \partial_X Y \; +\;  \rho^{r-1}_{A_X} (Y)    \ ,
\end{equation}
where $A_X : M \rightarrow \mathfrak{so} (8)$ is the map defined by the entries $(A_X)_{jk} \simeq  g(\nabla_{X} V_j , V_k)$ in a $g$-orthonormal trivialization of $TM$. The symbols $\rho^{r-1}_A$ denote the three inequivalent representations of $\mathfrak{so} (8)$, which themselves are related by triality \cite{Sapio, Baez, Harvey}. This works because of the $\mathrm{PSU}(3)$-equivariant identification of the half spinor spaces with $\su(3)$. 

One can also ask whether the spinorial lifts to $S^\pm$ of the new connections on $TM$ coincide with their triality-transported images. The relation between these two procedures to obtain new connections is the content of theorem \ref{SpinorialLiftsVSTrialityTransports}. Another algebraic relation between the $\nabla^r$ is given in lemma \ref{Sum3Reps}.
Section \ref{SectionCurvatureTorsion} gives explicit formulas for the curvature tensor $R^{\nabla^r}$ of the new connections on $TM$ in terms of the curvature of $\nabla$ and Okubo products. The same for the Ricci scalars $\mathrm{Scal}^r$. The rotated connections $\nabla^r$ will in general have torsion. Section \ref{SectionCurvatureTorsion} gives explicit formulas for the torsion traces $\tau_r$.

\subsubsection*{Triality rotations as symmetries of moduli spaces}

Section \ref{SymmetriesModuliSpaces} observes that the triality rotations of $\nabla$ can be regarded as its $\mathbb{Z}_3$-orbit under a cyclic transformation ${\mathcal T}: \mathscr{A}_g \rightarrow \mathscr{A}_g$ of order 3, where $\mathscr{A}_g$ denotes the space of metric connections on $TM$. Theorem \ref{PropertiesTrialityRotationConnections} shows that ${\mathcal T}$ descends to a map of moduli spaces $\mathscr{A}_g / \mathscr{G} \rightarrow \mathscr{A}_g / \mathscr{G}$. Proposition \ref{PropInvarianceNormCurvartureChernSimons} shows that ${\mathcal T}$ preserves the Chern-Simons 3-form and the norm of curvature, $|R^{{\mathcal T} \nabla}|_g^2 =|R^{\nabla}|_g^2$. Hence ${\mathcal T}$ defines a new 3-fold symmetry of the moduli space of Yang-Mills connections on $(TM, g)$.

Section \ref{SectionYangMillsHiggs} studies the space of $g$-skew endomorphims of $TM$, denoted $\HH_g$. This is a space of Higgs fields $\phi$ in the adjoint representation of $\mathrm{SO}(8)$. We define a natural action of triality rotations on $\HH_g$. It preserves the natural inner product on $\HH_g$ and satisfies the compatibility relation ${\mathcal T} (\nabla_X \phi) = ( {\mathcal T}\nabla)_X ({\mathcal T}\phi)$. Theorem \ref{TheorTrialityHiggsModuliSpace} shows that the simultaneous triality rotation of connections and Higgs fields descends to a map of moduli spaces $(\mathscr{A}_g \times \HH_g) / \mathscr{G} \rightarrow (\mathscr{A}_g \times \HH_g )/ \mathscr{G}$. These moduli spaces have natural Yang-Mills-Higgs functionals that are preserved by ${\mathcal T}$. Hence triality-rotations are also symmetries of the space of solutions of the Euler-Lagrange equations of those functionals.

\subsubsection*{Triality-transported Dirac operators}

The spinorial lifts of the connections $\nabla^r$ on $TM$ can be used to define three distinct Dirac operators on $S$. Since the $\nabla^r$ in general have non-zero trace torsion, their classical Dirac operators will not be formally self-adjoint \cite{FS79, Agricola}. 

To define self-adjoint operators, in section \ref{ThreeDiracOperators} we choose a particular chiral bundle, say $S^+$, which is equivalent to choosing an orientation on $M$, and start instead from the connections $\nabla^r$ on $S^+$. We define three elliptic operators $\Gamma (S^+) \rightarrow \Gamma (S^-)$  by the usual expressions $\cD_r \psi = \sum_k V_k \cdot \nabla^r_{V_k} \psi$ and calculate their $L^2$-adjoints $\cD_r^\dag$. The self-adjoint Dirac operator $\sD_r$ on $S = S^+ \oplus S^-$ is then the anti-diagonal operator built from $\cD_r$ and $\cD_r^\dag$. In other words, it is the self-adjoint completion of $\cD_r$.  

Expanding $\sD_r$ as a sum of products of gamma matrices, in section \ref{ThreeDiracOperators} we obtain 
\begin{equation*}
\sD_r \psi \; = \; \sum_j  \gamma_j \big(  \, \partial_{V_j} \, +  \, \tfrac{1}{2}\, (\divergence_g V_j)  \, \big)   \psi\: + \:  \tfrac{1}{4}\, \sum_{j \neq k\neq l}  \Gamma^r_{jkl} \, \gamma_j  \gamma_k  \gamma_l \, \psi   \: - \: \tfrac{1}{2}\, \tau_r \cdot \Gamma_S\,  \psi  \ ,     
\end{equation*}
where $\Gamma_S$ is the chiral operator, the $\tau_r$ are the trace torsion vector fields of $\nabla^r$, and we have abbreviated $\Gamma^r_{jkl} = g(\nabla^r_{V_j} V_k , V_l)$. The vectorial terms with one gamma matrix coincide for all three connections $\nabla^r$. The axial terms with seven gamma matrices do not. In section \ref{SectionExplicitExample} we calculate that the $\tau_r$ can indeed be non-zero, even when $\nabla$ is chosen to be the Levi-Civita connection.

\subsubsection*{Triality rotations preserve symmetries}

The construction of the rotated connections $\nabla^r$ and Dirac operators $\sD_r$ relies on the choice of a pair $(g, \nabla)$ and an orientation on the group $\SU(3)$. One expects that every symmetry that preserves these inputs will also preserve the three connections and Dirac operators. This is confirmed in section \ref{KLDerivatives}. 
Theorems \ref{TrialitiyLieDerivativeConnection} and \ref{TrialitiyKLLieDerivatives} prove that if $X$ is a left- or right-invariant Killing vector field such that the Lie derivative $\Lie_X \nabla$ vanishes, then $\Lie_X \nabla^r = 0$ as well, and the Kosmann-Lichnerowicz derivative $\cL_X$ of spinors will satisfy
\begin{equation}
\label{CommutationKLDerivativeNablar}
\cL_X ( \nabla^r_Y \psi^+ ) \: = \:  \nabla^r_{[X, Y]} \psi^+ \: + \:  \nabla^r_Y ( \cL_X \psi^+) \  
\end{equation}
for $r=1, 2, 3$. Those theorems do more: they also describe what happens when $\Lie_X \nabla$ does not vanish. A consequence of \eqref{CommutationKLDerivativeNablar} is that the derivatives $\cL_X$ will also commute with the operators $\sD_r$ (proposition \ref{PropDiracCommutesWithKosmann}).

\subsubsection*{An explicit non-isospectral example}

Section \ref{SectionExplicitExample} shows that the three operators $\sD_r$ are not in general isospectral when $g$ is not bi-invariant. It considers a simple family of left-invariant metrics on $M$ obtained by rescaling the circle generated by a vector $v_8 \in \su(3)$. These metrics have scalar curvature $R_g =  \frac{3}{ 4\, a^2}  \, ( 9  -  \mu^{2})$, where $a, \mu \in \mathbb{R}^+$ are the parameters of the initial bi-invariant metric and of the circle's rescaling, respectively. The connected component of the isometry group is now $(\SU(3) \times \SU(2) \times \mathrm{U}(1)) / \mathbb{Z}_6$, the gauge group of the Standard Model \cite{Bap1}. Choosing $\nabla$ to be the Levi-Civita connection, section \ref{SectionExplicitExample} then exhibits a spinor $\psi_0$ such that  
\[
\sD_1 \psi_0 \; = \;  -\, \frac{3}{2a} \, \psi_0   \qquad \quad  \ \   \sD_2 \psi_0 \; = \;  -\, \frac{3}{4\, a} \, ( 1 + \mu ) \, \psi_0    \qquad \quad \ \  \sD_3 \psi_0 \; = \;  -\, \frac{3 \, \mu}{2\, a} \, \psi_0    \ .
\]
Observe that the eigenvalue of $\sD_3$ is arbitrarily small as $\mu \rightarrow 0^+$ with fixed $a$, while the scalar curvature tends to a positive constant. The eigenvalue of $\sD_2$ also tends to a finite constant below Friedrich's bound \cite{Friedrich}. So $\sD_2$ and $\sD_3$ cannot be isospectral to the classical Dirac operator $\sD_1$. 
Section \ref{SectionExplicitExample} and appendix \ref{AppendixExplicitExample} also calculate that the trace torsion vector fields of $\nabla^r$ are identified with $\tau_2  =  -\, \tau_3   =  \tfrac{3}{4\, a} \, (1-\mu)  \,  v_8$ in this example. Hence they are non-zero when $\mu \neq 1$ and $g$ is not bi-invariant.

\subsection*{Extensions to other manifolds besides $\SU(3)$}

The constructions in this paper may extend to general $8$-dimensional manifolds carrying a topological $\mathrm{PSU}(3)$-structure in the sense of Hitchin and Witt \cite{Hitchin, Witt, Witt2}, whose intrinsic torsion was studied in \cite{Puhle}. Such a structure is a reduction $P$ of the frame bundle of $M$ to the subgroup $\mathrm{Ad}_{\SU(3)} = \mathrm{PSU}(3) \subset \mathrm{SO}(8)$. It determines a metric $g$, an orientation, and identifies $TM \simeq P \times_{\mathrm{Ad}} \su(3)$. Since $\mathrm{PSU}(3)$ lifts canonically to $\mathrm{Spin}(8)$, the reduction also determines a spin structure and chiral spinor bundles $S^\pm \simeq P \times_{\mathrm{PSU}(3)} \Delta^\pm$. The two half-spin representations restricted to this $\mathrm{PSU}(3)$ are equivalent to the adjoint one \cite{Hitchin, Witt, Witt2}. So the $\mathrm{Ad}$-equivariant isometries $\Delta^\pm \simeq \su(3)$ used for $M=\SU(3)$ determine global bundle isometries ${\mathcal I}^\pm : TM \rightarrow S^\pm$, unique up to sign. The three connections $\nabla^r$ on $TM$ and $S^\pm$ can then be defined  by \eqref{DefinitionThreeConnections} and \eqref{DefinitionThreeConnections3}, from the spinorial lifts of the metric connection $\nabla$ on $(TM, g)$. Since the Okubo product is $\mathrm{Ad}$-invariant, it also descends to fibrewise products $\dok$ between $TM$, $S^+$ and $S^-$ that model Clifford multiplication, as in section \ref{FibrewiseOkubo}.

It would also be interesting to investigate whether the constructions extend to 8-manifolds whose spin bundle reduces to $G_2 \subset \mathrm{Spin}(8)$, the stabiliser of a pair of unit spinors of opposite chirality. Restricted to such a $G_2$, the vector and the two spin representations again coincide \cite[Prop. 1.4]{Witt}. So $TM$, $S^+$ and $S^-$ are associated with a common principal bundle and are isomorphic again. In this case, the para-octonionic product $x\ast y = \bar{x} \bar{y}$ defines a symmetric composition algebra that may effectively replace the Okubo one \cite{Elduque, OkuboBook}.

Finally, triality rotation of connections on $M$ may also extend to higher-dimensional Riemannian submersions with 8-dimensional fibre $M$. One could define the triality rotation of a higher-dimensional connection by rotating its vertical projection while leaving unchanged its horizontal and mixed components.

\section{An explicit model of the spinor bundle}

\subsection{Properties of the Okubo product on $\su(3)$}
\label{BasicPropertiesOkubo}

The Lie algebra $\su (3)$ has a two-dimensional space of ${\mathrm{Ad}}_{\,\SU (3)}$-equivariant, bilinear, algebraic products. All such products are linear combinations of the commutator $[ u, v ]$ and of the traceless part of the anticommutator, 
\begin{align}
i \{ u, v  \}_0 \; &:= \;  i \big( uv \, + \,  vu  \,-  \, \tfrac{2}{3} \,  {\mathrm {Tr}} (u v) \, I_3  \big) \ ,
\end{align}
where $u, v \in \su(3)$. The Okubo product can be defined as the combination
\begin{equation}
\label{DefinitionOkuboProduct}
u \ok v \; := \;  a \, \big( [u, v] +  \sqrt{3}\, i  \{ u, v  \}_0  \big)
\end{equation}
for a fixed constant $a \in \mathbb{R}^+$. Considering the ${\mathrm{Ad}}_{\SU (3)}$-invariant inner product on $\su(3)$
\begin{equation}
\label{AdInvariantProduct}
\langle u ,  v \rangle \; := \; 2 \, a^2 \, {\mathrm {Tr}} (u^\dag v) \ ,
\end{equation}
the Okubo product satisfies the notable identities
\begin{align}
[u, v \ok w]  \; &= \;  [u, v] \ok w \: + \: v \ok [u, w]  & \qquad \qquad & \text{(Ad-equivariance)}   \label{AdInvariantOkubo}  \linebr
\langle u \ok v,  u \ok w \rangle  \; &= \;  \langle u ,  u \rangle \,  \langle v ,  w \rangle & \qquad \qquad & \text{(orthogonality)}   \label{OrthogonalityOkubo}  \linebr
\langle u \ok v, w \rangle \;& = \; \langle u ,  v \ok w \rangle \; = \; \langle v,  w \ok u \rangle   & \qquad \qquad & \text{(adjunction)} \ ,  \label{AdjunctionOkubo}
\end{align}
and the Clifford-like relations
\begin{equation}
\label{CliffordOkubo}
( u \ok v) \ok w  +  ( w \ok v) \ok u   \; = \; 2\, \langle u , w \rangle  \, v   \; = \;  u \ok ( v \ok w ) +   w \ok ( v \ok u )   
\end{equation}
for all $u, v, w \in \su(3)$. See for instance the book by Okubo \cite{OkuboBook}, sections 4.2 and 4.3.

A longer calculation described in appendix \ref{AppendixOkuboIdentities} gives the associator of the $\ok$-product:
\begin{equation}
\label{AssociatorOkubo}
( u \ok v) \ok w  -   u \ok (v \ok w)   \; = \; 4\, a^2  \, [[u,w], v] \, - \, 2   \langle u , v \rangle  \, w  \, + \,  2   \langle v , w \rangle  \, u  \ .
\end{equation}
Finally, the next identities are important for us. They are proved in appendix \ref{AppendixOkuboIdentities}.
\begin{proposition}
\label{PropositionOkuboIdentities}
Let $\{v_j\}$ be a $\langle , \rangle$-orthonormal basis of $\su(3)$. Then for all $u, w \in \su(3)$,
\begin{align}
\sum\nolimits_j \, \big([v_j, u] \ok w \big) \ok v_j \; &= \;  \sum\nolimits_j \,   v_j \ok  \big(  w \ok [v_j, u]  \big) \; = \;  4\, [u, w]    \label{ImportantIdentityOkubo1}  \linebr
\sum\nolimits_j    \big(u \ok [w, v_j] \big) \ok v_j  \; &= \;  \sum\nolimits_j  \,  v_j \ok \big([v_j, u]  \ok w \big) \; = \; 4\, [u, w]   \: - \:  \frac{3}{a} \, u \ok w  \label{ImportantIdentityOkubo2}  \ .
\end{align}
\end{proposition}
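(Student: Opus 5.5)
The plan is to avoid a brute-force computation with structure constants and use equivariance instead. Each left-hand side in \eqref{ImportantIdentityOkubo1}--\eqref{ImportantIdentityOkubo2} is a sum over an orthonormal basis, so it is independent of the choice of basis. By \eqref{AdInvariantOkubo}, and since the commutator and $\langle , \rangle$ are $\mathrm{Ad}$-invariant, each left-hand side is therefore an $\mathrm{Ad}_{\SU(3)}$-equivariant bilinear map $F(u,w)$ from $\su(3)\times\su(3)$ to $\su(3)$. By the remark opening this section, the space of such maps is two-dimensional. Hence
\[
F(u,w) \;=\; \alpha\,[u,w] \,+\, \beta\, u\ok w
\]
for constants $\alpha,\beta$. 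The proof then reduces to determining two numbers for each expression.

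\textbf{Step 1: equalities within each line.} I would first show that the two sums in each line agree. Pair each sum with an arbitrary $z$ and apply the adjunction identity \eqref{AdjunctionOkubo} repeatedly. This moves the factors $v_j$ around, and the identity $\langle [v_j,u], y\rangle = -\langle v_j,[u,y]\rangle$ lets the bracket be reassigned. In line \eqref{ImportantIdentityOkubo1}, for instance, I expect $\langle (([v_j,u])\ok w)\ok v_j, z\rangle$ to cycle into $\langle v_j\ok(w\ok[v_j,u]), z\rangle$ after summing over $j$. If a direct match fails, I would instead compute the coefficients $\alpha,\beta$ independently for each of the four sums.

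\textbf{Step 2: auxiliary contractions.} Next I would collect the basic contractions. Setting $u=w=v_j$ in \eqref{CliffordOkubo} and summing gives $\sum_j (v_j\ok x)\ok v_j = 8x$. Inserting this into the associator \eqref{AssociatorOkubo} with $u=w=v_j$ gives $\sum_j v_j\ok(x\ok v_j) = -6x$. The sum $\sum_j v_j\ok v_j$ is an invariant vector, so it vanishes. From the Killing form,
\[
\sum_j [v_j,[v_j,x]] \;=\; -\tfrac{3}{a^2}\,x .
\]
Finally, the map $x\mapsto \sum_j v_j\ok[v_j,x]$ is equivariant, and the adjoint representation is absolutely irreducible, so this map is a scalar. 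I would fix that scalar by taking a trace and using \eqref{AdjunctionOkubo}.

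\textbf{Step 3: fixing $\alpha,\beta$.} Substitute $u=v_k$, $w=v_l$, sum over $k,l$, and pair with $[v_k,v_l]$ and with $v_k\ok v_l$. This gives a $2\times2$ linear system whose Gram matrix is built from $\sum|[v_k,v_l]|^2 = 24/a^2$, $\sum|v_k\ok v_l|^2 = 64$ (by \eqref{OrthogonalityOkubo}), and the cross term $\sum\langle[v_k,v_l],v_k\ok v_l\rangle$ (via the scalar from Step 2). The right-hand sides are triple sums. Adjunction collapses each of them to one of the contractions in Step 2, possibly after using \eqref{AdInvariantOkubo} to pass a bracket across an Okubo product.

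As a cheap consistency check, \eqref{ImportantIdentityOkubo1} should come out antisymmetric in $u,w$, consistent with $\beta=0$ there. Alternatively, one could evaluate both sides on a pair of concrete matrices such as $u$ and $w$ in a Cartan subalgebra.

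\textbf{Main obstacle.} I expect the main obstacle to be the bookkeeping in these triple contractions, especially the cross term linking $[\,,\,]$ and $\ok$. The second identity \eqref{ImportantIdentityOkubo2} is the delicate one, because its nonzero $\beta=-3/a$ must emerge from the associator's $4a^2[[u,w],v]$ term combined with the Casimir $-3/a^2$. To control this, I would first try expanding the nested product with \eqref{AssociatorOkubo} and \eqref{CliffordOkubo} directly. Only if that stalls would I fall back on the Gram-matrix method of Step 3.
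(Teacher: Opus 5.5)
Your reduction is sound, and it is a genuinely different route from the paper's. Basis-independence and \eqref{AdInvariantOkubo} make each sum an $\mathrm{Ad}$-equivariant bilinear map, so by the two-dimensionality recalled at the start of section~\ref{BasicPropertiesOkubo} each equals $\alpha[u,w]+\beta\,u\ok w$. The paper never uses this and instead computes at matrix level:
\begin{itemize}
\item it substitutes into the decomposition of $(u\ok v)\ok w$ in proposition~\ref{DoubleOkuboIdentities}, which comes from the polarized Cayley--Hamilton identity;
\item it evaluates the double commutator with the adjoint Casimir;
\item it kills the cubic term $\sum_j([v_j,u]\,w\,v_j-v_j\,w\,[v_j,u])_0$ with lemma~\ref{IdentitySchur};
\item it gets \eqref{ImportantIdentityOkubo2} in one line from \eqref{ImportantIdentityOkubo1}, \eqref{AdInvariantOkubo} and \eqref{OkuboIdentity4}.
\end{itemize}
Your route explains why only $[u,w]$ and $u\ok w$ can appear, and it avoids matrices. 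But all the content is in the two constants, and your plan for computing them does not work as written.

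First, $\sum_j v_j\ok(x\ok v_j)=8x$, not $-6x$. For $u=w$ the right-hand side of \eqref{AssociatorOkubo} vanishes identically, so $v_j\ok(x\ok v_j)=(v_j\ok x)\ok v_j=x$. This error would propagate. Put $u=v_l$, $w=[v_l,x]$ in $\sum_j v_j\ok(w\ok[v_j,u])$ and sum over $l$. Applying $\sum_l\mathrm{ad}_{v_l}^2$ to $y\ok x$ and using the Casimir gives $\sum_l[v_l,y]\ok[v_l,x]=\tfrac{3}{2a^2}\,y\ok x$. So the left side reduces to $-\tfrac{3}{2a^2}\sum_j v_j\ok(x\ok v_j)$, while the ansatz gives $-\tfrac{3}{a^2}(\alpha+a\beta)x$. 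Your value would yield $\alpha+a\beta=-3$ instead of $4$.

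Second, invariance reads $\langle[v_j,u],y\rangle=+\langle v_j,[u,y]\rangle$, without the minus sign.

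Third, Step 1 cannot succeed with adjunction and invariance alone. These only cycle the slots of $\langle x\ok y,z\rangle$ and move brackets, so they preserve the cyclic orientation of each $\ok$-product. The two sums in \eqref{ImportantIdentityOkubo1}, however, are mirror images with every product reversed. Two things do work:
\begin{itemize}
\item \eqref{CliffordOkubo} together with \eqref{AssociatorOkubo}, which give $\sum_j v_j\ok(w\ok[v_j,u])=8[u,w]-\sum_j([v_j,u]\ok w)\ok v_j$;
\item complex conjugation $x\mapsto\bar x$, which preserves $[\,,]$ and $\langle,\rangle$ and satisfies $\overline{x\ok y}=-\bar y\ok\bar x$, so it carries the first sum to the second sum evaluated at $(\bar u,\bar w)$.
\end{itemize}

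With these fixes your framework closes quickly, with no Gram matrix. Adjunction plus $\sum_j v_j\otimes[v_j,u]=-\sum_j[v_j,u]\otimes v_j$ shows that $w\mapsto\sum_j([v_j,u]\ok w)\ok v_j$ is $\langle,\rangle$-skew. Since $u\ok w=a[u,w]+\sqrt3\,a\,i\{u,w\}_0$, and $w\mapsto i\{u,w\}_0$ is symmetric and nonzero for $u\neq0$, skewness forces $\beta=0$. The same contraction applied to this sum gives $-\tfrac{3}{2a^2}\sum_j(v_j\ok x)\ok v_j=-\tfrac{12}{a^2}x=-\tfrac{3}{a^2}\alpha x$, so $\alpha=4$. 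Finally, \eqref{ImportantIdentityOkubo2} is not the delicate case: it follows from \eqref{ImportantIdentityOkubo1} in one line.
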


\subsection{Bundle Okubo products and Clifford multiplication}
\label{FibrewiseOkubo}

Let $g$ be a fixed Riemannian metric on the tangent bundle $TM$. The derivative of left multiplication on the group can be used to identify $TM$ with the product $M \times \su(3)$. Then a vector field is viewed as a map $M \rightarrow \su(3)$. The metric $g$ is a map from $M$ to the space of inner products on $\su(3)$. Now define two additional trivial vector bundles: 
\begin{equation}
\label{DefinitionSpinorBundles}
S^+ := M \times \su(3)  \qquad \qquad  \quad S^- := M \times \su(3) \ . 
\end{equation} 
Sections of $S^\pm$ can also be viewed as maps $M \rightarrow \su(3)$. The inner product on the real fibres of $S^\pm$ is taken to be the Ad-invariant product $\langle , \rangle$ on $\su(3)$. 

The aim of this section is to lift the Okubo product on $\su(3)$ to a set of bilinear maps between the bundles $TM$, $S^+$ and $S^-$. Those lifted maps need to be deformed in order to model Clifford multiplication for an arbitrary metric $g$. We will show that:
\begin{proposition}
There exists a set of three bilinear bundle maps 
\[
TM \times S^+ \rightarrow S^-  \quad  \qquad  TM \times S^- \rightarrow S^+ \quad  \qquad  S^+ \times S^- \rightarrow TM  \ ,
\] 
all denoted $\dok$, such that for every $v \in TM$ and every $\psi^\pm \in S^\pm $,
\begin{align}
\label{GeneralizedCliffordRelations}
u \dok (v \dok \psi^\pm) \, +  \, v \dok (u \dok \psi^\pm)    \; &= \;  -\, 2\, g(u, v) \, \psi^\pm      \linebr
\psi^+ \dok (v \dok \phi^+) \, + \, \phi^+ \dok (v \dok \psi^+)    \; &= \;  2\, \langle \psi^+, \phi^+ \rangle \, v    \nonumber     \linebr
(\psi^+ \dok \psi^-) \dok \phi^+ \, + \,  (\phi^+ \dok \psi^-) \dok \psi^+      \; &= \;   2\, \langle \psi^+, \phi^+ \rangle \, \psi^-    \nonumber    \linebr
(v \dok \psi^-) \dok \phi^-  \, +  \,  (v \dok \phi^-) \dok \psi^-     \; &= \;  -\, 2\, \langle \psi^-, \phi^- \rangle \, v     \nonumber    \linebr
(\psi^+ \dok \psi^-) \dok \phi^- \, + \,  (\psi^+ \dok \phi^-) \dok \psi^-      \; &= \;  -\, 2\, \langle \psi^-, \phi^- \rangle \, \psi^+    \ . \nonumber  
\end{align}
\end{proposition}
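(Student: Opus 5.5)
The approach is to reduce every relation in the statement to the two Clifford-like identities \eqref{CliffordOkubo} of the Okubo product. The metric dependence is absorbed into a fibrewise linear isometry from $(TM,g)$ to $(M\times\su(3),\langle\,,\rangle)$.

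\textbf{The isometry $P$.} Write $g(u,v)=\langle G u, v\rangle$, where $G: M\rightarrow \mathrm{End}(\su(3))$ is pointwise $\langle\,,\rangle$-symmetric and positive definite. Set $P:=G^{1/2}$, the positive square root. Then $P$ is smooth, because the positive square root is a smooth (indeed real-analytic) map on positive definite operators. It is also invertible and satisfies
\[
g(u,v)=\langle Pu,Pv\rangle .
\]
So $P$ is a bundle isometry $(TM,g)\rightarrow (M\times \su(3),\langle\,,\rangle)$. Any other choice, such as the isometry induced by a $g$-orthonormal frame, would work equally well.

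\textbf{The three products.} Define the fibrewise bilinear maps
\[
v\dok\psi^+ := (Pv)\ok\psi^+\in S^-,\qquad v\dok\psi^- := -\,\psi^-\ok (Pv)\in S^+,\qquad \psi^+\dok\psi^- := P^{-1}(\psi^+\ok\psi^-)\in TM .
\]
The placement of the factors and the single minus sign are chosen so that each relation matches one of the two forms in \eqref{CliffordOkubo}. Those forms are
\[
(u\ok v)\ok w+(w\ok v)\ok u=2\langle u,w\rangle v
\quad\text{and}\quad
u\ok(v\ok w)+w\ok(v\ok u)=2\langle u,w\rangle v .
\]

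\textbf{Checking the five relations.} Each verification is a one-line substitution.

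\begin{itemize}
\item[(1)] With $\psi^+$ in the middle slot, $u\dok(v\dok\psi^+)+v\dok(u\dok\psi^+)=-[(Pv\ok\psi^+)\ok Pu+(Pu\ok\psi^+)\ok Pv]$. By the first form this equals $-2\langle Pu,Pv\rangle\psi^+=-2g(u,v)\psi^+$.
\item[(1')] For $\psi^-$, the sum is $-[Pu\ok(\psi^-\ok Pv)+Pv\ok(\psi^-\ok Pu)]$. By the second form this is $-2g(u,v)\psi^-$.
\item[(2)] The second relation becomes $P^{-1}[\psi^+\ok(Pv\ok\phi^+)+\phi^+\ok(Pv\ok\psi^+)]$. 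By the second form this is $P^{-1}(2\langle\psi^+,\phi^+\rangle Pv)=2\langle\psi^+,\phi^+\rangle v$.
\item[(3)] The third relation is $(\psi^+\ok\psi^-)\ok\phi^++(\phi^+\ok\psi^-)\ok\psi^+$. The first form gives $2\langle\psi^+,\phi^+\rangle\psi^-$.
\item[(4)] The fourth relation is $-P^{-1}[(\psi^-\ok Pv)\ok\phi^-+(\phi^-\ok Pv)\ok\psi^-]$. The first form gives $-2\langle\psi^-,\phi^-\rangle v$.
\item[(5)] The fifth relation is $-[\phi^-\ok(\psi^+\ok\psi^-)+\psi^-\ok(\psi^+\ok\phi^-)]$. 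The second form, with $\psi^+$ in the middle slot, gives $-2\langle\psi^-,\phi^-\rangle\psi^+$.
\end{itemize}

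\textbf{Main obstacle.} The only real difficulty is the bookkeeping: choosing the order of factors and the signs so that all five relations fit simultaneously. With the choices above, every relation lands on one of the two forms of \eqref{CliffordOkubo} with the correct middle variable. The factors $P$ and $P^{-1}$ then cancel, because every $TM$-valued or $TM$-consuming slot passes through $P$ exactly once.
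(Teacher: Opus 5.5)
Your proof is correct and is essentially the paper's: since $g(u,v)=\langle\alpha^{-1}u,\alpha^{-1}v\rangle=\langle\alpha^{-2}u,v\rangle$, your $P=G^{1/2}$ is exactly the paper's $\alpha^{-1}$, and your three products coincide with \eqref{DeformedCliffordMultiplication}. Your substitutions into \eqref{CliffordOkubo} all check out, and they spell out the verification that the paper only sketches; the paper's reduction to $u=v$, $\psi^\pm=\phi^\pm$ via polarization is not needed, because \eqref{CliffordOkubo} is already in polarized form.
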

The first identity in \eqref{GeneralizedCliffordRelations} says that the $\dok$-action of $TM$ on $S^+\oplus S^-$ is a model of Clifford multiplication on the manifold $(M, g)$. So the vector bundle $S^+\oplus S^-$ is a 16-dimensional real Clifford module, which justifies the interpretation of $S^\pm$ as chiral spinor bundles. And because $M$ is simply connected, its spin structure is unique, so our constructed spinor bundle is the only one. 

The spinor bundles $S^\pm$ have additional structure in our case $M = \SU(3)$. It is now possible to multiply two spinors to obtain a vector. The last four identities in \eqref{GeneralizedCliffordRelations} involve multiplications of that kind. Note that, on the right-hand side of those identities, the inner product of spinors is always $\langle , \rangle$. The $\dok$-product reflects a deformed metric $g$ on $TM$ but keeps unchanged the bi-invariant inner products on the chiral spinor bundles.

\subsubsection*{Construction and properties of the bundle maps $\dok$}

Consider the fixed Riemannian metric $g$. There exists a unique smooth automorphism $\alpha: TM \rightarrow TM$ that projects to the identity on $M$, is positive-definite, and satisfies 
\begin{equation}
\label{DefinitionAlpha}
g(u, v) \; = \; \langle \alpha^{-1} (u), \, \alpha^{-1} (v) \rangle  \qquad  \qquad \langle \alpha (u),  v \rangle \; = \; \langle u, \alpha (v) \rangle    
\end{equation}
for all vectors $u, v \in TM$. This automorphism is standard when identifying spinors of two different metrics (see \cite{Wang, BG, BGM, AHermann} and the appendices of \cite{Bap4}). Here we use it to deform the Okubo product so that the Clifford-like relations \eqref{CliffordOkubo} hold for the metric $g$, instead of the bi-invariant metric $\langle , \rangle$. 

Identifying $TM$ and $S^\pm$ with $M \times \su (3)$, there are obvious isometries between $(TM, g)$ and $(S^\pm, \langle , \rangle)$. Simply define
\begin{equation}
\label{DefinitionIsometry1}
{\mathcal I}^\pm : TM \longrightarrow S^\pm \ , \qquad (x, v)  \mapsto (x, \alpha_x^{-1} (v)) \ .
\end{equation}
Then we clearly have 
\begin{equation}
\label{Isometry1}
\langle {\mathcal I}^\pm(u) , {\mathcal I}^\pm(v) \rangle  = g( u , v) \ .
\end{equation}
The composition
\begin{equation}
\label{DefinitionIsometry2}
{\mathcal I}^+ \circ ({\mathcal I}^-)^{-1} : S^- \longrightarrow S^+ \ , \qquad (x, v)  \mapsto (x, \alpha_x^{-1} \circ \alpha_x (v)) = (x, v)
\end{equation}
is also a manifest isometry between $(S^-, \langle , \rangle)$ and $(S^+, \langle , \rangle)$.

Now identify $TM$ and $S^\pm$ with $M \times \su (3)$ and consider the Okubo product $\ok$ on $\su(3)$. Define three bilinear multiplications on the fibres of the vector bundles by the formulas
\begin{align}
\label{DeformedCliffordMultiplication}
TM \times S^+ \rightarrow S^- \ &, \qquad v \dok \psi^+ \; := \;  ( \alpha^{-1} v) \ok \psi^+  \linebr
TM \times S^- \rightarrow S^+ \ &,  \qquad  v \dok \psi^- \; := \;  -   \;  \psi^- \ok  ( \alpha^{-1} v)      \nonumber  \linebr
S^+ \times S^- \rightarrow TM \ &,  \qquad \psi^+ \dok \psi^-   \; := \;  \alpha ( \psi^+ \ok \psi^-   )   \ .  \nonumber 
\end{align} 
On the right-hand sides, the elements $v$, $\psi^+$ and $\psi^-$ should all be regarded as having values in $\su(3)$, so they can be multiplied with the Okubo product and transformed with the automorphisms $\alpha$. These multiplication rules were first written down in the bi-invariant case $\alpha = 1$, in which $\dok$ is essentially the same as $\ok$. For the general case, the isometry \eqref{DefinitionIsometry1} suggests that all the vectors in the formulas, namely $v$ and $\psi^+ \dok \psi^-$, should be transformed by $\alpha^{-1}$.

Using the definitions  \eqref{DeformedCliffordMultiplication} of the $\dok$-products, the properties of the Okubo product, and the definition of $\alpha$, one then calculates that
\begin{equation}
\label{PropertiesDokProduct}
g( \psi^+ \dok \psi^- , v)  \; = \;   \langle v \dok \psi^+ , \psi^- \rangle   \; = \;  - \; \langle \psi^+ ,  v \dok \psi^- \rangle  \ .
\end{equation}
Longer but straightforward calculations verify all the identities \eqref{GeneralizedCliffordRelations} of the proposition. These need to be checked only in the simpler case $u=v$ and $\psi^\pm = \phi^\pm$, as the general case follows from polarization. In particular, traditional Clifford multiplication $\cdot$ of vectors and spinors can be written as
\begin{equation}
\label{TraditionalCliffordMultiplication}
v \cdot  \begin{bmatrix} \psi^+ \\ \psi^- \end{bmatrix} \ = \  \begin{bmatrix}  v\dok \psi^- \\  v \dok \psi^+ \end{bmatrix} \ = \  \begin{bmatrix}  - \,  \psi^- \ok  ( \alpha^{-1} v)  \\  ( \alpha^{-1} v) \ok \psi^+ \end{bmatrix} \ .
\end{equation}
From \eqref{GeneralizedCliffordRelations}, it is clear that indeed $v \cdot v \cdot  \psi^\pm = \ -\, g(v, v)   \psi^\pm$.

One can also check that $\dok$-products preserve the orthogonality relation \eqref{OrthogonalityOkubo} of the Okubo product, after the deformation of $\langle , \rangle$ into $g$ on $TM$. Specifically, we have 
\begin{align}
\label{GeneralizedOrthogonalityRelations2}
\langle u \dok \psi^\pm, \, u \dok \psi^\pm \rangle  \; &= \; \, g(u, u) \;  \langle \psi^\pm, \, \psi^\pm \rangle      \linebr
g( \psi^+ \dok \psi^-, \, \psi^+ \dok \psi^- ) \; &= \;  \langle \psi^+, \, \psi^+ \rangle \;  \langle \psi^-, \, \psi^- \rangle   \ .
\end{align} 
So the bilinear bundle maps $\dok$ respect all the inner products in $TM$, $S^+$ and $S^-$.

\section{Three natural connections on $S^+$}
\label{ThreeNaturalConnections}

\subsection{Compatible connections on $TM$, $S^+$ and $S^-$ }

Let $\nabla$ be a fixed metric connection on the Riemannian manifold $(M, g)$. Let $\{ v_j\}$ be an oriented basis of $\su (3)$ that is $\langle , \rangle$-orthonormal. Let $v_j^\LL$ denote the extension of $v_j$ as a left-invariant vector field on $M$. Then \eqref{DefinitionAlpha} implies that the vector fields 
\begin{equation}
\label{DefinitionVj}
V_j \; := \;  \alpha (v_j^\LL)  
\end{equation}
form a $g$-orthonormal trivialization of $TM$. Then the lifts of $\nabla$ to the chiral bundles $S^\pm$ exists and are given, in these trivializations, by standard formulas analogous to the Levi-Civita case \cite{Bourguignon, LM}:
\begin{equation}
\nabla^\pm_X  \psi^\pm \; := \; \partial_X \psi^\pm \; + \; \frac{1}{4}\, \sum\nolimits_{j, k}  \, g (\nabla_X V_j, V_k) \; V_j \cdot [ \, V_k \cdot \psi^\pm  \, ] \ .
\end{equation}
Here we identify $S^\pm = M \times \su(3)$, as before, and so chiral spinors are regarded as simple maps $M \rightarrow \su(3)$ that have directional derivatives like $\partial_X \psi^\pm$. Using the explicit model for Clifford multiplication given in \eqref{DeformedCliffordMultiplication}, we can then write that
\begin{align}
\label{SpinorialConnectionsTrivialization}
\nabla^+_X  \psi^+ \; &= \;   \partial_X \psi^+ \; - \; \frac{1}{4}\, \sum\nolimits_{j, k}\, g (\nabla_X V_j, V_k) \; (v_k \ok \psi^+) \ok v_j    \linebr
\nabla^-_X  \psi^- \;  &= \;  \partial_X \psi^- \; - \; \frac{1}{4}\, \sum\nolimits_{j, k} \, g(\nabla_X V_j, V_k) \; v_j \ok ( \psi^- \ok v_k )    \   \nonumber 
\end{align}
as maps $M \rightarrow \su(3)$. For reference, in the trivialization $\{ V_j\}$ of $TM$, a vector field can be written as $Y = \sum_j Y^j V_j$ and its covariant derivative is the usual
\begin{equation}
\label{VectorialConnectionTrivialization}
\nabla_X Y \; =\;  \sum\nolimits_j \, (\partial_X Y^j) \, V_j  \; + \;  \sum\nolimits_{j \neq k} \, g (\nabla_X V_j, V_k) \; Y^j \, V_k \ .
\end{equation}
Since $\nabla$ is compatible with $g$, the products $g (\nabla_X V_j, V_k)$ are anti-symmetric in $j,k$. So one can use \eqref{SpinorialConnectionsTrivialization} and the adjunction relations  \eqref{AdjunctionOkubo} of the Okubo product to check that
\begin{equation}
\label{CompatibilityConnectionInnerProduct1}
\langle  \nabla^\pm_X \psi_1^\pm , \psi_2^\pm \rangle \; + \; \langle  \psi_1^\pm ,  \nabla^\pm_X \psi_2^\pm \rangle \; = \; \partial_X   \langle  \psi_1^\pm ,  \psi_2^\pm \rangle \ .
\end{equation}
This needed to happen, of course, as we know that the lifts to $S^\pm$ of a metric connection on $TM$ are connections compatible with the inner product of spinors.
\begin{proposition}
\label{PropDokCompatibleConnections}
The lifted connections $\nabla$ are compatible with the three instances of the product $\dok$. Besides the usual two identities of chiral spinors,
\begin{equation}
\label{OldCompatibilitiesConnection}
\nabla^\mp_X  (Y \dok \psi^\pm) \; = \;  (\nabla_X  Y) \dok \psi^\pm    \; +  \; Y \dok  (\nabla^\pm_X \psi^\pm)    \ ,
\end{equation}
we have an additional identity of vector fields on $M$, 
\begin{equation}
\label{NewCompatibilityConnection}
\nabla_X  ( \psi^+ \dok \psi^-) \; = \;  (\nabla^+_X  \psi^+) \dok \psi^-    \; +  \; \psi^+ \dok  (\nabla^-_X \psi^-) \ .
\end{equation}
\end{proposition}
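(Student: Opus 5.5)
The plan is to prove the two familiar identities \eqref{OldCompatibilitiesConnection} directly in the trivialization $\{V_j\}$, and then derive the new identity \eqref{NewCompatibilityConnection} from them by duality, using the adjunction relations \eqref{PropertiesDokProduct} and metric compatibility.

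\emph{Step 1: the identities \eqref{OldCompatibilitiesConnection}.} Write $Y=\sum_l Y^l V_l$. Since $\alpha^{-1}V_l = v_l^\LL$ is the constant map $v_l$ under $TM\simeq M\times\su(3)$, definition \eqref{DeformedCliffordMultiplication} gives
\[
Y\dok\psi^+=\sum_l Y^l\, v_l\ok\psi^+ .
\]
Hence $\partial_X(Y\dok\psi^+)=\sum_l(\partial_XY^l)\,v_l\ok\psi^+ + Y\dok\partial_X\psi^+$. The first piece is the derivative term of $(\nabla_XY)\dok\psi^+$ in \eqref{VectorialConnectionTrivialization}, and the second is the derivative term of $Y\dok\nabla^+_X\psi^+$. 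It remains to match the zeroth-order terms. Writing $\omega_{jk}=g(\nabla_XV_j,V_k)$, which is antisymmetric, this reduces to the pointwise algebraic statement
\[
\tfrac14\sum_{j,k}\omega_{jk}\big(V_j\cdot V_k\cdot (V_l\cdot\psi) - V_l\cdot V_j\cdot V_k\cdot\psi\big)=\sum_k\omega_{lk}\,V_k\cdot\psi .
\]
This is the standard commutator identity, which follows only from the Clifford relation (the first line of \eqref{GeneralizedCliffordRelations}) and the antisymmetry of $\omega$. I will do this computation abstractly with $\cdot$, using \eqref{TraditionalCliffordMultiplication}, rather than with Okubo products, so that the signs in the $TM\times S^-\to S^+$ product are handled automatically. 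The case $Y\dok\psi^-$ is identical.

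\emph{Step 2: the identity \eqref{NewCompatibilityConnection}.} Fix an arbitrary vector field $Y$. By \eqref{PropertiesDokProduct}, $g(\psi^+\dok\psi^-,Y)=\langle Y\dok\psi^+,\psi^-\rangle$. Differentiate both sides along $X$.
\begin{itemize}
\item On the left, metric compatibility of $\nabla$ gives $g(\nabla_X(\psi^+\dok\psi^-),Y)+g(\psi^+\dok\psi^-,\nabla_XY)$.
\item On the right, apply \eqref{CompatibilityConnectionInnerProduct1} and then Step 1. This gives
\[
\langle(\nabla_XY)\dok\psi^+,\psi^-\rangle+\langle Y\dok\nabla^+_X\psi^+,\psi^-\rangle+\langle Y\dok\psi^+,\nabla^-_X\psi^-\rangle .
\]
\end{itemize}
Converting each of the three terms back with \eqref{PropertiesDokProduct} gives
\[
g(\psi^+\dok\psi^-,\nabla_XY)+g\big((\nabla^+_X\psi^+)\dok\psi^-,Y\big)+g\big(\psi^+\dok\nabla^-_X\psi^-,Y\big).
\]
The terms containing $\nabla_XY$ cancel between the two sides. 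Since $Y$ is arbitrary and $g$ is nondegenerate, \eqref{NewCompatibilityConnection} follows.

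The main obstacle is Step 1, and specifically the bookkeeping of signs and orderings. These come from the minus sign in $v\dok\psi^-=-\psi^-\ok\alpha^{-1}v$ and from the two different orderings in \eqref{SpinorialConnectionsTrivialization}. To avoid this, I will work with the abstract Clifford product $\cdot$ throughout Step 1, and invoke the explicit Okubo formulas only to confirm that \eqref{SpinorialConnectionsTrivialization} is indeed $\tfrac14\sum_{j,k}\omega_{jk}V_j\cdot V_k\cdot$.
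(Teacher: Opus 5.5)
Your proposal is correct and follows the paper's proof. Step 2 is exactly the paper's duality computation: differentiate $g(\psi^+\dok\psi^-,Y)=\langle Y\dok\psi^+,\psi^-\rangle$, use metric compatibility, \eqref{CompatibilityConnectionInnerProduct1}, \eqref{OldCompatibilitiesConnection} and \eqref{PropertiesDokProduct}, then cancel the $\nabla_XY$ terms. The only addition is Step 1, where you verify \eqref{OldCompatibilitiesConnection} via the commutator identity $\bigl[\tfrac14\sum_{j,k}\omega_{jk}V_j\cdot V_k\cdot\,,\,V_l\cdot\bigr]=\sum_k\omega_{lk}V_k\cdot$; the paper simply cites this as standard, and your check is consistent with its sign conventions.
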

\begin{proof}
Relations \eqref{OldCompatibilitiesConnection} are standard. Using them together with the $g$-compatibility of $\nabla$, the compatibilities \eqref{CompatibilityConnectionInnerProduct1}, and the adjunction formulas \eqref{PropertiesDokProduct} repeatedly, we calculate
\begin{align}
 g\big(\nabla_X(\psi^+\dok\psi^-), Y\big) &= \, \partial_X \, g(\psi^+\dok\psi^-,Y) \, - \, g(\psi^+\dok\psi^-,\nabla_XY)    \nonumber  \linebr
 &=\, \partial_X \, \langle Y\dok\psi^+,\psi^-\rangle \, - \, \langle(\nabla_XY)\dok\psi^+,\psi^-\rangle     \nonumber  \linebr
 &=\, \langle \nabla^-_X(Y\dok\psi^+),\psi^-\rangle \,+\, \langle Y\dok\psi^+,\nabla^-_X\psi^-\rangle\, -\, \langle(\nabla_XY)\dok\psi^+,\psi^-\rangle  \nonumber  \linebr
 &=\, \langle Y\dok(\nabla^+_X\psi^+),\psi^-\rangle \,+ \,\langle Y\dok\psi^+,\nabla^-_X\psi^-\rangle \nonumber  \linebr
 &=\, g\big((\nabla^+_X\psi^+)\dok\psi^-,Y\big) \, + \,g\big(\psi^+\dok(\nabla^-_X\psi^-), Y\big)  \ .  \nonumber   \qedhere
 \end{align}  
\end{proof}

\subsection{Defining the three connections on $S^+$}

In this section we will define three natural connections on the chiral spinor bundle $S^+$ related to the original metric connection $\nabla$ on $TM$. The first one is just $\nabla^+$, the usual spinorial lift of $\nabla$ to $S^+$. The second connection is the image of the spinorial lift $\nabla^-$ by the isometry $S^- \rightarrow S^+$ defined in \eqref{DefinitionIsometry2}. The third connection is the image of $\nabla$ itself by the isometry $TM \rightarrow S^+$ defined in \eqref{DefinitionIsometry1}. Thus, we define
\begin{align}
\label{DefinitionThreeConnections}
\nabla^1_X  \psi^+ \; &:= \; \nabla^+_X  \psi^+    \linebr
\nabla^2_X  \psi^+ \; &:= \;  {\mathcal I}^+ \circ ({\mathcal I}^- )^{-1} \circ  \nabla^-_X \circ {\mathcal I}^- \circ ({\mathcal I}^+ )^{-1}\, (\psi^+)   \nonumber \linebr
\nabla^3_X  \psi^+ \; &:= \;  {\mathcal I}^+ \circ  \nabla_X \circ ({\mathcal I}^+ )^{-1}\, (\psi^+)   \nonumber \ .
\end{align}
From \eqref{CompatibilityConnectionInnerProduct1} we know that $\nabla^1 = \nabla^+$ is compatible with the inner-product $\langle , \rangle$ of spinors. Since $\nabla^-$ is also $\langle , \rangle$-compatible and ${\mathcal I}^+ \circ ({\mathcal I}^- )^{-1}$ is an isometry between $S^-$ and $S^+$, the connection $\nabla^2$ will also be $\langle , \rangle$-compatible in $S^+$. Similarly, because $\nabla$ is $g$-compatible on $TM$ and ${\mathcal I}^+$ is an isometry between $(TM, g)$ and $(S^+, \langle , \rangle)$, the connection $\nabla^3$ will be $\langle , \rangle$-compatible in $S^+$ as well. So for $r=1, 2, 3$ we have
\begin{equation}
\label{CompatibilityConnectionInnerProduct3}
\langle  \nabla^r_X \psi_1^+ , \psi_2^+ \rangle \; + \; \langle  \psi_1^+ ,  \nabla^r_X \psi_2^+ \rangle \; = \; \partial_X   \langle  \psi_1^+ ,  \psi_2^+ \rangle \ .
\end{equation}
Explicitly, one can calculate, for example:
\begin{align}
\partial_X  \langle  \psi^+ , \psi^+  \rangle  \; &= \;  \partial_X \,  \big\langle \,  {\mathcal I}^+ \circ ({\mathcal I}^+ )^{-1} \psi^+ \, , \,   {\mathcal I}^+ \circ ({\mathcal I}^+ )^{-1} \psi^+  \, \big\rangle  \linebr
&= \;  \partial_X \,  g\big( \, ({\mathcal I}^+ )^{-1} \psi^+ \, , \,  ({\mathcal I}^+ )^{-1} \psi^+   \, \big)  \nonumber \linebr
&= \;   g\big( \nabla_X \circ ({\mathcal I}^+ )^{-1} \psi^+ \, , \,  ({\mathcal I}^+ )^{-1} \psi^+   \big)  \; + \:  g\big(  ({\mathcal I}^+ )^{-1} \psi^+ \, , \, \nabla_X \circ   ({\mathcal I}^+ )^{-1} \psi^+   \big)  \nonumber  \linebr
&= \;   \big\langle {\mathcal I}^+ \circ \nabla_X \circ ({\mathcal I}^+ )^{-1} \psi^+ \, , \,  \psi^+  \big\rangle  \; + \:  \big\langle \psi^+ \, , \, {\mathcal I}^+ \circ \nabla_X \circ   ({\mathcal I}^+ )^{-1} \psi^+  \big\rangle  \nonumber  \linebr
&= \; \langle \nabla^3_X  \psi^+ , \psi^+  \rangle  +   \langle \psi^+ ,  \nabla^3_X  \psi^+ \rangle   \nonumber \ ,
\end{align}
where we have made repeated use of \eqref{Isometry1}.

Now we give a more direct expressions for the three connections $\nabla^r$, as written in a trivialization of $S^+ = M \times \su(3)$. 
\begin{proposition} 
\label{PropThreeConnections}
Regarding chiral spinors as maps $M \rightarrow \su(3)$, the three connections on $S^+$ defined above can be written as
\begin{align}
\label{ThreeConnections}
\nabla^1_X  \psi^+ \; &= \; \partial_X \psi^+ \; - \; \frac{1}{4}\, \sum\nolimits_{j, k} \, g (\nabla_X V_j, V_k) \; (v_k \ok \psi^+) \ok v_j      \linebr
\nabla^2_X  \psi^+ \; &= \; \partial_X \psi^+ \; - \; \frac{1}{4}\, \sum\nolimits_{j, k} \, g (\nabla_X V_j, V_k) \; v_j \ok ( \psi^+ \ok v_k )   \nonumber   \linebr
\nabla^3_X  \psi^+    \; &= \; \partial_X \psi^+   \; + \;  \sum\nolimits_{j, k} \, g (\nabla_X V_j, V_k) \; \langle \psi^+  , v_j \rangle \, v_k \   \nonumber
\end{align}
in any given $\langle , \rangle$-orthonormal basis $\{ v_j\}$ of $\su(3)$, where the $V_j = \alpha (v_j^\LL)$ are $g$-orthonormal.
\end{proposition}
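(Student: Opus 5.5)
We verify the three formulas one at a time, working throughout in the trivialisation $S^\pm = M\times\su(3)$ and the $g$-orthonormal frame $V_j=\alpha(v_j^\LL)$.

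\emph{The formula for $\nabla^1$.} This is immediate, since $\nabla^1=\nabla^+$ and \eqref{SpinorialConnectionsTrivialization} already gives $\nabla^+$ in exactly this form.

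\emph{The formula for $\nabla^2$.} By \eqref{DefinitionIsometry2}, the map ${\mathcal I}^+\circ({\mathcal I}^-)^{-1}$ is the identity on $M\times\su(3)$. Hence $\nabla^2_X\psi^+$ equals $\nabla^-_X$ applied to the same $\su(3)$-valued map $\psi^+$. Substituting into the second line of \eqref{SpinorialConnectionsTrivialization} gives the claimed expression directly. For completeness I would also record how \eqref{SpinorialConnectionsTrivialization} is obtained from the general lift formula using \eqref{DeformedCliffordMultiplication}. On $S^+$,
\[
V_j\cdot[V_k\cdot\psi^+] = V_j\dok(v_k\ok\psi^+) = -(v_k\ok\psi^+)\ok v_j ,
\]
because $\alpha^{-1}V_j=v_j$. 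On $S^-$,
\[
V_j\cdot[V_k\cdot\psi^-] = (\alpha^{-1}V_j)\ok\bigl(-\psi^-\ok v_k\bigr) = -\,v_j\ok(\psi^-\ok v_k).
\]
This reproduces the minus signs and the factor $\tfrac14$.

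\emph{The formula for $\nabla^3$.} This is the only case that needs a computation. Take $\psi^+$ and set $Y=({\mathcal I}^+)^{-1}\psi^+ = \alpha(\psi^+)$, regarded as a map $M\to\su(3)$. Expanding $\psi^+=\sum_j\langle\psi^+,v_j\rangle v_j$ gives
\[
Y=\sum_j \langle\psi^+,v_j\rangle\,\alpha(v_j)=\sum_j Y^jV_j, \qquad Y^j=\langle\psi^+,v_j\rangle .
\]
Now apply \eqref{VectorialConnectionTrivialization}:
\[
\nabla_XY=\sum_j(\partial_XY^j)V_j+\sum_{j,k}g(\nabla_XV_j,V_k)\,Y^jV_k ,
\]
where the diagonal terms vanish by antisymmetry, so the restriction $j\neq k$ can be dropped. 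Applying ${\mathcal I}^+=\alpha^{-1}$ sends each $V_k$ to $v_k$, since $v_k$ is constant. The first sum becomes $\sum_j\partial_X\langle\psi^+,v_j\rangle\,v_j=\partial_X\psi^+$, because the basis $v_j$ is constant. The second sum becomes $\sum_{j,k}g(\nabla_XV_j,V_k)\langle\psi^+,v_j\rangle v_k$, which is the claimed formula.

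\emph{Main obstacle.} The only subtle point is bookkeeping: one must consistently distinguish $V_j$, which is $x$-dependent through $\alpha$, from the constant $v_j$. The components $Y^j$ must be taken relative to $V_j$, so that the derivative of $\alpha$ is absorbed entirely into the Christoffel-type coefficients $g(\nabla_XV_j,V_k)$. With that convention, no further identities of the Okubo product are needed.
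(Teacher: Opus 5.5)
Your proposal is correct and follows essentially the same route as the paper. In both, $\nabla^1$ and $\nabla^2$ are read off from \eqref{SpinorialConnectionsTrivialization}, using that ${\mathcal I}^+\circ({\mathcal I}^-)^{-1}$ is the identity on $M\times\su(3)$. In both, $\nabla^3$ is computed as $\alpha^{-1}\circ\nabla_X\circ\alpha$ via \eqref{VectorialConnectionTrivialization}, with components $[\alpha(\psi^+)]^j=\langle\psi^+,v_j\rangle$. Your extra remarks are also correct and fill in details the paper leaves implicit: re-deriving the spinor-lift formulas from \eqref{DeformedCliffordMultiplication}, and noting that the diagonal terms vanish so the restriction $j\neq k$ can be dropped.
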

\begin{proof}
Since $\nabla^1 = \nabla^+$, the first identity in \eqref{ThreeConnections} follows directly from \eqref{SpinorialConnectionsTrivialization}. For the second identity, we only need to note that the isometry ${\mathcal I}^- \circ ({\mathcal I}^+ )^{-1}$ between $S^+$ and $S^-$, as defined in \eqref{DefinitionIsometry2}, actually acts trivially on $M \times \su(3)$. So the explicit formula for $\nabla^2_X$ coincides with the formula for $\nabla^-_X$ given in \eqref{SpinorialConnectionsTrivialization}.  For the case of $\nabla^3$, first note that the isometry ${\mathcal I}^+$ defined in \eqref{DefinitionIsometry1} transforms vectors in $\su(3)$ as $v \mapsto \alpha^{-1} (v)$. Thus, regarding the spinor $\psi^+$ as a map $M \rightarrow \su(3)$, the covariant derivative $\nabla^3_X = {\mathcal I}^+ \circ  \nabla_X \circ ({\mathcal I}^+ )^{-1}$ has the same formula as the covariant derivative $\nabla_X^\alpha  = \alpha^{-1} \circ \nabla_X \circ \alpha$ acting on such maps. So now we can apply expression \eqref{VectorialConnectionTrivialization} for $\nabla$. After observing that
\[
[\alpha (Y)]^j \; = \; g( \alpha(Y), V_j) \; = \; \langle Y , v_j^\LL\rangle \  , 
\]
expression \eqref{VectorialConnectionTrivialization} and definition \eqref{DefinitionVj} imply that
\begin{align}
\nabla^\alpha_X  Y \; &= \;  \alpha^{-1} \big\{ \, \nabla_X [\,\alpha( Y)\, ] \, \big\}   \linebr
&= \;  \alpha^{-1} \big\{ \, \sum\nolimits_j \big[  \partial_X (\alpha\, Y)^j  \big]\, V_j \, + \, \sum\nolimits_{j, k} \,  g (\nabla_X V_j, V_k) \,  (\alpha \, Y)^j  \, V_k   \big\}    \nonumber   \linebr
&= \;   \sum\nolimits_j \big[  \partial_X \langle Y , v_j^\LL \rangle  \big] \, v_j^\LL \, + \, \sum\nolimits_{j, k} \,  g (\nabla_X V_j, V_k) \,   \langle Y , v_j^\LL \rangle  \, v_k^\LL   \nonumber   \ .
\end{align}
But the inner product $\langle Y , v_j^\LL \rangle$ with the bi-invariant metric on $TM$ is just $ \langle Y , v_j \rangle$ when we regard $Y$ as a map $M \rightarrow \su(3)$. Moreover, the left-invariant vector field $v_j^\LL$ translates as the map on $M$ with constant value $v_j \in \su(3)$. So the formula for $\nabla^\alpha_X$ regarded as an operator on the space of maps $Y: M \rightarrow \su(3)$ is just 
\begin{align}
\label{ExplicitNablaAlpha}
\nabla^\alpha_X  Y \; &= \;  \sum\nolimits_j \big[  \partial_X \langle Y , v_j \rangle  \big] \, v_j \, + \, \sum\nolimits_{j, k} \,  g (\nabla_X V_j, V_k) \,   \langle Y , v_j \rangle  \, v_k   \linebr
&= \;  \partial_X  Y \; + \; \sum\nolimits_{j, k} \,  g (\nabla_X V_j, V_k) \,   \langle Y , v_j \rangle  \, v_k   \nonumber   \ .
\end{align}
Using this general expression for the particular map $\psi^+: M \rightarrow \su(3)$, we obtain the formula for $\nabla^3$ given in the proposition. 
\end{proof}

\subsection{Relations among the connections}

The three connections given in formulas \eqref{ThreeConnections} can be written more succintly.
Given a $\langle , \rangle$-orthogonal endomorphism $A \in \mathfrak{so}( \su(3))$, define linear maps $\rho^r_A: \su(3) \rightarrow \su(3)$ by
\begin{align}
\rho^1_A \, (\psi)  \: &= \:  - \frac{1}{4} \,  \sum\nolimits_j  \, [ ( A\, v_j) \ok \psi ] \ok v_j    \label{rho1} \linebr
\rho^2_A \, (\psi ) \: &= \:  - \frac{1}{4} \, \sum\nolimits_j  \, v_j \ok [\psi \ok ( A\, v_j) ]      \label{rho2} \linebr
\rho^3_A \, (\psi)  \: &= \:   \sum\nolimits_j  \, \langle v_j, \psi \rangle \,  A\, v_j  \: = \: A \, \psi \ , \label{rho3}
\end{align}
where $\{v_j\}$ is a  $\langle , \rangle$-orthonormal basis of $\su(3)$. Then formulas \eqref{ThreeConnections} for the connections $\nabla^r$ on $S^+$ become just
\begin{equation}
\label{CondensedConnections}
\nabla^r_X \, \psi  \: = \:  \partial_X \psi \; +\;  \rho^r_{A_X} (\psi)    \ ,
\end{equation}
where $A_X : M \rightarrow  \mathfrak{so}( \su(3))$ is the map defined by
\begin{equation}
\label{EndomorphismConnection}
 (A_X)_x (v_j)  \: =\:  \sum\nolimits_k \, g_x(\nabla_{X} V_j , V_k) \, v_k \ 
\end{equation}
 for all $x \in M$. Since the connection $\nabla^1$ was defined as the usual spinorial lift of $\nabla$ to $S^+$, the map $A \mapsto \rho^1_A $ is one of the spin representations of $\mathfrak{so}( \su(3)) \simeq \mathfrak{so}(8)$. This can also be checked directly using relation \eqref{TraditionalCliffordMultiplication} between the deformed Okubo product and the action of gamma matrices. The connection $\nabla^2$ was defined as the image of the spinorial lift of $\nabla$ to $S^-$ by the isometry $S^- \rightarrow S^+$ in \eqref{DefinitionIsometry2}. But this isometry acts trivially on $M \times \su(3)$, as the underlying bundle of $S^\pm$. So the map $A \mapsto \rho^2_A $ is the second spin representations of $\mathfrak{so}(8)$. The map $A \mapsto \rho^3_A $ is just the defining vectorial representation of $\mathfrak{so}(8)$. Thus, the three connections $\nabla^r$ are the images by the three inequivalent representations of $\mathfrak{so}(8)$ of the same endomorphism $A_X \in \mathfrak{so}( \su(3))$ determined by the metric connection $\nabla$.
 
The following algebraic identity says that the three representations of $\mathfrak{so}(8)$ have the same restriction to the subalgebra $\su(3) \subset \mathfrak{so}(8)$ determined by $\mathrm{ad}$-transformations. It is a well-known result proved by Hitchin in \cite{Hitchin} through abstract highest weight calculations. Here we can check it directly using the properties of the Okubo product.
\begin{lemma}[\!\!\cite{Hitchin}]
\label{Restriction3Reps}
Given a vector $u \in \su(3)$, the endomorphism $\mathrm{ad}_u$ satisfies
\[
\rho^1_{\mathrm{ad}_u} \: =\: \rho^2_{\mathrm{ad}_u}  \: =\: \rho^3_{\mathrm{ad}_u}  \: =\: \mathrm{ad}_u
\]
 as linear maps $\su(3) \rightarrow \su(3)$.
\end{lemma}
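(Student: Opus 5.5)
The plan is to verify each of the three equalities directly from the definitions \eqref{rho1}--\eqref{rho3}, using the identities of Proposition \ref{PropositionOkuboIdentities}. The case $\rho^3_{\mathrm{ad}_u} = \mathrm{ad}_u$ is immediate from \eqref{rho3}, since $\rho^3_A = A$ for every $A \in \mathfrak{so}(\su(3))$. Note also that $\mathrm{ad}_u$ is indeed $\langle , \rangle$-skew, by Ad-invariance of \eqref{AdInvariantProduct}, so $\rho^r_{\mathrm{ad}_u}$ makes sense.

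For $\rho^1$, set $A v_j = [u, v_j] = -[v_j,u]$ in \eqref{rho1}. This gives
\[
\rho^1_{\mathrm{ad}_u}(\psi) \; = \; \tfrac{1}{4}\sum\nolimits_j \big([v_j,u]\ok\psi\big)\ok v_j \ .
\]
The first identity \eqref{ImportantIdentityOkubo1}, with $w=\psi$, evaluates the sum as $4[u,\psi]$. Hence $\rho^1_{\mathrm{ad}_u}(\psi) = [u,\psi] = \mathrm{ad}_u \psi$.

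For $\rho^2$, substituting into \eqref{rho2} gives
\[
\rho^2_{\mathrm{ad}_u}(\psi) \; = \; -\tfrac14\sum\nolimits_j v_j\ok\big(\psi\ok[u,v_j]\big) \; = \; \tfrac14\sum\nolimits_j v_j\ok\big(\psi\ok[v_j,u]\big) \ .
\]
The second expression in \eqref{ImportantIdentityOkubo1}, again with $w=\psi$, equals $4[u,\psi]$, so $\rho^2_{\mathrm{ad}_u}(\psi) = [u,\psi]$ as well.

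The only real obstacle is getting the signs and the argument placement right when matching with the proposition. Each sum in \eqref{ImportantIdentityOkubo1} must be read with the bracket $[v_j,u]$ in exactly the slot where $A v_j$ sits in \eqref{rho1} and \eqref{rho2}. This check matters because the superficially similar identity \eqref{ImportantIdentityOkubo2} carries the extra term $-\tfrac{3}{a}\, u\ok w$, which would spoil the equality if the wrong form were invoked. Once the correct identity is applied, no further computation is needed, since Proposition \ref{PropositionOkuboIdentities} is assumed.

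As an alternative consistency check, independent of Proposition \ref{PropositionOkuboIdentities}: $\rho^1$ and $\rho^2$ are Lie algebra representations of $\mathfrak{so}(8)$. Ad-equivariance \eqref{AdInvariantOkubo} shows that $\rho^r_{\mathrm{ad}_u}$ commutes with the adjoint action, so it is an $\su(3)$-equivariant map. Combined with Schur-type arguments, this only pins it down up to the two-dimensional space of equivariant maps, so the explicit identities above remain the main route.
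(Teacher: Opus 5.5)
Your proof is correct and is essentially the paper's own argument. The paper likewise substitutes $A v_j = [u,v_j]$ into \eqref{rho1}, evaluates the sum with the first identity in \eqref{ImportantIdentityOkubo1}, and writes out only the $\rho^1$ case; your $\rho^2$ computation, using the second expression in \eqref{ImportantIdentityOkubo1} with $[u,v_j] = -[v_j,u]$ placed in the correct slot, is exactly the omitted analogous step.
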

For example, using one of the identities \eqref{ImportantIdentityOkubo1} of the $\ok$-product, we get
\begin{equation}
4 \, \rho^1_{\mathrm{ad}_u} (\psi^+) \: = \: -  \sum\nolimits_{j}  \, ([u, v_j] \ok \psi^+) \ok v_j  \: = \:   \, 4\,   [u, \psi^+]    \nonumber  \ .
\end{equation}

When $g$ is a bi-invariant metric on $M$, there is a famous family of $g$-compatible connections on $TM$ introduced by Cartan and Schouten. They depend on a real parameter $s$ and include the Levi-Civita connection. Denoting $u^\LL$ and $v^\LL$ the left-invariant extensions of $u, v \in \su(3)$, these connections are determined by $\nabla_{u^\LL} v^\LL = s \, [u^\LL,  v^\LL ] =  s \, [u,  v]^\LL$. The previous lemma then has the following consequence:
\begin{corollary}
\label{CoincidenceConnectionsBiinvariant}
When $g$ is bi-invariant and the connection $\nabla$ on $TM$ belongs to the Cartan-Schouten family, the three connections $\nabla^r$ induced on $S^+$ coincide.
\end{corollary}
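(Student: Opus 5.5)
Since every $\nabla^r$ has the condensed form \eqref{CondensedConnections}, $\nabla^r_X \psi = \partial_X\psi + \rho^r_{A_X}(\psi)$, the plan is to show that the endomorphism $A_X$ of \eqref{EndomorphismConnection} is of the form $\mathrm{ad}_u$ for some $u\in\su(3)$ depending on $X$. Once that is known, Lemma \ref{Restriction3Reps} gives $\rho^1_{A_X}=\rho^2_{A_X}=\rho^3_{A_X}=\mathrm{ad}_u$, and the three connections coincide. Because each $\nabla^r$ is $C^\infty(M)$-linear in $X$, it is enough to check this for $X$ a left-invariant field $w^\LL$, $w\in\su(3)$; the left-invariant fields span $TM$ pointwise.

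First I would normalise the data. If $g$ is bi-invariant, it is a constant positive multiple of $\langle\,,\rangle$: Ad-invariant inner products on the simple algebra $\su(3)$ are unique up to scale. So $g=c^{-2}\langle\,,\rangle$ with $c>0$ constant. Then \eqref{DefinitionAlpha} gives $\alpha=c\cdot\mathrm{id}$, and \eqref{DefinitionVj} gives $V_j=c\,v_j^\LL$, a left-invariant $g$-orthonormal frame.

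Next I would compute $A_X$. By the Cartan--Schouten rule, $\nabla_{w^\LL}V_j = c\, s\,[w,v_j]^\LL$. Hence
\[
g(\nabla_{w^\LL}V_j,V_k)=c^2 s\, c^{-2}\langle [w,v_j],v_k\rangle = s\,\langle [w,v_j],v_k\rangle ,
\]
which is a constant function on $M$. Substituting into \eqref{EndomorphismConnection},
\[
A_{w^\LL}(v_j)=\sum\nolimits_k s\langle[w,v_j],v_k\rangle v_k = s\,[w,v_j],
\]
so $A_{w^\LL}=\mathrm{ad}_{sw}$. Lemma \ref{Restriction3Reps} then shows that all three $\rho^r_{A_{w^\LL}}$ equal $s\,\mathrm{ad}_w$. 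Extending by $C^\infty$-linearity in $X$ gives $\nabla^1=\nabla^2=\nabla^3$ on $S^+$.

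The only step needing care is the normalisation: bi-invariance forces $g$ to be a constant multiple of $\langle\,,\rangle$, and the scale $c$ cancels in $g(\nabla V_j,V_k)$. Bi-invariance also has to mean invariance with respect to the group structure used in the left trivialization, which is the standing convention here. Everything else is a direct substitution.
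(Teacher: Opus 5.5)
Your proof is correct and follows the paper's argument essentially line by line. Bi-invariance makes $\alpha$ a constant scalar (your $c$ is the paper's $\lambda^{-1}$), so $g(\nabla_{u^\LL}V_j,V_k)=s\,\langle[u,v_j],v_k\rangle$ and $A_{u^\LL}=s\,\mathrm{ad}_u$; Lemma \ref{Restriction3Reps} together with \eqref{CondensedConnections} and $C^\infty$-linearity in $X$ then finishes it, exactly as in the paper.
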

\begin{proof}
When $g$ is bi-invariant, $g(X,Y) = \lambda^2 \langle X , Y \rangle$ for some fixed $\lambda \in \mathbb{R}^+$. Then the automorphism $\alpha$ in \eqref{DefinitionAlpha} is simply $\alpha (v) = \lambda^{-1} v$. The trivialization vector fields in \eqref{DefinitionVj} are $V_j = \lambda^{-1} v_j^\LL$. So for $\nabla$ in the Cartan-Schouten line we have 
\[
g (\nabla_{u^\LL} V_j, V_k) \:= \:  \langle \nabla_{u^\LL} v_j^\LL , v_k^\LL \rangle \: = \:  s\, \langle [u, v_j] , v_k \rangle  \ .
\] 
Thus, \eqref{EndomorphismConnection} is the constant endomorphism $A_{u^\LL} = s\, \mathrm{ad}_u$. Lemma \ref{Restriction3Reps} and formulas \eqref{CondensedConnections} for the connections then guarantee that $\nabla^1_{u^\LL} = \nabla^2_{u^\LL} = \nabla^3_{u^\LL}$. Since this identity holds for any left-invariant $u^\LL$, it will hold for a general vector field.
\end{proof}
\begin{remark}
Recall that the torsion of a metric connection splits into a vectorial part, a totally skew-symmetric part, and component of mixed symmetry. Only the first two are seen by the associated classical Dirac operator \cite{Agricola}. The Cartan-Schouten connections are the simplest among the much studied connections with purely skew torsion \cite{FI, Agricola2, AgricolaF, Ferreira}. 
\end{remark}

The three spin representations of $\mathfrak{so}(8)$ satisfy an additional algebraic identities:
\begin{lemma}
\label{Sum3Reps}
Given an endomorphism $A \in \mathfrak{so}( \su(3))$, the representations $\rho_A^r$ satisfy
\[
\rho^1_{A}(\psi) \: +\: \rho^2_{A}(\psi) \: +\: \rho^3_{A}(\psi) \: = \:  \mathrm{ad}_{u_A} (\psi)
\]
for all $\psi$, where $u_A \, :=\, a^2 \sum\nolimits_j [v_j, A v_j ]$ is a fixed vector in $\su(3)$.
\end{lemma}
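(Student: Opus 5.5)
The plan is to exploit linearity in $A$ and reduce to simple bivectors, where the associator formula \eqref{AssociatorOkubo} does all the work. Both sides of the claimed identity are linear in $A \in \mathfrak{so}(\su(3))$: the maps $\rho^r_A$ are linear in $A$ by \eqref{rho1}--\eqref{rho3}, and $u_A = a^2\sum_j [v_j, A v_j]$ is linear in $A$. Since $\mathfrak{so}(\su(3))$ is spanned by the elementary skew endomorphisms
\[
A_{u,w}(v) \; := \; \langle u, v\rangle\, w \; - \; \langle w, v\rangle\, u , \qquad u, w \in \su(3),
\]
it suffices to prove the identity for $A = A_{u,w}$. This avoids any representation-theoretic decomposition of $\mathfrak{so}(8)$; a Schur argument on $\mathfrak{so}(8) = \su(3) \oplus (\text{20-dim complement})$ would not suffice anyway, because $\mathrm{End}(\su(3))$ also contains the $10 \oplus \overline{10}$ pieces.

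First I compute each term for $A = A_{u,w}$, using $\sum_j \langle u, v_j\rangle v_j = u$ in an orthonormal basis:
\[
\rho^1_A(\psi) = -\tfrac14\big[(w\ok\psi)\ok u - (u\ok\psi)\ok w\big], \qquad
\rho^2_A(\psi) = -\tfrac14\big[u\ok(\psi\ok w) - w\ok(\psi\ok u)\big],
\]
\[
\rho^3_A(\psi) = \langle u,\psi\rangle\, w - \langle w,\psi\rangle\, u ,
\qquad
u_A = a^2\big([u,w] - [w,u]\big) = 2a^2[u,w].
\]

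Next I add $\rho^1_A$ and $\rho^2_A$, regrouping into two associators:
\[
\rho^1_A(\psi) + \rho^2_A(\psi) = -\tfrac14\Big[\big((w\ok\psi)\ok u - w\ok(\psi\ok u)\big) - \big((u\ok\psi)\ok w - u\ok(\psi\ok w)\big)\Big].
\]
Applying \eqref{AssociatorOkubo} to each bracket gives
\[
(w\ok\psi)\ok u - w\ok(\psi\ok u) = 4a^2[[w,u],\psi] - 2\langle w,\psi\rangle u + 2\langle \psi,u\rangle w ,
\]
and the same expression with $u \leftrightarrow w$ for the second bracket. Taking the difference, the commutator terms add (since $[w,u] = -[u,w]$) and the inner-product terms double. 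The result is
\[
\rho^1_A(\psi) + \rho^2_A(\psi) = 2a^2[[u,w],\psi] + \langle w,\psi\rangle u - \langle u,\psi\rangle w .
\]
Adding $\rho^3_A(\psi)$ cancels the last two terms exactly, leaving $2a^2[[u,w],\psi] = \mathrm{ad}_{u_A}(\psi)$, which is the claim.

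As a consistency check, for $A = \mathrm{ad}_u$ Lemma \ref{Restriction3Reps} gives a left-hand side of $3\,\mathrm{ad}_u$. The su(3) Casimir in the normalization \eqref{AdInvariantProduct} should give $u_A = 3u$.

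The only step requiring real care is the bookkeeping of signs and of the argument order when matching the expressions to the associator $(x\ok y)\ok z - x\ok(y\ok z)$. In particular one must use $x = w$, $y = \psi$, $z = u$ (and then $u \leftrightarrow w$), and apply the symmetry of $\langle\,,\rangle$. Everything else is direct substitution into \eqref{AssociatorOkubo}, which is assumed from the appendix.
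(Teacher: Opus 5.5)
Your proof is correct and is essentially the paper's argument: both rest entirely on the associator formula \eqref{AssociatorOkubo}, and your signs and argument orders check out. The only difference is bookkeeping. The paper keeps a general $A$ and uses its skew-symmetry to rewrite $\rho^2_A(\psi) = \tfrac14\sum_j (Av_j)\ok(\psi\ok v_j)$, so that a single associator with arguments $(Av_j,\psi,v_j)$ appears; you instead reduce by linearity to the elementary skew maps $A_{u,w}$, where skew-symmetry is built in, and apply the associator twice.
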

\begin{proof}
Using that $A$ is antisymmetric with respect to $\langle , \rangle$, rewrite \eqref{rho2} as
\[
\rho^2_A \, (\psi ) \: = \:  - \frac{1}{4} \, \sum\nolimits_{j,k}  \,  \langle v_k,  A\, v_j \rangle \,  v_j \ok [\psi \ok v_k ]   \: = \:   \frac{1}{4} \, \sum\nolimits_j  \,   (A\, v_j) \ok [\psi \ok v_j ]  \ .
\]
Then the identity in the lemma follows directly from formula \eqref{AssociatorOkubo} for the associator of the $\ok$-product after substituting $u= A v_j$, $v= \psi$ and $w= v_j$.
\end{proof}
\begin{lemma}
\label{Compatibility3Reps}
Given an endomorphism $A \in \mathfrak{so}( \su(3))$, the maps $\rho_A^r: \su(3) \rightarrow \su(3)$ satisfy
\begin{equation}
\label{CyclicIdentities}
\rho^{\sigma(1)}_{A}(u \ok v)  \: = \: (\rho^{\sigma(2)}_{A} \, u) \ok v \: +\:  u\ok (\rho^{\sigma(3)}_{A}\, v)  
\end{equation}
for all $u, v \in \su(3)$, where $\sigma$ denotes any cyclic permutation of $(1, 2, 3)$.
\end{lemma}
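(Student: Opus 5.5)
The plan is to obtain \eqref{CyclicIdentities} as the algebraic, pointwise content of the compatibility identities in Proposition \ref{PropDokCompatibleConnections}, specialised to the bi-invariant case. I would not attack the Okubo associator directly. Take $g = \langle , \rangle$, so that $\alpha = 1$, $V_j = v_j^\LL$, and the bundle products \eqref{DeformedCliffordMultiplication} reduce to
\[
v \dok \psi^+ = v \ok \psi^+ \ , \qquad v \dok \psi^- = - \, \psi^- \ok v \ , \qquad \psi^+ \dok \psi^- = \psi^+ \ok \psi^- \ .
\]
Given a constant $A \in \mathfrak{so}(\su(3))$ and a fixed $\langle,\rangle$-unit $e \in \su(3)$, I would define a $g$-compatible connection on $TM = M \times \su(3)$ by $\nabla_X Y = \partial_X Y + \langle X, e\rangle \, A Y$. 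At a point, and for $X = e^\LL$, the endomorphism \eqref{EndomorphismConnection} is then exactly $A_X = A$. Since only the value of $A_X$ at one point matters, any $A$ can be realised this way.

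By \eqref{VectorialConnectionTrivialization}, \eqref{SpinorialConnectionsTrivialization} and \eqref{CondensedConnections}, the three relevant connections in this trivialisation read
\[
\nabla_X Y = \partial_X Y + \rho^3_{A_X} Y \ , \qquad \nabla^+_X \psi^+ = \partial_X \psi^+ + \rho^1_{A_X}\psi^+ \ , \qquad \nabla^-_X \psi^- = \partial_X \psi^- + \rho^2_{A_X}\psi^- \ .
\]
The last formula uses the fact that the $S^-$ formula in \eqref{SpinorialConnectionsTrivialization} is literally $\rho^2$. Because $\ok$ is a constant bilinear product on $\su(3)$, the derivative $\partial_X$ obeys the Leibniz rule on each product, so the $\partial$-terms cancel in every compatibility identity. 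What survives are purely algebraic identities at the chosen point, valid for all $u, v \in \su(3)$, which I would read off one at a time:
\begin{itemize}
\item From \eqref{OldCompatibilitiesConnection} applied to $Y \dok \psi^+ = Y \ok \psi^+$:
\[
\rho^2_A(u \ok v) = (\rho^3_A u)\ok v + u \ok (\rho^1_A v) \ ,
\]
which is the cyclic permutation $\sigma = (2,3,1)$.
\item From \eqref{NewCompatibilityConnection}:
\[
\rho^3_A(u\ok v) = (\rho^1_A u)\ok v + u\ok(\rho^2_A v) \ ,
\]
which is $\sigma = (3,1,2)$.
\item From \eqref{OldCompatibilitiesConnection} applied to $Y \dok \psi^- = -\,\psi^-\ok Y$, after cancelling the overall minus sign and renaming $\psi^- = u$, $Y = v$:
\[
\rho^1_A(u\ok v) = (\rho^2_A u)\ok v + u \ok (\rho^3_A v) \ ,
\]
which is $\sigma = (1,2,3)$.
\end{itemize}

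The main point needing care is the first relations \eqref{OldCompatibilitiesConnection}, which the paper calls "standard". I would justify them from the defining property of the spinorial lift: Clifford multiplication is parallel for the lifted connection together with $\nabla$ on $TM$. In our explicit model this is the $\mathrm{Spin}(8)$-equivariance of $\cdot$ in \eqref{TraditionalCliffordMultiplication}. \eqref{NewCompatibilityConnection} then follows as in the proof already given.

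A second point to check is the signs: that the $S^-$ part of \eqref{TraditionalCliffordMultiplication}, with its minus sign, produces exactly $\rho^2$ and not $-\rho^2$ or a transpose. If any doubt remains, I would confirm it by a direct computation. Using the antisymmetry of $A$, $\rho^2_A$ can be rewritten as in the proof of Lemma \ref{Sum3Reps}, and then \eqref{CliffordOkubo} and \eqref{AssociatorOkubo} can be used to verify one of the three identities independently. As a consistency check, the case $A = \mathrm{ad}_w$ reduces via Lemma \ref{Restriction3Reps} to the Ad-equivariance \eqref{AdInvariantOkubo}.
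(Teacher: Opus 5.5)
Your proof is correct and is the paper's own argument. Both specialise to $\alpha = 1$, write $\nabla = \partial + \rho^3_{A_X}$, $\nabla^+ = \partial + \rho^1_{A_X}$ and $\nabla^- = \partial + \rho^2_{A_X}$, read off the algebraic content of the three compatibility relations of Proposition \ref{PropDokCompatibleConnections}, and use that every $A \in \mathfrak{so}(\su(3))$ arises as some $A_X$. Your explicit connection $\nabla_X Y = \partial_X Y + \langle X, e\rangle A Y$, your assignment of each compatibility relation to a specific cyclic permutation, and your sign check for the $S^-$ product just spell out what the paper states in one line.
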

\begin{proof}
Instead of an algebraic proof, we derive these identities from proposition \ref{PropDokCompatibleConnections}. Consider the case where $g$ is bi-invariant, $\alpha$ is the identity, and regard all vectors fields and chiral spinors as simple maps $M \rightarrow \su(3)$. Below \eqref{EndomorphismConnection} we have identified $\nabla^1 = \nabla^+$ as the spinorial lift of $\nabla$ to $S^+$, identified $\nabla^2 = \nabla^-$ as the spinorial lift of $\nabla$ to $S^-$, and identified $\nabla^3 = \nabla$. Thus, applying formula \eqref{CondensedConnections} for the connections $\nabla^r$ in the case of constant maps $M \rightarrow \su(3)$, we get that $\nabla^+_X \psi^+ = \rho^1_{A_X} \psi^+$, that $\nabla^-_X \psi^- = \rho^2_{A_X} \psi^-$, and that $\nabla_X Y = \rho^3_{A_X} Y$. 
The desired identities \eqref{CyclicIdentities} are then a direct consequence of the three compatibility relations in proposition \ref{PropDokCompatibleConnections}, after using definition \eqref{DeformedCliffordMultiplication} of $\dok$ in the case $\alpha =1$, and after using that the range of the transformations $A_X$ in \eqref{EndomorphismConnection} is the whole $\mathfrak{so}( \su(3))$, as $X$ and the metric connection $\nabla$ vary.
\end{proof}
Finally, note that formula \eqref{CondensedConnections} can be extended to construct new connections on $S^+$ through linear combinations with real coefficients,
\begin{equation}
\label{LinearCombinationsConnections}
\nabla^{t_1, t_2, t_3}_X  \, \psi  \: = \:  \partial_X \psi \; +\;  \sum\nolimits_{r=1}^3 \,  t_r\: \rho^r_{A_X} (\psi) \ .
\end{equation}
All these connections are compatible with the spinor inner product.

\section{Rotated connections on $TM$ and their spinorial lifts}
\label{Section3ConnectionsSpinorialLifts}

\subsection{New connections on $TM$ and $S^-$}

As was done in section \ref{ThreeNaturalConnections} in the case of $S^+$, we can use the triality isometries ${\mathcal I}^\pm : TM \rightarrow S^\pm$ of \eqref{DefinitionIsometry1} to transport the connections $\nabla$ on $TM$ and $\nabla^\pm$ on $S^\pm$, and obtain three distinct connections on $S^-$ and $TM$. For any chiral spinor $\psi^-$ and any vector fields $X, Y$, the connections $\nabla^r$ are defined on the respective vector bundles by
\begin{align}
\label{DefinitionThreeConnections2}
\nabla^1_X  \psi^- \ &:= \   \nabla^-_X  \psi^-   \linebr
\nabla^2_X  \psi^- \ &:= \   {\mathcal I}^- \circ  \nabla_X \circ ({\mathcal I}^- )^{-1}\, (\psi^-)  \nonumber \linebr
\nabla^3_X  \psi^- \ &:= \  {\mathcal I}^- \circ ({\mathcal I}^+ )^{-1} \circ  \nabla^+_X \circ {\mathcal I}^+ \circ ({\mathcal I}^- )^{-1}\, (\psi^-)  \nonumber  \displaybreak[0] \\[1.2em]
\label{DefinitionThreeConnections3}
\nabla^1_X  Y \ &:= \  \nabla_X Y    \linebr
\nabla^2_X  Y \ &:= \  ({\mathcal I}^+ )^{-1}  \circ  \nabla^+_X \circ {\mathcal I}^+ \, (Y)  \nonumber \linebr
\nabla^3_X  Y \ &:= \  ({\mathcal I}^- )^{-1}  \circ  \nabla^-_X \circ {\mathcal I}^- \, (Y)   \nonumber \ .
\end{align}
When we regard chiral spinors as maps $M \rightarrow \su(3)$ through the trivialization of $S^-$ induced by the trivialization $\{V_j \}$ of $TM$, the analogs of expressions \eqref{CondensedConnections} are
\begin{align}
\label{CondensedConnections2}
\nabla^1_X  \psi^- \ &= \  \partial_X \psi^- \, + \: \rho^2_{A_X} (\psi^-)     \linebr
\nabla^2_X  \psi^- \ &= \  \partial_X \psi^- \, + \: \rho^3_{A_X} (\psi^-)    \nonumber \linebr
\nabla^3_X  \psi^- \ &= \  \partial_X \psi^- \, + \: \rho^1_{A_X} (\psi^-)    \nonumber \ ,
\end{align}
where the linear maps $\rho^r_A: \su(3) \rightarrow \su(3)$ are still given by \eqref{rho1} and the maps $A_X: M \rightarrow \mathrm{End} (\su(3))$ are still given by \eqref{EndomorphismConnection}. The indices of $\nabla^r$ and $\rho^r$ do not match because we chose to denote by $\nabla^r$ the spinorial lift to $S^-$ of the connection $\nabla^r$ on $TM$ (see theorem \ref{SpinorialLiftsVSTrialityTransports}), not the image of that connection under the triality isometries. For example, $A \mapsto \rho^2_A$ is the usual spin representation on $\Delta^-$, as discussed in the paragraph after \eqref{EndomorphismConnection},  so our connection $\nabla^1$ on $S^-$ is the spinorial lift of $\nabla = \nabla^1$ on $TM$.

In the same way, if we use the correspondence $Y \mapsto \sum_j g(Y, V_j)\, v_j$ to regard vector fields as maps $M \rightarrow \su(3)$, the analogs of \eqref{CondensedConnections} for the connections $\nabla^r$ on $TM$ are
\begin{align}
\label{CondensedConnections3}
\nabla^1_X  Y \ &= \  \partial_X Y \, + \: \rho^3_{A_X} (Y)      \linebr
\nabla^2_X  Y \ &= \  \partial_X Y \, + \: \rho^1_{A_X}  (Y)      \nonumber \linebr
\nabla^3_X  Y \ &= \  \partial_X Y \, + \: \rho^2_{A_X}  (Y)      \nonumber \ .
\end{align}
Again, here we named $\nabla^r$ with an index that does not match that of the corresponding $\rho^r$, but will, instead, allows us to follow the traditional convention of using the same symbol for a connection on $TM$ and its spinorial lifts to $S^\pm$. 

The calculations in $S^-$ and $TM$ that take us from definitions \eqref{DefinitionThreeConnections2} and \eqref{DefinitionThreeConnections3} of $\nabla^r$ to the explicit expressions \eqref{CondensedConnections2} and \eqref{CondensedConnections3} are completely analog to the calculations that, in the case of $S^+$, derived \eqref{ThreeConnections} and \eqref{CondensedConnections} from definitions \eqref{DefinitionThreeConnections}.

\subsection{Triality transports versus spinorial lifts}

At this point we have defined three connections $\nabla^r$ on each of the bundles $TM$ and $S^\pm$. A natural question is what is their precise relation through spinorial lifts of connections and through the triality isometries ${\mathcal{I}^\pm}$. This is answered by the next result.

\begin{theorem}
\label{SpinorialLiftsVSTrialityTransports}
The spinorial lifts of the $g$-compatible connection $\nabla^r$ on $TM$ are the $\langle , \rangle$-compatible connections $\nabla^r$ on $S^\pm$, with the definitions \eqref{DefinitionThreeConnections}, \eqref{DefinitionThreeConnections2}, \eqref{DefinitionThreeConnections3}, for $r=1, 2, 3$. Under the triality isometries, the connection $\nabla^r$ on $TM$ is transported instead to $\nabla^{r \mp 1}$ on $S^\pm$, with indices taken modulo 3.
\end{theorem}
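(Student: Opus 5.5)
The plan is to separate the theorem into two claims. The first says that the spinorial lifts to $S^\pm$ of $\nabla^r$ on $TM$ are the connections called $\nabla^r$ on $S^\pm$. The second says that the triality isometries ${\mathcal I}^\pm$ carry $\nabla^r$ on $TM$ to $\nabla^{r\mp1}$ on $S^\pm$. Both claims will be checked in the explicit trivializations, where every connection takes the form $\partial_X + \rho^s_{A}$ for some $s$.

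\textbf{Second claim (transport).} This is pure bookkeeping. Under the correspondence $Y \mapsto \sum_j g(Y,V_j)\,v_j$ used in \eqref{CondensedConnections3}, the map ${\mathcal I}^\pm$ becomes the identity of $M\times\su(3)$. Indeed ${\mathcal I}^\pm(V_j) = \alpha^{-1}\alpha(v_j^\LL) = v_j$. Hence transporting $\nabla^r$ from $TM$ to $S^\pm$ leaves its $\rho$-label unchanged. The labels are $\rho^3,\rho^1,\rho^2$ on $TM$ for $r=1,2,3$, by \eqref{CondensedConnections3}. On $S^+$ the labels are $\rho^1,\rho^2,\rho^3$, by \eqref{CondensedConnections}. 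On $S^-$ they are $\rho^2,\rho^3,\rho^1$, by \eqref{CondensedConnections2}. A direct comparison then shows that $\nabla^r$ on $TM$ goes to $\nabla^{r-1}$ on $S^+$ and to $\nabla^{r+1}$ on $S^-$, with indices mod $3$.

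\textbf{First claim (spinorial lifts).} Write $\nabla^r_X Y = \partial_X Y + \rho^{\sigma(r)}_{A_X} Y$ on $TM$. In the frame $\{V_j\}$, the connection matrix of $\nabla^r$ is $A^r_X := \rho^{\sigma(r)}_{A_X} \in \mathfrak{so}(\su(3))$. This element is skew because each $\rho^s_A$ is $\langle,\rangle$-orthogonal, as established by \eqref{CompatibilityConnectionInnerProduct3}. By \eqref{CondensedConnections} and its $S^-$ analogue, the spinorial lifts of $\nabla^r$ are $\partial_X + \rho^1_{A^r_X}$ on $S^+$ and $\partial_X + \rho^2_{A^r_X}$ on $S^-$. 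So the claim reduces to the algebraic composition identities
\[
\rho^1_{\rho^3_A}=\rho^1_A,\quad \rho^1_{\rho^1_A}=\rho^2_A,\quad \rho^1_{\rho^2_A}=\rho^3_A,
\qquad
\rho^2_{\rho^3_A}=\rho^2_A,\quad \rho^2_{\rho^1_A}=\rho^3_A,\quad \rho^2_{\rho^2_A}=\rho^1_A .
\]
The cases $r=1$ are trivial, since $\rho^3_A=A$.

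\textbf{Main obstacle.} I expect the remaining four identities to be the hard step. I would derive them from Lemma \ref{Compatibility3Reps} rather than by brute force. Set $B=\rho^1_A$. Lemma \ref{Compatibility3Reps} for the cyclic permutation $(2,3,1)$ gives
\[
\rho^2_A(u\ok v)=(\rho^3_A u)\ok v+u\ok(\rho^1_A v) = (Au)\ok v + u\ok (Bv).
\]
So the triple $(\rho^2_A,\, A,\, B)$ satisfies the same Leibniz-type relation for $\ok$ that the triple $(\rho^3_B,\, \rho^1_B,\, \rho^2_B)$ satisfies for $B$, namely $\rho^3_B(u\ok v)=(\rho^1_B u)\ok v + u\ok(\rho^2_B v)$.

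I would then prove a uniqueness statement: if skew maps $C,D$ satisfy $C(u\ok v)=(Du)\ok v+u\ok(Bv)$ for all $u,v$, then $C$ and $D$ are determined by $B$. To see this, subtract two such solutions. The differences satisfy $C'(u\ok v)=(D'u)\ok v$. Using orthogonality \eqref{OrthogonalityOkubo} and the Clifford relation \eqref{CliffordOkubo}, this forces $C'=D'=0$; this is the infinitesimal form of the triality principle. Once $C'=D'=0$, we get $\rho^1_B = A$ and $\rho^3_B=\rho^2_A$, that is $\rho^1_{\rho^1_A}=\rho^3_A$. That matches the required index pattern only up to a relabelling, so I would carefully recheck which cyclic ordering of the lemma matches each lift. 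The remaining identities would follow the same way, or from Lemma \ref{Sum3Reps} combined with $\rho^s_{\mathrm{ad}_u}=\mathrm{ad}_u$ from Lemma \ref{Restriction3Reps}.

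An alternative for this step is more conceptual. Proposition \ref{PropDokCompatibleConnections}, applied to the lifts of $\nabla^r$, already forces the lifts to be compatible with $\dok$. The connections $\nabla^r$ on $S^\pm$ are also compatible with $\dok$, by Lemma \ref{Compatibility3Reps} transported through $\alpha$. The same uniqueness argument then shows that the $\dok$-compatible pair on $(S^+,S^-)$ over a fixed connection on $TM$ is unique, which is exactly the first claim.
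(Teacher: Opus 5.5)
Your reduction of the lift claim to the six identities of Proposition \ref{PropositionTrialityCompositions} is the paper's, and your transport argument (the ${\mathcal I}^\pm$ become the identity in the $\{V_j\}$-trivializations) is equivalent to it. The key step that proves those identities, however, fails as written.

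\textbf{The pairing is wrong.} You compare the case $\sigma=(2,3,1)$ of Lemma \ref{Compatibility3Reps} for $A$, whose right slot carries $B=\rho^1_A$, with the case $\sigma=(3,1,2)$ for $B$, whose right slot carries $\rho^2_B$ rather than $B$. Your uniqueness statement needs the same map in the same slot, so it does not apply. Its output is also false: $\rho^3_B=\rho^2_A$ would mean $\rho^1_A=\rho^2_A$ for every $A$, which fails unless $A\in\mathrm{ad}(\su(3))$. The correct partner is the case $\sigma=\mathrm{id}$ for $B$:
\[
\rho^2_A(u\ok v)=(Au)\ok v+u\ok(Bv), \qquad \rho^1_B(u\ok v)=(\rho^2_B u)\ok v+u\ok(Bv).
\]
This gives $\rho^1_{\rho^1_A}=\rho^2_A$ and $\rho^2_{\rho^1_A}=A=\rho^3_A$. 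Pairing the case $\sigma=(3,1,2)$ for $A$ (right slot $\rho^2_A$) with the case $\sigma=\mathrm{id}$ for $B=\rho^2_A$ gives $\rho^1_{\rho^2_A}=\rho^3_A$ and $\rho^2_{\rho^2_A}=\rho^1_A$.

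\textbf{The uniqueness lemma is only asserted.} It carries the whole argument, and orthogonality plus the Clifford relations do not visibly force $C'=D'=0$ on their own. Here is a proof.
\begin{itemize}
\item Suppose $C'(u\ok v)=(D'u)\ok v$ with $C',D'$ skew. Skewness and the adjunction \eqref{AdjunctionOkubo} give $D'(u\ok v)=u\ok(C'v)$.
\item Combining the two relations, $C'\big((y\ok w)\ok x\big)=(y\ok C'w)\ok x$ for all $x,y,w$.
\item Hence $C'$ commutes with $\rho^1_{\mathrm{ad}_u}=-\tfrac14\sum_j\big([u,v_j]\ok(\,\cdot\,)\big)\ok v_j$, which equals $\mathrm{ad}_u$ by Lemma \ref{Restriction3Reps}.
\item The adjoint representation of $\su(3)$ is absolutely irreducible, so $C'$ is a real scalar, and it is $0$ because it is skew.
\item Then $|(D'u)\ok v|=|D'u|\,|v|$ from \eqref{OrthogonalityOkubo} forces $D'=0$.
\end{itemize}
The left-slot version, which your $\dok$-compatibility variant needs, works the same way using $\rho^2_{\mathrm{ad}_u}$.

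\textbf{Comparison with the paper.} With these repairs your route is a genuine alternative. The paper proves Proposition \ref{PropositionTrialityCompositions} by explicit Okubo algebra: the decomposition $\rho^{1,2}_A=\tfrac12\mathrm{ad}_{u_A}-\tfrac12A\pm\tfrac12N_A$ and the computation of $N_{N_A}$ in Lemmas \ref{LemmaDecompositionRho}--\ref{LemmaNSquared}. You instead use local triality: the Leibniz identities of Lemma \ref{Compatibility3Reps} plus injectivity of the projection onto one slot.
\begin{itemize}
\item Your route is shorter and explains the cyclic index pattern.
\item It inherits the spin-geometric inputs of Lemma \ref{Compatibility3Reps}: Clifford compatibility of spinorial lifts, and surjectivity of $X\mapsto A_X$.
\item The paper's computation is self-contained, and it produces the $N_A$ identities reused in Theorem \ref{PropertiesTrialityRotationConnections}(a).
\end{itemize}
Your fallback through Lemma \ref{Sum3Reps} would also need $u_{\rho^1_A}=u_A$, which is Lemma \ref{LemmaURhoA}.
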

\begin{proof}
The assertion about the transport under the triality isometries is easily proved and almost tautological, due to the definitions of the three connections. For instance, using definition \eqref{DefinitionThreeConnections3}, the image of $\nabla^2$ on $TM$ through the isometry ${\mathcal{I}^-}:TM \rightarrow S^-$ is 
\[ 
{\mathcal I}^- \circ  \nabla^2_X \circ ({\mathcal I}^- )^{-1}\: = \: {\mathcal I}^- \circ  ({\mathcal I}^+ )^{-1}  \circ  \nabla^+_X \circ {\mathcal I}^+  \circ ({\mathcal I}^- )^{-1}  \: = \: \nabla^3_X \ ,
\]
as a connection on $S^-$. The proofs for the other triality transports are similar.

Thus, the main content of the theorem is the assertion about the spinorial lifts. To address it, start by observing that, for every $A \in \mathfrak{so}(\su(3)) \simeq \mathfrak{so}(8)$, the correspondences $A \mapsto \rho^1_A$ and $A \mapsto \rho^2_A$ are, respectively, the spin representations of $\mathfrak{so}(8)$ on the spin spaces $\Delta^+$ and $\Delta^-$. This was justified in the paragraph after \eqref{EndomorphismConnection}. In particular, the spinorial lifts to $S^+$ of the three connections on $TM$, as expressed in \eqref{CondensedConnections3} through the $\rho^r_{A_X}$, are determined by the composed endomorphisms $\rho^1_{\rho^r_{A_X}}$. Similarly, the spinorial lifts to $S^-$ of the three connections on $TM$ are determined by endomorphisms $\rho^2_{\rho^r_{A_X}}$. The assertion of the theorem then follows directly from the algebraic content of proposition \ref{PropositionTrialityCompositions}. 
\end{proof}
\begin{proposition}
\label{PropositionTrialityCompositions}
For every $A \in \mathfrak{so}(\su(3))$, the transformations $\rho^r_A$ are also in $\mathfrak{so}(\su(3))$ and satisfy the composition rules
\begin{align}
\rho^1_{\rho^1_A} \: &= \: \rho^2_A    &   \rho^1_{\rho^2_A} \: &= \: \rho^3_A  &   \rho^1_{\rho^3_A} \: &= \: \rho^1_A     \linebr
\rho^2_{\rho^1_A} \: &= \: \rho^3_A    &   \rho^2_{\rho^2_A} \: &= \: \rho^1_A  &   \rho^2_{\rho^3_A} \: &= \: \rho^2_A   \ .
\end{align}
\end{proposition}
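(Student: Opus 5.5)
The plan is to derive everything from the compatibility identities of Lemma \ref{Compatibility3Reps}, the spin-representation property of $\rho^1,\rho^2$, and the adjunction relations \eqref{AdjunctionOkubo}. Skew-symmetry comes first. For $\rho^3_A = A$ there is nothing to prove. For $\rho^1_A$ I would compute $\langle \rho^1_A\psi,\phi\rangle = -\tfrac14\sum_j\langle ((Av_j)\ok\psi)\ok v_j,\phi\rangle$. Applying \eqref{AdjunctionOkubo} twice moves everything onto $\phi$, giving $-\tfrac14\sum_j\langle \psi, v_j\ok(\phi\ok Av_j)\rangle$ up to the cyclic rearrangements allowed. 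Expanding $Av_j=\sum_k A_{kj}v_k$ and using $A_{kj}=-A_{jk}$ then converts this into $-\langle\psi,\rho^1_A\phi\rangle$. The same manipulation handles $\rho^2_A$. Alternatively, skew-symmetry follows at once from the remark after \eqref{EndomorphismConnection}: $\rho^1,\rho^2$ are spin representations, and these preserve the spinor inner product, as \eqref{CompatibilityConnectionInnerProduct1} shows.

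For the composition rules, the key observation is a uniqueness statement. Let $B\in\mathfrak{so}(\su(3))$ and let $(B_1,B_2,B_3)$ be a triple of skew endomorphisms with $B_3=B$ satisfying the derivation identity $B_1(u\ok v)=(B_2u)\ok v+u\ok(B_3 v)$. Then $B_1$ and $B_2$ are determined by $B$. Lemma \ref{Compatibility3Reps} with the identity permutation says that $(\rho^1_A,\rho^2_A,\rho^3_A)$ is such a triple. Reading this with the cyclic shifts, the triple $(\rho^2_A,\rho^3_A,\rho^1_A)$ is one with third entry $\rho^1_A$, and $(\rho^3_A,\rho^1_A,\rho^2_A)$ is one with third entry $\rho^2_A$.

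On the other hand, applying Lemma \ref{Compatibility3Reps} to $B=\rho^1_A$ shows that $(\rho^1_{B},\rho^2_{B},\rho^3_{B}=\rho^1_A)$ is also a triple. Uniqueness then forces $\rho^1_{\rho^1_A}=\rho^2_A$ and $\rho^2_{\rho^1_A}=\rho^3_A$. In the same way, $B=\rho^2_A$ gives $\rho^1_{\rho^2_A}=\rho^3_A$ and $\rho^2_{\rho^2_A}=\rho^1_A$. Finally, $B=\rho^3_A=A$ gives trivially $\rho^r_{\rho^3_A}=\rho^r_A$.

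The main obstacle is proving the uniqueness claim. Since the conditions are linear, it suffices to show: if $C_1(u\ok v)=(C_2u)\ok v$ for all $u,v$, with $C_1,C_2$ skew, then $C_1=C_2=0$. I would first try $v$ with $\langle v,v\rangle=1$. By the Clifford relation \eqref{CliffordOkubo}, right multiplication $R_v$ satisfies $R_v^2=\pm$ identity-type relations and is orthogonal by \eqref{OrthogonalityOkubo}. The condition then reads $C_1 R_v = R_v C_2$, so $C_2 = R_v^{-1}C_1R_v$ for every unit $v$. Polarizing in $v$, and using that the maps $R_v$ generate the full Clifford action (so the algebra they generate acts irreducibly on $\su(3)\oplus\su(3)$), the pair $(C_2,C_1)$ must commute with the whole Clifford algebra of the form $\mathrm{diag}$. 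That forces it to be a scalar pair, and skewness makes it zero.

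If this Schur-type argument becomes delicate, the fallback is to use the infinitesimal triality principle for $\mathfrak{so}(8)$: triples satisfying the local triality identity are determined by any one component. This is exactly the triality structure of the symmetric composition algebra $(\su(3),\ok)$ cited in \cite{Elduque, OkuboBook}.
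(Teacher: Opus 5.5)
Your proof is correct, but it takes a different route from the paper's. The paper argues by direct computation in appendix \ref{Appendix3ConnectionsSpinorialLifts}:

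\begin{itemize}
\item With $u_A = a^2\sum_j[v_j,Av_j]$ and the skew map $N_A$ of \eqref{DefinitionNA}, Lemma \ref{LemmaDecompositionRho} proves $\rho^{1,2}_A = \tfrac12\mathrm{ad}_{u_A} - \tfrac12 A \pm \tfrac12 N_A$ from the explicit double-product formulas.
\item It then shows $u_{\rho^{1,2}_A}=u_A$ and $N_{\mathrm{ad}_w}=0$. The long step is $N_{N_A}=\mathrm{ad}_{u_A}-3A$, obtained via Cayley--Hamilton and Lemma \ref{IdentitySchur}.
\item After that, the six rules are linear bookkeeping.
\end{itemize}

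You instead use the local triality principle for the symmetric composition algebra $(\su(3),\ok)$. Lemma \ref{Compatibility3Reps} shows that the cyclic shifts of $(\rho^1_A,\rho^2_A,\rho^3_A)$ satisfy the derivation identity. Uniqueness of skew triples with prescribed last entry then forces the rules. This is not circular, since Lemma \ref{Compatibility3Reps} is established before and independently of the proposition. Your route is shorter, avoids all matrix identities, and explains conceptually why the indices must cycle. The cost is that it depends on Lemma \ref{Compatibility3Reps}, which the paper derives geometrically from the compatibility of spinorial lifts with the $\dok$-products, and on the irreducibility of the real $\mathrm{Cl}(8)$-module. The paper's computation is purely algebraic. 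It also yields by-products used later, notably $N_{N_A}=\mathrm{ad}_{u_A}-3A$, which gives the fixed-point criterion $\rho^1_A=\rho^2_A \iff A\in\mathrm{ad}(\su(3))$ in Theorem \ref{PropertiesTrialityRotationConnections}(a).

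Two details in your write-up need repair, though neither is fatal.

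\textbf{The Clifford relation.} $R_v^2$ is not a scalar. The relation you actually need is $L_vR_v=R_vL_v=\langle v,v\rangle\,\mathrm{Id}$, where $L_vx=v\ok x$; this is \eqref{CliffordOkubo} with $u=w=v$. Sandwiching $C_1R_v=R_vC_2$ between two factors $L_v$ gives $C_2L_v=L_vC_1$. Hence $\mathrm{diag}(C_2,C_1)$ commutes with every $\Gamma_v=\bigl(\begin{smallmatrix} 0 & L_v\\ R_v & 0\end{smallmatrix}\bigr)$ on $\su(3)\oplus\su(3)$. These operators satisfy $\Gamma_u\Gamma_w+\Gamma_w\Gamma_u=2\langle u,w\rangle\,\mathrm{Id}$ and generate $\mathrm{Cl}(8)\cong M_{16}(\mathbb{R})$. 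So $C_1=C_2=\lambda\,\mathrm{Id}$, and skewness forces $\lambda=0$. Skewness is genuinely needed here, since $(\lambda\,\mathrm{Id},\lambda\,\mathrm{Id},0)$ satisfies the derivation identity.

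\textbf{The skewness computation.} The two adjunctions give $\langle ((Av_j)\ok\psi)\ok v_j,\phi\rangle=\langle\psi,(v_j\ok\phi)\ok (Av_j)\rangle$. From there, antisymmetry of $A$ does yield $-\langle\psi,\rho^1_A\phi\rangle$. The expression you wrote, $-\tfrac14\sum_j\langle\psi,v_j\ok(\phi\ok Av_j)\rangle$, equals $\langle\psi,\rho^2_A\phi\rangle$ and would not lead there. Your alternative justification via \eqref{CompatibilityConnectionInnerProduct1} is fine.
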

Here, the $\mathfrak{so}(\su(3))$ is defined with respect to the inner product $\langle, \rangle$ on $\su(3)$.  The proof of this proposition is given in appendix \ref{Appendix3ConnectionsSpinorialLifts} after a sequence of auxiliary lemmas.

\subsection{Curvature and torsion of the rotated connections}
\label{SectionCurvatureTorsion}

The curvature of a connection is the tensor $R^\nabla (X, Y) Z =  \nabla_X \nabla_Y Z -  \nabla_Y \nabla_X  Z - \nabla_{[X, Y]}Z$. The curvatures of the triality rotated metric connections $\nabla^2$ and $\nabla^3$ on $TM$ are, in general, different from the curvature of the original $\nabla = \nabla^1$. The purpose of this section is to relate them.
As before, we denote by $\{ v_j \}$ a $\langle , \rangle$-orthonormal basis of $\su(3)$ and by $V_j = \alpha(v_j^\LL)$ the associated $g$-orthonormal frame of $TM$, as in \eqref{DefinitionVj}. 

\begin{proposition} 
\label{CurvatureRotatedConnections}
The curvature tensors of the connections $\nabla^2$ and $\nabla^3$ on $TM$ are related to that of $\nabla = \nabla^1$ by the formulas
\begin{align}
g \big( R^{\nabla^1}(X, Y) V_i \, , V_j \big) \, &= \: \langle \, F_{X,Y} (v_i) , \,  v_j \, \rangle  \: = \: \langle \, \rho^3_{F_{X,Y}} v_i , \,  v_j \, \rangle     \label{ExplicitCurvature}  \linebr
g \big( R^{\nabla^2}(X, Y) V_i \, , V_j \big) \, &= \, - \tfrac{1}{4} \sum\nolimits_{k,l}\ \mathrm{R}^{\nabla}(X, Y, V_k, V_l)\:  \langle \, ( v_l \ok v_i) \ok v_k  ,\,  v_j \, \rangle  \, = \,     \langle \, \rho^1_{F_{X,Y}} v_i , \,  v_j \, \rangle  \nonumber  \linebr
g \big( R^{\nabla^3}(X, Y) V_i  ,\, V_j \big) \, &= \,  - \tfrac{1}{4}  \sum\nolimits_{k,l}  \mathrm{R}^{\nabla}(X, Y, V_k, V_l)\:  \langle \,  v_k \ok (v_i \ok v_l)  ,\,  v_j \, \rangle\, = \,     \langle \, \rho^2_{F_{X,Y}} v_i , \,  v_j \, \rangle  \, , \nonumber
\end{align}
where the representations $\rho^r$ are as in \eqref{rho1}, and  $F_{X, Y}$ is the endomorphism in  $\mathfrak{so} (\su(3))$ defined by $F_{X, Y} (v_k) = \sum_l \mathrm{R}^\nabla(X, Y, V_k, V_l)\, v_l$.
\end{proposition}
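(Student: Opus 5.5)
The plan is to work entirely in the condensed form \eqref{CondensedConnections3}, where each $\nabla^r$ on $TM$ is written as $\partial_X + B^r_X$. Here vector fields are identified with maps $M\to\su(3)$ via $Y\mapsto \sum_j g(Y,V_j)\,v_j$, and $B^1_X=\rho^3_{A_X}$, $B^2_X=\rho^1_{A_X}$, $B^3_X=\rho^2_{A_X}$. Under this identification the frame $V_i$ corresponds to the constant map $v_i$, and $g(\cdot,\cdot)$ corresponds to $\langle\,,\rangle$. So $g(R^{\nabla^r}(X,Y)V_i,V_j)=\langle \Omega^r_{X,Y} v_i, v_j\rangle$, where
\[
\Omega^r_{X,Y} \;=\; \partial_X B^r_Y-\partial_Y B^r_X-B^r_{[X,Y]}+[B^r_X,B^r_Y]
\]
is the usual curvature of a connection written in a trivialization. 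This holds because $\partial$ is flat, so the $\partial\partial$ terms cancel against $\partial_{[X,Y]}$.

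First I treat $r=1$. Here $B^1_X=A_X$, so $\Omega^1_{X,Y}=\partial_XA_Y-\partial_YA_X-A_{[X,Y]}+[A_X,A_Y]$. This is exactly the matrix of $R^\nabla(X,Y)$ in the frame $\{V_j\}$. I check this directly from \eqref{VectorialConnectionTrivialization}, being careful with the transposition convention in \eqref{EndomorphismConnection}: $A_X v_j=\sum_k g(\nabla_XV_j,V_k)v_k$, and the commutator ordering must come out right. It then follows that $\langle \Omega^1 v_i,v_j\rangle=R^\nabla(X,Y,V_i,V_j)=\langle F_{X,Y}v_i,v_j\rangle$, and $F=\rho^3_F$ by \eqref{rho3}. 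This gives the first line.

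For $r=2,3$, the key point is that each $\rho^r$ is linear in $A$, with coefficients that are constants (Okubo products of the fixed basis). Hence $\partial_X\rho^r_{A_Y}=\rho^r_{\partial_XA_Y}$. Moreover, $\rho^1,\rho^2$ are Lie algebra homomorphisms $\mathfrak{so}(\su(3))\to\mathfrak{so}(\su(3))$, being the two spin representations of $\mathfrak{so}(8)$, as justified after \eqref{EndomorphismConnection} and Proposition \ref{PropositionTrialityCompositions}. So $[\rho^r_{A_X},\rho^r_{A_Y}]=\rho^r_{[A_X,A_Y]}$, and therefore $\Omega^2=\rho^1_{\Omega^1}=\rho^1_F$ and $\Omega^3=\rho^2_F$. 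The middle expressions then follow by expanding \eqref{rho1}–\eqref{rho2} with $F v_k=\sum_l R^\nabla(X,Y,V_k,V_l)v_l$. For $\rho^1$ this gives $-\tfrac14\sum_{k,l}R(\ldots,V_k,V_l)(v_l\ok v_i)\ok v_k$. For $\rho^2$ it gives $-\tfrac14\sum_{k,l}R(\ldots,V_k,V_l)\,v_k\ok(v_i\ok v_l)$, after relabelling the summation.

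The main obstacle is justifying the homomorphism property rigorously, rather than just invoking the fact that these are representations. For $\rho^1$ I would argue that $A\mapsto\rho^1_A$ coincides, through \eqref{TraditionalCliffordMultiplication}, with $A\mapsto\frac14\sum A_{jk}\gamma_j\gamma_k$ restricted to $S^+$. That map is the standard spin representation, hence a homomorphism. For $\rho^2$ the same argument applies on $S^-$. A fallback is to use the fact that curvature of spinorial lifts is the spin image of the curvature, together with Theorem \ref{SpinorialLiftsVSTrialityTransports}, since $\nabla^2,\nabla^3$ on $TM$ are transports of $\nabla^\pm$ under the isometries. The remaining step is routine index bookkeeping to confirm that no sign is lost in the orientation of $F$ (i.e.\ $R(X,Y,V_k,V_l)$ versus its transpose).
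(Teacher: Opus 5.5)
Your proof is correct, but it is organised differently from the paper's.

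The paper does not use the condensed formulas for this proof. It starts from the definition $\nabla^2_X=(\mathcal I^+)^{-1}\circ\nabla^+_X\circ\mathcal I^+$, so that $R^{\nabla^2}(X,Y)=(\mathcal I^+)^{-1}\circ R^{\nabla^+}(X,Y)\circ\mathcal I^+$. It then quotes the textbook formula $R^{\nabla^\pm}(X,Y)\psi^\pm=\tfrac14\sum_{k,l}\mathrm{R}^\nabla(X,Y,V_k,V_l)\,V_k\cdot V_l\cdot\psi^\pm$ and inserts the Okubo model $V_k\cdot V_l\cdot\psi^+=-(v_l\ok\psi^+)\ok v_k$. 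So it obtains the Okubo-product expression first, and only afterwards recognises it as $\langle\rho^1_{F_{X,Y}}v_i,v_j\rangle$. The case of $\nabla^3$ goes the same way via $\nabla^-$.

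You instead stay in the global trivialisation, where $\nabla^r=\partial+\rho^{r-1}_{A}$ and $\rho^{r-1}$ is a constant-coefficient Lie algebra homomorphism. The curvature is then $\rho^{r-1}$ applied to the curvature $F$ of $\partial+A$, and the Okubo expressions come from expanding $\rho^1_F$ and $\rho^2_F$.

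The essential input is the same in both arguments. The textbook spinor-curvature formula is itself a consequence of $A\mapsto\tfrac14\sum_{j,k}A_{jk}\gamma_j\gamma_k$ being a homomorphism. Your identification $\rho^1_A\psi^+=\tfrac14\sum_{j,k}\langle Av_j,v_k\rangle\,V_j\cdot V_k\cdot\psi^+$, together with the analogue for $\rho^2$ on $S^-$, is exactly the check needed to make that input rigorous.

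Your route makes the structural reason visible and treats all three values of $r$ uniformly. The condensed formulas are exact here, because $\mathcal I^+$ in the left trivialisation is literally $Y\mapsto\sum_j g(Y,V_j)\,v_j$. The paper's route has the advantage of not depending on \eqref{CondensedConnections3}, whose derivation on $TM$ it only sketches. Your proposed fallback is precisely the paper's argument.
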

\begin{proof}
The first identity is trivial, resulting from the fact that $\rho^3_A = A$ and that $\{ v_j\}$ is a $\langle , \rangle$-orthonormal basis of $\su(3)$.
For the other two identities, observe that definition \eqref{DefinitionThreeConnections3} says that $\nabla^2_X  = (\mathcal{I}^+)^{-1} \circ \nabla_X^+ \circ \mathcal{I}^+$, so the curvatures will also satisfy
\[
R^{\nabla^2}(X, Y)\, Z  \: = \: (\mathcal{I}^+)^{-1} \circ R^{\nabla^+}(X, Y) \circ \mathcal{I}^+ (Z) .
\]
 But the curvatures of the spinorial lifts of $\nabla$ have the standard expression \cite[sec.~II.4]{LM}
\[
R^{\nabla^\pm}(X, Y)\, \psi^\pm \: = \: \tfrac{1}{4} \sum\nolimits_{k, l} \, \mathrm{R}^{\nabla}(X, Y, V_k, V_l)\; V_k \cdot V_l \cdot \psi^\pm \ .
\]
Moreover, regarding chiral spinors as maps $M \rightarrow \su(3)$, the model \eqref{TraditionalCliffordMultiplication} of Clifford multiplication says that $V_k \cdot V_l \cdot \psi^+$ is identified with the map $- ( v_l \ok \psi^+) \ok v_k$. Thus, using definition \eqref{DefinitionIsometry1} of $\mathcal{I}^+$ and the relation \eqref{DefinitionAlpha} between $g$ and $\langle , \rangle$, we obtain that
\begin{align}
g\big( R^{\nabla^2}(X, Y) V_i,\, V_j \big) \: &= \:  \tfrac{1}{4} \sum\nolimits_{k,l} \, \mathrm{R}^{\nabla}(X, Y, V_k, V_l)\;  g\big( \alpha[ V_k \cdot V_l \cdot (\alpha^{-1} V_i) ] ,\,  V_j \big)   \nonumber \linebr
&= \:  - \tfrac{1}{4} \sum\nolimits_{k,l} \, \mathrm{R}^{\nabla}(X, Y, V_k, V_l)\;  \langle \, ( v_l \ok v_i) \ok v_k  ,\,  v_j \, \rangle  \ .
\end{align}
The proof of the formula for $R^{\nabla^3}$ is similar. The last equalities are a direct application of the definitions \eqref{rho1} of the $\rho^r$.
\end{proof}

The formulas of proposition \ref{CurvatureRotatedConnections} relating the curvature tensors $R^{\nabla^r}$ and $R^{\nabla}$ also determine, by straightforward contractions, similar formulas relating the Ricci tensors and Ricci scalars of $\nabla^r$ and $\nabla$. So we get, for example:
\begin{corollary}
The Ricci scalars of the connections $\nabla^r$ satisfy
\begin{align}
\label{ExplicitRicciScalars}
&\mathrm{Scal}^2 \: = \: \tfrac{1}{4} \sum_{i,j,k,l}  \mathrm{R}^\nabla_{ijkl} \, \langle v_k \ok v_j , \, v_l \ok v_i \rangle  \qquad
\mathrm{Scal}^3 \: = \: \tfrac{1}{4} \sum_{i,j,k,l}  \mathrm{R}^\nabla_{klij} \, \langle v_k \ok v_j , \, v_l \ok v_i \rangle \ \nonumber \linebr
&\mathrm{Scal}^1 \, +\, \mathrm{Scal}^2 \, +\, \mathrm{Scal}^3 \:= \: a^2 \,\sum\nolimits_{i,j,k,l} \, \mathrm{R}^\nabla_{ijkl} \, \big\langle [v_i, v_j] , [v_l, v_k] \big\rangle  \ .
\end{align}
Thus $\mathrm{Scal}^2  = \mathrm{Scal}^3$ when $\mathrm{R}^\nabla_{ijkl} = \mathrm{R}^\nabla_{klij}$, which is true for the Levi-Civita connection. 
\end{corollary}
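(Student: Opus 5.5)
We would obtain the three formulas by contracting the curvature identities of proposition \ref{CurvatureRotatedConnections}. With the convention $\mathrm{Scal}^r = \sum_{i,j} g\big(R^{\nabla^r}(V_i,V_j)V_j, V_i\big)$, and $\mathrm{R}^\nabla_{ijkl} = \mathrm{R}^\nabla(V_i,V_j,V_k,V_l) = g(R^\nabla(V_i,V_j)V_k,V_l)$, we take $X=V_i$, $Y=V_j$ and contract. The proposition gives
\[
\mathrm{Scal}^2 = -\tfrac14 \sum_{i,j,k,l} \mathrm{R}^\nabla_{ijkl}\,\langle (v_l \ok v_j)\ok v_k, v_i\rangle ,
\]
with the sign fixed by whichever contraction convention matches $\mathrm{Scal}^1 = \sum \mathrm{R}^\nabla_{ijji}$. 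We then apply the adjunction relation \eqref{AdjunctionOkubo} to rewrite $\langle (v_l\ok v_j)\ok v_k, v_i\rangle = \langle v_l \ok v_j, v_k\ok v_i\rangle$.

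Next we reshuffle indices with the symmetries available for a metric connection: $\mathrm{R}^\nabla$ is antisymmetric in $(i,j)$ and in $(k,l)$. To reach the form $\langle v_k\ok v_j, v_l\ok v_i\rangle$, we use the polarized orthogonality identity $\langle u\ok v, w\ok v\rangle = \langle u,w\rangle|v|^2$. Its further polarization gives
\[
\langle u\ok v, w\ok z\rangle + \langle u\ok z, w\ok v\rangle = 2\langle u,w\rangle\langle v,z\rangle .
\]
This swaps the second slots at the cost of a $\delta$-term, and that $\delta$-term vanishes after contraction with $\mathrm{R}^\nabla$ by antisymmetry in the last pair. Combined with the antisymmetries, this should produce exactly the stated $\mathrm{Scal}^2$. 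The computation for $\mathrm{Scal}^3$ is identical, starting from $\langle v_k\ok(v_j\ok v_l), v_i\rangle$. It yields the same bracket with the roles of the pairs $(ij)$ and $(kl)$ exchanged, hence $\mathrm{R}^\nabla_{klij}$.

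For the sum we do not compute all three separately. Instead we use lemma \ref{Sum3Reps}: $\rho^1_F+\rho^2_F+\rho^3_F = \mathrm{ad}_{u_F}$ with $u_F = a^2\sum_k [v_k, F v_k]$. By proposition \ref{CurvatureRotatedConnections}, the sum of the three curvature components is therefore $\langle [u_{F_{X,Y}}, v_i], v_j\rangle$. We contract with $X=V_i$, $Y=V_j$ and insert $F_{V_i,V_j}(v_k) = \sum_l \mathrm{R}^\nabla_{ijkl}v_l$. Ad-invariance of $\langle,\rangle$ then turns $\langle[[v_k,v_l],v_j],v_i\rangle$ into $\langle [v_k,v_l],[v_j,v_i]\rangle$, which up to reordering and sign gives $a^2\sum \mathrm{R}^\nabla_{ijkl}\langle[v_i,v_j],[v_l,v_k]\rangle$.

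The final remark is immediate: the two expressions for $\mathrm{Scal}^2$ and $\mathrm{Scal}^3$ differ only by exchanging the index pairs $(ij)$ and $(kl)$ in $\mathrm{R}^\nabla$. The Levi-Civita curvature has pair symmetry, so they agree.

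The main obstacle is bookkeeping of signs and index placement. In particular we must fix the contraction convention for $\mathrm{Scal}$ and verify that the polarization step leaves no stray $\delta$-terms (or that they cancel using the Clifford relation \eqref{CliffordOkubo}). We would first check the formulas in the bi-invariant Levi-Civita case, where all three scalar curvatures must coincide by corollary \ref{CoincidenceConnectionsBiinvariant}, and use that check to fix the signs.
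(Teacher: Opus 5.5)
Your proposal is correct and follows the paper's proof. You contract the formulas of proposition \ref{CurvatureRotatedConnections} and use adjunction \eqref{AdjunctionOkubo} with index renaming for $\mathrm{Scal}^2$ and $\mathrm{Scal}^3$, then lemma \ref{Sum3Reps} with the Ad-invariance of $\langle , \rangle$ for the sum. The polarized-orthogonality detour is unnecessary: after adjunction, relabelling $k \leftrightarrow l$ with $\mathrm{R}^\nabla_{ijlk} = -\mathrm{R}^\nabla_{ijkl}$ already gives the stated form, and in the sum the two bracket reversals cancel, so no sign ambiguity remains.
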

\begin{proof}
The expressions form $\mathrm{Scal}^2$ and $\mathrm{Scal}^3$ follow directly from the standard contractions of the curvatures in \eqref{ExplicitCurvature}, after using the adjunction relations  \eqref{AdjunctionOkubo} and renaming the indices in the case of $\mathrm{Scal}^3$. The identity for the sum of the Ricci scalars is an application of proposition \ref{CurvatureRotatedConnections} and lemma \ref{Sum3Reps}, together with the Ad-invariance of $\langle , \rangle$ :
\begin{align}
\sum_{r=1}^3 \, \mathrm{Scal}^r  \, &=  \,  \sum_{r} \sum_{i,j} \, \langle \, \rho^r_{F_{V_j,V_i}} ( v_i) , \,  v_j \, \rangle \,  =  \,  \sum_{i,j} \,  \langle \, [ u_{F_{V_j,V_i}} ,  v_i ] , \,  v_j \, \rangle \nonumber \linebr
&= \, \sum_{i, j, k} \,  \langle {u_{F_{V_j,V_i}}} , \,  v_k  \rangle  \:  \langle \, [v_k, v_i] ,   v_j \, \rangle \, =\,  a^2 \sum_{i,j ,k, l}  \, \langle  [v_l, F_{V_j,V_i} (v_l) ] , \,  v_k  \rangle  \:  \langle \, [v_k, v_i] , v_j \, \rangle  \nonumber \linebr
&= \, a^2 \sum_{i,j ,k, l, s}  \, \mathrm{R}^\nabla_{j, i, l, s}\,  \langle  [v_l , v_s ] ,  v_k \rangle  \:  \langle \, [v_k, v_i] ,   v_j \, \rangle \, =\,    a^2 \sum_{i,j, l, s}   \,\mathrm{R}^\nabla_{j, i, l, s}\,  \langle  [v_l , v_s ] ,  [ v_i ,  v_j ]\, \rangle  \, .   \nonumber   \qedhere
\end{align}
\end{proof}

The triality rotated connections $\nabla^r$ on $TM$ can have torsion, even when $\nabla = \nabla^1$ does not. Now we offer an expression for the torsion traces, $\tau^{\nabla^r}(X) := \sum_j g(T^{\nabla^r}(X, V_j) , V_j)$, written in terms of the original connection $\nabla$ and Okubo products. The torsion trace is directly related to the axial couplings (degree 7 terms) of the Dirac operators of section \ref{ThreeDiracOperators2}, when expanded as products of gamma matrices.
\begin{proposition}
\label{PropTorsionTrace}
Denote by $\tau_r$ the vector field $g$-dual to the trace torsion 1-form $\tau^{\nabla^r}$. Regarding $\tau_r$ as a map $M\rightarrow \su(3)$, as usual, it is given by 
\begin{align}
\label{TorsionTraceFormulas}
\tau_1 \: &= \:   \sum\nolimits_{j, k}  \Gamma_{jjk}\; v_k  \; + \; \sum\nolimits_j \, (\divergence_g V_j)\, v_j    \linebr
\tau_2 \: &= \: - \tfrac{1}{4}\, \sum\nolimits_{j, k,l} \, \Gamma_{jkl} \; (v_l \ok v_j) \ok v_k  \; + \; \sum\nolimits_j \, (\divergence_g V_j)\, v_j \nonumber \linebr
\tau_3 \: &= \: - \tfrac{1}{4} \, \sum\nolimits_{j, k,l} \,\Gamma_{jkl} \; v_k \ok (v_j \ok v_l)   \; + \; \sum\nolimits_j \, (\divergence_g V_j)\, v_j \ ,  \nonumber
\end{align}
where we have abreviated $\Gamma_{jkl} = g(\nabla_{V_j} V_k , V_l)$.
\end{proposition}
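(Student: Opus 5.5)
The plan is to compute the torsion trace of each $\nabla^r$ directly in the $g$-orthonormal frame $\{V_j\}$ of \eqref{DefinitionVj}. We then insert the condensed expressions \eqref{CondensedConnections3} for the connections on $TM$ and finally rearrange the resulting Okubo products with the adjunction relations \eqref{AdjunctionOkubo}.

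\emph{Step 1: a formula valid for any metric connection.} From $T(X,Y)=\nabla^r_XY-\nabla^r_YX-[X,Y]$ we get
\[
\tau^{\nabla^r}(V_k)=\sum\nolimits_j g(\nabla^r_{V_k}V_j,V_j)-\sum\nolimits_j g(\nabla^r_{V_j}V_k,V_j)-\sum\nolimits_j g([V_k,V_j],V_j).
\]
The first sum vanishes, since each $\nabla^r$ is $g$-compatible and $g(V_j,V_j)=1$. For the last sum we use the Levi-Civita connection $\nabla^{g}$, which is torsion-free and metric:
\[
\sum\nolimits_j g([V_k,V_j],V_j)=\sum\nolimits_j g(\nabla^g_{V_k}V_j,V_j)-\sum\nolimits_j g(\nabla^g_{V_j}V_k,V_j)=-\divergence_g V_k .
\]
Hence
\[
\tau_r=\sum\nolimits_k\tau^{\nabla^r}(V_k)\,V_k=\sum\nolimits_k\big(\divergence_g V_k-\sum\nolimits_j g(\nabla^r_{V_j}V_k,V_j)\big)V_k .
\]
Under the identification $Y\mapsto\sum_j g(Y,V_j)\,v_j$, this produces the common term $\sum_j(\divergence_g V_j)\,v_j$ in all three formulas.

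\emph{Step 2: the case $r=1$.} Here $\nabla^1=\nabla$, so $g(\nabla_{V_j}V_k,V_j)=\Gamma_{jkj}=-\Gamma_{jjk}$ by antisymmetry of $\Gamma_{jkl}$ in its last two indices. This gives the stated formula for $\tau_1$ immediately.

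\emph{Step 3: the cases $r=2,3$.} The frame vector $V_k$ corresponds to the constant map $v_k$. By \eqref{CondensedConnections3},
\[
g(\nabla^r_{V_j}V_k,V_l)=\langle\rho^{r-1}_{A_{V_j}}v_k,\,v_l\rangle,\qquad A_{V_j}v_m=\sum\nolimits_n\Gamma_{jmn}\,v_n .
\]
For $r=2$, formula \eqref{rho1} gives
\[
\rho^1_{A_{V_j}}v_k=-\tfrac14\sum\nolimits_{m,n}\Gamma_{jmn}\,(v_n\ok v_k)\ok v_m .
\]
Therefore
\[
\sum\nolimits_{k,j}\langle\rho^1_{A_{V_j}}v_k,v_j\rangle\,v_k=-\tfrac14\sum\Gamma_{jmn}\,\langle(v_n\ok v_k)\ok v_m,\,v_j\rangle\,v_k .
\]
Moving $v_k$ out of the inner product with the adjunction identities \eqref{AdjunctionOkubo}, and then summing over $k$ with completeness of the orthonormal basis, converts this into a single Okubo expression of the form $\sum\Gamma_{jkl}\,(v_l\ok v_j)\ok v_k$ after relabelling. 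With the overall minus sign from Step 1, this yields the stated $\tau_2$. The case $r=3$ is identical, using $\rho^2$ from \eqref{rho2} and the nesting $v_k\ok(v_j\ok v_l)$.

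\emph{Main obstacle.} The only delicate point is Step 3: the bookkeeping of the adjunction moves and index relabelling, and making sure the signs combine correctly. These come from the minus sign in Step 1, from the antisymmetry of $\Gamma$, and from the cyclic adjunction $\langle u\ok v,w\rangle=\langle v,w\ok u\rangle$. I would first check that each intermediate adjunction step reproduces the nesting pattern written in the statement, $(v_l\ok v_j)\ok v_k$ for $\tau_2$ and $v_k\ok(v_j\ok v_l)$ for $\tau_3$. If the nesting comes out mirrored, I would fix it with one more use of antisymmetry $\Gamma_{jkl}=-\Gamma_{jlk}$ together with the Clifford relation \eqref{CliffordOkubo}.
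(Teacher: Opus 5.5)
Your proposal is correct and follows essentially the same route as the paper: the three-term split of $\tau^{\nabla^r}(V_k)$, the divergence term obtained via $\nabla^g$, and the condensed formulas \eqref{CondensedConnections3} with the definitions of $\rho^{r-1}$ for the middle term. The only difference is that the paper first uses metric compatibility, $-\sum_j g(\nabla^r_{V_j}V_k,V_j)=\sum_j g(V_k,\nabla^r_{V_j}V_j)$, so that $\tau_r=\sum_j\rho^{r-1}_{A_{V_j}}(v_j)+\sum_j(\divergence_g V_j)\,v_j$ produces the stated nestings directly; your adjunction route instead yields $+\tfrac14\sum_{j,k,l}\Gamma_{jkl}\,(v_k\ok v_j)\ok v_l$ (and the analogous expression for $\tau_3$), which needs exactly one relabelling with $\Gamma_{jkl}=-\Gamma_{jlk}$ and no appeal to the Clifford relation.
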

\begin{proof}
In our usual trivialization $\{ V_j\}$ of $TM$, write $T^{\nabla^r}(V_i, V_j) = \nabla^r_{V_i} V_j - \nabla^r_{V_j} V_i - [V_i, V_j]$. Then the trace $\tau^{\nabla^r} (V_i)$ has three terms. The first is $g(\nabla^r_{V_i} V_j , V_j) $, which vanishes because $\nabla^r$ is metric and $V_j$ has unit length. Using the torsionless Levi-Civita connection $\nabla^g$ and a standard formula for the divergence, the third term is 
\[
- \sum\nolimits_j \, g([V_i, V_j], V_j)\, =\, \sum\nolimits_j \, g(\nabla^g_{V_j} V_i -\nabla^g_{V_i} V_j  \, , \,V_j) \, =\, \sum\nolimits_j \, g(\nabla^g_{V_j} V_i , V_j) \,  = \,\divergence_g( V_i ) \; .
\]
For the second term of $\tau^{\nabla^r} (V_i)$, we can use that $\nabla^r$ is metric, use expressions \eqref{CondensedConnections3} for $\nabla^r$ on $TM$, relation \eqref{DefinitionAlpha} between $g$ and $\langle, \rangle$, and the fact that $V_j = \alpha (v_j^\LL)$, to obtain
\begin{equation}
\label{PropTorsionTraceAux1}
- \sum\nolimits_j \, g(\nabla^r_{V_j} V_i, V_j) \, =\, \sum\nolimits_j \, g( V_i, \nabla^r_{V_j} V_j) \, =\,  \sum\nolimits_j \, \langle v_i \, , \,\rho^{r-1}_{A_{V_j}} v_j \rangle    \: ,
\end{equation}
with indices $r$ taken modulo 3. Finally, applying expressions \eqref{rho1} for the $\rho^{r-1}$, formula \eqref{EndomorphismConnection} for $A_{V_j}$, and summing the previously obtained divergence term, the results for the vector fields $\tau_r = \sum_j \tau^{\nabla^r} (V_j)\,  v_j$ are the expressions \eqref{TorsionTraceFormulas}.
\end{proof}
\begin{remark}
\label{RemarkTorsionLeftInvariantMetrics}
When $g$ is a left-invariant metric on the unimodular group $M = \SU(3)$, the Riemannian volume form $\vol_g$ coincides with the fully bi-invariant Haar volume form, up to normalization. Moreover, the $g$-orthonormal trivialization vectors $V_j$ are left-invariant vector fields. So the Lie derivative $\Lie_{V_j} \vol_g $ vanishes, and hence so does the divergence $\divergence_g V_j$ and the last terms on the right-hand side of \eqref{TorsionTraceFormulas}.
\end{remark}
In section \ref{SectionExplicitExample}, we exhibit an example in which the trace torsions $\tau_2$ and $\tau_3$ of the triality rotated connections are non-zero, even though $\nabla = \nabla^1$ is the Levi-Civita connection. Observe that with $\nabla = \nabla^g$, the trace torsions $\tau_r$ are canonical vector fields on $M$ that only depend on the choice of metric. They vanish when $g$ is bi-invariant, but not in general. Similarly, the skew-symmetric parts of the torsions of the rotated $(\nabla^g)^r$ are canonical 3-forms on $M$ that only depend on the choice of metric.

\section{Triality rotations as symmetries of moduli spaces}
\label{SymmetriesModuliSpaces}

\subsection{Gauge transformations and Yang-Mills functionals}

Let $\mathscr{A}_g$ be the space of $g$-compatible connections on $TM$. Let $\mathscr{G}$ be the group of gauge transformations of $(TM, g)$. Its elements are the smooth sections $\varphi$ of the bundle of orientation-preserving isometries of the fibres of $TM$. They act on connections by 
\begin{equation}
\label{GaugeTransformationConnection}
[\varphi (\nabla) ]_X Y \: :=\: \varphi \circ \nabla_X \circ \varphi^{-1} (Y) \ 
\end{equation}
and preserve $\mathscr{A}_g$. The moduli space of metric connections is the quotient $\mathscr{A}_g / \mathscr{G}$.
\begin{definition}
\label{DefinitionTrialityRotationConnections}
The triality rotation is the map $\mathcal{T} : \mathscr{A}_g \rightarrow \mathscr{A}_g$ that sends a metric connection $\nabla$ to the metric connection $\nabla^2$ defined in \eqref{DefinitionThreeConnections3}.
\end{definition}
The next theorem says that $\mathcal{T}$ is a transformation of order three on $\mathscr{A}_g$, has a clearly-defined invariant locus, and descends to an automorphism of the moduli space.
\begin{theorem}
\label{PropertiesTrialityRotationConnections}
Let $\nabla$ be any metric connection on $(TM, g)$. Then
\begin{equation}
\label{CyclicityTrialityRotations}
\mathcal{T} \nabla = \nabla^2  \qquad \quad  \mathcal{T} \mathcal{T} \nabla = \nabla^3 \qquad  \quad \mathcal{T} \mathcal{T}  \mathcal{T}  \nabla = \nabla = \nabla^1 \ ,
\end{equation}
where the $\nabla^r$ are the rotated metric connections of \eqref{DefinitionThreeConnections3}. Moreover, 
\begin{itemize}
\item[(a)] $\mathcal{T} \nabla = \nabla$ if and only if the maps $A^\nabla_X$ of \eqref{EndomorphismConnection} have values in the adjoint transformations $\mathrm{ad} (\su(3))$ inside $\mathfrak{so}(\su(3))$ for all vector fields $X$ and all points in $M$.
\item[(b)] $\mathcal{T}$ descends to a map of moduli spaces $\mathscr{A}_g / \mathscr{G} \rightarrow  \mathscr{A}_g / \mathscr{G}$.
\end{itemize}
\end{theorem}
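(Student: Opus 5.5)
The plan is to work entirely in the trivialization of $TM$ by the $g$-orthonormal frame $\{V_j\}$. There, by \eqref{CondensedConnections3}, a metric connection is $\nabla_X Y = \partial_X Y + A^\nabla_X(Y)$. The rotated connections are $\nabla^1 = \partial + \rho^3_{A}$, $\nabla^2 = \partial + \rho^1_A$ and $\nabla^3 = \partial+\rho^2_A$, where $A = A^\nabla_X$ and $\rho^3_A = A$. The key observation is that the connection-form map of $\mathcal{T}\nabla$ is $A^{\mathcal{T}\nabla}_X = \rho^1_{A^\nabla_X}$. This holds because $g(\nabla^2_X V_j, V_k) = \langle \rho^1_{A_X} v_j, v_k\rangle$: the frame $V_j$ corresponds to the constant map $v_j$, so $\partial_X$ kills it. Proposition \ref{PropositionTrialityCompositions} guarantees that $\rho^1_A \in \mathfrak{so}(\su(3))$, so this is again a legitimate metric connection.

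\textbf{Cyclicity.} We iterate using the composition rules of Proposition \ref{PropositionTrialityCompositions}. First, $A^{\mathcal{T}\mathcal{T}\nabla} = \rho^1_{\rho^1_A} = \rho^2_A$, which is the connection form of $\nabla^3$. Next, $A^{\mathcal{T}^3\nabla} = \rho^1_{\rho^2_A} = \rho^3_A = A$, so $\mathcal{T}^3\nabla = \nabla$. This gives \eqref{CyclicityTrialityRotations} directly.

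\textbf{Part (a).} We have $\mathcal{T}\nabla=\nabla$ if and only if $\rho^1_{A_X} = A_X$ pointwise for all $X$. Lemma \ref{Restriction3Reps} gives the direction "if": when $A_X\in\mathrm{ad}(\su(3))$, we have $\rho^1_{A_X} = A_X$. For the converse, we need that the fixed set $\{A\in\mathfrak{so}(8) : \rho^1_A = A\}$ is exactly $\mathrm{ad}(\su(3))$. The planned argument is as follows. If $\rho^1_A = A = \rho^3_A$, then applying $\rho^1$ and using the composition rules gives $\rho^2_A = \rho^1_{\rho^1_A} = \rho^1_A = A$, so all three representations agree on $A$. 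Lemma \ref{Sum3Reps} then yields $3A = \mathrm{ad}_{u_A}$, so $A = \mathrm{ad}_{u_A/3}$ lies in $\mathrm{ad}(\su(3))$. This is the step I expected to be hardest, and Lemma \ref{Sum3Reps} disposes of it cleanly.

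\textbf{Part (b).} The goal is to show that $\mathcal{T}$ intertwines gauge transformations, $\mathcal{T}(\varphi\nabla) = \hat\varphi(\mathcal{T}\nabla)$, for some gauge transformation $\hat\varphi$ depending only on $\varphi$. In the frame, $\varphi$ is a map $\Phi: M\to \mathrm{SO}(8)$ acting by $Y\mapsto \Phi Y$. Its effect on the connection form is
\[
A^{\varphi\nabla}_X = \Phi A_X \Phi^{-1} - (\partial_X\Phi)\Phi^{-1}.
\]
Since $M$ is simply connected, $\Phi$ lifts to $\tilde\Phi: M\to\mathrm{Spin}(8)$. Let $\hat\Phi$ be its image under the spin representation $\Delta^+$, whose Lie algebra representation is $\rho^1$; this is again a map to $\mathrm{SO}(\su(3))$. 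Equivariance of $\rho^1$ under conjugation, together with the fact that its differential commutes with the Maurer–Cartan form, gives
\[
\rho^1_{\Phi A\Phi^{-1}} = \hat\Phi\,\rho^1_A\,\hat\Phi^{-1}, \qquad \rho^1_{(\partial_X\Phi)\Phi^{-1}} = (\partial_X\hat\Phi)\hat\Phi^{-1}.
\]
Hence $A^{\mathcal{T}(\varphi\nabla)} = A^{\hat\varphi(\mathcal{T}\nabla)}$, where $\hat\varphi$ acts by $\hat\Phi$ in the frame. This is just the statement that spinorial lifts commute with gauge transformations, conjugated by the fixed isometry $\mathcal{I}^+$. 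Therefore $\mathcal{T}$ maps gauge orbits to gauge orbits and descends to $\mathscr{A}_g/\mathscr{G}$.

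The lift $\tilde\Phi$ is defined only up to sign, but $-1\in\mathrm{Spin}(8)$ acts by a constant scalar, so it does not affect the conjugation action. The main technical care in (b) is checking that the inhomogeneous term transforms correctly under $\rho^1$. This follows because $\rho^1$ is the derivative of a group homomorphism $\mathrm{Spin}(8)\to\mathrm{SO}(8)$.
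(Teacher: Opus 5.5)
Your proof is correct. The cyclicity argument is the paper's, but parts (a) and (b) take different routes.

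For (a), the paper first uses the order-three property to reduce $\mathcal{T}\nabla=\nabla$ to $\nabla^3=\nabla^2$, i.e.\ $\rho^1_A=\rho^2_A$. It then combines $N_A=\rho^1_A-\rho^2_A=0$ with the identity $N_{N_A}=\mathrm{ad}_{u_A}-3A$ of Lemma~\ref{LemmaNSquared}. You instead pass from $\rho^1_A=A$ to $\rho^2_A=\rho^1_{\rho^1_A}=A$ and read off $3A=\mathrm{ad}_{u_A}$ from Lemma~\ref{Sum3Reps}. This is slicker at the level of quoted results. Logically it rests on the same algebra, since Proposition~\ref{PropositionTrialityCompositions} is itself proved from Lemma~\ref{LemmaNSquared}.

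For (b), the paper argues invariantly. It shows $(\varphi\nabla)^S=\tilde\varphi(\nabla^S)$ by noting that the difference commutes with Clifford multiplication. The difference is therefore a scalar, by irreducibility of $S$ as a $\mathrm{Cl}(8)$-module, and it vanishes because it is skew-adjoint. The paper then conjugates by $\mathcal{I}^+$.

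You verify the same statement at the level of connection $1$-forms in the frame $\{V_j\}$. You use only that $\rho^1=d\sigma^+\circ(d\lambda)^{-1}$, where $\lambda:\mathrm{Spin}(8)\to\mathrm{SO}(8)$ is the covering and $\sigma^+$ the half-spin representation. Hence $\rho^1$ intertwines $\mathrm{Ad}$ and the Maurer--Cartan form, and the inhomogeneous term is handled automatically.

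Your route is more elementary and avoids the Clifford-module uniqueness argument. It needs only that $A\mapsto\rho^1_A$ is a Lie algebra homomorphism, which then integrates on the simply connected $\mathrm{Spin}(8)$; the paper asserts and uses this too, for instance in Lemma~\ref{LemmaTracesRho}.

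Your $\hat\Phi=\sigma^+(\tilde\Phi)$ is exactly the frame representative of the paper's $\varphi'=(\mathcal{I}^+)^{-1}\circ\tilde\varphi\circ\mathcal{I}^+$, since $\mathcal{I}^+(V_j)=\psi_j$ makes $\mathcal{I}^+$ the identity in these frames. The homogeneous half of your computation, $\rho^1_{\Phi\phi\Phi^{-1}}=\hat\Phi\,\rho^1_\phi\,\hat\Phi^{-1}$, also gives Theorem~\ref{TheorTrialityHiggsModuliSpace} immediately, with the same $\hat\Phi$.
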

 So the triple $\{\nabla^1, \nabla^2, \nabla^3\}$ is the orbit of $\nabla$ under the cyclic group $\{ \mathrm{Id}, \mathcal{T}, \mathcal{T}^2 \} \simeq \mathbb{Z}_3$. In particular, this orbit has either one or three distinct elements, never two. Theorem \ref{PropertiesTrialityRotationConnections} is proved in appendix \ref{AppendixSymmetriesModuliSpaces}.
 
When we identify $TM$ with $M \times \su(3)$ and vector fields are regarded as maps $M \rightarrow \su(3)$, as in section \ref{FibrewiseOkubo}, formulas \eqref{CondensedConnections3} and \eqref{rho3} show that $\mathcal{T}$ is implemented by the transformation $A \mapsto \rho^1_A$ of connection representations in $\mathfrak{so}(\su(3))$.

Now let again $\nabla$ be a metric connection on $(TM,g)$ and let $\{V_j\}$ be our usual $g$-orthonormal trivialization of $TM$ associated to a $\langle, \rangle$-orthonormal basis $\{v_j\}$ of $\su(3)$. The norm of the curvature tensor of $\nabla$ is given by
\[
|\,R^{\nabla} \,|_g^2 \: := \: \sum\nolimits_{i, j, k , l} \big[\,  g \big( R^{\nabla}(V_k, V_l) V_i \, , V_j \big) \big]^2   \ .
\]
 The Yang-Mills energy-momentum tensor \cite[ch. 9.3]{Bleecker}, up to normalization, is 
\[
\Theta^{\nabla}(X, Y) \: := \: \sum\nolimits_{i,j} \, g \big( R^{\nabla}(X, V_i)V_j \, ,\, R^{\nabla}(Y, V_i)V_j \big) \, - \, \tfrac{1}{4}\, g(X, Y)  \, |\,R^{\nabla} \,|_g^2 \ .
\]
 The Chern-Simons 3-form of $\nabla$ relative to the flat connection on $TM$ for which the $V_j$ are parallel is the real form \cite{CS, Freed}
\begin{equation}
 \mathrm{CS}(\nabla) \; := \;   \mathrm{tr}\big(\, A\wedge \dd A \,  + \,  \tfrac{2}{3}\, A\wedge A\wedge A \, \big) \ ,
\end{equation}
 where $A= A^\nabla$ is the 1-form on $M$ with values in $\mathfrak{so}(\su(3))$ given by \eqref{EndomorphismConnection}. The next result says that these three objects are invariant under triality rotations of $\nabla$.
\begin{proposition}
\label{PropInvarianceNormCurvartureChernSimons}
For every metric connection $\nabla$ on $(TM, g)$, we have
\[
 |\, R^{ \mathcal{T} \nabla} \, |_g^2 \: =\:  |\, R^{\nabla} \, |_g^2   \qquad \qquad  \Theta^{\mathcal{T} \nabla} \: =\: \Theta^{\nabla} \qquad \qquad  \mathrm{CS} ( \mathcal{T} \nabla)  \: =\: \mathrm{CS}(\nabla) \ .
 \]
So these tensors coincide for the rotated connections $\nabla^1$, $\nabla^2$, and $\nabla^3$.
 \end{proposition}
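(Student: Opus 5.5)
The plan is to reduce all three invariances to a single algebraic fact: the map $A \mapsto \rho^1_A$ is a Lie algebra automorphism of $\mathfrak{so}(\su(3))$ that preserves the trace form $\mathrm{tr}(AB)$. In the trivialization of $TM$ by $\{V_j\}$ (vector fields regarded as maps $M\to\su(3)$ via $Y\mapsto \sum_j g(Y,V_j)v_j$), formulas \eqref{CondensedConnections3} say that $\nabla$ has connection form $A$, while $\mathcal T\nabla=\nabla^2$ has connection form $\rho^1_A$. Proposition \ref{CurvatureRotatedConnections} likewise says that the curvature endomorphism of $\mathcal T\nabla$ in this frame is $\rho^1_{F_{X,Y}}$, where $F_{X,Y}$ is that of $\nabla$.

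\textbf{Step 1: $\rho^1$ is a Lie algebra automorphism.} Since $\nabla^1=\nabla^+$ is the spinorial lift, $A\mapsto\rho^1_A$ is a spin representation of $\mathfrak{so}(8)$, so it is a Lie algebra homomorphism into $\mathfrak{so}(\su(3))$; Proposition \ref{PropositionTrialityCompositions} gives the latter inclusion. It is bijective, since Proposition \ref{PropositionTrialityCompositions} gives $\rho^1_{\rho^1_{\rho^1_A}}=\rho^1_{\rho^2_A}=\rho^3_A=A$, so $\rho^1$ has order three.

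\textbf{Step 2: $\rho^1$ preserves $\mathrm{tr}(AB)$.} The form $\mathrm{tr}(AB)$ on $\mathfrak{so}(8)$ is a multiple of the Killing form, because $\mathfrak{so}(8)$ is simple. Every automorphism preserves the Killing form, hence $\mathrm{tr}(\rho^1_A\rho^1_B)=\mathrm{tr}(AB)$. Equivalently, $\langle\rho^1_A,\rho^1_B\rangle=\langle A,B\rangle$ for the Frobenius inner product $\sum_{i,j}\langle Av_i,v_j\rangle\langle Bv_i,v_j\rangle=-\mathrm{tr}(AB)$. If a self-contained check is preferred, one can compute $\mathrm{tr}(\rho^1_A\rho^1_A)$ directly with \eqref{CliffordOkubo} and Proposition \ref{PropositionOkuboIdentities}, but the Killing-form argument avoids that.

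\textbf{Step 3: the curvature norm and $\Theta$.} Using Proposition \ref{CurvatureRotatedConnections},
\[
|R^{\mathcal T\nabla}|_g^2=\sum_{k,l}\|\rho^1_{F_{V_k,V_l}}\|^2=\sum_{k,l}\|F_{V_k,V_l}\|^2=|R^\nabla|_g^2 .
\]
For $\Theta$, the first term is $\sum_i \langle F_{X,V_i},F_{Y,V_i}\rangle$, a Frobenius inner product of curvature endomorphisms, so it is invariant by the polarized form of Step 2. The second term is invariant by the norm identity just proved.

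\textbf{Step 4: the Chern--Simons form.} The Chern--Simons form is built from $A$ as an $\mathfrak{so}(\su(3))$-valued 1-form:
\begin{itemize}
\item $\rho^1$ is a constant linear map, so it commutes with $\dd$, giving $\dd(\rho^1 A)=\rho^1(\dd A)$;
\item $\mathrm{tr}(A\wedge A\wedge A)=\tfrac12\,\mathrm{tr}(A\wedge[A\wedge A])$, and $\rho^1$ preserves brackets by Step 1 and the trace pairing by Step 2.
\end{itemize}
Hence $\mathrm{CS}(\rho^1A)=\mathrm{CS}(A)$. The reference flat connection is the same for $\nabla$ and $\mathcal T\nabla$, because the frame $V_j$ corresponds to $A=0$, and $\rho^1_0=0$.

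\textbf{Main obstacle.} The delicate point is Step 2 in normalization: the trace in $\mathrm{CS}$ and the Frobenius norm are both taken in the vector representation. One needs the identity $\mathrm{tr}_{\rho^3}(\rho^1_A\rho^1_B)=\mathrm{tr}_{\rho^3}(AB)$, not merely proportionality. This holds because an automorphism preserves the Killing form exactly, and $\mathrm{tr}_{\rho^3}$ is a fixed multiple of the Killing form. Finally, since the statement holds for every $\nabla$, applying it iteratively to $\nabla$ and $\mathcal T\nabla$ gives the equality for all three $\nabla^r$ by \eqref{CyclicityTrialityRotations}.
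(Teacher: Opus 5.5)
Your proposal is correct and follows essentially the paper's proof: the curvature of $\mathcal T\nabla$ is $\rho^1_F$ by Proposition~\ref{CurvatureRotatedConnections}, $\rho^1$ preserves the trace pairing and brackets, and the invariance of the norm, of $\Theta$ and of $\mathrm{CS}$ follows (the paper handles the first two through the tensor $Q^\nabla$ and the last through a basis expansion). The only variation is in the trace identity: the paper gets $\mathrm{tr}(\rho^1_A\rho^1_B)=b\,\mathrm{tr}(AB)$ from $\mathrm{Spin}(8)$-invariance and uses $\rho^1\circ\rho^1\circ\rho^1=\mathrm{id}$ to force $b^3=1$, whereas you use the order-three identity only for bijectivity and then invoke exact preservation of the Killing form by automorphisms, which is equally valid and slightly more direct.
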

This proposition is proved in appendix \ref{AppendixSymmetriesModuliSpaces}. The proof relies on the following algebraic identities, which are also proved in appendix \ref{AppendixSymmetriesModuliSpaces}.
 \begin{lemma}
\label{LemmaTracesRho}
For all $A, B, C \in \mathfrak{so}(\su(3))$ and $r = 1, 2, 3$,
\begin{equation}
\label{TraceRhoRho}
\mathrm{tr}( \rho^r_A \, \rho^r_B ) \: = \: \mathrm{tr}(A B) \  \qquad \qquad \mathrm{tr}\big( \rho^r_A \, [\rho^r_B , \rho^r_C] \big) \: = \: \mathrm{tr}( A\, [B, C] ) \ ,
\end{equation}
where $\mathrm{tr}$ is the trace of transformations of the 8-dimensional $\su(3)$.
\end{lemma}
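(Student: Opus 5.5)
Both identities follow from one structural fact: each $\rho^r$ is a Lie algebra representation of $\mathfrak{so}(\su(3))\simeq\mathfrak{so}(8)$ on the 8-dimensional space $\su(3)$. The trace forms $\mathrm{tr}(\rho^r_A\rho^r_B)$ are then invariant symmetric bilinear forms on the simple Lie algebra $\mathfrak{so}(8)$, hence proportional to $\mathrm{tr}(AB)$. So the plan is to prove the first identity, fix the constant, and deduce the second.

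First, $\rho^3_A=A$, so for $r=3$ both identities are tautological. For $r=1,2$, the paragraph after \eqref{EndomorphismConnection} identifies $A\mapsto\rho^1_A$ and $A\mapsto\rho^2_A$ with the two half-spin representations of $\mathfrak{so}(8)$, because $\nabla^1,\nabla^2$ are spinorial lifts. In particular they are Lie algebra homomorphisms: $\rho^r_{[B,C]}=[\rho^r_B,\rho^r_C]$. If one prefers not to lean on that paragraph, the homomorphism property also follows algebraically from Proposition \ref{PropositionTrialityCompositions} together with the fact that $\rho^3$ is the identity map.

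Given the homomorphism property, the bilinear form $\beta_r(A,B):=\mathrm{tr}(\rho^r_A\rho^r_B)$ is $\mathrm{ad}$-invariant:
\[
\beta_r([C,A],B)+\beta_r(A,[C,B])=\mathrm{tr}\big([\rho^r_C,\rho^r_A]\rho^r_B+\rho^r_A[\rho^r_C,\rho^r_B]\big)=0
\]
by cyclicity of the trace. Since $\mathfrak{so}(8)$ is simple, $\beta_r=c_r\,\mathrm{tr}(AB)$ for a constant $c_r$. To find $c_r$, I would evaluate on a single rank-two rotation $A=E_{12}$, with $E_{12}v_1=v_2$, $E_{12}v_2=-v_1$ and $E_{12}v_j=0$ otherwise, so that $\mathrm{tr}(A^2)=-2$. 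Only the terms $j=1,2$ survive in \eqref{rho1}, giving
\[
\rho^1_A\psi=-\tfrac14\big[(v_2\ok\psi)\ok v_1-(v_1\ok\psi)\ok v_2\big].
\]
In the Clifford picture \eqref{TraditionalCliffordMultiplication} with $\alpha=1$, this operator is $\pm\tfrac12 v_1\cdot v_2$ acting on $\Delta^+$. Its square is $-\tfrac14$ times the identity on the 8-dimensional space, so $\mathrm{tr}((\rho^1_A)^2)=-2$ and $c_1=1$. The case $r=2$ is identical. Alternatively, the Clifford-like relations \eqref{CliffordOkubo} and orthogonality \eqref{OrthogonalityOkubo} can be used to show directly that $\rho^1_A$ squares to $-\tfrac14$.

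The second identity then follows at once:
\[
\mathrm{tr}\big(\rho^r_A[\rho^r_B,\rho^r_C]\big)=\mathrm{tr}\big(\rho^r_A\,\rho^r_{[B,C]}\big)=\mathrm{tr}(A[B,C]).
\]

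The main obstacle is making the homomorphism property rigorous within the paper's own framework rather than citing it loosely. I would use that $\rho^1_A$ equals $\tfrac14\sum_{j,k}\langle Av_j,v_k\rangle\,v_j\cdot v_k$ in the Clifford model, together with the standard fact that this map is a Lie algebra homomorphism $\mathfrak{so}(8)\to\mathrm{End}(\Delta^\pm)$. The normalisation check is only a short Okubo-product calculation.
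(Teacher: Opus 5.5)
Your proof is correct, and its structure matches the paper's. The case $r=3$ is trivial. The form $\beta_r(A,B)=\mathrm{tr}(\rho^r_A\rho^r_B)$ is an invariant symmetric form on the simple algebra $\mathfrak{so}(8)$, hence a constant multiple of $\mathrm{tr}(AB)$. The second identity follows from $\rho^r_{[B,C]}=[\rho^r_B,\rho^r_C]$. The paper gets invariance from the group-level equivariance $\rho^1_{\mathrm{Ad}_hA}=\sigma_h\rho^1_A\sigma_h^{-1}$ rather than your infinitesimal computation, but that difference is cosmetic.

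The real difference is how the constant is fixed. The paper does no computation. Since $\rho^1_A\in\mathfrak{so}(\su(3))$ and $\rho^1_{\rho^1_{\rho^1_A}}=A$ by Proposition~\ref{PropositionTrialityCompositions}, applying $\beta_1=b\,\mathrm{tr}$ three times gives $\mathrm{tr}(AB)=b^3\,\mathrm{tr}(AB)$, so $b=1$. You instead evaluate on $E_{12}$, and the computation checks out. By \eqref{CliffordOkubo} with $\langle v_1,v_2\rangle=0$ one gets $\rho^1_{E_{12}}\psi=\tfrac12(v_1\ok\psi)\ok v_2$ and $\rho^2_{E_{12}}\psi=-\tfrac12\,v_1\ok(\psi\ok v_2)$. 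One more use of \eqref{CliffordOkubo}, together with its special case $(x\ok y)\ok x=x\ok(y\ok x)=\langle x,x\rangle y$, shows both square to $-\tfrac14$. Hence $\mathrm{tr}\big((\rho^r_{E_{12}})^2\big)=-2=\mathrm{tr}(E_{12}^2)$ for $r=1,2$. The paper's route is slicker but rests on the lengthy appendix computations behind Proposition~\ref{PropositionTrialityCompositions}. Yours is self-contained, uses only the Clifford relations, and shows that the lemma amounts to the three 8-dimensional representations of $\mathfrak{so}(8)$ having the same Dynkin index.

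You should drop one aside: the homomorphism property does not follow from Proposition~\ref{PropositionTrialityCompositions} and $\rho^3_A=A$. Those composition rules only say that $\rho^2=\rho^1\circ\rho^1$ and $\rho^1\circ\rho^1\circ\rho^1=\mathrm{id}$ as linear maps of $\mathfrak{so}(\su(3))$. A linear map of order three need not preserve brackets.

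The justification in your last paragraph is the right one, and it is what the paper relies on. With $\alpha=1$, \eqref{TraditionalCliffordMultiplication} gives $\rho^1_A=\tfrac14\sum_{j,k}\langle Av_j,v_k\rangle\,v_j\cdot v_k$ on $S^+$, and the same expression gives $\rho^2_A$ on $S^-$. The first identity in \eqref{GeneralizedCliffordRelations} makes the Okubo model a genuine $\mathrm{Cl}(8)$-module. So $A\mapsto\rho^r_A$ is the standard isomorphism $\mathfrak{so}(8)\to\mathfrak{spin}(8)$ followed by Clifford multiplication, which is a Lie algebra homomorphism.
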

A particular consequence of proposition \ref{PropInvarianceNormCurvartureChernSimons} is that the Yang-Mills functional 
\begin{equation}
\label{YMFunctional}
\mathcal{E}^{YM}_g(\nabla) \: := \:   \frac{1}{2} \,  \int_M  \,   | R^\nabla |_g^2 \,  \, \mathrm{vol}_g \ 
\end{equation}
 is invariant under triality rotations $\nabla \mapsto \mathcal{T} \nabla$. Even its integrand density is. Hence $\mathcal{T}$ takes critical points of $\mathcal{E}^{YM}_g$ to other critical points.
 We get a 3-fold symmetry of the moduli space of solutions of the Yang-Mills equations on $(TM, g)$. 
 
 The transformation $\nabla \mapsto \mathcal{T} \nabla$ also preserves the Einstein-Yang-Mills functional
\begin{equation}
\label{EYMFunctional}
\mathcal{E}^{EYM}(g, \nabla) \: := \:   \frac{1}{2} \,  \int_M  \,( \, R_g \: + \:  \gamma\, | R^\nabla |_g^2\, ) \,  \, \mathrm{vol}_g \ ,
\end{equation}
 where $R_g$ denotes the scalar curvature of the Levi-Civita connection. The invariant tensor $\Theta^{\nabla}$ of proposition \ref{PropInvarianceNormCurvartureChernSimons} is the energy-momentum tensor of precisely this functional. It would be interesting to study $\mathcal{E}^{EYM}(g, \nabla)$ in the setting of Kaluza-Klein models on $M_{4} \times \SU(3)$ (e.g. \cite{Bap1, Bap3}). The existence of 3-point orbits of Yang-Mills connections on the internal space $\SU(3)$ inducing three non-isospectral internal Dirac operators with the same symmetry group is a suggestive fact. It could be useful to model generations of fermions with different masses and the same gauge representations.

 \subsection{Invariance of adjoint Yang-Mills-Higgs functionals}
 \label{SectionYangMillsHiggs}

 Denote by $\HH_g$ the space of sections of the bundle of $g$-skew endomorphisms of $TM$.  Under the identification $TM \simeq M \times \su(3)$ provided by our $g$-orthonormal frames $\{V_j \}$, this $\HH_g$ becomes the space of maps $M \rightarrow \mathfrak{so}(\su(3))$, where the inner product on $\su(3)$ is the Ad-invariant $\langle,  \rangle$. This section exploits the existence of a natural triality action on $\HH_g$.

Let $\{B_c \}$ be a basis of the transformations in $\mathfrak{so}(\su(3))$, so we can expand them as  $\phi = \sum_c \phi^c \, B_c$, where each $\phi^c$ is a real function on $M$. 
\begin{definition}
\label{DefinitionTrialityRotationHiggs}
The triality map $\mathcal{T} : \HH_g \rightarrow \HH_g$ is defined by $\mathcal{T} \phi =  \rho^1_{\phi} =\sum_c \phi^c \, \rho^1_{B_c}$, where the transformations $ \rho^1_{B_c} \in \mathfrak{so}(\su(3))$ are defined through \eqref{rho1}.
\end{definition}
Just as in the case of connections, proposition \ref{PropositionTrialityCompositions} guarantees that $\mathcal{T} \mathcal{T} \mathcal{T} \phi = \phi$. So we obtain an action on $\HH_g$ of the cyclic group $\{ \mathrm{Id}, \mathcal{T}, \mathcal{T}^2 \} \simeq \mathbb{Z}_3$. The space of transformations $\mathfrak{so}(\su(3))$ has a natural inner product, up to normalization, defined by the trace of the transformation. Denoting it by $( , )$, lemma \ref{LemmaTracesRho} shows that
\begin{equation}
\label{InvarianceNormHiggsTriality}
(\mathcal{T}\phi_1 , \mathcal{T}\phi_2)\: =\: \mathrm{tr}( \rho^1_{\phi_1} \, \rho^1_{\phi_2} ) \:= \: \mathrm{tr}( \phi_1 \, \phi_2 ) \:=\: (\phi_1 , \phi_2)
\end{equation}
as functions on $M$. So $\mathcal{T}$ preserves the pointwise inner product of Higgs fields. 

The bundle of $g$-skew endomorphisms of $TM$ has a special sub-bundle. It is defined by the fibre endomorphisms that map to adjoint transformations $\mathrm{ad}_{\su(3)} \subset \mathfrak{so}(\su(3))$ under the identification of $(TM, g)$ with $(M \times \su(3) ,  \langle ,  \rangle)$ provided by our $g$-orthonormal trivializations $\{V_j\}$. So we have a special subspace $\HH_g^{\mathrm{ad}} \subset \HH_g$ of Higgs fields. Then
\begin{equation}
\mathcal{T} \phi \; = \; \phi  \qquad  \iff   \qquad \phi \,\in\, \HH_g^{\mathrm{ad}} \ .
\end{equation}
This is a consequence of the algebraic fact that, for any $A$ in $\mathfrak{so}(\su(3))$, the identity $\rho^1_A = A$ is equivalent to $\rho^1_A = \rho^2_A$ and to $A \in \mathrm{ad}_{\su(3)}$.
This was justified in appendix \ref{AppendixSymmetriesModuliSpaces} in the proof of statement (a) of theorem \ref{PropertiesTrialityRotationConnections}. 

Any metric connection $\nabla$ on $TM$ determines a covariant derivative acting on $\HH_g$ by 
\begin{equation}
\label{CovariantDerivativeHiggs1}
\nabla_X \phi \; := \;  \sum\nolimits_c  \  (\partial_X \phi^c)\, B_c \; + \; \phi^c \, [A_X , B_c ]   \ ,
\end{equation}
where $A_X: M \rightarrow \mathfrak{so}(\su(3))$ is the map \eqref{EndomorphismConnection} determined by $\nabla$ on $TM$.
Then triality rotations transform $\nabla_X \phi$ naturally, in the sense that
\begin{align}
\label{EquivarianceCovDerivHiggsTriality}
\mathcal{T} (\nabla_X \phi) \; &= \;  \sum\nolimits_c  \  (\partial_X \phi^c)\, \rho^1_{B_c} \: + \: \phi^c \, \rho^1_{[A_X , B_c ]} \; = \;  \sum\nolimits_c  \  (\partial_X \phi^c)\, \rho^1_{B_c} \: + \: \phi^c \, [\rho^1_{A_X} ,   \rho^1_{B_c } ] \nonumber \linebr
&= \;  (\mathcal{T} \nabla)_X \big( \sum\nolimits_c  \: \phi^c \: \rho^1_{B_c}  \big) \; = \; (\mathcal{T} \nabla)_X (\mathcal{T} \phi) \ .
\end{align}
The second equality uses that the correspondence $A \mapsto \rho^1_A$ is one of the representations of $\mathfrak{spin}(8)$, hence it is a Lie algebra homomorphism. The third equality uses that the connection $\mathcal{T} \nabla = \nabla^2$ has the explicit formula \eqref{CondensedConnections3}.

The covariant derivative on $\HH_g$ can be regarded in another way. Since $\phi$ is a section of the bundle of $g$-skew endomorphisms of $TM$, for any vector field $Y$ the element $\phi (Y)$ is a vector field too. Then $\nabla_X \phi$ is the endomorphism in $\HH_g$ determined by
\begin{equation}
\label{CovariantDerivativeHiggs2}
(\nabla_X \phi) (Y) \; = \;  \nabla_X [\phi(Y)] \; - \; \phi (\nabla_X Y)   \ ,
\end{equation}
where the right-hand side only has covariant derivatives of vector fields. 

This viewpoint is especially useful when we consider gauge transformations. As before, such a $\varphi \in \mathscr{G}$ is a section of a bundle of endomorphisms of $TM$, so we can compose it with $\phi$ to define the gauge-transformed element $\varphi(\phi) :=  \varphi \circ \phi \circ \varphi^{-1}$ in $\HH_g$.
Combining this expression with \eqref{CovariantDerivativeHiggs2} and rule \eqref{GaugeTransformationConnection} for the gauge transformation of $\nabla$ on $TM$,  we get the equivariance property 
\begin{equation}
\label{GaugeEquivarianceHiggsCovariantDerivative}
\varphi(\nabla_X \phi) =  (\varphi \nabla)_X (\varphi \phi)
\end{equation}
 for the covariant derivative on $\HH_g$.

\begin{theorem}
\label{TheorTrialityHiggsModuliSpace}
The triality map of definition \ref{DefinitionTrialityRotationHiggs} descends to a map of moduli spaces $\HH_g / \mathscr{G} \rightarrow \HH_g / \mathscr{G}$. Moreover, its combination with the triality rotation of connections in \ref{DefinitionTrialityRotationConnections} descends to a map $(\mathscr{A}_g \times \HH_g) / \mathscr{G} \rightarrow (\mathscr{A}_g \times \HH_g )/ \mathscr{G}$.
 \end{theorem}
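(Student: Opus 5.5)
The plan is to show that $\mathcal{T}$ on $\HH_g$ is gauge-equivariant, in the sense that for every $\varphi\in\mathscr{G}$ there is a $\varphi'\in\mathscr{G}$, depending only on $\varphi$, with
\[
\mathcal{T}(\varphi\phi)=\varphi'(\mathcal{T}\phi) \qquad \text{and} \qquad \mathcal{T}(\varphi\nabla)=\varphi'(\mathcal{T}\nabla).
\]
The key point is that the same $\varphi'$ must serve for both the Higgs field and the connection. Then orbits map to orbits, and both maps of moduli spaces are well defined. Since part (b) of theorem \ref{PropertiesTrialityRotationConnections} already gives the statement for connections, I will reuse the gauge transformation produced in its proof in appendix \ref{AppendixSymmetriesModuliSpaces}. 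My expectation is that it is the spinorial lift, described next.

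In the trivialization $\{V_j\}$, a gauge transformation is a map $\varphi:M\to \mathrm{SO}(\su(3))$. Because $M$ is simply connected, it lifts to a map $\tilde\varphi:M\to\mathrm{Spin}(8)$. Let $\varphi'=\rho^1(\tilde\varphi)$ be its image under the spin representation whose differential is $A\mapsto\rho^1_A$; this is again a map $M\to\mathrm{SO}(\su(3))$, so $\varphi'\in\mathscr{G}$. The sign ambiguity of the lift acts on $\Delta^+$ by $-1$, which does not affect conjugation.

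The central algebraic identity is the group-level equivariance
\[
\rho^1\big(\mathrm{Ad}_{\tilde\varphi} B\big)=\varphi'\,\rho^1_B\,\varphi'^{-1} \qquad \text{for } B\in\mathfrak{so}(\su(3)),
\]
where $\mathrm{Ad}_{\tilde\varphi}B=\varphi B\varphi^{-1}$. This is just the statement that the differential of a group homomorphism intertwines the adjoint actions. Applying it to $\phi=\sum_c\phi^cB_c$ pointwise gives
\[
\mathcal{T}(\varphi\circ\phi\circ\varphi^{-1})=\varphi'\circ\mathcal{T}\phi\circ\varphi'^{-1}=\varphi'(\mathcal{T}\phi).
\]
This proves the first claim of the theorem.

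For connections, the gauge-transformed connection has connection map
\[
A^{\varphi\nabla}_X=\varphi A_X\varphi^{-1}-(\partial_X\varphi)\varphi^{-1}.
\]
Applying $\rho^1$, using linearity together with the equivariance identity above, and using $\rho^1_{(\partial_X\varphi)\varphi^{-1}}=(\partial_X\varphi')\varphi'^{-1}$ (the differential of the homomorphism applied to the Maurer–Cartan form), gives $A^{\mathcal{T}\varphi\nabla}=A^{\varphi'\mathcal{T}\nabla}$. Here the formula $\mathcal{T}A=\rho^1_A$ comes from \eqref{CondensedConnections3}, and this is presumably exactly the content of part (b).

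Since $\varphi'$ depends only on $\varphi$, the pair map $(\nabla,\phi)\mapsto(\mathcal{T}\nabla,\mathcal{T}\phi)$ sends the diagonal $\mathscr{G}$-orbit of $(\nabla,\phi)$ into the diagonal orbit of its image. This gives the map on $(\mathscr{A}_g\times\HH_g)/\mathscr{G}$.

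The main obstacle is the bookkeeping that shows the abstract $\mathrm{Spin}(8)$ representation integrating $A\mapsto\rho^1_A$ is well defined as a map into $\mathrm{SO}(\su(3))$, and is compatible with the identifications used in \eqref{CondensedConnections3}. The proposition that $\rho^r_A\in\mathfrak{so}(\su(3))$ and the homomorphism property used in \eqref{EquivarianceCovDerivHiggsTriality} handle the infinitesimal part. If the lift causes trouble, I would instead argue infinitesimally along a path from the identity when $\varphi$ is null-homotopic, and treat the remaining component of $\mathscr{G}$ separately.
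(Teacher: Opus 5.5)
Your argument is correct, but it takes a different route from the paper.

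\textbf{The paper's route.} The paper works on the bundles themselves. For Higgs fields it first proves the alternative description $\mathcal{T}\phi=(\mathcal{I}^+)^{-1}\circ\tilde\phi\circ\mathcal{I}^+$, where $\tilde\phi(\psi^+)=\frac14\sum_j V_j\cdot(\phi V_j)\cdot\psi^+$. It then uses the Clifford compatibility $\tilde\varphi(X\cdot\psi)=(\varphi X)\cdot(\tilde\varphi\psi)$ to get $\tilde\varphi\,\tilde\phi\,\tilde\varphi^{-1}=\widetilde{\varphi\phi}$. For connections it proves $(\varphi\nabla)^S=\tilde\varphi(\nabla^S)$ by an irreducibility argument for the Clifford module, and then conjugates by $\mathcal{I}^+$. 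In both cases $\varphi'=(\mathcal{I}^+)^{-1}\circ\tilde\varphi\circ\mathcal{I}^+$.

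\textbf{Your route.} You stay in the global frame and use only Lie theory. You integrate $A\mapsto\rho^1_A$ to a homomorphism $\mathrm{Spin}(8)\to\mathrm{SO}(\su(3))$. You then use that its differential intertwines the adjoint actions and pulls back the Maurer--Cartan form. Since $\mathcal{I}^+(V_j)=\psi_j$, the paper's $\varphi'$ is represented in the frame exactly by your $\rho^1(\tilde\varphi)$, so the two constructions give the same gauge transformation.

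\textbf{What each buys.} Your version is shorter and handles $\phi$ and $\nabla$ with one computation; the connection case only adds the term $(\partial_X\varphi)\varphi^{-1}$. It makes it obvious that $\varphi'$ does not depend on $(\nabla,\phi)$, and it works unchanged for $\mathcal{T}^2$ using $\rho^2$. The paper's version is frame-free and explains the result as naturality of spinorial lifts transported by the triality isometry, which is the form suited to the $\mathrm{PSU}(3)$-structure generalization.

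\textbf{The obstacle you flag does not arise.} The paper states that $\rho^1_A\in\mathfrak{so}(\su(3))$ and that $A\mapsto\rho^1_A$ is a Lie algebra homomorphism. Since $\mathrm{Spin}(8)$ is connected and simply connected, Lie's theorem integrates it to a homomorphism with values in $\mathrm{SO}(\su(3))$. The lift $\tilde\varphi$ always exists because $M$ is simply connected, so your fallback through null-homotopic gauge transformations is unnecessary.
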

 This theorem means that, given a gauge transformation $\varphi \in  \mathscr{G}$, there exists a second gauge transformation $\varphi'$ such that $\mathcal{T}(\varphi \phi)  = \varphi' ( \mathcal{T} \phi)$. Moreover, the triality rotation of connections also satisfies $\mathcal{T}(\varphi \nabla)  = \varphi' ( \mathcal{T} \nabla)$ for {\it the same} $\varphi'$. The second transformation is determined by $\varphi$ and is independent of $\phi$ and $\nabla$. It is the transformation $\varphi' = ({\mathcal I}^+)^{-1} \circ \tilde{\varphi} \circ {\mathcal I}^+$ constructed in the proof of theorem \ref{PropertiesTrialityRotationConnections} in appendix \ref{AppendixSymmetriesModuliSpaces}. The proof of theorem \ref{TheorTrialityHiggsModuliSpace} is also given in appendix \ref{AppendixSymmetriesModuliSpaces}.

Finally, consider the Yang-Mills-Higgs functional on pairs $(\nabla, \phi)$ on $(TM, g)$:
\begin{equation}
\label{YMHFunctional}
\mathcal{E}^{YMH}_g(\nabla, \phi) \: := \:   \frac{1}{2} \,  \int_M \Big( \,   | R^\nabla |_g^2 \, + \, \sum\nolimits_j | \nabla_{V_j} \phi |^2 \, + \, f(|\phi|^2) \Big) \, \mathrm{vol}_g \ .
\end{equation}
From the last section we know that $| R^{{\mathcal T} \nabla} |^2 = | R^{\nabla} |^2$. From \eqref{InvarianceNormHiggsTriality} we known that $|{\mathcal T}\phi|^2 = |\phi|^2$. Combining \eqref{EquivarianceCovDerivHiggsTriality} with  \eqref{InvarianceNormHiggsTriality} we know that $|({\mathcal T}\nabla)_{V_j}({\mathcal T} \phi) |^2 =  | \nabla_{V_j} \phi |^2$. Therefore, the whole Lagrangian density of $\mathcal{E}^{YMH}_g$ is invariant under triality rotations $(\nabla, \phi) \mapsto ({\mathcal T}\nabla, {\mathcal T}\phi)$, and hence so is the functional itself. In particular, $(\nabla, \phi)$ is a critical configuration of the functional if and only if $({\mathcal T}\nabla, {\mathcal T}\phi)$ is. So we get a 3-fold symmetry of the moduli space of solutions of the Yang-Mills-Higgs equations on $(TM, g)$.

\section{Three Dirac-type operators on $S$}
\label{ThreeDiracOperators}

\subsection{The operators $\Gamma (S^+) \rightarrow \Gamma (S^-)$ and their $L^2$-adjoints}

We have established that the data $(g,\nabla)$ determines three natural connections $\nabla^r$ on the chiral spinor bundle $S^+$ that are compatible with the inner product $\langle , \rangle$ of spinors. After contraction through Clifford multiplication, these connections then determine three linear, first-order differential operators. As usual, 
\begin{equation}
\label{DefinitioncDr}
\cD_r : \Gamma (S^+) \rightarrow \Gamma (S^-) \ , \qquad  \cD_r \psi^+ \; := \; \sum\nolimits_j \, V_j \cdot \nabla^r_{V_j} \psi^+ \ , \qquad r= 1, 2, 3, 
\end{equation}
where $\{ V_j\}$ denotes any $g$-orthonormal trivialization of $TM$. Since $\nabla^1$ is the standard spinorial lift to $S^+$ of the connection $\nabla$ on $TM$, the operator $\cD_1$ is the standard Dirac operator on $S^+$ determined by $\nabla$. The operators $\cD_2$ and $\cD_3$ are different. However, all three manifestly share the same principal symbol, given by Clifford multiplication. In particular, all three are elliptic.
\begin{proposition}
\label{PropExplicitD}
Regarding chiral spinors as maps $M \rightarrow \su(3)$, the three Dirac-type operators
$\cD_r : \Gamma (S^+) \rightarrow \Gamma(S^-)$ are given by
\begin{align}
\label{ExplicitDr}
\cD_1 \psi^+ \; &= \; \sum_j \, v_j \ok \partial_{V_j}  \psi^+ \; - \; \frac{1}{4} \, \sum_{i, j, k} \, \Gamma_{ijk} \ v_i \ok \big[ \, ( v_k \ok \psi^+ ) \ok v_j \, \big]   \linebr
\cD_2 \psi^+ \; &= \; \sum_j \, v_j \ok \partial_{V_j}  \psi^+  \; - \; \frac{1}{4} \, \sum_{i, j, k} \, \Gamma_{ijk} \ v_i \ok \big[ \, v_j \ok ( \psi^+ \ok v_k ) \,\big]  \nonumber \linebr
\cD_3 \psi^+ \; &= \; \sum_j \, v_j \ok \partial_{V_j}  \psi^+ \; + \;  \sum_{i, j, k} \,\Gamma_{ijk} \; \langle \psi^+ , v_j \rangle \  v_i \ok v_k  \ . \nonumber
\end{align}
Here the $V_j$ form the $g$-orthonormal trivialization of $TM$ determined by the $\langle , \rangle$-orthonormal basis $\{ v_j \}$ of $\su(3)$, as in \eqref{DefinitionVj}, and we have abbreviated $\Gamma_{ijk} = g( \nabla_{V_i} V_j , \, V_k )$.
\end{proposition}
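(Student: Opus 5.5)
The plan is a direct substitution: combine the explicit formulas of proposition \ref{PropThreeConnections} for $\nabla^r_X \psi^+$ with the model \eqref{TraditionalCliffordMultiplication} of Clifford multiplication.

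The first step is to compute Clifford multiplication by a frame vector. For a chiral spinor $\phi^+$ regarded as a map $M \rightarrow \su(3)$, \eqref{TraditionalCliffordMultiplication} gives $V_j \cdot \phi^+ = V_j \dok \phi^+ = (\alpha^{-1} V_j) \ok \phi^+$. By the definition \eqref{DefinitionVj}, $\alpha^{-1} V_j = v_j^\LL$, which in the left-invariant trivialization is the constant map with value $v_j$. Hence
\[
V_j \cdot \phi^+ \, = \, v_j \ok \phi^+ ,
\]
pointwise, and this holds for every section $\phi^+$ of $S^+$.

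The second step is to apply this with $\phi^+ = \nabla^r_{V_j}\psi^+$ and then sum over $j$. The derivative term $\partial_{V_j}\psi^+$ in each formula of \eqref{ThreeConnections} produces the common principal part $\sum_j v_j \ok \partial_{V_j}\psi^+$. For the zeroth-order terms, set $X = V_i$ in \eqref{ThreeConnections}, so that $g(\nabla_X V_j, V_k)$ becomes $\Gamma_{ijk}$. Then left-multiply by $v_i$ and use bilinearity of $\ok$ to move the scalar coefficients outside. This gives:
\begin{itemize}
\item for $r=1$, the term $-\tfrac14 \sum \Gamma_{ijk}\, v_i \ok [(v_k \ok \psi^+) \ok v_j]$;
\item for $r=2$, the term $-\tfrac14 \sum \Gamma_{ijk}\, v_i \ok [v_j \ok (\psi^+ \ok v_k)]$;
\item for $r=3$, the term $\sum \Gamma_{ijk} \langle \psi^+, v_j\rangle\, v_i \ok v_k$.
\end{itemize}

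There is no real obstacle. The only point needing care is to confirm two things: that the definition \eqref{DefinitioncDr} uses the same frame $\{V_j\}$ built from $\{v_j\}$ through \eqref{DefinitionVj}, and that the result does not depend on the choice of $g$-orthonormal frame. The second follows because $\sum_j V_j \cdot \nabla^r_{V_j}$ is the contraction of a tensorial expression, so it is frame-independent. The first is built into the statement. The only genuine content is therefore the identification $\alpha^{-1}V_j = v_j$ in the first step, which is what makes the deformed product $\dok$ reduce to the plain Okubo product $\ok$ in this frame.
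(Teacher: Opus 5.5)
Your proposal is correct and follows essentially the same route as the paper. Both use the identification $V_j \dok \phi^+ = (\alpha^{-1}V_j)\ok\phi^+ = v_j \ok \phi^+$, which follows from $\alpha^{-1}V_j = v_j^\LL$, and then substitute the formulas \eqref{ThreeConnections} for $\nabla^r_{V_i}\psi^+$, where $\Gamma_{ijk}$ enters as $g(\nabla_{V_i}V_j,V_k)$.
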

\begin{proof}
From the first equality in \eqref{TraditionalCliffordMultiplication}, definition \eqref{DefinitioncDr} of $\cD_r$ can be written as $\cD_r \psi^+ = \sum_j V_j \dok \nabla^r_{V_j} \psi^+$. But $\alpha^{-1}(V_j) = v_j^\LL$ from \eqref{DefinitionVj}, and the left-invariant vector field $v_j^\LL$ translates as the map from $M$ with constant value $v_j \in \su(3)$. So definitions \eqref{DeformedCliffordMultiplication} of the $\dok$-products give
\begin{equation}
\label{Vjdokpsi}
V_j \dok \psi^+ \; = \; v_j \ok \psi^+  \qquad\qquad V_j \dok \psi^- \; = \; - \, \psi^- \ok v_j 
\end{equation}
for any sections $\psi^\pm$ of $S^\pm$. In particular $\cD_r \psi^+ = \sum_j v_j \ok \nabla^r_{V_j} \psi^+$. Substituting in this expression the formulas \eqref{ThreeConnections} for the three connections $\nabla^r$, we get directly the formulas for $\cD_r \psi^+$ given in the proposition. The only point of note is that, since $v_j$ is a constant vector in $\su (3)$ that does not depend on the point on $M$, 
\begin{equation*}
\label{DerivativeThroughOkubo}
\partial_{V_i} \, ( \psi \ok v_j ) \; = \; ( \partial_{V_i} \psi ) \ok v_j  \qquad \quad \partial_{V_i} \, ( v_j \ok \psi ) \; = \; v_j \ok ( \partial_{V_i} \psi ) \ .  \qedhere
\end{equation*} 
\end{proof}

The metric $g$ on $TM$ and the inner-products $\langle , \rangle$ on $S^\pm$ allow us to define $L^2$-inner products of spinors and vector fields on $M$ by integrating with respect to the Riemannian volume form $\vol_g$.
So we can define the $L^2$-adjoint operators $\cD_r^\dag : \Gamma(S^-) \rightarrow \Gamma(S^+)$ through 
\begin{equation}
\label{DefinitionL2Adjoint}
\blangle \cD_r^\dag \psi^- , \psi^+ \brangle_{L^2}  \; = \;  \blangle \psi^-, \cD_r \psi^+ \brangle_{L^2} \ .
\end{equation}
\begin{proposition}
\label{AlternativeExplicitD1Adjoint}
The $L^2$-adjoints $\cD_r^\dag$ are given by
\begin{align}
\cD_r^\dag \psi^-  \, &= \, \sum\nolimits_j  \big( V_j \cdot \nabla^r_{V_j} \psi^-  \big) \, + \,  \tau_r \cdot \psi^-   \ , 
\end{align}
where $\cdot$ is Clifford multiplication and $\tau_r$ is the vector field $g$-dual to the trace torsion of the connection $\nabla^r$ on $TM$. 
\end{proposition}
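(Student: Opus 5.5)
The plan is to compute the adjoint by a divergence argument, as in the classical derivation of Dirac operators for connections with torsion. Here $\nabla^r_{V_j}\psi^-$ means the connection $\nabla^r$ on $S^-$ from \eqref{DefinitionThreeConnections2}, and $V_j\cdot$ is the Clifford action $S^+\to S^-$.

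\emph{Step 1: a 1-form built from the pairing.} Fix $\psi^\pm$ and define the vector field $W$ by $g(W,X) = \langle \psi^-, X\cdot\psi^+\rangle$, so that $W=\sum_j \langle \psi^-, V_j\cdot \psi^+\rangle V_j$. By \eqref{PropertiesDokProduct}, $W = \psi^+\dok\psi^-$. This is the natural vector field whose divergence should produce the adjoint.

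\emph{Step 2: compatibility of $\nabla^r$ with $\dok$.} I need the analogue of Proposition \ref{PropDokCompatibleConnections} for the triple $(\nabla^r \text{ on } TM, \nabla^r \text{ on } S^+, \nabla^r \text{ on } S^-)$. By Theorem \ref{SpinorialLiftsVSTrialityTransports}, the $\nabla^r$ on $S^\pm$ are the spinorial lifts of the metric connection $\nabla^r$ on $TM$. Proposition \ref{PropDokCompatibleConnections} applies to any metric connection together with its lifts, so applying it to $\nabla^r$ gives
\[
\nabla^r_X(\psi^+\dok\psi^-) = (\nabla^r_X\psi^+)\dok\psi^- + \psi^+\dok(\nabla^r_X\psi^-),
\]
as well as $\nabla^r_X(Y\cdot\psi^+)=(\nabla^r_XY)\cdot\psi^++Y\cdot\nabla^r_X\psi^+$. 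I would also record this in condensed form through Lemma \ref{Compatibility3Reps}, as a cross-check of the index shifts.

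\emph{Step 3: divergence and torsion.} For a metric connection $\tilde\nabla$ with torsion $T$, comparing with Levi-Civita gives
\[
\divergence_g W = \sum_j g(\tilde\nabla_{V_j}W, V_j) + \tau^{\tilde\nabla}(W),
\]
with the sign to be fixed. Concretely, the difference tensor $K=\tilde\nabla-\nabla^g$ is $g$-skew and its trace $\sum_j g(K_{V_j}W,V_j)$ equals $\pm\sum_j g(T(W,V_j),V_j)$ (up to the sign convention), because the contorsion and torsion traces agree. Combining this with Step 2 and \eqref{PropertiesDokProduct}:
\begin{align*}
\sum_j g(\nabla^r_{V_j}W,V_j) &= \sum_j \langle \psi^-, V_j\cdot\nabla^r_{V_j}\psi^+\rangle + \sum_j \langle \nabla^r_{V_j}\psi^-, V_j\cdot\psi^+\rangle \\
&= \langle\psi^-,\cD_r\psi^+\rangle - \Big\langle \sum_j V_j\cdot\nabla^r_{V_j}\psi^-,\psi^+\Big\rangle .
\end{align*}
The second line uses skewness of Clifford multiplication and the fact that $\nabla^r_{V_j}V_j$ terms cancel. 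Those terms should be handled by writing $W$ in the frame and using that $\nabla^r$ is metric, so $\nabla^r_{V_j}$ of the frame is skew.

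\emph{Step 4: integration.} Integrating against $\vol_g$ on the closed manifold $M$ kills $\divergence_g W$. The torsion term is $\tau^{\nabla^r}(W)=g(\tau_r,W)=\langle\psi^-,\tau_r\cdot\psi^+\rangle = -\langle \tau_r\cdot\psi^-,\psi^+\rangle$. Collecting terms yields $\cD_r^\dag\psi^- = \sum_j V_j\cdot\nabla^r_{V_j}\psi^- + \tau_r\cdot\psi^-$.

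\emph{Main obstacle.} The main difficulty is the sign bookkeeping in Step 3: the relation between divergence, the $\tilde\nabla$-divergence and the torsion trace, combined with the sign in \eqref{PropertiesDokProduct}. I would fix it by checking against Proposition \ref{PropTorsionTrace}, whose proof already computes $-\sum_j g([V_i,V_j],V_j)=\divergence_g V_i$. Alternatively, I would test the formula in the frame directly, using the explicit $\cD_r$ of Proposition \ref{PropExplicitD} and integrating $\partial_{V_j}$ by parts with $\int \partial_{V_j} f\,\vol_g = -\int (\divergence_g V_j) f\,\vol_g$. The divergence pieces then reproduce the last term of \eqref{TorsionTraceFormulas}.
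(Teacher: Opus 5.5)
Your proposal is correct and follows essentially the paper's route. It uses a vector field built from the spinor pairing (your $W$ is the paper's $-Y$), Stokes' theorem, and compatibility of the lifts of $\nabla^r$ with Clifford multiplication via Theorem \ref{SpinorialLiftsVSTrialityTransports}. It also uses the contorsion-trace identity $\sum_j g(K^{\nabla^r}(V_j,V_j),X)=\tau^{\nabla^r}(X)$. The sign you left open is already right as displayed: $g$-skewness of $K^{\nabla^r}_{V_j}$ gives $\sum_j g(K^{\nabla^r}(V_j,W),V_j)=-\tau^{\nabla^r}(W)$, hence $\divergence_g W=\sum_j g(\nabla^r_{V_j}W,V_j)+\tau^{\nabla^r}(W)$.
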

This proposition is proved in appendix \ref{AppendixKLDerivatives}.

\subsection{Three self-adjoint operators $\Gamma (S) \rightarrow \Gamma (S)$}
\label{ThreeDiracOperators2}

The triality rotated connections $\nabla^r$ and their spinorial lifts define three classical Dirac operators on $S = S^+ \oplus S^-$ through the usual expression $D _r\psi = \sum\nolimits_j   V_j \cdot \nabla^r_{V_j} \psi$. Since the $\nabla^r$ have non-zero trace torsion, in general, the classical operators $D_r$ will not be self-adjoint (see \cite{FS79, Agricola} or our proposition \ref{AlternativeExplicitD1Adjoint}). However, after choosing a preferred chiral spinor bundle, which is equivalent to choosing a preferred orientation of $M$, it is possible to define three self-adjoint, Dirac-type operators acting on $S$ by the adjoint-completion
\begin{equation}
\sD_r : \Gamma (S) \rightarrow \Gamma (S) \ ,  \qquad  \sD_r \begin{bmatrix} \psi^+ \\ \psi^- \end{bmatrix} :=  \begin{bmatrix}  \cD_r^\dag\, \psi^-  \\ \cD_r\, \psi^+ \end{bmatrix} \ .
\end{equation}
for $r = 1, 2, 3$. Using proposition \ref{AlternativeExplicitD1Adjoint}, it is clear that 
\begin{equation}
\label{ClassicalVsAlternativeDiracOperators}
\sD_r \psi \; = \; D_r \psi \:  + \:  \tau_r \cdot \psi^-  \; = \; D_r \psi  \:  + \: \tfrac{1}{2}\, \tau_r \cdot \psi  \:  - \: \tfrac{1}{2}\, \tau_r \cdot \Gamma_S\,  \psi  \ , 
\end{equation}
where $\Gamma_S$ is the chiral operator acting on the chiral spinor bundles by $\Gamma_S \psi^\pm = \pm\, \psi^\pm$. So $\sD_r$ differs from $D_r$ only when the torsion trace $\tau_r$ is non-zero. The difference are scalar terms involving products of one and seven gamma matrices, since $\Gamma_S$ can be expressed as a product of eight gamma matrices.
\begin{remark}
The difference $\cD_r - D^+_r$ has no derivatives, so the operator $\cD_r$ is homotopic to the classical $D^+_r$ through elliptic operators, and hence has the same index as $D^+_r$. By the Atiyah-Singer theorem this is zero, since $\SU(3)$ is a parallelizable manifold \cite{LM}.
\end{remark}
\begin{lemma}
Expanding $\sD_r$ in sums of products of gamma matrices, 
\begin{equation}
\sD_r \psi \; = \; \sum_j  \gamma_j \, \big( \partial_{V_j}  \, +  \, \tfrac{1}{2}\, (\divergence_g V_j)\,  \big) \psi \: + \:  \tfrac{1}{4}\, \sum_{j \neq k\neq l}  \Gamma^r_{jkl} \, \gamma_j  \gamma_k  \gamma_l \, \psi   \: - \: \tfrac{1}{2}\, \tau_r \cdot \Gamma_S\,  \psi  \ ,     \label{ExpansionsDr}
\end{equation}
where $j \neq k\neq  l$ means distinct indices and we have abreviated $\Gamma^r_{jkl} = g(\nabla^r_{V_j} V_k , V_l)$. In particular, the terms with one gamma matrix coincide for all three connections $\nabla^r$.
\end{lemma}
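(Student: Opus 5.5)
Starting from \eqref{ClassicalVsAlternativeDiracOperators}, it suffices to expand the classical operator $D_r\psi = \sum_j V_j\cdot\nabla^r_{V_j}\psi$ in gamma matrices and then add $\tfrac12\tau_r\cdot\psi - \tfrac12\tau_r\cdot\Gamma_S\psi$. We write $\gamma_j$ for Clifford multiplication by $V_j$. By Theorem \ref{SpinorialLiftsVSTrialityTransports}, the connection $\nabla^r$ on $S=S^+\oplus S^-$ is the spinorial lift of the metric connection $\nabla^r$ on $TM$. It is therefore given in the frame $\{V_j\}$ by the standard formula
\[
\nabla^r_X\psi = \partial_X\psi + \tfrac14\sum\nolimits_{k,l} g(\nabla^r_X V_k,V_l)\,\gamma_k\gamma_l\psi ,
\]
where $\partial_X$ is the flat derivative in the trivialization of $S$ induced by the $V_j$. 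Hence
\[
D_r\psi = \sum\nolimits_j\gamma_j\partial_{V_j}\psi + \tfrac14\sum\nolimits_{j,k,l}\Gamma^r_{jkl}\,\gamma_j\gamma_k\gamma_l\psi .
\]

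Next I split the triple sum according to index coincidences. Since $\Gamma^r_{jkl}$ is antisymmetric in $(k,l)$, the terms with $k=l$ vanish. The terms with $j$, $k$, $l$ distinct give exactly the cubic term in \eqref{ExpansionsDr}. The remaining terms have $j=k\neq l$ or $j=l\neq k$. Using $\gamma_j\gamma_j=-1$ and $\gamma_j\gamma_k\gamma_j=\gamma_k$ for $j\neq k$, these reduce to
\[
\tfrac14\Big(-\sum\nolimits_{j,l}\Gamma^r_{jjl}\gamma_l+\sum\nolimits_{j,k}\Gamma^r_{jkj}\gamma_k\Big) = -\tfrac12\sum\nolimits_{j,l}\Gamma^r_{jjl}\,\gamma_l ,
\]
where I used antisymmetry once more to write $\Gamma^r_{jkj}=-\Gamma^r_{jjk}$.

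The one-gamma term must now be combined with $\tfrac12\tau_r\cdot\psi$. Comparing with Proposition \ref{PropTorsionTrace} and its proof, the $V_l$-component of $\tau_r$ is $\sum_j g(V_l,\nabla^r_{V_j}V_j)+\divergence_g V_l = \sum_j\Gamma^r_{jjl}+\divergence_g V_l$. This is just the first step of that proof written for $\nabla^r$ directly, which is equivalent to \eqref{PropTorsionTraceAux1}. Therefore $\tfrac12\tau_r\cdot\psi=\tfrac12\sum_{j,l}\Gamma^r_{jjl}\gamma_l\psi+\tfrac12\sum_l(\divergence_gV_l)\gamma_l\psi$. The $\Gamma^r_{jjl}$ terms cancel the contribution computed above, and only the divergence term survives, which gives \eqref{ExpansionsDr}. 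The divergence term depends only on $g$ and the frame, not on $r$, and this proves the final claim.

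The main obstacle is the bookkeeping of conventions, not any real difficulty. One must check that the standard spinor formula above, with its sign and the ordering $\gamma_k\gamma_l$ in the Clifford model \eqref{TraditionalCliffordMultiplication}, agrees with the formulas \eqref{SpinorialConnectionsTrivialization} used for $\nabla^\pm$. One must also make sure the trace torsion sign convention matches that of Proposition \ref{PropTorsionTrace}. I would fix these by comparing with the $r=1$ case, where the result reduces to the known expansion of a classical Dirac operator with torsion.
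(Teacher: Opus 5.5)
Your proposal is correct and follows essentially the paper's own proof: expand the classical $D_r$ using the standard local formula for the spinorial lift of $\nabla^r$, reduce the coincident-index terms to $-\tfrac12\sum_{j,l}\Gamma^r_{jjl}\gamma_l$, and cancel them against $\tfrac12\tau_r\cdot\psi$ via $\sum_j\Gamma^r_{jjl} = -\divergence_g V_l + g(\tau_r, V_l)$. The only minor difference is how that last identity is justified: the paper uses the contorsion decomposition $\nabla^r = \nabla^g + K^{\nabla^r}$, while you read it off directly from the first step of the proof of Proposition \ref{PropTorsionTrace}, which is equivalent.
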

\begin{proof}
Using the standard expression for a metric connection in terms of gamma matrices \cite{Bourguignon}, the classical Dirac operator of $\nabla^r$ can be expanded as
\begin{align}
D_r \psi \: &= \: \sum\nolimits_j  \gamma_j \, \partial_{V_j} \psi \:+ \: \tfrac{1}{4}\, \sum\nolimits_j \sum\nolimits_{k \neq l}\,  \Gamma^r_{jkl} \, \gamma_j  \gamma_k  \gamma_l \, \psi    \label{ExpansionClassicalDirac} \linebr
&= \: \sum\nolimits_j  \gamma_j \, \partial_{V_j} \psi  \: - \: \tfrac{1}{2}\, \sum\nolimits_{j,l} \Gamma^r_{jjl} \gamma_l \psi \:+ \: \tfrac{1}{4}\, \sum\nolimits_{j \neq k\neq l}  \,  \Gamma^r_{jkl} \, \gamma_j  \gamma_k  \gamma_l \, \psi  \nonumber  \ ,
\end{align}
Using the Levi-Civita connection and the contorsion to write $\nabla^r = \nabla^g + K^{\nabla^r}$, calculations similar to \eqref{cD1Aux4} , \eqref{cD1Aux2} and \eqref{cD1Aux3} yield
\begin{align}
\sum\nolimits_j \, \Gamma^r_{jjl} \: &= \:  \sum\nolimits_j g( \nabla^r_{V_j} V_j ,  V_l)\: =\:  \sum\nolimits_j \, g\big( \nabla^g_{V_j} V_j + K^{\nabla^r} (V_j, V_j) \, ,  V_l \big)  \nonumber \linebr
&= \: - \, \divergence_g (V_l) \: +\: \tau^{\nabla^r}(V_l) \: =\:  - \, \divergence_g (V_l) \: +\: g(\tau^r, V_l) \ .
\end{align}
Substituting this expression and \eqref{ExpansionClassicalDirac} into formula \eqref{ClassicalVsAlternativeDiracOperators},  we see that the two terms with $\tau_r \cdot \psi$ cancel, and hence we obtain the identity of the lemma. 
\end{proof}

Finally, note that the operators $\sD_{t_1, t_2, t_3}$ constructed with the connections $\nabla^{t_1, t_2, t_3}$ on $S^+$ given in  \eqref{LinearCombinationsConnections} will also be elliptic, formally self-adjoint operators on spinors. They have the same principal symbol as the $\sD_r$. A special case is having equal coefficients, due to the identity of the connections $\nabla^r$ in lemma \ref{Sum3Reps}. Moreover, there is a very natural, elliptic, formally self-adjoint operator $\sD_1 \oplus \sD_2 \oplus \sD_3$ on the sum of three copies $S\oplus S \oplus S$ of the spinor bundle. It involves no choices besides orientation and the data $(g, \nabla)$.

\section{Triality rotations preserve symmetries}
 \label{KLDerivatives}

\subsection{Lie derivatives of the rotated connections}

 The purpose of this section is to show that every infinitesimal symmetry that preserves the data $(g, \nabla)$ and the group structure on $M$ will also preserve the three connections $\nabla^r$ on each of the bundles $TM$ and $S^\pm$,  as well as the Dirac operators $\sD_r$. This reinforces the naturalness of the previous constructions in the paper. The main results are theorem \ref{TrialitiyLieDerivativeConnection} for $TM$, proposition \ref{TrialitiyKLLieDerivatives} for $S^\pm$, and proposition \ref{PropDiracCommutesWithKosmann} for the Dirac operators.

Recall the notion of Lie derivative of a connection.
Given a vector field $X$ on $M$ and a metric connection $\nabla$ on $TM$, define the $(2, 1)$-tensor $\Lie_X \nabla$ by
\[
(\Lie_X \nabla)_Y Z \: := \:   [X,  \nabla_Y Z ] \: - \:  \nabla_{[X, Y]} Z \: - \:  \nabla_Y [X, Z] \ .
\]
Observe that, when $X$ is Killing, the $g$-compatibility of $\nabla$ implies that 
\[
g\big((\Lie_X \nabla)_Y Z, W \big) \; = \; - \: g\big(Z, (\Lie_X \nabla)_Y W \big)  \ .
\]
So $(\Lie_X \nabla)_Y$ is a $g$-skew endomorphism of $TM$. In the parlance of section \ref{SectionYangMillsHiggs}, it is an adjoint Higgs field in $\HH_g$. In particular, it has a natural triality rotation ${\mathcal T} [(\Lie_X \nabla)_Y]$. The main result of this section is the following theorem, proved in appendix \ref{AppendixKLDerivatives}.
\begin{theorem}
\label{TrialitiyLieDerivativeConnection}
Let  $X$ be a left or right-invariant Killing vector field on $(M,g)$ and let $\nabla$ be a metric connection on $TM$. Then the triality rotated connection ${\mathcal T} \nabla$ satisfies 
\[
(\Lie_X \,  {\mathcal T} \nabla)_Y  \: = \:  {\mathcal T} [(\Lie_X \nabla)_Y ] \ .
\]
In particular, $(\Lie_X \,  {\mathcal T} \nabla)_Y$ vanishes if and only if $(\Lie_X \nabla)_Y$ does. Since $\nabla$ is arbitrary, analogous relations hold for the second triality-rotated connection ${\mathcal T} {\mathcal T} \nabla$.
\end{theorem}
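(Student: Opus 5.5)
The plan is to work entirely in the trivialization $TM \simeq M \times \su(3)$ given by the $g$-orthonormal frame $V_j = \alpha(v_j^\LL)$. In it, $\nabla_Y Z = \partial_Y Z + A_Y(Z)$ and, by \eqref{CondensedConnections3}, $({\mathcal T}\nabla)_Y Z = \partial_Y Z + \rho^1_{A_Y}(Z)$. The first step is to express the Lie derivative $\Lie_X$ of vector fields in this picture. Write $[X, Z] = \partial_X Z - B_X(Z)$, where $B_X(Z) := \nabla^0_Z X$ and $\nabla^0$ is the flat connection for which the $V_j$ are parallel. Because $X$ is Killing, the $\mathrm{End}$-valued map $B_X : M \rightarrow \mathrm{End}(\su(3))$ is determined by $\nabla^0 X$ together with the Lie derivatives $\Lie_X V_j$. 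The key structural claim is that for left- or right-invariant Killing $X$ one has $\Lie_X V_j = [X, V_j] = \sum_k \beta_{jk} V_k$ with $\beta$ a $\langle , \rangle$-skew matrix lying in $\mathrm{ad}(\su(3))$, pointwise.

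This claim should hold in both cases. For $X = u^\RR$ right-invariant, left-invariant fields commute with right-invariant ones, so $[u^\RR, v_j^\LL] = 0$. Since $X$ is Killing for $g$, the flow preserves $g$ and $\langle , \rangle$, hence preserves $\alpha$, which is canonically determined by the two metrics; so $[X, V_j] = \alpha([X, v_j^\LL]) = 0$. For $X = u^\LL$ left-invariant, $[u^\LL, v_j^\LL] = [u, v_j]^\LL$, and the Killing condition again gives $\Lie_X \alpha = 0$. Hence $[X, V_j] = \alpha([u,v_j]^\LL) = \sum_k \langle [u,v_j], v_k\rangle V_k$, which is $\mathrm{ad}_u$ in the frame. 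Therefore, in the trivialization, $\Lie_X$ acts on maps $Z : M \rightarrow \su(3)$ by $\Lie_X Z = \partial_X Z + C(Z)$, where $C = 0$ or $C = -\mathrm{ad}_u$ is a constant element of $\mathrm{ad}(\su(3))$.

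The second step is the computation itself. Substituting into the definition of $\Lie_X \nabla$, the derivative terms cancel as for any Lie derivative of a connection, and one gets
\[
(\Lie_X \nabla)_Y = \partial_X (A_Y) - A_{[X,Y]} + [C, A_Y] ,
\]
where $\partial_X(A_Y)$ is the pointwise derivative of the $\mathfrak{so}(\su(3))$-valued map. The same computation for ${\mathcal T}\nabla$ gives $\partial_X(\rho^1_{A_Y}) - \rho^1_{A_{[X,Y]}} + [C, \rho^1_{A_Y}]$. Since $\rho^1$ is linear and constant, $\partial_X \rho^1_{A_Y} = \rho^1_{\partial_X A_Y}$. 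Lemma \ref{Restriction3Reps} gives $\rho^1_C = C$ for $C \in \mathrm{ad}(\su(3))$. Because $\rho^1$ is a Lie algebra homomorphism, $[C, \rho^1_{A_Y}] = [\rho^1_C, \rho^1_{A_Y}] = \rho^1_{[C, A_Y]}$. Summing these, we get $(\Lie_X {\mathcal T}\nabla)_Y = \rho^1_{(\Lie_X\nabla)_Y} = {\mathcal T}[(\Lie_X \nabla)_Y]$, by Definition \ref{DefinitionTrialityRotationHiggs}.

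The vanishing equivalence follows because ${\mathcal T}$ is invertible, with ${\mathcal T}^3 = \mathrm{Id}$ by Proposition \ref{PropositionTrialityCompositions}. Applying the result to ${\mathcal T}\nabla$ in place of $\nabla$ gives the statement for ${\mathcal T}{\mathcal T}\nabla$.

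The main obstacle is the first step: checking carefully that $\Lie_X$ in the frame is $\partial_X$ plus a constant element of $\mathrm{ad}(\su(3))$. This requires Killing fields to commute with $\alpha$, which needs left- or right-invariance so that $\Lie_X v_j^\LL$ is well controlled. It also requires the sign conventions for left-invariant fields to be consistent. A general Killing field would produce a non-adjoint $C$, and then $\rho^1_C \neq C$, which is why the hypothesis is needed.
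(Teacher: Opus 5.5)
Your proposal is correct and follows essentially the paper's proof. The paper likewise writes $\Lie_X$ in the frame as $\partial_X$ plus a constant element of $\mathrm{ad}(\su(3))$ (its Lemma \ref{LieDerivativesInvariantKillingFields}, which uses $\Lie_X\alpha=0$), and then uses $\rho^1_{\mathrm{ad}_u}=\mathrm{ad}_u$ together with the homomorphism property of $\rho^1$ to obtain $(\Lie_X\mathcal{T}\nabla)_Y=\rho^1_{(\Lie_X\nabla)_Y}$.

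There are two harmless slips, neither of which affects the argument, since only the constancy of $C$ and $C\in\mathrm{ad}(\su(3))$ are used:
\begin{itemize}
\item Your own frame computation $[u^\LL,V_j]=\sum_k\langle[u,v_j],v_k\rangle V_k$ gives $C=+\mathrm{ad}_u$, as in the paper, not $-\mathrm{ad}_u$.
\item The formula $B_X(Z)=\nabla^0_Z X$ omits the torsion term of $\nabla^0$.
\end{itemize}
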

The proof of this result requires an understanding of how Lie derivatives operate when we regard vector fields as maps $M \rightarrow \su(3)$. This is our usual correspondence $Y = \sum_j g(Y, V_j) \, v_j = \sum_j Y^j \, v_j$, where $\{V_j\}$ is the $g$-orthonormal trivialization of $TM$ associated to a $\langle , \rangle$-orthonormal basis $\{v_j\}$ of $\su(3)$, as in \eqref{DefinitionVj}.  Denote by $u^\LL$ and $u^\RR$ the left- and right-invariant vector fields on $M$ extending $u\in \su(3)$. Then we have:
\begin{lemma}
\label{LieDerivativesInvariantKillingFields}
Regard vector fields as maps $M \rightarrow \su(3)$, with $Y (x) = \sum_j \, Y^j (x)\, v_j$. Suppose that $u^\LL$ and $w^\RR$ are invariant Killing vector fields. Then
\begin{align}
\Lie_{u^\LL} Y \ &= \ \partial_{u^\LL} Y \ + \ [u, Y] \ = \ \sum\nolimits_j   \   (\partial_{u^\LL} Y^j ) \, v_j \: + \:    Y^j  \,   [u, v_j]        \label{ExpressionsLieDerivativeKilling}   \linebr
\Lie_{w^\RR} Y \ &= \ \partial_{w^\RR} Y \ = \ \sum\nolimits_j   \   (\partial_{w^\RR} Y^j ) \, v_j \  \ .  \nonumber
\end{align}
\end{lemma}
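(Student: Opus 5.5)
The plan is to compute $\Lie_X Y = [X, Y]$ directly in the $g$-orthonormal frame $\{V_j\}$. Writing $Y = \sum_j Y^j V_j$, the Leibniz rule gives
\[
[X, Y] \; = \; \sum\nolimits_j (\partial_X Y^j)\, V_j \; + \; \sum\nolimits_j Y^j\, [X, V_j] \ .
\]
Everything therefore reduces to computing the brackets $[X, V_j]$ for $X = u^\LL$ and $X = w^\RR$. These brackets must then be translated into the map picture $V_j \leftrightarrow v_j$. The expected answers are $[w^\RR, V_j] = 0$ and $[u^\LL, V_j] = \sum_k \langle [u, v_j], v_k\rangle\, V_k$. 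With these, $\sum_j Y^j [u,v_j] = [u, Y]$ in the map picture, which gives the stated formulas.

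The key step is to show that the flow of an invariant Killing field commutes with the automorphism $\alpha$ of \eqref{DefinitionAlpha}. The flow of $w^\RR$ is left multiplication $L_{\exp(tw)}$, and the flow of $u^\LL$ is right multiplication $R_{\exp(tu)}$.
\begin{itemize}
\item Each such diffeomorphism $\Phi_t$ preserves $g$, by the Killing hypothesis.
\item Each $\Phi_t$ also preserves the bi-invariant metric induced by $\langle , \rangle$.
\item Since $\alpha$ is the unique positive, $\langle , \rangle$-self-adjoint automorphism relating $g$ and $\langle , \rangle$, uniqueness forces $(\Phi_t)_* \circ \alpha = \alpha \circ (\Phi_t)_*$.
\end{itemize}
This also shows that $\Phi_t$ is a gauge-compatible isometry, so $(\Phi_t)_* V_j = \alpha\big((\Phi_t)_* v_j^\LL\big)$. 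Differentiating at $t=0$ gives $[X, V_j] = \alpha([X, v_j^\LL])$.

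It then remains to use the standard brackets with left-invariant fields. Left- and right-invariant fields commute, so $[w^\RR, v_j^\LL] = 0$, and hence $[w^\RR, V_j]=0$. For left-invariant fields, $[u^\LL, v_j^\LL] = [u, v_j]^\LL = \sum_k \langle [u,v_j], v_k\rangle\, v_k^\LL$, so applying $\alpha$ gives $[u^\LL, V_j] = \sum_k \langle [u,v_j], v_k\rangle V_k$. Substituting both into the Leibniz formula and reading off components under $V_k \leftrightarrow v_k$ yields \eqref{ExpressionsLieDerivativeKilling}.

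The main obstacle is the commutation of $\alpha$ with the flows, which is what lets the non-invariant frame $V_j$ behave like the invariant frame $v_j^\LL$. I would justify it via uniqueness of the positive square root: $\alpha^{-2}$ is the $\langle,\rangle$-self-adjoint operator representing $g$, and both metrics are $\Phi_t$-invariant. A secondary point to check is the sign convention for $[u^\LL, v^\LL] = [u,v]^\LL$ relative to the matrix commutator used in the Okubo product.
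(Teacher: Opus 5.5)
Your proposal is correct and follows essentially the same route as the paper. In both arguments the key input is $\Lie_X \alpha = 0$ for an invariant Killing field $X$, which gives $[X, V_j] = \alpha([X, v_j^\LL])$; this is then combined with $[u^\LL, v_k^\LL] = [u,v_k]^\LL$ and $[w^\RR, v_k^\LL]=0$. The differences are minor: you expand $[X, \sum_j Y^j V_j]$ by the Leibniz rule where the paper computes $g(\Lie_X Y, V_j)$ using $\Lie_X g = 0$, and you prove $\Lie_X\alpha=0$ directly via uniqueness of the positive square root where the paper cites \cite{Bap4}.
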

This lemma is also proved in appendix \ref{AppendixKLDerivatives}.

\subsection{Kosmann-Lichnerowicz derivatives of spinors}

Let $X$ be a vector field on $M$ and let $\psi$ be a section of $S= S^+ \oplus S^-$. The Kosmann-Lichnerowicz derivative acting on spinors \cite{Kosmann, Lich} is defined by 
\begin{align}
\label{DefinitionKLDerivative}
\cL_X \, \psi \; &:=\;  \nabla^g_X \psi \; -\; \frac{1}{8}\, \sum_{i, j} \big[  g(\nabla^g_{V_i} X, V_j) - g(\nabla^g_{V_j} X, V_i) \big] \, V_i \cdot V_j  \cdot \psi     \linebr
&\, \, = \;  \partial_X \tilde{\psi} \;+ \;\frac{1}{8}\,  \sum_{i, j} \big\{  g([X, V_i], V_j) - g([X, V_j], V_i) \big\} \,   V_i \cdot V_j  \cdot \tilde{\psi} \ . \nonumber
\end{align}
Here $\nabla^g$ denotes both the Levi-Civita connection on $TM$ and its standard lift to the spinor bundle. The dot denotes the Clifford action of vectors on spinors.  The vectors $V_j$ form a $g$-orthonormal, oriented, local trivialization of $TM$. The second line is a local expression using the representative $\tilde{\psi}$ of $\psi$ with respect to the trivialization of $S$ induced by $\{ V_j\}$. It follows from a direct application of the Koszul formula for $\nabla^g$. When $X$ is Killing, the coefficients $g(\nabla^g_{V_i} X, V_j)$ and $g([X, V_i], V_j)$ are both anti-symmetric in $i$ and $j$.

In the case $M=\SU(3)$, we can take our usual $g$-orthonormal trivializations of the form $\{ V_j = \alpha (v_j^\LL)\}$, where $\{v_j\}$ is a $\langle , \rangle$-orthonormal basis of $\su(3)$. Then Clifford multiplication is modelled by the Okubo product on $\su(3)$ and the Kosmann-Lichnerowicz derivatives along invariant vector fields on $M$ take the following, more explicit form:
\begin{proposition}
\label{ExplicitKLDerivativesNonKilling}
Regard Weyl spinors as maps $M \rightarrow \su(3)$, with $\psi (x) = \sum_j \, \psi^j (x)\, v_j$. Denote by $u^\LL$ and $u^\RR$ the extensions of $u \in \su(3)$ as left-invariant and right-invariant vector fields on $M$. Then the Kosmann-Lichnerowicz derivatives act on spinors as
\begin{align}
\cL_{u^\LL} \, \psi^+ \;  &= \;   \sum_j \: \partial_{u^\LL} (\psi^+)^j  \, v_j \:+ \:  (\psi^+)^j \, [u, v_j ] \;- \; \frac{1}{4}\,  \sum_{i \neq j} \,  \big\langle \, \alpha^{-1} (\Lie_{u^\LL} \alpha) ( v_i^\LL) , \,v_j^\LL  \,\big\rangle  \,  ( v_j  \ok \psi^+ ) \ok v_i  \nonumber  \linebr
\cL_{u^\RR} \, \psi^+ \;  &= \;   \sum_j \:  \partial_{u^\RR} (\psi^+)^j  \, v_j \; - \; \frac{1}{4}\,  \sum_{i \neq j} \,  \big\langle \, \alpha^{-1} (\Lie_{u^\RR} \alpha) ( v_i^\LL) , \,v_j^\LL  \,\big\rangle  \,    ( v_j  \ok \psi^+ ) \ok v_i  \ .
\end{align}
where $\alpha$ is the automorphism of $TM$ defined in \eqref{DefinitionAlpha}. For spinors $\psi^-$ in $S^-$ the formula is analogous, except that $( v_j  \ok \psi^+ ) \ok v_i $ should be substituted by $v_i  \ok (\psi^-  \ok v_j)$. 
\end{proposition}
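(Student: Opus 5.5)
We start from the second (local) expression in \eqref{DefinitionKLDerivative}, using the trivialization $\{V_j = \alpha(v_j^\LL)\}$. In that trivialization a spinor $\psi^+$ is represented by the map $\tilde\psi = \psi^+ : M \to \su(3)$. Clifford multiplication is modelled by \eqref{TraditionalCliffordMultiplication}, so $V_i\cdot V_j\cdot\psi^+$ corresponds to $-(v_j\ok\psi^+)\ok v_i$, as already used in the proof of proposition \ref{CurvatureRotatedConnections}. Hence
\[
\cL_X\psi^+ \;=\; \partial_X\psi^+ \;-\; \tfrac18\sum_{i,j} c_{ij}(X)\,(v_j\ok\psi^+)\ok v_i ,
\qquad c_{ij}(X) := g([X,V_i],V_j)-g([X,V_j],V_i).
\]
For $S^-$ the Clifford model gives $V_i\cdot V_j\cdot\psi^- = -\,v_i\ok(\psi^-\ok v_j)$, which accounts for the stated substitution. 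The remaining work is to compute $c_{ij}$ for $X=u^\LL$ and $X=u^\RR$.

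We expand the bracket using $V_i=\alpha(v_i^\LL)$ and the Leibniz rule for Lie derivatives of the endomorphism $\alpha$:
\[
[X,V_i] = (\Lie_X\alpha)(v_i^\LL) + \alpha([X,v_i^\LL]).
\]
For $X=u^\RR$ we have $[u^\RR,v_i^\LL]=0$, since left- and right-invariant fields commute. For $X=u^\LL$ we have $[u^\LL,v_i^\LL]=[u,v_i]^\LL$. By \eqref{DefinitionAlpha}, $g(\alpha(Y),\alpha(Z))=\langle Y,Z\rangle$, so
\[
g([X,V_i],V_j) = \langle \alpha^{-1}(\Lie_X\alpha)(v_i^\LL), v_j^\LL\rangle + \langle [X,v_i^\LL], v_j^\LL\rangle .
\]
The second term is $\langle [u,v_i],v_j\rangle$ in the left-invariant case and zero in the right-invariant case. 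It is antisymmetric in $i,j$ by Ad-invariance, so it contributes $2\langle[u,v_i],v_j\rangle$ to $c_{ij}$.

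Next we treat the $\alpha$-term. We will argue that $\alpha^{-1}\Lie_X\alpha$ is $\langle,\rangle$-antisymmetric whenever $X$ is $g$-Killing and $\langle,\rangle$-Killing, which holds for invariant vector fields. Indeed, differentiating $g(\alpha Y,\alpha Z)=\langle Y,Z\rangle$ along $X$, using $\Lie_X g=0$ and $\Lie_X\langle,\rangle=0$, gives $g((\Lie_X\alpha)Y,\alpha Z)+g(\alpha Y,(\Lie_X\alpha)Z)=0$, and this translates into antisymmetry of $\alpha^{-1}\Lie_X\alpha$ for $\langle,\rangle$. The statement writes the sum over $i\neq j$ and makes no Killing assumption, so if antisymmetry fails in general the formula should still hold after symmetrisation, the diagonal terms dropping out. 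Either way, the $\alpha$-part of $c_{ij}$ produces $-\tfrac14\sum\langle\alpha^{-1}(\Lie_X\alpha)v_i^\LL,v_j^\LL\rangle (v_j\ok\psi^+)\ok v_i$. I expect this sign and symmetrisation bookkeeping to be the main obstacle. If $\alpha^{-1}\Lie_X\alpha$ is not antisymmetric, its symmetric part must be shown to cancel between the two terms of $c_{ij}$, and we need to check that the terms with $i=j$ vanish because $(v_i\ok\psi)\ok v_i$ enters antisymmetrised in $c_{ij}$.

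Finally we treat the adjoint term in the left-invariant case. It contributes
\[
-\tfrac14\sum_{i,j}\langle[u,v_i],v_j\rangle (v_j\ok\psi^+)\ok v_i = -\tfrac14\sum_i([u,v_i]\ok\psi^+)\ok v_i .
\]
By identity \eqref{ImportantIdentityOkubo1}, exactly as in lemma \ref{Restriction3Reps} with $\rho^1_{\mathrm{ad}_u}=\mathrm{ad}_u$, this equals $[u,\psi^+]=\sum_j(\psi^+)^j[u,v_j]$. Combined with $\partial_{u^\LL}\psi^+=\sum_j\partial_{u^\LL}(\psi^+)^j v_j$, this gives the stated formulas. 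For $S^-$ we use $\rho^2_{\mathrm{ad}_u}=\mathrm{ad}_u$ from the same lemma.
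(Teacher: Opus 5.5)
Your route is the paper's. You expand $[X,V_i]$ through $V_i=\alpha(v_i^\LL)$ and read off $g([X,V_i],V_j)=\langle \alpha^{-1}(\Lie_X\alpha)(v_i^\LL),v_j^\LL\rangle+\langle [X,v_i^\LL],v_j^\LL\rangle$. You then evaluate the adjoint piece with \eqref{ImportantIdentityOkubo1}, i.e.\ $\rho^1_{\mathrm{ad}_u}=\mathrm{ad}_u$, and $\rho^2_{\mathrm{ad}_u}=\mathrm{ad}_u$ on $S^-$. All of that is correct.

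The gap is in the $\alpha$-term, which is the part of the statement going beyond corollary \ref{ExplicitKLDerivatives}. Your main argument assumes $X$ is $g$-Killing, which you say ``holds for invariant vector fields''. That is false. The metric $g$ is arbitrary, and $u^\LL$, $u^\RR$ are only guaranteed to be Killing for $\langle\,,\rangle$; the proposition is precisely the non-Killing formula. Moreover, when $X$ is Killing for both metrics it preserves the positive $\langle\,,\rangle$-self-adjoint $\alpha$ relating them. Then $\Lie_X\alpha=0$, so your antisymmetry argument says nothing. What remains is the fallback (``should still hold after symmetrisation''), which you flag as the main obstacle and do not carry out.

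The missing step is one line, and it is where the paper begins. For $i\neq j$, the Clifford relation \eqref{CliffordOkubo} with $\langle v_i,v_j\rangle=0$ gives $(v_j\ok\psi^+)\ok v_i=-(v_i\ok\psi^+)\ok v_j$, i.e.\ $V_i\cdot V_j=-V_j\cdot V_i$. Hence, for any real matrix $B_{ij}$, with no symmetry hypothesis,
\[
-\tfrac18\sum_{i,j}(B_{ij}-B_{ji})\,(v_j\ok\psi^+)\ok v_i \;=\; -\tfrac14\sum_{i\neq j}B_{ij}\,(v_j\ok\psi^+)\ok v_i .
\]
The diagonal drops because $c_{ii}=0$. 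The off-diagonal symmetric part of $B$ is annihilated because $(v_j\ok\psi^+)\ok v_i$ is antisymmetric in $i\neq j$. Equivalently, the paper rewrites $\frac18\sum_{i,j}\{\cdots\}\,V_i\cdot V_j\cdot\psi=\frac14\sum_{i\neq j}g([X,V_i],V_j)\,V_i\cdot V_j\cdot\psi$ before expanding the bracket.

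The restriction $i\neq j$ in the statement is essential. Since $(v_i\ok\psi^+)\ok v_i=\psi^+$, the diagonal would contribute $\mathrm{tr}(B)\,\psi^+$, and this need not vanish for non-Killing $X$.

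For $S^-$, use the other half of \eqref{CliffordOkubo}: $v_i\ok(\psi^-\ok v_j)=-v_j\ok(\psi^-\ok v_i)$ for $i\neq j$. With this step inserted, your proof is complete.
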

The formulas for $\cL_{u^\LL} $ and $\cL_{u^\RR}$ are asymmetric because the trivialization of $S$ was induced by a left-invariant frame on $TM$, not a right-invariant one. The proposition is proved in appendix \ref{AppendixKLDerivatives}.
\begin{corollary}
\label{ExplicitKLDerivatives}
When $u^\LL$ and $w^\RR$ are Killing with respect to $g$, we get simply
\begin{align}
\label{ExpressionsKLDerivativesKilling}
\cL_{u^\LL} \psi^\pm \ &= \ \partial_{u^\LL} \psi^\pm \ + \ [u, \psi^\pm] \    \linebr
\cL_{w^\RR} \psi^\pm \ &= \ \partial_{w^\RR} \psi^\pm  \ . \nonumber
\end{align}
\end{corollary}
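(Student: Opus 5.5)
The plan is to specialise Proposition \ref{ExplicitKLDerivativesNonKilling}: under the Killing hypothesis, show that the correction term involving $\alpha^{-1}(\Lie_{X}\alpha)$ vanishes. Concretely, it suffices to prove that $\alpha^{-1}(\Lie_X\alpha)$ is symmetric with respect to $\langle\,,\rangle$ whenever $X = u^\LL$ or $X = w^\RR$ is $g$-Killing. The term in Proposition \ref{ExplicitKLDerivativesNonKilling} is a contraction of the coefficients $c_{ij} = \langle \alpha^{-1}(\Lie_X\alpha)(v_i^\LL), v_j^\LL\rangle$ against $(v_j\ok\psi^+)\ok v_i$. The Clifford-like relation \eqref{CliffordOkubo} (or equivalently $V_i\cdot V_j = -V_j\cdot V_i$ for $i\neq j$) makes the bilinear expression $(v_j\ok\psi^+)\ok v_i$ antisymmetric in $i\neq j$. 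Hence a symmetric $c_{ij}$ contracts to zero.

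To establish the symmetry, I would differentiate the defining relation \eqref{DefinitionAlpha} along $X$. Here I use that $\langle\,,\rangle$ is bi-invariant, so $X$ is $\langle\,,\rangle$-Killing and $\Lie_X\langle\,,\rangle=0$. Since $X$ is also $g$-Killing, $\Lie_X g = 0$. I would write $g(u,v) = \langle \alpha^{-2}u, v\rangle$, using that $\alpha$ is $\langle\,,\rangle$-self-adjoint. Then $0 = \Lie_X g$ gives $\langle (\Lie_X \alpha^{-2}) u, v\rangle = 0$, so $\Lie_X(\alpha^{-2}) = 0$. Because $\alpha$ is the positive square root of $\alpha^{2}$, which is Lie-invariant under the flow of $X$, and $\alpha$ is uniquely determined by $g$ and $\langle\,,\rangle$, it follows that $\Lie_X\alpha = 0$ outright. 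The flow of $X$ is by isometries of both metrics, so it preserves the unique positive automorphism relating them. The correction term therefore vanishes identically, not merely by the symmetry argument. This naturality step is the cleanest route, and it is the one I would use. The only care needed is that $\Lie_X$ of an endomorphism field commutes with taking the positive square root, which follows from flow-invariance.

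With the correction term gone, Proposition \ref{ExplicitKLDerivativesNonKilling} gives $\cL_{u^\LL}\psi^+ = \sum_j \partial_{u^\LL}(\psi^+)^j v_j + (\psi^+)^j[u,v_j] = \partial_{u^\LL}\psi^+ + [u,\psi^+]$, using linearity of the bracket and constancy of the $v_j$. For the right-invariant case it gives simply $\partial_{w^\RR}\psi^+$. The $S^-$ case is identical, since the correction there has the same coefficients $c_{ij}$. I expect no real obstacle. The only point to check is that the proposition's coefficients are exactly those of $\alpha^{-1}\Lie_X\alpha$, so that $\Lie_X\alpha=0$ kills them; this holds by direct inspection.
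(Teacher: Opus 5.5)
Your proposal is correct and follows the paper's argument. For invariant fields that are Killing for both $g$ and $\langle\,,\rangle$ one has $\Lie_{u^\LL}\alpha = \Lie_{w^\RR}\alpha = 0$, so the correction term in Proposition \ref{ExplicitKLDerivativesNonKilling} vanishes; the only difference is that the paper cites \cite{Bap4} for $\Lie_X\alpha=0$, whereas you prove it directly from flow-invariance of $\alpha^{-2}$ and uniqueness of the positive square root (your preliminary symmetry reduction is valid but not needed).
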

This is because $\Lie_{u^\LL} \alpha = \Lie_{w^\RR} \alpha = 0$ for invariant Killing fields, as in the proof of lemma \ref{LieDerivativesInvariantKillingFields}. Now observe that, applied to the respective representative maps $M \rightarrow \su(3)$, expressions \eqref{ExpressionsKLDerivativesKilling} for the Kosmann-Lichnerowicz derivatives are formally identical to expressions \eqref{ExpressionsLieDerivativeKilling} for the Lie derivatives. Moreover, expressions \eqref{CondensedConnections}, \eqref{CondensedConnections2} and \eqref{CondensedConnections3} for the covariant derivatives $\nabla^r_X$ on $TM$, $S^+$ and $S^-$ are also formally identical, up to permutations of the index $r$. This implies that all the calculations in the proof of theorem \ref{TrialitiyLieDerivativeConnection} in appendix \ref{AppendixKLDerivatives} can be duplicated to show that:
\begin{proposition}
\label{TrialitiyKLLieDerivatives}
Let  $X$ be a left or right-invariant Killing vector field on $(M,g)$ and let $\nabla$ be a metric connection on $TM$. Regard Weyl spinors $\psi^\pm$ as maps $M \rightarrow \su(3)$. Then the spinorial lifts of the triality rotated connections $\nabla^r$ satisfy the identities
\begin{align}
\cL_{X}  ( \nabla^r_Y \psi^+ )  - \nabla^r_Y (\cL_{X} \psi^+ ) -  \nabla^r_{[X, Y ]} \psi^+ \ &= \  \rho^{r}_{( \Lie_X \nabla)_Y} (\psi^+)     \linebr 
\cL_{X}  ( \nabla^r_Y \psi^- )  - \nabla^r_Y (\cL_{X} \psi^- ) -  \nabla^r_{[X, Y ]} \psi^- \ &= \  \rho^{r+1}_{( \Lie_X \nabla)_Y} (\psi^-)    \nonumber   \ ,
\end{align}
where the $\rho^r$ are defined in \eqref{rho1}, as usual, and the indices $r$ are taken modulo 3.
\end{proposition}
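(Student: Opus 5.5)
We compute everything in the trivializations by the frame $\{V_j = \alpha(v_j^\LL)\}$, where spinors are maps $M \rightarrow \su(3)$ and, by \eqref{CondensedConnections} and \eqref{CondensedConnections2}, $\nabla^r_Y \psi^+ = \partial_Y \psi^+ + \rho^r_{A_Y}\psi^+$ and $\nabla^r_Y \psi^- = \partial_Y\psi^- + \rho^{r+1}_{A_Y}\psi^-$. Since the $S^-$ formula is the $S^+$ formula with $\rho^r$ replaced by $\rho^{r+1}$, it suffices to prove the identity $\cL_X(\partial_Y\psi + \rho_{A_Y}\psi) - \partial_Y\cL_X\psi - \rho_{A_Y}\cL_X\psi - \partial_{[X,Y]}\psi - \rho_{A_{[X,Y]}}\psi = \rho_{(\Lie_X\nabla)_Y}\psi$ for an arbitrary one of the three representations $\rho = \rho^s$ of $\mathfrak{so}(\su(3))$. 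We treat right-invariant and left-invariant $X$ separately, using corollary \ref{ExplicitKLDerivatives} for $\cL_X$ on spinors and lemma \ref{LieDerivativesInvariantKillingFields} for the Lie derivative on vector fields.

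\emph{Right-invariant case $X = w^\RR$.} Here $\cL_X = \partial_X$ on spinors and $\Lie_X = \partial_X$ on vector fields regarded as maps. The derivative terms give $\partial_X\partial_Y - \partial_Y\partial_X - \partial_{[X,Y]} = 0$. The remaining terms give $\rho_{\partial_X A_Y - A_{[X,Y]}}\psi$, by linearity of $A\mapsto\rho_A$ with constant structure maps. On the other hand, from the definition of $\Lie_X\nabla$ and $\nabla_Y Z = \partial_Y Z + A_Y Z$ (formula \eqref{CondensedConnections3} with $\rho^3_A = A$), we obtain $(\Lie_X\nabla)_Y = \partial_X A_Y - A_{[X,Y]}$ as an endomorphism. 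This matches, and it is essentially the content of theorem \ref{TrialitiyLieDerivativeConnection}'s proof.

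\emph{Left-invariant case $X = u^\LL$.} Now $\cL_X = \partial_X + \mathrm{ad}_u$ on spinors and $\Lie_X = \partial_X + \mathrm{ad}_u$ on vector fields. Since $u$ is constant, $\mathrm{ad}_u$ commutes with $\partial_Y$, so the extra derivative terms cancel. We are left with $\rho_{\partial_X A_Y - A_{[X,Y]}}\psi + [\mathrm{ad}_u, \rho_{A_Y}]\psi$. Next we apply lemma \ref{Restriction3Reps}, which gives $\mathrm{ad}_u = \rho_{\mathrm{ad}_u}$ for every one of the three representations. Because each $\rho^s$ is a Lie algebra homomorphism (being a representation of $\mathfrak{so}(8)$, as used in \eqref{EquivarianceCovDerivHiggsTriality}), we get $[\mathrm{ad}_u,\rho_{A_Y}] = [\rho_{\mathrm{ad}_u},\rho_{A_Y}] = \rho_{[\mathrm{ad}_u, A_Y]}$. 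The same computation on vector fields gives $(\Lie_X\nabla)_Y = \partial_X A_Y - A_{[X,Y]} + [\mathrm{ad}_u, A_Y]$, and applying $\rho$ yields the claim.

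One check must precede all of this: when $X$ is Killing and invariant, $\Lie_X\alpha = 0$, so the frame identifications commute with $\Lie_X$ as used. The argument also relies on the correspondence $Y \mapsto \sum_j g(Y,V_j)v_j$ and on $[X,Y]$ computed via vector fields, not maps. This bookkeeping is the main obstacle. Specifically, we must verify that the derivative $\partial_X$ of the map $A_Y$ (defined through $g(\nabla_Y V_j,V_k)$) correctly reproduces $\Lie_X\nabla$ with the $\mathrm{ad}_u$ correction. We would settle this first by computing $(\Lie_X\nabla)_Y Z$ on constant maps $Z = v_j$, where $\Lie_X v_j = [u,v_j]$ by lemma \ref{LieDerivativesInvariantKillingFields}.
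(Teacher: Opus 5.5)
Your proposal is correct and follows the paper's own route. The paper proves this proposition by observing that the Kosmann--Lichnerowicz derivatives of corollary \ref{ExplicitKLDerivatives} have the same form as the Lie derivatives of lemma \ref{LieDerivativesInvariantKillingFields}. It then repeats the computation from the proof of theorem \ref{TrialitiyLieDerivativeConnection}, using lemma \ref{Restriction3Reps} and the homomorphism property to write $[\mathrm{ad}_u,\rho_{A_Y}]=\rho_{[\mathrm{ad}_u,A_Y]}$, and the case $\rho^3_A=A$ to identify $(\Lie_X\nabla)_Y$; this is exactly what you do. The bookkeeping you flag as the main obstacle is already handled by those two cited results, which rest on $\Lie_X\alpha=0$. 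Your own vector-field computation then completes it, so no separate verification is needed.
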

Thus, when $( \Lie_X \nabla) = 0$, the derivative $\cL_X$ has the natural commutation relation with the three connections $\nabla^r$ on the chiral spinor bundles.

We have just established how the Kosmann-Lichnerowicz derivative $\cL_X$ commutes with the three covariant derivatives $\nabla^r_Y$ of spinors. Now we will describe how it commutes with the respective Dirac-type operators. The commutation of $\cL_X$ with the classical Dirac operator for Killing $X$ is a standard result \cite{Kosmann}.
\begin{proposition}
\label{PropDiracCommutesWithKosmann}
Let $X$ be a left-invariant or right-invariant vector field on $M$ such that $\Lie_X g = 0$ and $\Lie_X \nabla = 0$. Then, for $r = 1, 2, 3$, the derivative $\cL_X$ satisfies 
\[
\cL_X \, \cD_r \, \psi^+ \; = \; \cD_r \, \cL_X \,  \psi^+ \qquad
\cL_X \, \cD_r^\dag  \, \psi^- \; = \; \cD_r^\dag \, \cL_X  \, \psi^-  \qquad
\cL_X \, \sD_r \, \psi \; = \; \sD_r \, \cL_X \, \psi \ .
\]
\end{proposition}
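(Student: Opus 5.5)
We split $\cD_r\psi^+ = \sum_j V_j \cdot \nabla^r_{V_j}\psi^+$ into a Clifford contraction and a covariant derivative, and commute $\cL_X$ past each factor separately. By proposition \ref{TrialitiyKLLieDerivatives} with $\Lie_X\nabla = 0$, we have
\[
\cL_X(\nabla^r_Y\psi^+) = \nabla^r_{[X,Y]}\psi^+ + \nabla^r_Y(\cL_X\psi^+)
\]
for every vector field $Y$, and the analogous identity holds on $S^-$.

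The second ingredient is compatibility of $\cL_X$ with Clifford multiplication. For Killing $X$ this is standard \cite{Kosmann}:
\[
\cL_X(Y\cdot\phi) = [X,Y]\cdot\phi + Y\cdot\cL_X\phi .
\]
In our model we can verify it directly from corollary \ref{ExplicitKLDerivatives} and lemma \ref{LieDerivativesInvariantKillingFields}. Here $V_j\cdot\phi$ corresponds to $v_j\ok\phi$ or $-\phi\ok v_j$, and every identified object (vector or spinor) is transported by $\partial_X+\mathrm{ad}_u$ when $X=u^\LL$, or by $\partial_X$ when $X=w^\RR$. Ad-equivariance \eqref{AdInvariantOkubo} of the Okubo product then gives a Leibniz rule for $\ok$.

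Combining the two identities, we get
\[
\cL_X\cD_r\psi^+ = \sum_j [X,V_j]\cdot\nabla^r_{V_j}\psi^+ + \sum_j V_j\cdot\nabla^r_{[X,V_j]}\psi^+ + \cD_r\cL_X\psi^+ .
\]
The first two sums cancel. Write $[X,V_j] = \sum_k c_{jk}V_k$ with $c_{jk}=g([X,V_j],V_k)$, which is antisymmetric in $j,k$ because $X$ is Killing. The two sums then become $\sum_{j,k}c_{jk}\,V_k\cdot\nabla^r_{V_j}\psi^+$ and $\sum_{j,k}c_{jk}\,V_j\cdot\nabla^r_{V_k}\psi^+$, which are negatives of each other. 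This proves the first identity.

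For $\cD_r^\dag$ there are two routes. The first uses that $\cL_X$ is formally skew-adjoint on $L^2(S)$ for Killing $X$, since it preserves $\langle\,,\rangle$ pointwise (the local formula is $\partial_X$ plus a skew term) and $\Lie_X\vol_g=0$. Taking adjoints of $[\cL_X,\cD_r]=0$ then gives $[\cL_X,\cD_r^\dag]=0$. The second route works from proposition \ref{AlternativeExplicitD1Adjoint}. The sum $\sum_j V_j\cdot\nabla^r_{V_j}\psi^-$ is handled exactly as before, now using the $S^-$ case of proposition \ref{TrialitiyKLLieDerivatives}. The extra term needs $\cL_X(\tau_r\cdot\psi^-) = [X,\tau_r]\cdot\psi^- + \tau_r\cdot\cL_X\psi^-$, so it remains to show $\Lie_X\tau_r=0$. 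Theorem \ref{TrialitiyLieDerivativeConnection} gives $\Lie_X\nabla^r=0$, and $\Lie_Xg=0$, so the torsion $T^{\nabla^r}$ is $X$-invariant. Its trace against $g$ is then invariant as well, so $\tau_r$ is preserved. The third identity, for $\sD_r$, follows immediately because $\cL_X$ preserves chirality and $\sD_r$ is built from $\cD_r$ and $\cD_r^\dag$.

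The main obstacle is justifying the Clifford Leibniz rule and the skew-adjointness of $\cL_X$ for our deformed $\dok$ model. Pointwise invariance of $\langle\,,\rangle$ should follow from corollary \ref{ExplicitKLDerivatives} together with Ad-invariance of $\langle\,,\rangle$.
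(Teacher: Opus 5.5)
Your proposal is correct and follows essentially the same route as the paper: the Kosmann Leibniz rule for Clifford multiplication combined with proposition \ref{TrialitiyKLLieDerivatives}, cancellation of the two extra sums through the antisymmetry of $g([X,V_j],V_k)$, and, for $\cD_r^\dag$, exactly your first route (pointwise compatibility of $\cL_X$ with $\langle\,,\rangle$, $\Lie_X \vol_g = 0$ and Stokes), with $\sD_r$ then following from the two chiral pieces. Your alternative route through $\Lie_X \tau_r = 0$ (via theorem \ref{TrialitiyLieDerivativeConnection}) and your direct check of the Leibniz rule in the Okubo model via Ad-equivariance are also valid, though the paper does not need them.
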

\begin{proof}
The Kosmann-Lichnerowicz derivative always preserves the chiral spinor bundles $S^\pm$. When $X$ is Killing, it is also compatible with Clifford multiplication \cite{Kosmann}. Combining this fact with proposition \ref{TrialitiyKLLieDerivatives}, we calculate that
\begin{align}
\cL_X \big( \sum\nolimits_j V_j \cdot \nabla^r_{V_j} \psi^+ \big) \: &= \:  \sum\nolimits_j \:  [X, V_j] \cdot \nabla^r_{V_j} \psi^+ \: + \:  V_j \cdot  \cL_X \nabla^r_{V_j} \psi^+   \nonumber  \linebr
&= \:  \sum\nolimits_j \:    [X, V_j] \cdot \nabla^r_{V_j} \psi^+ \: + \:  V_j \cdot  \nabla^r_{[X, V_j]} \psi^+ \: + \:  V_j \cdot  \nabla^r_{V_j} ( \cL_X \psi^+)  \nonumber  \linebr
&= \:   \cD_r \, \cL_X \,  \psi^+ \ .
\end{align}
The last equality uses that $\{V_j \}$ is $g$-orthonormal and $X$ is Killing, and in particular
\begin{align}
\sum\nolimits_j \:   V_j \cdot  \nabla^r_{[X, V_j]} \psi^+ \:  &= \: \sum\nolimits_{j, k} \, g(\, [X, V_j], V_k\,) \:  V_j \cdot  \nabla^r_{V_k} \psi^+  \nonumber  \linebr
 &= \: - \sum\nolimits_{j, k} \, g(\, V_j, [X, V_k] \,) \:  V_j \cdot  \nabla^r_{V_k} \psi^+  \nonumber  \linebr
 &= \: - \sum\nolimits_{k} \: [X, V_k] \cdot \nabla^r_{V_k} \psi^+ \ .
\end{align}
This proves the commutation of $\cL_X$ with $\cD_r$. For the $L^2$-adjoints $\cD_r^\dag$, using twice the fact that $\cL_X$ is compatible with the inner product of spinors, the fact  that $X$ is Killing and hence $\Lie_X \vol_g = 0$, and Stokes' theorem, we have
\begin{align}
\int_M \langle \, \cL_X \, \cD_r^\dag  \, \psi^-, \, \psi^+ \, \rangle \, \vol_g \: &= \: \int_M  \Big[ \partial_X \langle \cD_r^\dag   \psi^-,  \psi^+ \rangle \,  - \, \langle  \cD_r^\dag   \psi^-,  \cL_X  \psi^+ \rangle \Big] \, \vol_g \nonumber  \linebr
&= \: \int_M  \Lie_X  \Big[  \langle \cD_r^\dag   \psi^-,  \psi^+ \rangle \, \vol_g  \Big] \, - \, \langle    \psi^-,  \cD_r \cL_X  \psi^+ \rangle  \, \vol_g  \nonumber  \linebr
&= \: - \int_M \,  \langle    \psi^-,  \cL_X  \cD_r  \psi^+ \rangle \, \vol_g  \nonumber  \linebr
&= \:  \int_M \,  \langle   \cL_X  \psi^-,   \cD_r  \psi^+ \rangle  \, \vol_g  \nonumber  \linebr
&= \:  \int_M \,  \langle  \, \cD_r ^\dag\, \cL_X \, \psi^-,  \, \psi^+\, \rangle  \, \vol_g  \nonumber  \ .
\end{align}
This proves the commutation of $\cL_X$ with $\cD_r^\dag$. The commutation of $\cL_X$ with $\sD_r$ follows from the previous two.
\end{proof}
\begin{remark}
When $g$ is a left-invariant metric, the identity component of the isometry group is contained in the group of left and right multiplications \cite{OT}. Moreover, all smooth conformal vector fields are in fact Killing. This is because such a vector field will generate a 1-parameter group of conformal transformations, as $M$ is compact. But $M$ is homogeneous and not a sphere, so a theorem of Goldberg and Kobayashi \cite{GoKo} guarantees that those transformations are isometries, and the vector field that generates them must be Killing.
\end{remark}

\section{An explicit non-isospectral example}
 \label{SectionExplicitExample}
 
 We start this section by defining a one-parameter deformation $g$ of the bi-invariant metric $\langle , \rangle$ on $TM$. After deformation, the identity component of the isometry group of $g$ becomes $(\SU(3) \times \SU(2) \times \mathrm{U}(1)) / \mathbb{Z}_6$. Then we consider the three self-adjoint Dirac operators $\sD_r$ associated with the Levi-Civita connection $\nabla^g$ and calculate their simplest eigeinvalues. These differ for the three operators and, as a consequence, we observe that $\sD_2$ and $\sD_3$ cannot be isospectral to the classical Dirac operator $\sD = \sD_1$ of $g$. We also calculate the explicit trace torsion vectors $\tau_2$ and $\tau_3$ of the triality-rotated connections ${\mathcal T}\nabla^g$ and ${\mathcal T}{\mathcal T}\nabla^g$. We conclude that they vanish for the bi-invariant metric, but are non-zero for the deformed metrics $g$. Most of the actual computations are relegated to appendix \ref{AppendixExplicitExample}.

Let $\{ \lambda_j \}$ be the usual set of eight Gell-Mann matrices. Define 
\begin{equation}
\label{NormalizedBasissu(3)}
v_j \; := \; \frac{i}{2a}\, \lambda_j \qquad  \quad   \text{for} \ \ j = 1, \ldots, 8 \ .
\end{equation}
These matrices lie in $\su (3)$ and satisfy $ \langle v_j , v_k \rangle   =  \tfrac{1}{2}\, \mathrm{Tr} (\lambda_j \lambda_k) =  \delta_{jk}$, by \eqref{AdInvariantProduct}.
So $\{ v_j \}$ is a $\langle , \rangle$-orthonormal basis of $\su(3)$. Define the subspaces
\begin{equation}
\label{E8Splitting}
 \mathfrak{u} (1)\: := \: \mathrm{span}\{ v_8\} \ , \qquad \su(2) \: := \: \mathrm{span}\{ v_1, v_2, v_3\} \ , \qquad \mathfrak{m} \: := \: \mathrm{span}\{ v_4, v_5, v_6, v_7\} \ .
\end{equation}
There is a well-known $\langle , \rangle$-orthogonal decomposition
\begin{equation}
\su(3) \: =  \: \mathfrak{u} (1) \, \oplus\, \su(2) \,  \oplus \,  \mathfrak{m} \ .
\end{equation} 
All the elements in the subalgebra $\mathfrak{u}(2) := \mathfrak{u} (1) \, \oplus\, \su(2)$ commute with $v_8$. The subgroup $U(2) \subset \SU(3)$ generated by $\mathfrak{u}(2)$ preserves the decomposition under the adjoint action.

Let $v_j^\LL$ denote the left-invariant extension of $v_j$ to a vector field on $M$. Define a left-invariant metric $g$ on $M$ by declaring that the vectors fields
\begin{equation}
V_8 \; = \; \mu^{-1} \, v_8^\LL  \qquad  \qquad  V_j \; = \; v_j^\LL  \ \ \  \text{for} \ \ j=1, \ldots, 7
\end{equation}
 form a $g$-orthonormal trivialization of $TM$. Here $\mu$ is a positive constant. The scalar curvature of this metric is a special case of the one given in \cite{Bap1}, section 3.7. It is
\begin{equation}
R_g \: = \:  \frac{3}{ 4\, a^2}  \, ( 9  -  \mu^{2})  \ .
\end{equation}
The constant $\mu$ measures the relative size of the circle generated by $v_8$ inside $\SU(3)$. When $\mu \rightarrow 0^+$ with fixed $a$, that circle becomes arbitrarily small but the scalar curvature  remains bounded. The constant $a \in \mathbb{R}^+$ determines the normalization of the bi-invariant metric $\langle , \rangle$ of which $g$ is a deformation, as is explicit in \eqref{AdInvariantProduct}.

Regarding chiral spinors as maps $M \rightarrow \su(3)$, define a spinor $\psi_0$ by declaring that its two chiral components are the constant maps $\psi_0^+ = \psi^-_0 = v_8$. Then we have:
\begin{proposition}
\label{PropositionExplicitEigenvalues}
For the Levi-Civita connection of the metric $g$ described above, the Dirac-type operators $\sD_r$ satisfy
\begin{align}
\sD_1 \psi_0 \; &= \;  -\, \frac{3}{2a} \, \psi_0   \qquad \qquad \quad \sD_3 \psi_0 \; = \;  -\, \frac{3 \, \mu}{2\, a} \, \psi_0   \linebr
\sD_2 \psi_0 \; &= \;  -\, \frac{3}{4\, a} \, ( 1 + \mu ) \, \psi_0  \nonumber \ .
\end{align}
\end{proposition}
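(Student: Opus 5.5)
Since $\psi_0$ is constant and $g$ is left-invariant, all derivative terms $\partial_{V_j}\psi_0$ vanish, and by Remark \ref{RemarkTorsionLeftInvariantMetrics} all divergences $\divergence_g V_j$ vanish. So the plan is to compute, for each $r$, the algebraic (zeroth-order) parts of $\cD_r v_8$ and of $\cD_r^\dag v_8$, and check that both equal the stated multiple of $v_8$. The first input is the Levi-Civita coefficients $\Gamma_{ijk}=g(\nabla^g_{V_i}V_j,V_k)$ of the left-invariant frame $\{V_j\}$. I will get them from the Koszul formula for left-invariant fields,
\[
\Gamma_{ijk}=\tfrac12\big(g([V_i,V_j],V_k)-g([V_j,V_k],V_i)+g([V_k,V_i],V_j)\big).
\]
The structure constants are those of $\su(3)$ in the Gell-Mann basis (scaled by $1/2a$), with $V_8$ rescaled by $\mu^{-1}$ and $g(V_8,V_8)=1$. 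Because $v_8$ commutes with $\mathfrak u(2)$ and acts on $\mathfrak m$ by a complex structure with eigenvalue proportional to $\sqrt3/(2a)$, the only $\mu$-dependent coefficients are $\Gamma_{8jk}$, $\Gamma_{j8k}$ and $\Gamma_{jk8}$ with $j,k\in\mathfrak m$. These are multiples of $\mu$ or $\mu^{-1}$ combinations that collapse to factors like $\mu$ and $(2-\mu^2)/\mu$.

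Next I evaluate $\cD_r v_8$ using Proposition \ref{PropExplicitD}, with $\psi^+=v_8$.
\begin{itemize}
\item For $\cD_3$ the formula reduces to $\sum_{i,k}\Gamma_{i8k}\,v_i\ok v_k$. Only the $\mathfrak m$-block of $\Gamma_{i8k}$ is nonzero, and it is proportional to $\mu\,\langle[v_8,v_i],v_k\rangle$. Then $\sum \langle [v_8,v_i],v_k\rangle v_i\ok v_k$ should be a multiple of $v_8$, which I will get from the $\mathfrak m$-part of the identities in Proposition \ref{PropositionOkuboIdentities}. This gives the factor $\mu$ in the $\sD_3$ eigenvalue.
\item For $\cD_1$ and $\cD_2$ I write $\Gamma=\Gamma^{\mathrm{bi}}+\delta\Gamma$. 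Here $\Gamma^{\mathrm{bi}}$ is the bi-invariant Levi-Civita part, i.e.\ the $s=\tfrac12$ Cartan-Schouten term.
\item The $\Gamma^{\mathrm{bi}}$ contribution is handled by Lemma \ref{Restriction3Reps} together with \eqref{ImportantIdentityOkubo1}--\eqref{ImportantIdentityOkubo2}. It yields a multiple of $v_8\ok v_8$ (which is proportional to $v_8$ by the Okubo structure) or of $\sum_j v_j\ok[v_j,v_8]$-type sums. These produce the $-3/(2a)$ constant.
\item The deformation part $\delta\Gamma$ is supported on index triples involving $8$ and two $\mathfrak m$ indices. I will evaluate it using the Clifford relations \eqref{CliffordOkubo} and the associator \eqref{AssociatorOkubo}. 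For $\cD_1$ the $\mu$-dependence must cancel, as expected since $\mathrm{Scal}$ changes only mildly; for $\cD_2$ it must leave $(1+\mu)/2$ times the bi-invariant value.
\end{itemize}

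For the adjoint part I use Proposition \ref{AlternativeExplicitD1Adjoint}: $\cD_r^\dag v_8=\sum_j V_j\cdot\nabla^r_{V_j}v_8+\tau_r\cdot v_8$. The needed torsion traces come from Proposition \ref{PropTorsionTrace}. I expect $\tau_1=0$ (Levi-Civita), and I will verify $\tau_2=-\tau_3=\tfrac{3}{4a}(1-\mu)v_8$. This requires $v_8\ok v_8$ and the Clifford action $V_j\cdot$ on $S^-$, namely $-\psi^-\ok v_j$ from \eqref{Vjdokpsi}. Consistency, i.e.\ that the $S^-$ component reproduces the same eigenvalue, is a check that the $\tau_r$ correction exactly compensates the asymmetry between the $\ok$ orderings on $S^+$ and $S^-$.

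The main obstacle is the explicit Okubo arithmetic on the $\mathfrak m$-block: sums like $\sum_{j,k\in\mathfrak m}\langle[v_8,v_j],v_k\rangle\,(v_k\ok v_8)\ok v_j$. I plan first to use $\mathrm{Ad}$-equivariance under $U(2)$ to argue that every such sum is a multiple of $v_8$, since $v_8$ spans the $U(2)$-invariants in $\su(3)$. Then I only need to pair with $v_8$, using adjunction \eqref{AdjunctionOkubo} to reduce each coefficient to a trace of Gell-Mann matrices, i.e.\ to $d_{8jk}$ and $f_{8jk}$ values. The fallback is direct computation with Gell-Mann matrices, deferred to the appendix.
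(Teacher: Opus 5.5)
Your proposal is correct. For the $\cD_r\psi_0^+$ half it is essentially the paper's argument. The paper also splits $A_{V_i}=\tfrac12\,\mathrm{ad}_{v_i}+C_{V_i}$ and uses Lemma \ref{Restriction3Reps} to reduce the bi-invariant part to $\tfrac12\sum_i v_i\ok[v_i,v_8]=-\tfrac{3}{2a}v_8$. It then evaluates $\sum_i v_i\ok\rho^r_{C_{V_i}}(v_8)$.

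Your one-line treatment of $\cD_3$ via $\Gamma_{i8k}=\tfrac{\mu}{2}f_{i8k}$ is a nice shortcut. Note that the identity it needs is \eqref{OkuboIdentity4}, not Proposition \ref{PropositionOkuboIdentities}.

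The genuine divergence is the adjoint half. The paper never evaluates $\cD_r^\dag$. It observes that $\cD_r^\dag$ maps constant spinors to constant spinors and commutes with $\cL_{u^\LL}$ for $u\in\mathfrak{u}(2)$ (Proposition \ref{PropDiracCommutesWithKosmann}). Hence $\cD_r^\dag$ preserves the $\mathrm{Ad}_{\mathrm{U}(2)}$-invariant line $\mathbb{R}v_8$, and the constant is forced by $\langle\cD_r^\dag v_8,v_8\rangle=\langle v_8,\cD_r v_8\rangle$.

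You instead compute $\cD_r^\dag v_8=-\sum_j(\nabla^r_{V_j}v_8)\ok v_j-v_8\ok\tau_r$ from Proposition \ref{AlternativeExplicitD1Adjoint}. Here you must remember that on $S^-$ the connections $\nabla^1,\nabla^2,\nabla^3$ use $\rho^2,\rho^3,\rho^1$. With $c_0=\tfrac12(\mu-1)$ and $\tau_2=-\tau_3=-\tfrac{3c_0}{2a}v_8$, this does close. For $r=3$, the bi-invariant, $C_{V_j}$ and torsion pieces give $-\tfrac{3}{2a}$, $-\tfrac{3c_0}{2a}$ and $-\tfrac{3c_0}{2a}$ times $v_8$, which sum to $-\tfrac{3\mu}{2a}v_8$.

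The two routes buy different things:
\begin{itemize}
\item Your route avoids the Kosmann--Lichnerowicz machinery and doubles as a consistency check of Propositions \ref{AlternativeExplicitD1Adjoint} and \ref{PropositionExplicitTaur}. Its cost is that it needs the torsion traces as input.
\item The paper's route is shorter and explains structurally why the $S^+$ and $S^-$ constants agree.
\end{itemize}
Your own $\mathrm{U}(2)$-equivariance idea already gives the paper's shortcut, so the explicit adjoint computation is replaceable rather than a mere ``check''.

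One caveat concerns the deformation step. The remark that the $\mu$-dependence of $\cD_1$ cancels ``since $\mathrm{Scal}$ changes only mildly'' is not a reason and should be dropped. The actual mechanism is as follows:
\begin{itemize}
\item For $i\neq 8$, $C_{V_i}(w)=c_0\big(\langle w,v_8\rangle[v_i,v_8]-\langle w,[v_i,v_8]\rangle v_8\big)$ is $c_0$ times the infinitesimal rotation in the plane of the orthogonal vectors $v_8$ and $[v_i,v_8]$.
\item The relations \eqref{CliffordOkubo} then collapse the two spin actions to $\rho^1_{C_{V_i}}(w)=\tfrac{c_0}{2}(v_8\ok w)\ok[v_i,v_8]$ and $\rho^2_{C_{V_i}}(w)=\tfrac{c_0}{2}[v_i,v_8]\ok(w\ok v_8)$.
\item Using $v_8\ok v_8=v_8$ together with \eqref{ImportantIdentityOkubo1} and \eqref{ImportantIdentityOkubo2} gives $0$ and $-\tfrac{3c_0}{2a}v_8$ respectively.
\item The $i=8$ term drops out because $C_{V_8}=c_8\,\mathrm{ad}_{v_8}$ kills $v_8$.
\end{itemize}
This is much lighter than your fallback of reducing everything to $d_{8jk}$ and $f_{8jk}$ traces.
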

\begin{remark}
When $\mu =1$ the metric $g$ is bi-invariant and the Levi-Civita connection $\nabla^g$ is in the Cartan-Schouten line. So, from corollary \ref{CoincidenceConnectionsBiinvariant}, we know that $\sD_1 = \sD_2 = \sD_3$. Hence the three eigenvalues computed here must coincide when $\mu = 1$, as they do. Observe also that the $\sD_2$-eigenvalue is the average of the other two eigenvalues.
\end{remark}
\begin{remark}
Friedrich's lower bound \cite{Friedrich} says that the eigenvalues of the classical Dirac operator $\sD_1 = \sD$ must all have a square greater than, or equal to,
$
\tfrac{2}{7} \, R_g =   \tfrac{3}{ 14\, a^2}  \, ( 9  -  \mu^{2})
$. The calculated eigenvalue of $\sD_1$ is entirely independent of $\mu$ and satisfies that bound. In contrast, when $\mu \rightarrow 0^+$ with fixed $a$, the calculated eigenvalue of $\sD_3$ tends to zero. The eigenvalue of $\sD_2$ also tends to a finite constant below Friedrich's bound. This shows that $\sD_2$ and $\sD_3$ do not have the same spectrum as the classical Dirac operator, in general.
\end{remark}
\begin{remark}
Spectral deformations of this kind were calculated long ago for the Berger metrics on odd-dimensional spheres, for example, exhibiting how the Dirac spectrum and the space of harmonic spinors can depend on the metric \cite{Hitchin2, Bar92, Ginoux}. However, sometimes non-isometric metrics do have identical Dirac spectra \cite{Bar96}. Thus, the result of the calculation for our operators $\sD_r$ could hardly be guessed a priori. The behaviour of the classical Dirac spectrum under deformations with collapsing circles, like our $\mu \rightarrow 0^+$ limit, was studied in \cite{AB}, for example.
\end{remark}

Now consider the trace torsion vector fields $\tau_2$ and $\tau_3$ of the triality rotations of the Levi-Civita connection, as given in \eqref{TorsionTraceFormulas}. By \eqref{ExpansionsDr}, these vector fields also determine the degree-7 terms of the operators $\sD_2$ and $\sD_3$ on the spinor bundle. Then we have:
\begin{proposition}
\label{PropositionExplicitTaur}
The trace torsion vector fields $\tau_2$ and $\tau_3$ of the rotated connections ${\mathcal T}\nabla^g$ and ${\mathcal T}{\mathcal T}\nabla^g$ are identified with the constant maps  in $M \rightarrow \su(3)$ having values
\begin{equation}
\tau_2 \: = \: -\, \tau_3  \: = \:  \frac{3}{4\, a} \, (1- \mu)  \,  v_8 \ .
\end{equation}
They only vanish for the bi-invariant metric with $\mu=1$.
\end{proposition}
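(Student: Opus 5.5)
The plan is to apply the formulas \eqref{TorsionTraceFormulas} of proposition \ref{PropTorsionTrace} with $\nabla=\nabla^g$. Remark \ref{RemarkTorsionLeftInvariantMetrics} applies because $g$ is left-invariant, so all $\divergence_g V_j$ vanish. Since the $V_j$ are left-invariant, the coefficients $\Gamma_{jkl}=g(\nabla^g_{V_j}V_k,V_l)$ are constants, and both $\tau_2$ and $\tau_3$ are constant maps $M\to\su(3)$. The proof therefore reduces to an algebraic computation in $\su(3)$.

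\textbf{Step 1: Christoffel symbols.} Compute the $\Gamma_{jkl}$ with the Koszul formula for left-invariant fields,
\[
2\Gamma_{jkl} = g([V_j,V_k],V_l) - g([V_k,V_l],V_j) + g([V_l,V_j],V_k).
\]
The structure constants are those of $\su(3)$, rescaled by $\mu$ wherever index $8$ appears. Because $v_8$ commutes with $\mathfrak u(2)$, the only brackets producing or involving $V_8$ are $[\mathfrak m,\mathfrak m]\to v_8$ and $[v_8,\mathfrak m]\subset\mathfrak m$. Hence only the components with exactly one index $8$ and two indices in $\mathfrak m$ are deformed. All other components keep their bi-invariant value $\tfrac12\langle[v_j,v_k],v_l\rangle$.

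Concretely, I would write
\[
\Gamma_{jkl}=\tfrac12 c_{jkl}+\delta\Gamma_{jkl},
\]
where $c_{jkl}=g([V_j,V_k],V_l)$, and $\delta\Gamma$ is supported on triples $\{8,p,q\}$ with $p,q\in\{4,\dots,7\}$, with coefficients that are affine in $\mu$ and proportional to $\langle [v_p,v_q],v_8\rangle$. Equivalently, $\nabla^g = (\text{Cartan--Schouten-type part}) + $ a correction, and the correction's endomorphism $A_X$ from \eqref{EndomorphismConnection} is a combination of $\mathrm{ad}_{v_8}$ restricted to $\mathfrak m$ and of the rank-two maps $v_p\mapsto v_8$.

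\textbf{Step 2: reduce the sums to the non-adjoint part.} Write $\Lambda_j := A_{V_j}$. By \eqref{PropTorsionTraceAux1}, $\tau_r$ is dual to $\sum_j \rho^{r-1}_{\Lambda_j} v_j$. Any part of $\Lambda_j$ of the form $\mathrm{ad}_{w_j}$ contributes the same to all three $\tau_r$ by lemma \ref{Restriction3Reps}. Moreover $\tau_1=0$, since $\nabla^g$ is torsion-free. So only the non-adjoint part $\Lambda_j^\perp$ can produce differences.

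This also gives the consistency check $\tau_1+\tau_2+\tau_3=\sum_j[u_{\Lambda_j},v_j]$, from lemma \ref{Sum3Reps}. I expect the right-hand side to vanish by $\mathrm{U}(2)$-symmetry, which would yield $\tau_2=-\tau_3$ directly. Indeed, $\tau_r$ is a $\mathrm{U}(2)$-invariant element of $\su(3)$: the metric and frame are $\mathrm{Ad}_{\mathrm{U}(2)}$-invariant, and the products $\ok$ are $\mathrm{Ad}$-equivariant. Hence $\tau_r\in\mathrm{span}\{v_8\}$, and it suffices to compute the single component $\langle\tau_2,v_8\rangle$.

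\textbf{Step 3: the explicit evaluation (main obstacle).} Compute $\langle\tau_2,v_8\rangle = -\tfrac14\sum_{j,k,l}\Gamma_{jkl}\langle (v_l\ok v_j)\ok v_k,v_8\rangle$. Using adjunction \eqref{AdjunctionOkubo}, rewrite this as $-\tfrac14\sum\Gamma_{jkl}\langle v_l\ok v_j, v_k\ok v_8\rangle$. The Cartan--Schouten part $\tfrac12 c_{jkl}$ is handled by identity \eqref{ImportantIdentityOkubo1} or \eqref{ImportantIdentityOkubo2} with $u$ or $w=v_8$, and should give the $\mu$-independent contribution. The correction $\delta\Gamma$ requires the explicit Okubo products $v_p\ok v_q$ and $v_8\ok v_p$ for $p,q\in\mathfrak m$. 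Here one needs the $d$-symbols $d_{8pp}=-\tfrac{1}{2\sqrt3}$ for $p=4,\dots,7$, and the structure constants $f_{458}=f_{678}=\tfrac{\sqrt3}{2}$.

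This bookkeeping of signs and factors is where I expect the difficulty. The key simplification is that $v_8\ok v_p$ stays in $\mathfrak m$, and that the relevant combination of $\ok$-products collapses to multiples of $\langle v_p,v_p\rangle$. This should leave $\tfrac{3}{4a}(1-\mu)$, to be cross-checked against the eigenvalue difference in proposition \ref{PropositionExplicitEigenvalues}. Finally, $\tau_3=-\tau_2$ follows from Step 2, and the vanishing only at $\mu=1$ is immediate from the explicit factor $(1-\mu)$.
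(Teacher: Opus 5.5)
Your reduction is the same as the paper's. For $\nabla=\nabla^g$ the divergences vanish, $\tau_r=\sum_j\rho^{r-1}_{A_{V_j}}(v_j)$, $\tau_1=0$, $\mathrm{U}(2)$-invariance gives $\tau_r\in\mathrm{span}\{v_8\}$, and lemma~\ref{Sum3Reps} gives $\tau_2+\tau_3=\sum_j[u_{A_{V_j}},v_j]$.

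The gap is your claim that this last vector vanishes ``by $\mathrm{U}(2)$-symmetry''. $\mathrm{U}(2)$-invariance only puts it in $\mathrm{span}\{v_8\}$, the same line where $\tau_2$ and $\tau_3$ already lie, and says nothing about its coefficient. It cannot do more. An $\mathrm{Ad}_{\mathrm{U}(2)}$-equivariant assignment $v_j\mapsto u_{A_{V_j}}$ may contain, on $\mathfrak m$, a multiple of the complex structure $\mathrm{ad}_{v_8}|_{\mathfrak m}$. Such a term contributes a multiple of $\sum_j[[v_8,v_j],v_j]=-\tfrac{3}{a^2}v_8\neq 0$. So $\tau_3=-\tau_2$, half of the statement, is unproved.

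The paper closes this by explicit computation. It writes $u_{A_{V_j}}=\tfrac32 v_j+a^2\sum_k[v_k,C_{V_j}v_k]$, where $\sum_k[v_k,C_{V_j}v_k]=-2c_0[v_8,[v_8,v_j]]$ for $j\neq 8$, which has no complex-structure component. The $j=8$ contribution is killed by the adjoint Casimir. The Jacobi identity and the Casimir then give $\sum_j[v_j,[v_8,[v_8,v_j]]]=0$. You have two other ways to repair it:
\begin{itemize}
\item compute $\tau_3$ directly, exactly as $\tau_2$;
\item use a symmetry that reverses $v_8$. The map $\theta(X)=\bar X$ is an automorphism of $\su(3)$ preserving $\langle\,,\rangle$ and $g$, with $\theta(v_8)=-v_8$ and $\theta(u\ok v)=-(\theta v)\ok(\theta u)$. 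Hence $\theta\circ\rho^1_A\circ\theta^{-1}=\rho^2_{\theta A\theta^{-1}}$ and $\theta(\tau_2)=\tau_3$, which with $\tau_r\in\mathrm{span}\{v_8\}$ forces $\tau_3=-\tau_2$.
\end{itemize}

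Step 3 is also only announced: the value $\tfrac{3}{4a}(1-\mu)$ is asserted (``should leave''), not derived. Your base part $\tfrac12 c_{jkl}$, with $c_{jkl}=g([V_j,V_k],V_l)$, is not skew in $(k,l)$ when an index equals $8$, so it is not a Cartan--Schouten part. The correct split is $A_{V_j}=\tfrac12\mathrm{ad}_{v_j}+C_{V_j}$, with $C_{V_j}$ as in \eqref{DefinitionCVi}. With this split no Gell-Mann bookkeeping is needed:
\begin{itemize}
\item by your own Step 2, $\tfrac12\mathrm{ad}_{v_j}$ and $C_{V_8}=c_8\,\mathrm{ad}_{v_8}$ contribute $\tfrac12[v_j,v_j]$ and $c_8[v_8,v_8]$, i.e.\ nothing;
\item the rank-two $C_{V_j}$, $j\neq 8$, carry the factor $c_0=\tfrac12(\mu-1)$ and give $-4\tau_2=-c_0\sum_k(v_8\ok[v_8,v_k])\ok v_k+c_0\sum_j([v_j,v_8]\ok v_j)\ok v_8$;
\item \eqref{ImportantIdentityOkubo2}, \eqref{OkuboIdentity4} and $v_8\ok v_8=v_8$ evaluate this to $\tfrac{6c_0}{a}v_8$, which is the stated value.
\end{itemize}
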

The proofs of propositions \ref{PropositionExplicitEigenvalues} and \ref{PropositionExplicitTaur} are the content of appendix \ref{AppendixExplicitExample}.

\newpage

\addtocontents{toc}{\cftpagenumbersoff{section}} 

\appendix
\appendixpage
\noappendicestocpagenum
\addappheadtotoc

\addtocontents{toc}{\cftpagenumberson{section}}

\section{Additional identities of the Okubo product}
\label{AppendixOkuboIdentities}

\begin{proposition}
Let $\{v_j\}$ be a $\langle , \rangle$-orthonormal basis of $\su(3)$. Then for all $u$ in $\su(3)$,
\begin{align}
\sum\nolimits_j \, v_j \ok v_j \; &= \; 0   \linebr
\sum\nolimits_j \, (v_j \ok u ) \ok v_j \; &= \; \sum\nolimits_j \, v_j \ok (u  \ok v_j ) \; = \; 8\,u   \label{OkuboIdentity2}   \linebr
\sum\nolimits_j \, (u \ok v_j ) \ok v_j \; &= \;  \sum\nolimits_j \,  v_j \ok ( v_j \ok u)  \; = \;  2\,u   \label{OkuboIdentity3}   \linebr
\sum\nolimits_j \, [u , v_j ] \ok v_j \; &= \;   \sum\nolimits_j \, v_j \ok [ v_j , u ]    \; = \; - \frac{3}{a} \,u  \ .  \label{OkuboIdentity4}  
\end{align}
\end{proposition}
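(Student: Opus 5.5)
The plan is to derive all four identities from the Clifford-like relations \eqref{CliffordOkubo}, the adjunction relations \eqref{AdjunctionOkubo}, and the definition \eqref{DefinitionOkuboProduct}, using only basis-independent traces.

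\emph{First identity.} The element $\sum_j v_j \ok v_j$ is independent of the orthonormal basis and is Ad-invariant, by \eqref{AdInvariantOkubo}. An Ad-invariant element of $\su(3)$ vanishes, since the adjoint representation has no trivial summand. One can also check it directly: $\sum_j [v_j,v_j]=0$, and $\sum_j v_j^2$ is a multiple of $I_3$, so its traceless part $\{v_j,v_j\}_0$ summed over $j$ is zero.

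\emph{Identity \eqref{OkuboIdentity3}.} Set $u=w=v_j$ in the first Clifford relation of \eqref{CliffordOkubo}. With $v$ renamed $u$, this gives $2(v_j\ok u)\ok v_j = 2u$, which is not the required sum. So instead take $v=v_j$ and $u=w$ in \eqref{CliffordOkubo}, read as $(u\ok v_j)\ok u=\langle u,u\rangle v_j$, and polarize. The cleaner route is the adjunction relation. It gives
\[
\langle (u\ok v_j)\ok v_j, w\rangle=\langle u\ok v_j, v_j\ok w\rangle .
\]
Summing over $j$ turns this into the trace of $L_u^{T}L_w$ type operators. Define $L_u(x)=u\ok x$ and $R_u(x)=x\ok u$. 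The orthogonality relation \eqref{OrthogonalityOkubo} says $L_u^{T}L_u=\langle u,u\rangle I$. Adjunction shows that $L_u^T=R_u$ and $R_u^T=L_u$, and that $\langle u\ok v_j, v_j\ok w\rangle=\langle u\ok v_j, R_w^T v_j\rangle$.

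So $\sum_j (u\ok v_j)\ok v_j$ becomes a trace expression: its pairing with $w$ is $\mathrm{tr}(R_w L_u)$ up to adjoints. The trace $\mathrm{tr}(L_w^T\,\cdot)$ is an Ad-invariant symmetric bilinear form on $\su(3)$, hence a multiple $c\langle u,w\rangle$. I would fix $c$ by evaluating one explicit element, for instance $u=w=v_8$ in the Gell-Mann basis, expecting $c=2$.

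\emph{Identity \eqref{OkuboIdentity2}.} Here the pairing with $w$ is $\sum_j\langle v_j\ok u, v_j\ok w\rangle$, using \eqref{AdjunctionOkubo} in the form $\langle x\ok v_j,w\rangle=\langle x, v_j\ok w\rangle$. By polarized orthogonality this equals $\sum_j\langle v_j,v_j\rangle\langle u,w\rangle=8\langle u,w\rangle$, so the identity holds with no computation. The mirrored versions $\sum_j v_j\ok(u\ok v_j)$ and $\sum_j v_j\ok(v_j\ok u)$ follow in the same way from the other adjunction equalities.

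\emph{Identity \eqref{OkuboIdentity4}.} By Ad-equivariance the map $u\mapsto\sum_j[u,v_j]\ok v_j$ is an $\su(3)$-equivariant endomorphism of the adjoint representation. The adjoint representation is real irreducible of real type, so this map is a scalar times the identity; note that Ad-invariance of the basis sum holds because $\{v_j\}$ is orthonormal for an Ad-invariant form. The same holds for $\sum_j v_j\ok[v_j,u]$. To find the scalar, substitute the definition \eqref{DefinitionOkuboProduct}. This gives
\[
a\sum_j\big([[u,v_j],v_j]+\sqrt3\, i\{[u,v_j],v_j\}_0\big).
\]
The first sum is a Casimir term $-\,\kappa\, u$, with $\kappa$ computable from $\langle\,,\rangle=2a^2\mathrm{Tr}$. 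Since $\sum_j[v_j,[v_j,u]]=-\frac{6}{4a^2}u$ (the dual Coxeter number 3 times the normalization), this term equals $-\frac{3}{2a}u$ after multiplying by $a$.

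The second sum I expect to vanish or contribute the remaining $-\frac{3}{2a}u$. It needs the completeness relation $\sum_j (v_j)_{ab}(v_j)_{cd}=-\frac{1}{2a^2}\big(\delta_{ad}\delta_{bc}-\frac13\delta_{ab}\delta_{cd}\big)$. Pinning down this coefficient, and the resulting sign and value, is the main obstacle, so I would cross-check with $u=v_8$ explicitly.

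The second form of \eqref{OkuboIdentity4} follows from the first. Write $v_j\ok[v_j,u]$, use the symmetry of the basis sum under $v_j\mapsto$ the adjoint-rotated basis, and apply adjunction $\langle v_j\ok[v_j,u],w\rangle=\langle [v_j,u]\ok w, v_j\rangle$. Pairing with $w$ and invoking Ad-invariance of $\langle\,,\rangle$ then shows that the two scalars coincide.
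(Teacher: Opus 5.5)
Your arguments for $\sum_j v_j\ok v_j=0$ and for \eqref{OkuboIdentity2} are fine; the latter is the paper's adjunction-plus-orthogonality argument. Your discarded substitution $u=w=v_j$ in \eqref{CliffordOkubo} is actually a second valid proof of \eqref{OkuboIdentity2}. The gap is that the two identities whose content is a specific constant are not proved.

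For \eqref{OkuboIdentity3}, Schur's lemma reduces you to $\langle\sum_j(u\ok v_j)\ok v_j,w\rangle=c\,\langle u,w\rangle$, and then you only ``expect'' $c=2$. The missing step is the substitution $(u,v,w)\mapsto(u,v_j,v_j)$ in the first Clifford relation:
$(u\ok v_j)\ok v_j+(v_j\ok v_j)\ok u=2\langle u,v_j\rangle v_j$.
Summing over $j$ and using $\sum_j v_j\ok v_j=0$ gives $2u$ at once. The second Clifford relation with $(v_j,v_j,u)$ gives $\sum_j v_j\ok(v_j\ok u)=2u$ in the same way. That mirrored sum is mixed left/right, so it is not reached by the adjunction--orthogonality argument you use for \eqref{OkuboIdentity2}; its value is $2u$, not $8u$.

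For \eqref{OkuboIdentity4}, the computation you sketch is wrong, not merely unfinished. The adjoint Casimir is $\sum_j[v_j,[v_j,u]]=-\frac{3}{a^2}u$, not $-\frac{6}{4a^2}u$. Indeed $\mathrm{Tr}(v_jv_j)=-\frac{1}{2a^2}$ and $\mathrm{Tr}(\mathrm{ad}_x\mathrm{ad}_y)=6\,\mathrm{Tr}(xy)$ on $\su(3)$, so the eigenvalue is $\frac18\sum_j\mathrm{Tr}(\mathrm{ad}_{v_j}^2)=\frac18\cdot 6\cdot 8\cdot(-\frac{1}{2a^2})=-\frac{3}{a^2}$.

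The anticommutator sum cannot ``contribute the remaining $-\frac{3}{2a}u$''. It vanishes identically, because $\sum_j\{[u,v_j],v_j\}=uC-Cu$ with $C=\sum_j v_j^2$ a multiple of $I_3$. So your plan, carried out with your constant, would give $-\frac{3}{2a}u$ and contradict the statement. With the correct constant it is exactly the paper's proof: Casimir term $-\frac{3}{a}u$, anticommutator term $0$.

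Finally, the equality of the two sums in \eqref{OkuboIdentity4} needs no Ad-rotation of the basis. Write $[u,v_j]=\sum_k\langle[u,v_j],v_k\rangle v_k$ and use skew-symmetry of $\mathrm{ad}_u$; this gives $\sum_j[u,v_j]\ok v_j=\sum_k v_k\ok[v_k,u]$ directly.
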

\begin{proof}
Using the definition of the $\ok$-product and the fact that $ \sum_j v_j^2 $ is a Casimir, so proportional to the identity, we get
\[
\sum\nolimits_j \, v_j \ok v_j \: \propto \:  \sum\nolimits_j \, \{ v_j, v_j  \}_0  \: = \:  \sum\nolimits_j \, 2\, v_j^2 \, -\, \frac{2}{3} \, \mathrm{Tr} (v_j^2) \, I_3   \: = \:   0  \ .
\]
Using the adjunction relations \eqref{AdjunctionOkubo} and the orthogonality \eqref{OrthogonalityOkubo}, we have
\[
\sum\nolimits_j \, \langle (v_j \ok u ) \ok v_j   , w \rangle  \: = \:  \sum\nolimits_j \, \langle v_j \ok u  ,  v_j  \ok  w \rangle \: = \:  \sum\nolimits_j \, \langle v_j ,  v_j \rangle \, \langle u , w \rangle \: = \: 8 \, \langle u , w \rangle \ ,
\]
which implies one of the identities in \eqref{OkuboIdentity2}. The other can be obtained similarly. Using the Clifford-like relations \eqref{CliffordOkubo} and the vanishing of $\sum_j \, v_j \ok v_j$, 
\begin{equation}
 2 \, u  \: = \: \sum\nolimits_j \,  2\, \langle u , v_j  \rangle \, v_j   \: = \:  \sum\nolimits_j \, ( u \ok  v_j) \ok  v_j  +  (  v_j \ok  v_j) \ok u   \: = \:  \sum\nolimits_j \, ( u \ok  v_j) \ok  v_j .
 \end{equation}
This establishes one of the identities in \eqref{OkuboIdentity3}. The other can be obtained similarly. 

For the last identity, using the definition of the Okubo product, 
\begin{equation}
 \sum\nolimits_j \, v_j \ok [ v_j , u ]    \; = \; a\,  \sum\nolimits_j \: [ v_j , [ v_j , u ] ] + i \sqrt{3} \, \{ v_j , [ v_j , u ]  \}_0  \ .
\end{equation}
But $\sum\nolimits_j  \{ v_j , [ v_j , u ] \} = \sum\nolimits_j (v_j v_j u - u v_j v_j)$ vanishes, since $\sum\nolimits_j v_j v_j$ is a Casimir proportional to the identity. Moreover, 
 $f(u) = \sum_j [v_j , [v_j , u] ]$ is the Casimir of the adjoint representation, so it is also proportional to the identity. The proportionality constant is 
\begin{equation}
\label{CasimirAdjointConstant}
 \frac{1}{8} \, \mathrm{Tr}( f) \, = \,  \frac{1}{8} \, \sum\nolimits_j \mathrm{Tr} (  \mathrm{ad}_{v_j}   \mathrm{ad}_{v_j }) \, = \,   \frac{6}{8} \, \sum\nolimits_j  \mathrm{Tr}(v_j\, v_j) \, = \,  -\, \frac{3}{8 a^2} \,  \sum\nolimits_j \, \langle v_j , v_j \rangle \, = \,   -\, \frac{3}{a^2}  \ . \qedhere
\end{equation}
\end{proof}

\begin{proposition}
\label{DoubleOkuboIdentities}
The double $\ok$-products can be decomposed as
\begin{align}
(u \ok v) \ok w \: &= \:   2\, a^2\, \big( [[u,w], v]   \, +  \, i \sqrt{3} \, (uvw - wvu )_0   \big) \, +\,  \langle u , w \rangle\, v  \, -\,  \langle u , v \rangle\, w  \, +\,  \langle v , w \rangle\, u  \nonumber  \linebr
u \ok (v \ok w) \: &= \:   2\, a^2\, \big( [[w,u], v]   \, +  \, i \sqrt{3} \, (uvw - wvu )_0   \big) \, +\,  \langle u , w \rangle\, v  \, +\,  \langle u , v \rangle\, w  \, -\,  \langle v , w \rangle\, u    \nonumber  
\end{align}
as matrices in $\su(3)$. The associator of the Okubo product is therefore
\begin{equation}
( u \ok v) \ok w  -   u \ok (v \ok w)   \; = \; 4\, a^2  \, [[u,w], v] \, - \, 2   \langle u , v \rangle  \, w  \, + \,  2   \langle v , w \rangle  \, u  \ .
\end{equation}
\end{proposition}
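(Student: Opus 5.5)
Writing $u\ok v = a([u,v] + \sqrt3\, i\{u,v\}_0)$ and expanding the double product $(u\ok v)\ok w$ bilinearly gives four terms. Two of them, the ones built only from commutators, are direct. The plan is to reduce every term to three kinds of objects: ordered cubic matrix monomials such as $uvw$; traces times matrices, coming from the $-\tfrac23\mathrm{Tr}(\cdot)I_3$ subtractions; and identity multiples, which are removed by taking traceless parts at the end.

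The term $[[u,v],w]$ contributes $a^2[[u,v],w]$ directly. For $i\sqrt3\,[\{u,v\}_0,w]$ the identity part of $\{u,v\}_0$ commutes with $w$ and drops, leaving $i\sqrt3\,(uvw+vuw-wuv-wvu)$. Similarly, $i\sqrt3\,\{[u,v],w\}_0$ expands to $i\sqrt3\,(uvw-vuw+wuv-wvu)_0$. Adding these two, the mixed monomials $vuw$ and $wuv$ cancel, and we get $2i\sqrt3\,(uvw-wvu)_0$, since $(uvw-wvu)$ minus its traceless part is a scalar. The commutator is already traceless, so the subscript $0$ is harmless on it.

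The remaining term is $-3\{\{u,v\}_0,w\}_0$. Its genuinely cubic part is $uvw+vuw+wuv+wvu$, together with $-\tfrac43\mathrm{Tr}(uv)\,w$ and trace-type scalars. To convert this into the target shape I will use the $3\times3$ identity for traceless matrices that comes from Cayley–Hamilton, polarized:
\[
\sum_{\sigma\in S_3} x_{\sigma 1}x_{\sigma 2}x_{\sigma 3}=\mathrm{Tr}(x_1x_2)\,x_3+\mathrm{Tr}(x_2x_3)\,x_1+\mathrm{Tr}(x_1x_3)\,x_2+\text{scalar}\cdot I_3 .
\]
This expresses $uvw+vuw+wuv+wvu$ as $-(uwv+wuv\ldots)$-type leftovers plus traces. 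The leftover symmetric combination should then be rewritten as a double commutator: $[[u,w],v]=uwv-wuv-vuw+vwu$ supplies exactly the needed pairing. Finally I will convert traces using $\mathrm{Tr}(xy) = -\langle x,y\rangle/(2a^2)$, valid on anti-Hermitian $x,y$, and collect coefficients. The expected outcome is $2a^2[[u,w],v]+2a^2 i\sqrt3(uvw-wvu)_0 + \langle u,w\rangle v - \langle u,v\rangle w + \langle v,w\rangle u$, with the $a^2[[u,v],w]$ piece recombining via the Jacobi identity.

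The second formula follows by the same expansion. A faster route is to obtain it from the first using the adjunction and symmetry properties: for instance, take the first formula with $(w,v,u)$ and apply $x\ok y$ versus $y\ok x$, which differ only in the sign of the commutator term. The associator is then the difference of the two displays. In that difference the cubic $i\sqrt3$ terms cancel, $[[u,w],v]-[[w,u],v]=2[[u,w],v]$, and the inner-product terms combine to $-2\langle u,v\rangle w+2\langle v,w\rangle u$.

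The main obstacle is the bookkeeping in the $\{\{\cdot,\cdot\}_0,\cdot\}_0$ term, namely getting the Cayley–Hamilton polarization and the trace normalisation to produce exactly the coefficients $\pm1$ and $2a^2$. As a safeguard, I will check the final formula on special cases: setting $u=w$ should reproduce the Clifford relation \eqref{CliffordOkubo}, and contracting with an orthonormal basis should reproduce \eqref{OkuboIdentity3}. These checks fix any sign or scale errors.
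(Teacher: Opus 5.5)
Your proposal is correct and is essentially the paper's proof. Both expand with the definition of $\ok$, apply the polarized Cayley--Hamilton identity for traceless $3\times3$ matrices, convert traces via $\langle x,y\rangle=-2a^2\,\mathrm{Tr}(xy)$, and subtract to get the associator. After Cayley--Hamilton, $[[u,v],w]+3(uwv+vwu)$ equals $2[[u,w],v]$ plus the full symmetrized sum of $u,v,w$ by direct comparison of monomials, so no separate Jacobi step is needed.

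The only difference is that the paper expands $u\ok(v\ok w)$ and calls the other case similar, whereas you expand $(u\ok v)\ok w$ and obtain the second formula by symmetry. That shortcut is valid: passing from $(w\ok v)\ok u$ to $u\ok(v\ok w)$ reverses both products. This negates exactly the single-commutator contribution $2a^2 i\sqrt3(\cdot)_0$ and leaves the double-commutator and inner-product terms unchanged.
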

\begin{proof}
The first step in the proof is to expand the double product $u \ok (v \ok w)$ using the definition \eqref{DefinitionOkuboProduct} of the Okubo product. After some cancellations, one obtains
\begin{multline}
u \ok (v \ok w) =  - 2\, a^2 \, \big[ \, 2\,uwv \,+ \,2\, vwu\, +\, uvw\, +\,wvu\,   - 2\, \mathrm{Tr}(wv) u  \\ - \, \mathrm{Tr}(uwv \,+ \, uvw)\, I_3 \,+ \,i\, \sqrt{3} \,(wvu\, -\, uvw)_0\, \big]  \ . \nonumber
\end{multline}
Now, the polarized Cayley-Hamilton identity guarantees that, for $3\times3$ matrices,  
\begin{multline}
uwv  \, +  \, wuv  \, + \,  vuw \, + \, vwu  \,+  \,uvw \, + \, wvu   \,  =  \, \mathrm{Tr}(uw) \, v \, +  \,  \mathrm{Tr}(uv)  \,w  \\ +  \, \mathrm{Tr}(wv) \, u \, + \, \mathrm{Tr}(uwv \, + \, wuv) \, I_3 \ .  \nonumber
\end{multline}
Substituting this identity into the previous expression and using that $ \langle u , v \rangle = - 2\, a^2 \, \mathrm{Tr}(uv)$, one obtains the expression for $u \ok (v \ok w)$ given in the proposition. 

The proof of the expression for $(u \ok v) \ok w$ is similar. The formula for the associator is just the difference.
\end{proof}

\begin{lemma}
\label{IdentitySchur}
Let $\{v_j\}$ be a $\langle , \rangle$-orthonormal basis of $\su(3)$. Then 
\begin{equation}
\label{IdentitySchurGeneralMatrices}
\sum\nolimits_j\, v_j\, M \, v_j \; = \; \frac{1}{6\, a^2}\, M \: -\: \frac{1}{2\, a^2}\, \mathrm{Tr}(M) \, I_3 
\end{equation}
for any complex $3\times 3$ matrix $M$.
\end{lemma}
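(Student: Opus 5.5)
The plan is to reduce the identity to the standard completeness relation for a basis of $\mathfrak{u}(3)$ or $\mathfrak{gl}(3,\CC)$, via Schur's lemma. The cleanest route is to compare $\{v_j\}$ with a fixed basis whose completeness relation is known: the Gell-Mann matrices. The identity itself is basis independent, since $\sum_j v_j \otimes v_j$ is the inverse of the inner product tensor and so does not depend on the orthonormal basis chosen. It is therefore enough to verify it for one $\langle , \rangle$-orthonormal basis. I would take $v_j = \frac{i}{2a}\lambda_j$, which the paper already shows is orthonormal, using the normalization $\langle u, v\rangle = 2a^2\,\mathrm{Tr}(u^\dag v)$ from \eqref{AdInvariantProduct}.

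The key input is the Fierz completeness relation for Gell-Mann matrices,
\[
\sum\nolimits_{j=1}^8 (\lambda_j)_{\alpha\beta}(\lambda_j)_{\gamma\delta} \;=\; 2\,\delta_{\alpha\delta}\delta_{\gamma\beta} \;-\; \tfrac{2}{3}\,\delta_{\alpha\beta}\delta_{\gamma\delta}.
\]
It follows from the fact that $\{I_3/\sqrt{3}\}\cup\{\lambda_j/\sqrt{2}\}$ is an orthonormal basis of $3\times3$ complex matrices for the Hilbert–Schmidt product $\mathrm{Tr}(A^\dag B)$: expanding the elementary matrices in this basis gives exactly the relation above. Contracting it with $M_{\beta\gamma}$ gives
\[
\sum\nolimits_j \lambda_j M\lambda_j \;=\; 2\,\mathrm{Tr}(M)\,I_3 \;-\; \tfrac{2}{3}\,M.
\]

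Multiplying by $\left(\frac{i}{2a}\right)^2 = -\frac{1}{4a^2}$ then yields
\[
\sum\nolimits_j v_jMv_j \;=\; \frac{1}{6a^2}\,M \;-\; \frac{1}{2a^2}\,\mathrm{Tr}(M)\,I_3,
\]
which is the claim.

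As an alternative, or as a cross-check, one can argue by Schur's lemma. The map $M\mapsto\sum_j v_jMv_j$ commutes with conjugation by $\SU(3)$, so it acts by a scalar on each of the irreducible pieces $\CC I_3$ and $\mathfrak{sl}(3,\CC)$. Call these scalars $c_0$ and $c_1$. Setting $M = I_3$ gives $c_0 = \sum_j v_j^2$, a multiple of $I_3$ whose trace is $\sum_j \mathrm{Tr}(v_j^2) = -\frac{1}{2a^2}\cdot 8$. Hence $c_0 = -\frac{4}{3a^2}$, in agreement with $\frac{1}{6a^2} - \frac{3}{2a^2}$. To find $c_1$, take the trace of the map against the basis of $\mathfrak{gl}(3)$, using the fact that the trace of $M\mapsto AMB$ is $\mathrm{Tr}(A)\,\mathrm{Tr}(B)$. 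Since the $v_j$ are traceless this trace is $0$, so $c_0 + 8c_1 = 0$ and $c_1 = \frac{1}{6a^2}$, which again matches. The main thing to watch is the normalization bookkeeping, namely the factor $2a^2$ and the sign coming from $\mathrm{Tr}(u^\dag v) = -\mathrm{Tr}(uv)$. I would use the Schur argument as the primary proof, since it avoids quoting the Fierz identity.
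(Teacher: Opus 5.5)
Your proposal is correct, and both of your arguments go through. They differ from the paper's proof mainly in how the scalar on the traceless part is determined.

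\textbf{The paper's route.} The paper applies Schur's lemma to the real adjoint representation on $\su(3)$ to get $\sum_j v_j u v_j = \lambda u$. It then finds $\lambda$ by expanding the adjoint Casimir, $\sum_j [v_j,[v_j,u]] = \sum_j (v_jv_ju + uv_jv_j - 2v_juv_j) = 2(c-\lambda)u$. It inserts two constants: $-3/a^2$ from \eqref{CasimirAdjointConstant}, which rests on $\mathrm{Tr}(\mathrm{ad}_X\mathrm{ad}_Y) = 6\,\mathrm{Tr}(XY)$, and $c = -\frac{4}{3a^2}$. Only afterwards does it extend $\mathbb{C}$-linearly to $\mathfrak{sl}(3,\mathbb{C})$ and treat the trace part separately.

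\textbf{Your Schur argument.} You work on $\mathfrak{gl}(3,\mathbb{C}) = \mathbb{C} I_3 \oplus \mathfrak{sl}(3,\mathbb{C})$ from the start, so the complex Schur lemma applies directly to both summands. You then fix $c_1$ with the trace identity $\mathrm{Tr}(M\mapsto AMB) = \mathrm{Tr}(A)\,\mathrm{Tr}(B)$ and $\mathrm{Tr}(v_j)=0$, which give $c_0 + 8c_1 = 0$. This needs only the normalisation $\sum_j \mathrm{Tr}(v_j^2) = -4/a^2$, not the adjoint Casimir. It is therefore more self-contained and carries over to $\su(n)$ with $c_0 + (n^2-1)c_1 = 0$. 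It also reverses the paper's logic: putting your $\lambda = \frac{1}{6a^2}$ into the same double-commutator expansion recovers the constant $-3/a^2$ of \eqref{CasimirAdjointConstant}. The paper's route has the advantage of reusing a constant it already computed for \eqref{OkuboIdentity4}.

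\textbf{Your Fierz argument.} This is a third, purely computational route. Basis independence, plus the Gell-Mann completeness relation, gives the identity with no representation theory at all. You derive that completeness relation correctly from the Hilbert--Schmidt orthonormality of $\{I_3/\sqrt{3}\}\cup\{\lambda_j/\sqrt{2}\}$. The cost is that it is tied to an explicit basis.
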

\begin{proof}
Assume first that $M = u \in \su(3)$. The map $s(u) := \sum_j v_j\, u \, v_j$ is clearly independent of the choice of $\langle , \rangle$-orthonormal basis $\{v_j\}$. For any $h \in \SU(3)$, the set $\{v'_j = \mathrm{Ad}_h v_j\}$ is another admissible basis. Hence
\[
\mathrm{Ad}_h (s (u)) \: = \: \sum\nolimits_j v'_j\, (\mathrm{Ad}_h  u) \, v'_j \: = \: s(\mathrm{Ad}_h  u) \ .
\]
Since Ad is irreducible on $\su(3)$, Schur's lemma says that $\sum\nolimits_j\, v_j\, u \, v_j  = \lambda u$ for some $\lambda  \in \mathbb{R}$. Now, from the proof of  \eqref{OkuboIdentity4} we know that 
\begin{equation}
\label{IdentitySchurAux1}
 -\, \frac{3}{a^2} \, u \: =\: \sum\nolimits_j \,  [v_j , [v_j , u] ]  \: =\: \sum\nolimits_j \,  v_j v_j u + u v_j v_j - 2 v_j u v_j  \: = \: 2 (c - \lambda) u \ ,
\end{equation}
 where the constant $c$ comes from the Casimir $\sum\nolimits_j \,  v_j v_j  = c I_3$. Taking traces,
 \[
 3\,c \: =\: \sum\nolimits_j \, \mathrm{Tr}(v_j v_j) \: =\:  - \tfrac{1}{2 a^2} \sum\nolimits_j \langle v_j , v_j \rangle \: =\:   - \tfrac{4}{a^2}  \ .
 \]
So $c = -\frac{4}{3\, a^2}$ and, from \eqref{IdentitySchurAux1}, we get $\lambda = \tfrac{1}{6 a^2}$. This proves identity \eqref{IdentitySchurGeneralMatrices} for any $M \in \su(3)$, and hence, by $\CC$-linearity, for any traceless $M \in \mathfrak{sl}(3, \CC)$.

For a general $M$, decompose $M = M_0 + \tfrac{1}{3} \mathrm{Tr}(M) \, I_3$ into traceless and pure trace parts. We have just proved that $M_0$ satisfies \eqref{IdentitySchurGeneralMatrices}. So using again the value of the constant $c$,
\begin{align}
\sum\nolimits_j\, v_j\, M \, v_j \: &= \: \tfrac{1}{6\, a^2}\, M_0 \: +\: \tfrac{1}{3} \mathrm{Tr}(M) \sum\nolimits_j v_j v_j   \nonumber \linebr
& = \: \tfrac{1}{6\, a^2}\, [M - \tfrac{1}{3} \mathrm{Tr}(M) \, I_3 ]\: +\: \tfrac{1}{3} \mathrm{Tr}(M)  (-\tfrac{4}{3\, a^2}\, I_3 ) \nonumber \linebr
& = \: \tfrac{1}{6\, a^2}\, M \: - \:  \tfrac{1}{2 \, a^2} \mathrm{Tr}(M) \, I_3  \ . \nonumber \qedhere
\end{align}
\end{proof}

\begin{lemma}
\label{LemmaAdditionalOkuboIdentities1}
Let $\{v_j\}$ be a $\langle , \rangle$-orthonormal basis of $\su(3)$. Then, for all $u, w \in \su(3)$,
\begin{equation}
\label{OkuboIdentities5}
\sum\nolimits_j\, \big[ v_j \, , \, (u\ok  v_j) \ok w  \big] \; = \; \sum\nolimits_j\, \big[ v_j \, , \,   w  \ok (u\ok  v_j) \big]  \; = \;  4\, [u, w] \ .
\end{equation} 
\end{lemma}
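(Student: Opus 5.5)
Both equalities can be reduced to orthogonality-type sums that have already been evaluated in the appendix. The identity is linear in $u$ and in $w$, and it does not depend on the choice of orthonormal basis.

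\emph{First sum.} Pair it with an arbitrary $z \in \su(3)$ and use the Ad-invariance of $\langle , \rangle$, in the form $\langle [v_j, X], z\rangle = \langle X, [z, v_j]\rangle$:
\[
\sum\nolimits_j \big\langle [v_j , (u\ok v_j)\ok w] , z \big\rangle = \sum\nolimits_j \big\langle (u\ok v_j)\ok w , [z, v_j] \big\rangle .
\]
Applying the adjunction relations \eqref{AdjunctionOkubo} moves $w$ across, so the right-hand side becomes $\sum_j \langle u\ok v_j , w \ok [z,v_j]\rangle$. One more adjunction step gives $\sum_j \langle u , v_j \ok (w\ok[z,v_j])\rangle$.

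Now set $X_j = [z,v_j] = -[v_j, z]$. By the first identity of \eqref{ImportantIdentityOkubo1}, applied with $u \to z$,
\[
\sum\nolimits_j v_j \ok (w \ok [v_j, z]) = 4\,[z,w] .
\]
Hence the pairing equals $-4\langle u, [z,w]\rangle$. By Ad-invariance, $-4\langle u,[z,w]\rangle = 4\langle [u,w], z\rangle$. Since $z$ is arbitrary, the first sum equals $4[u,w]$.

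\emph{Second sum.} The same manipulation gives
\[
\sum\nolimits_j \langle w\ok(u\ok v_j), [z,v_j]\rangle .
\]
Adjunction rewrites this as $\sum_j \langle u\ok v_j , [z,v_j]\ok w\rangle$, and another step gives $\sum_j \langle u , v_j\ok([z,v_j]\ok w)\rangle$. The inner sum is minus the sum appearing in \eqref{ImportantIdentityOkubo2}, with $u$ replaced by $z$, so it equals $-(4[z,w] - \tfrac{3}{a} z\ok w)$. This produces the desired $4\langle [u,w],z\rangle$ plus an extra term $\tfrac{3}{a}\langle u, z\ok w\rangle$.

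That extra term is the main obstacle. It does not vanish, so I have probably chosen the wrong adjunction routing. Instead I would route through the other form in \eqref{ImportantIdentityOkubo1}, namely $\sum_j ([v_j,z]\ok w)\ok v_j$, by moving $v_j$ to the outside first. The cyclic adjunction $\langle a\ok b, c\rangle = \langle b, c\ok a\rangle$ allows exactly such a reordering:
\[
\langle w\ok(u\ok v_j), X_j\rangle = \langle u\ok v_j , X_j\ok w\rangle = \langle v_j, (X_j\ok w)\ok u\rangle .
\]
Careful bookkeeping of which slot each factor occupies should then land on \eqref{ImportantIdentityOkubo1} rather than \eqref{ImportantIdentityOkubo2}.

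\emph{Fallback.} If neither routing closes directly, I would verify the second sum by expanding with the double-product formula of Proposition \ref{DoubleOkuboIdentities}, then evaluating the sums over $j$ with Lemma \ref{IdentitySchur} and the Casimir constant $c=-\tfrac{4}{3a^2}$. The key fact is that the symmetric $i\sqrt3(\cdot)_0$ terms commute out after summation, because $\sum_j v_j M v_j$ is proportional to $M$ plus a multiple of the identity. Once the $\sqrt{3}$ terms are gone, only double commutators remain, and these reduce to $4[u,w]$ via the adjoint Casimir $-3/a^2$.
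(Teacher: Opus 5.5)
Your proof of the first equality is correct, and it takes a different route from the paper's. The paper expands $[v_j,\cdot\,]$ by Ad-equivariance \eqref{AdInvariantOkubo}. It evaluates $\sum_j([v_j,u]\ok v_j)\ok w=\tfrac{3}{a}\,u\ok w$ with \eqref{OkuboIdentity4}, and cancels this against the $-\tfrac{3}{a}\,u\ok w$ in \eqref{ImportantIdentityOkubo2}. You instead dualize against $z$, use Ad-invariance and adjunction, and land directly on the second expression in \eqref{ImportantIdentityOkubo1}. So you need a single identity and no cancellation.

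For the second equality there is no routing mistake to fix. Your computation is right, and the extra term $\tfrac{3}{a}\langle u, z\ok w\rangle=\tfrac{3}{a}\langle w\ok u, z\rangle$ is genuine. Hence $\sum_j[v_j, w\ok(u\ok v_j)]=4[u,w]+\tfrac{3}{a}\,w\ok u$, and the second equality as stated is false.

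A quick check: take $u=w$. Proposition \ref{DoubleOkuboIdentities} gives $u\ok(u\ok v)=2a^2[u,[u,v]]+2\sqrt{3}\,a^2 i\,[u^2,v]+\langle u,u\rangle v$. After applying $\sum_j[v_j,\cdot\,]$, the first term vanishes by Jacobi and the Casimir, and the last term vanishes trivially. The middle term leaves $6\sqrt{3}\,i\,(u^2)_0=\tfrac{3}{a}\,u\ok u$, which is nonzero for $u\neq 0$; for example it equals $\tfrac{3}{a}v_8$ when $u=w=v_8$. By contrast, $4[u,u]=0$.

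The paper's ``proved similarly'' overlooks the same term. Ad-equivariance yields $\sum_k v_k\ok(u\ok[w,v_k])=4[u,w]$ from \eqref{ImportantIdentityOkubo1}, plus $w\ok\big(\sum_j[v_j,u]\ok v_j\big)=\tfrac{3}{a}\,w\ok u$ from \eqref{OkuboIdentity4}, and nothing cancels the latter. So your alternative routing cannot succeed, since any valid manipulation computes the same value. Your fallback also rests on a false premise: the $\sqrt{3}$-terms do not drop out but leave $3\sqrt{3}\,i\,\{u,w\}_0$. The total is then $[u,w]+3\sqrt{3}\,i\,\{u,w\}_0$, which is again $4[u,w]+\tfrac{3}{a}\,w\ok u$.

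The identity that does hold has the mirror ordering: $\sum_j[v_j, w\ok(v_j\ok u)]=4[u,w]$. This is also the one actually needed for $u_{\rho^2_A}$ in lemma \ref{LemmaURhoA}, where the sum over $l$ is of $[v_l, v_j\ok(v_l\ok Av_j)]$. Your method proves it at once, with exactly the landing you were looking for. Adjunction gives $\langle w\ok(v_j\ok u),[z,v_j]\rangle=\langle v_j\ok u,[z,v_j]\ok w\rangle=\langle u,([z,v_j]\ok w)\ok v_j\rangle$. The first expression in \eqref{ImportantIdentityOkubo1} then gives $\sum_j([z,v_j]\ok w)\ok v_j=-4[z,w]$. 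So the pairing is $-4\langle u,[z,w]\rangle=4\langle[u,w],z\rangle$.
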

\begin{proof}
Using the Ad-equivariance \eqref{AdInvariantOkubo} and that $[v_j, v_j ] = 0$,  then using \eqref{OkuboIdentity4} and that $\{ v_j\}$ is a basis, and then using the Ad-invariance of $\langle , \rangle$ and identity \eqref{ImportantIdentityOkubo2},
\begin{align}
\sum\nolimits_j\, \big[ v_j \, , \, (u\ok  v_j) \ok w  \big] \: &= \:  \sum\nolimits_j\, ( [ v_j , u ] \ok  v_j) \ok w \: +\:  (u  \ok  v_j) \ok [v_j, w]    \nonumber  \linebr
&= \: \tfrac{3}{a} \,  u  \ok w \: +\: \sum\nolimits_{j,k} \,   \langle \, v_k , [v_j, w] \rangle \: (u  \ok  v_j) \ok v_k \nonumber  \linebr
&= \: \tfrac{3}{a} \,  u  \ok w \: + \:  \sum\nolimits_{k} \, (u  \ok [w,  v_k]) \ok v_k \: = \: 4\, [u, w]  \nonumber \ .
\end{align}
The second identity in \eqref{OkuboIdentities5} is proved similarly.
\end{proof}

\subsection*{Proof of proposition \ref{PropositionOkuboIdentities}}

Take the first identity of proposition \ref{DoubleOkuboIdentities} and substitute $u \to [v_j, u]$, $w \to v_j$ and $v \to w$. Then we get
\begin{equation}
\label{WTerms}
\sum\nolimits_j \, \big([v_j, u] \ok w \big) \ok v_j \; = \; 2\, a^2 (W_1 + W_2) + W_3 \ , 
\end{equation}
where the three terms $W_r$ will be calculated below. Firstly, using \eqref{CasimirAdjointConstant},
\begin{align}
W_1 \: &= \:  \sum\nolimits_j \big[  [\,  [v_j, u]  ,v_j \, ], w \big]  \: = \:  - \,   [ f(u),  w ]  \: = \:    \frac{3}{a^2}  \, [u,  w ]  \ .
\end{align}
Secondly, using the Ad-invariance of $\langle , \rangle$ and that $\su(3)$ is unimodular, 
\begin{align}
W_3 \: &= \:  \sum\nolimits_j \,  \langle [v_j, u] , v_j \rangle\, w  \, -\,  \langle [v_j, u] , w \rangle\, v_j  \, +\,  \langle w , v_j \rangle\, [v_j, u]   \nonumber  \linebr
&= \:   - \mathrm{Tr}( \mathrm{ad}_{u } )\, w \, - \, \sum\nolimits_j  \, \langle v_j, [ u , w] \rangle\, v_j  \, +\,  [w, u]      \nonumber   \linebr
&= \:  0  \, - \, [u , w]  \, +\,  [w, u]    \: = \:   2\, [w, u] \ .
\end{align}
To evaluate $W_2$, note that
\begin{align}
W_2 \: &\propto \:   \sum\nolimits_j  \big( [v_j, u] \, w \, v_j \, - \,   v_j \, w \,  [v_j, u] \big)_0    \nonumber   \linebr
&= \:   \sum\nolimits_j  \big( v_j \, (u \, w \, + \, w \, u )\, v_j \, - \, u \, v_j \, w \, v_j \, - \,  v_j \, w \,  v_j \,  u  \big)_0  \nonumber     \linebr
&= \:   \tfrac{1}{6\, a^2}\, \big( (u \, w \, + \, w \, u )\, - \, u \, w \, - \,  w \,  u  \big)_0  \: = \: 0 \ ,
\end{align}
where the second equality used lemma \ref{IdentitySchur} three times, as well as the facts that $u$ and $w$ are traceless, and that the traceless part of any matrix proportional to the identity vanishes. Finally, substituting in expression \eqref{WTerms} the values of the three terms $W_j$, we obtain the first identity \eqref{ImportantIdentityOkubo1}. 

The second identity in \eqref{ImportantIdentityOkubo1} can be derived in the same way, using the second identity in propositon \ref{DoubleOkuboIdentities} instead of the first one.

The two identities \eqref{ImportantIdentityOkubo2} follow from the Ad-invariance \eqref{AdInvariantOkubo} of the Okubo product together with \eqref{ImportantIdentityOkubo1} and \eqref{OkuboIdentity4}. For example, 
\[
\sum\nolimits_j  (u \ok [v_j, w] ) \ok v_j  \: = \: \sum\nolimits_j \, [v_j, u \ok w]  \ok v_j  \, - \,   ([v_j, u] \ok w ) \ok v_j   \: = \: \frac{3}{a} \, u \ok w  \, - \,  4\, [u, w] \, .  \qedhere
\]

\section{Proofs for section \ref{Section3ConnectionsSpinorialLifts}}
\label{Appendix3ConnectionsSpinorialLifts}

Given a transformation $A \in \mathfrak{so}(\su(3))$, define the vector in $\su(3)$
\begin{equation}
\label{DefinitionUA}
u_A \: :=\: a^2 \, \sum\nolimits_j [v_j, A v_j ] \ ,
\end{equation}
and a linear map $N_A : \su(3) \rightarrow \su(3)$ by
\begin{equation}
\label{DefinitionNA}
N_A (w) \: := \: \sqrt{3}\, a^2\, i \, \sum\nolimits_j \, \big( \, v_j \, w \, (A v_j) \: - \: (A v_j)\, w \, v_j \, \big)_0 \ .
\end{equation}
The following decomposition shows that $N_A$ is the difference between $\rho^1_A$ and $\rho^2_A$.
\begin{lemma}
\label{LemmaDecompositionRho}
For every $A \in \mathfrak{so}(\su(3))$, the map $N_A$ belongs to $\mathfrak{so}(\su(3))$ and
\begin{align}
\label{DecompositionRho}
\rho^1_A \: &= \: \tfrac{1}{2}\, \mathrm{ad}_{u_A} \: - \: \tfrac{1}{2}\, A   \: + \: \tfrac{1}{2}\, N_A   \linebr
\rho^2_A \: &= \: \tfrac{1}{2}\, \mathrm{ad}_{u_A} \: - \: \tfrac{1}{2}\, A   \: - \: \tfrac{1}{2}\, N_A \ . \nonumber
\end{align}
In particular, $\rho^1_A$ and $\rho^2_A$ belong to $\mathfrak{so}(\su(3))$ and $\rho^1_A - \rho^2_A = N_A$.
\end{lemma}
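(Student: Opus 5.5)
The plan is to compute $\rho^1_A$ and $\rho^2_A$ directly from their definitions \eqref{rho1}, \eqref{rho2}, using the explicit double-product formulas of proposition \ref{DoubleOkuboIdentities}. For $\rho^1_A(\psi) = -\tfrac14\sum_j ((Av_j)\ok\psi)\ok v_j$, substitute $u\to Av_j$, $v\to\psi$, $w\to v_j$ in the first formula of proposition \ref{DoubleOkuboIdentities}. This produces three kinds of contributions: a double commutator term, a cubic traceless term, and three inner-product terms. We then treat each in turn.

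\emph{Double commutator term.} It is $2a^2\sum_j[[Av_j,v_j],\psi] = -2\,[u_A,\psi]$ by \eqref{DefinitionUA}. Multiplied by $-\tfrac14$ this gives $\tfrac12\mathrm{ad}_{u_A}\psi$.

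\emph{Inner-product terms.} These are
\[
\sum_j \langle Av_j,v_j\rangle\psi - \langle Av_j,\psi\rangle v_j + \langle \psi,v_j\rangle Av_j .
\]
The first vanishes since $A$ is skew. The second equals $+A\psi$ by skewness and completeness of the basis, and the third is $A\psi$. The total is $2A\psi$, which after the factor $-\tfrac14$ gives $-\tfrac12 A\psi$.

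\emph{Cubic term.} It is $2a^2 i\sqrt3\sum_j\big((Av_j)\psi v_j - v_j\psi(Av_j)\big)_0 = -2N_A(\psi)$, giving $+\tfrac12 N_A$. This yields the formula for $\rho^1_A$.

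For $\rho^2_A$, first rewrite it as in the proof of lemma \ref{Sum3Reps}: $\rho^2_A(\psi)=\tfrac14\sum_j (Av_j)\ok(\psi\ok v_j)$. Then apply the second formula of proposition \ref{DoubleOkuboIdentities} with the same substitution. The commutator term becomes $2a^2\sum_j[[v_j,Av_j],\psi]=2[u_A,\psi]$, contributing $\tfrac12\mathrm{ad}_{u_A}$. The inner-product terms are
\[
\langle Av_j,v_j\rangle\psi+\langle Av_j,\psi\rangle v_j-\langle\psi,v_j\rangle Av_j = -2A\psi ,
\]
contributing $-\tfrac12 A$. The cubic term is now $+2N_A$, which with the factor $\tfrac14$ gives $-\tfrac12 N_A$. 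This is consistent with lemma \ref{Sum3Reps}, since the sum $\rho^1_A+\rho^2_A+A$ equals $\mathrm{ad}_{u_A}$. In fact, one may shortcut by deriving $\rho^2_A$ from lemma \ref{Sum3Reps} once $\rho^1_A$ is known. The main care needed throughout is sign bookkeeping; I would double-check it against the $\mathrm{ad}_u$ case of lemma \ref{Restriction3Reps}, where $N_{\mathrm{ad}_u}$ should vanish.

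It remains to show $N_A\in\mathfrak{so}(\su(3))$. I would compute
\[
\langle N_A w, w'\rangle = -2a^2\,\mathrm{Tr}(N_A(w)\,w')
\]
(the traceless projection is irrelevant against the traceless $w'$). Cyclicity of trace gives
\[
\mathrm{Tr}\big(v_j w (Av_j) w'\big) - \mathrm{Tr}\big((Av_j) w v_j w'\big).
\]
Expanding $Av_j=\sum_k A_{kj}v_k$ with $A_{kj}=-A_{jk}$, the sum becomes $\sum_{j,k}A_{kj}\big(\mathrm{Tr}(v_jwv_kw')-\mathrm{Tr}(v_kwv_jw')\big)$. Swapping $j,k$ in the second piece turns this into $2\sum_{j,k}A_{kj}\mathrm{Tr}(v_jwv_kw')$. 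By cyclicity, $\mathrm{Tr}(v_jwv_kw')=\mathrm{Tr}(v_kw'v_jw)$, so exchanging $w\leftrightarrow w'$ and relabelling $j\leftrightarrow k$ yields a sign flip from $A$'s skewness. One must also check that $N_A(w)$ is anti-Hermitian: the factor $i$ times the difference of a matrix and (minus) its adjoint handles this.

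Finally, $\mathrm{ad}_{u_A}$ and $A$ are skew, so $\rho^1_A$, $\rho^2_A\in\mathfrak{so}(\su(3))$, and subtracting the two formulas gives $\rho^1_A-\rho^2_A=N_A$.
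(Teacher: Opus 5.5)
Your proposal is correct and follows the paper's proof: expand $\rho^1_A$ and $\rho^2_A$ with the double-product formulas of Proposition~\ref{DoubleOkuboIdentities}, read off the $\mathrm{ad}_{u_A}$, $A$ and $N_A$ pieces, and get skewness of $N_A$ from cyclicity of the trace. Two small differences: for $\rho^2_A$ the paper substitutes $(v_j,w,Av_j)$ directly into the second formula, and it notes that cyclicity alone gives antisymmetry in $z\leftrightarrow w$ term by term, without using that $A$ is skew. There is one bookkeeping slip in your $\rho^2_A$ computation. With your substitution $(Av_j,\psi,v_j)$, the cubic term is the same expression as for $\rho^1_A$, namely $-2N_A(\psi)$, and it is this times $\tfrac14$ that gives $-\tfrac12 N_A$. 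The value $+2N_A$ you wrote would give $+\tfrac12 N_A$; it is what appears in the paper's substitution, where the prefactor is $-\tfrac14$.
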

\begin{proof}
To see that $N_A$ is in $\mathfrak{so}(\su(3))$, take an arbitrary vector $z \in \su(3)$. Using the definition \eqref{AdInvariantProduct} of $\langle , \rangle$ and the fact that $z$ is traceless, we have
\[
\langle z,  N_A (w)  \rangle \: = \: -\, 2 \sqrt{3}\, a^4\, i \, \sum\nolimits_j \, \mathrm{Tr} \big[ \, v_j \, w \, (A v_j) \, z \: - \: (A v_j) \, w \, v_j \, z \, \big] \ .
\]
The cyclicity of the trace then implies that $\langle z,  N_A (w)  \rangle$ is antisymmetric in $z \leftrightarrow w$.

To derive the decomposition of $\rho^1_A$, apply the first identity of proposition \ref{DoubleOkuboIdentities} with the choices $(u, v, w) = (A v_j, w, v_j)$:
\begin{multline}
\big( (A v_j) \ok w \big) \ok v_j \: = \: 2 a^2 \, [[A v_j, v_j], w] \: + \: 2 \sqrt{3}\, a^2\, i\, \big( (A v_j) \, w \, v_j - v_j \, w \, (A v_j) \big)_0 \\ + \: \langle A v_j , v_j \rangle \, w \: - \: \langle A v_j , w \rangle \, v_j \: + \: \langle w , v_j \rangle \, A v_j\ .
\end{multline}
The term with $\langle A v_j , v_j \rangle$ vanishes, since $A$ is in $\mathfrak{so}(\su(3))$. Thus, using the definitions of $u_A$ and $N_A$, and the fact that $\{v_j\}$ is a $\langle , \rangle$-ortonormal basis,
\[
\sum\nolimits_j \, \big( (A v_j) \ok w \big) \ok v_j \: = \: -\, 2\, [u_A , w] \: - \: 2\, N_A(w) \: + \: 2\, A w\ .
\]
Applying this formula to definition \eqref{rho1} of $\rho^1_A $gives the first identity in \eqref{DecompositionRho}. The second identity in \eqref{DecompositionRho} is proved similarly, now with the choices $(u, v, w) = (v_j, w, A v_j)$ in the the second identity of proposition \ref{DoubleOkuboIdentities}.
\end{proof}

\begin{lemma}
\label{LemmaURhoA}
For every $A \in \mathfrak{so}(\su(3))$, we have that 
\begin{equation}
u_{\rho^1_A} \: =\: u_{\rho^2_A} \:= \: u_A  \qquad \qquad  u_{N_A} \: = \: 0 \ .
\end{equation}
\end{lemma}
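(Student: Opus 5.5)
The plan is to reduce everything to a trace identity. The key observation is that $u_A$ is characterised by
\[
\langle u_A, z\rangle = a^2 \sum\nolimits_j \langle [v_j, A v_j], z\rangle = a^2 \sum\nolimits_j \langle A v_j, [z, v_j]\rangle = -\,a^2\, \mathrm{tr}(A\, \mathrm{ad}_z)
\]
for every $z \in \su(3)$. This uses the Ad-invariance of $\langle , \rangle$ and the antisymmetry of $A$, up to an overall sign convention that I would fix carefully. So $u_B$ is, up to the factor $-a^2$, the $\langle , \rangle$-dual of the linear functional $z \mapsto \mathrm{tr}(B\,\mathrm{ad}_z)$. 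In other words, $u_B$ is proportional to the orthogonal projection of $B \in \mathfrak{so}(\su(3))$ onto the subalgebra $\mathrm{ad}(\su(3))$, with respect to the trace form. It therefore suffices to show, for all $z$,
\[
\mathrm{tr}(\rho^1_A\, \mathrm{ad}_z) = \mathrm{tr}(\rho^2_A\, \mathrm{ad}_z) = \mathrm{tr}(A\, \mathrm{ad}_z), \qquad \mathrm{tr}(N_A\, \mathrm{ad}_z) = 0 .
\]

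For $\rho^1_A$ and $\rho^2_A$, I would avoid grinding through the Okubo identities and use representation theory instead. Lemma \ref{Restriction3Reps} gives $\mathrm{ad}_z = \rho^r_{\mathrm{ad}_z}$ for $r = 1, 2$. Hence $\mathrm{tr}(\rho^r_A\,\mathrm{ad}_z) = \mathrm{tr}(\rho^r_A\, \rho^r_{\mathrm{ad}_z})$. Now $A \mapsto \rho^r_A$ is an irreducible representation of the simple Lie algebra $\mathfrak{so}(8)$ (a half-spin representation). So $(A, B) \mapsto \mathrm{tr}(\rho^r_A \rho^r_B)$ is an invariant symmetric bilinear form and must equal $c_r\,\mathrm{tr}(AB)$ for a constant $c_r$. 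Since the spin and vector representations of $\mathfrak{so}(8)$ have the same Dynkin index (they are permuted by triality), $c_r = 1$.

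If I wanted to avoid citing Dynkin indices, I would fix $c_r$ by one evaluation instead. The choice $A = B = \mathrm{ad}_z$ does not work, because Lemma \ref{Restriction3Reps} gives $\rho^r_{\mathrm{ad}_z} = \mathrm{ad}_z$ and hence $\mathrm{tr}(\mathrm{ad}_z^2) = c_r\,\mathrm{tr}(\mathrm{ad}_z^2)$, which forces $c_r = 1$ since $\mathrm{tr}(\mathrm{ad}_z^2) \neq 0$ for $z \neq 0$. Either way, the first two equalities follow. This is essentially the first identity of Lemma \ref{LemmaTracesRho}, but I would prove it this way here to avoid circularity if that lemma is proved later.

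The identity $u_{N_A} = 0$ then follows from Lemma \ref{LemmaDecompositionRho}. Because $N_A = \rho^1_A - \rho^2_A$ and $B \mapsto u_B$ is linear, we get $u_{N_A} = u_{\rho^1_A} - u_{\rho^2_A} = 0$. As a cross-check, one can compute directly from the decomposition $\rho^1_A = \tfrac12\mathrm{ad}_{u_A} - \tfrac12 A + \tfrac12 N_A$. Using $u_{\mathrm{ad}_w} = a^2\sum_j [v_j,[w,v_j]] = 3w$ by \eqref{CasimirAdjointConstant}, this gives $u_{\rho^1_A} = \tfrac32 u_A - \tfrac12 u_A + \tfrac12 u_{N_A} = u_A + \tfrac12 u_{N_A}$, which is consistent.

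The main obstacle is justifying $c_r = 1$ cleanly: checking irreducibility and the invariance argument, and confirming that the normalisation is not off by a factor. A purely algebraic fallback would be to compute $u_{N_A}$ directly. Expanding gives $\sum_k [v_k, N_A v_k] \propto \sum_{j,k}[v_k,(v_j v_k B_j - B_j v_k v_j)_0]$ with $B_j = A v_j$. Lemma \ref{IdentitySchur} collapses sums of the form $\sum_k v_k M v_k$, and the remaining terms should cancel by tracelessness and the antisymmetry of $A$, as in the computation of $W_2$.
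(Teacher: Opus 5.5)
Your argument is correct, but it takes a different route from the paper. The paper computes directly. It inserts \eqref{rho1} into $u_{\rho^1_A}=a^2\sum_l[v_l,\rho^1_A(v_l)]$ and collapses the double sum with the Okubo identity $\sum_l[v_l,(u\ok v_l)\ok w]=4[u,w]$ of Lemma~\ref{LemmaAdditionalOkuboIdentities1}, taking $u=Av_j$ and $w=v_j$. It handles $\rho^2_A$ in the same way, and gets $u_{N_A}=0$ by linearity exactly as in your last step.

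You instead dualise, $\langle u_B,z\rangle=-a^2\,\mathrm{tr}(B\,\mathrm{ad}_z)$. This holds for every $B\in\mathrm{End}(\su(3))$ because $\mathrm{ad}_z$ is skew, so no sign or skewness issue remains to be fixed. You then reduce everything to $\mathrm{tr}(\rho^r_A\rho^r_B)=\mathrm{tr}(AB)$, normalised through $\rho^r_{\mathrm{ad}_z}=\mathrm{ad}_z$. Your sentence saying that the choice $A=B=\mathrm{ad}_z$ ``does not work'' contradicts what follows it. That choice does work, and it is exactly what gives $c_r=1$. So the Dynkin-index remark is superfluous. Irreducibility is not needed either: you only use that $A\mapsto\rho^r_A$ is a Lie algebra homomorphism and that $\mathfrak{so}(8)$ is simple.

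The paper's route stays inside the Okubo algebra and needs no representation theory, at the price of an extra identity. Yours treats $r=1,2$ uniformly and needs nothing beyond Lemma~\ref{Restriction3Reps}. That uniformity helps here. The identity the paper's computation actually needs for $\rho^2_A$ is $\sum_l[v_l,w\ok(v_l\ok u)]=4[u,w]$. The second identity printed in \eqref{OkuboIdentities5} has $u\ok v_l$ in place of $v_l\ok u$, and that sum actually equals $4[u,w]+\tfrac{3}{a}\,w\ok u$. You can see this by expanding with \eqref{AdInvariantOkubo} and then using \eqref{ImportantIdentityOkubo1} and \eqref{OkuboIdentity4}.

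Your normalisation also gives an independent proof of the first identity in Lemma~\ref{LemmaTracesRho}. The paper's proof of that lemma fixes the constant through $\rho^1_{\rho^1_{\rho^1_A}}=A$, so it depends on the present lemma. You were therefore right not to cite it here.
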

\begin{proof}
Using definition \eqref{DefinitionUA} of $u_A$, definition \eqref{rho1} of $\rho^1_A$, and applying the first identity in \eqref{OkuboIdentities5} with $u = A v_j$ and $w = v_j$,
\[
u_{\rho^1_A} \: = \: a^2 \sum\nolimits_l  [v_l , \rho^1_A (v_l)] \: = \: -\, \frac{a^2}{4} \, \sum\nolimits_{j, l}  \big[ v_l\, , ((A v_j) \ok v_l) \ok v_j \big] \: = \: -\, a^2 \sum\nolimits_j  [A v_j , v_j] \: = \: u_A \ .
\]
The calculation of $u_{\rho^2_A}$ is similar, using now and the second identity in \eqref{OkuboIdentities5}.
Finally, since $A \mapsto u_A$ is linear, lemma \ref{LemmaDecompositionRho} implies that $u_{N_A} = u_{\rho^1_A} - u_{\rho^2_A} = 0$. 
\end{proof}

\begin{lemma}
\label{LemmaNSquared}
For every $w \in \su(3)$ and $A \in \mathfrak{so}(\su(3))$, we have
\begin{align}
N_{\mathrm{ad}_{w}} \: &= \: 0    & N_{N_A} \: &= \: \mathrm{ad}_{u_A} \: - \: 3\, A   \label{LemmaNSquaredIdentities1}  \linebr
N_{\rho^1_A} \: &= \: \rho^2_A \, -\,  A   &  N_{\rho^2_A} \: &= \: A \, - \, \rho^1_A  \ .   \label{LemmaNSquaredIdentities2}
\end{align}
\end{lemma}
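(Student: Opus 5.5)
The plan is to prove the four identities in the order $N_{\mathrm{ad}_w}=0$, then $N_{N_A}$, then deduce $N_{\rho^1_A}$ and $N_{\rho^2_A}$ by linearity from the decomposition in lemma \ref{LemmaDecompositionRho}.

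\emph{First identity.} For $N_{\mathrm{ad}_w}$ I will use lemma \ref{Restriction3Reps}, which gives $\rho^1_{\mathrm{ad}_w}=\rho^2_{\mathrm{ad}_w}$. Lemma \ref{LemmaDecompositionRho} says $N_A=\rho^1_A-\rho^2_A$, so $N_{\mathrm{ad}_w}=0$ immediately.

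\emph{Second identity.} For $N_{N_A}$ I will compute directly from definition \eqref{DefinitionNA}. Inserting $N_A v_j$ in place of $Av_j$ gives terms of the form $\sum_{j,k}\big(v_j\, z\, (v_k x A v_j - A v_j\, x\, v_k)_0\big)_0$, and similar. Here $x$ ranges over the basis elements produced by expanding $N_A v_j$, i.e. $x=v_j$ with $v_k$ summed. The tool is lemma \ref{IdentitySchur}, $\sum_j v_j M v_j=\tfrac{1}{6a^2}M-\tfrac{1}{2a^2}\mathrm{Tr}(M)I_3$, applied repeatedly.

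To make it applicable, I first rewrite $Av_j=\sum_k A_{kj}v_k$ with $A_{kj}$ antisymmetric. The double sums then take the shape $\sum_{j,k}A_{kj}\, v_j\, z\, v_l\, v_j\, v_k$, where an inner sum over $j$ can be contracted. The traceless-projection pieces $(\cdot)_0$ generate $I_3$ terms, which are harmless: they either vanish after the outer $(\cdot)_0$, or multiply Casimirs $\sum v_jv_j=-\tfrac{4}{3a^2}I_3$.

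After contraction, the surviving terms should organise into three pieces:
\begin{itemize}
\item a multiple of $\sum_k(v_k z Av_k + Av_k z v_k)$;
\item terms like $\sum_k (Av_k v_k z - z v_k Av_k)$, which combine into $\mathrm{ad}_{u_A}$;
\item a term proportional to $Az$, obtained via $\sum_k \mathrm{Tr}(Av_k\, z)v_k \propto Az$.
\end{itemize}
The symmetric combination in the first piece is the one not seen in $N$. I expect to need the completeness relation in its polarized form, $\sum_j \mathrm{Tr}(v_j X)\,v_j=-\tfrac{1}{2a^2}X_0$ for anti-Hermitian-type $X$, to kill it or convert it into $Az$. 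The factor $(\sqrt3 a^2 i)^2=-3a^4$ should produce exactly the coefficients $1$ and $-3$. This bookkeeping is the main obstacle.

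A possible shortcut is to avoid the direct computation. Decompose $\mathfrak{so}(8)=\mathrm{ad}(\su(3))\oplus\mathfrak m'$ under $\mathrm{PSU}(3)$. The map $A\mapsto N_A$ is $\mathrm{Ad}$-equivariant, kills $\mathrm{ad}(\su(3))$, and by lemma \ref{LemmaURhoA} has $u_{N_A}=0$, so it maps $\mathfrak m'$ to itself. The difficulty is that $\mathfrak m' \simeq 10\oplus\overline{10}$, so equivariance alone allows a complex scalar. I would then compute $N_{N_A}$ on a single test element to fix $N^2=-3$ on $\mathfrak m'$. On $\mathrm{ad}_w$ the formula gives $\mathrm{ad}_{u_{\mathrm{ad}_w}}-3\,\mathrm{ad}_w$, which must vanish; this is consistent provided $u_{\mathrm{ad}_w}=3w$. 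That holds since $a^2\sum_j[v_j,[w,v_j]]=-a^2 f(w)=3w$ by \eqref{CasimirAdjointConstant}. I would use this route if the direct contraction becomes unwieldy.

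\emph{Last two identities.} Given the first two, linearity of $A\mapsto N_A$ and lemma \ref{LemmaDecompositionRho} give
\[N_{\rho^1_A}=\tfrac12 N_{\mathrm{ad}_{u_A}}-\tfrac12 N_A+\tfrac12 N_{N_A}=-\tfrac12 N_A+\tfrac12\mathrm{ad}_{u_A}-\tfrac32 A.\]
This equals $\rho^2_A-A$ by \eqref{DecompositionRho}. Similarly, $N_{\rho^2_A}=-\tfrac12N_A-\tfrac12N_{N_A}+\dots$ gives $A-\rho^1_A$.
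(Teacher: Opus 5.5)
Your treatment of $N_{\mathrm{ad}_w}=0$ and of the last two identities matches the paper, and for $N_{N_A}$ you propose the same direct contraction with lemma~\ref{IdentitySchur}. The gap is in that computation. The one concrete claim you make about it is false, and following it gives the wrong answer: the inner traceless projection in $N_A(v_l)$ is not harmless. The matrix $\sum_j(v_jv_lAv_j-Av_jv_lv_j)$ has trace $\sum_j\mathrm{Tr}(v_l[Av_j,v_j])=-\tfrac{1}{a^2}\mathrm{Tr}(v_lu_A)$, which depends on $l$. Hence $N_A(v_l)=\sqrt{3}a^2i\,[\tfrac{1}{3a^2}\mathrm{Tr}(v_lu_A)I_3+\sum_j(v_jv_lAv_j-Av_jv_lv_j)]$. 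When this is reinserted into $v_l\,w\,N_A(v_l)-N_A(v_l)\,w\,v_l$, the $I_3$ piece becomes $\tfrac{1}{3a^2}\sum_l\mathrm{Tr}(v_lu_A)[v_l,w]=-\tfrac{1}{6a^4}[u_A,w]$. The outer projection does not remove it, and it is not a Casimir multiple. It supplies exactly half of the $\mathrm{ad}_{u_A}$ term. Dropping it, as your plan says, yields $N_{N_A}=\tfrac12\mathrm{ad}_{u_A}-3A$, which contradicts $N_{\mathrm{ad}_w}=0$ because $u_{\mathrm{ad}_w}=3w$.

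The rest of the computation runs as follows.
\begin{itemize}
\item Contract over the outer index $l$, the one from the second use of \eqref{DefinitionNA}, and keep $Av_j$ intact. The quartic terms are then $v_l(wv_j)v_l\,Av_j$, $v_l(wAv_j)v_l\,v_j$, $v_j\,v_l(Av_jw)v_l$ and $Av_j\,v_l(v_jw)v_l$.
\item In the shape you wrote, the sandwiching index also appears in $A_{kj}$. Lemma~\ref{IdentitySchur} does not apply to such weighted sums.
\item The $\tfrac{1}{6a^2}M$ part of lemma~\ref{IdentitySchur} gives $-\tfrac{1}{6a^4}[u_A,w]$.
\item The $\mathrm{Tr}(M)I_3$ part gives $\tfrac{1}{a^4}Aw$, using $\sum_j\mathrm{Tr}(wv_j)v_j=-\tfrac{1}{2a^2}w$ and the skewness of $A$.
\item No symmetric combination $\sum_k(v_kzAv_k+Av_kzv_k)$ appears.
\item The total $-\tfrac{1}{3a^4}[u_A,w]+\tfrac{1}{a^4}Aw$ already lies in $\su(3)$, and multiplying by $(\sqrt3a^2i)^2=-3a^4$ gives $[u_A,w]-3Aw$.
\end{itemize}

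Your representation-theoretic fallback is sound. The commutant of $\mathfrak m'=\ker(A\mapsto u_A)$ is $\mathbb{C}$, so one nonzero test evaluation fixes $N^2|_{\mathfrak m'}$, and $u_{\mathrm{ad}_w}=3w$ handles the adjoint part. It still needs that explicit evaluation, so it does not avoid computation. What it adds is structure: $N^2=-3$ forces $N|_{\mathfrak m'}=\pm\sqrt3\,J$ for a complex structure $J$. Consequently $A\mapsto\rho^1_A$ acts on $\mathfrak m'$ by a primitive cube root of unity and trivially on $\mathrm{ad}(\su(3))$.
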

\begin{proof}
The first identity follows from lemma \ref{Restriction3Reps}, which says that $\rho^1_{\mathrm{ad}_w} = \rho^2_{\mathrm{ad}_w}$, and the fact that $N_A = \rho^1_A - \rho^2_A$, as stated in lemma \ref{LemmaDecompositionRho}.

The second identity in \eqref{LemmaNSquaredIdentities1} is the longest to prove. Using definition \eqref{DefinitionNA} of $N_A$, the cyclicity of the trace, and definition \eqref{DefinitionUA} of $u_A$,
\begin{align}
N_A (v_l) \: &= \:  \sqrt{3}\, a^2\, i  \: \sum\nolimits_j \, \big( v_j \, v_l \, A v_j - A v_j \, v_l \, v_j \big) \: - \: \frac{1}{3} \, \mathrm{Tr} ( v_j \, v_l \, A v_j - A v_j \, v_l \, v_j) \, I_3 \,    \nonumber \linebr
&= \: \sqrt{3}\, a^2\, i \, \Big[ \,  \frac{1}{3 a^2} \, \mathrm{Tr} (v_l \, u_A) \, I_3  \: + \:  \sum\nolimits_j \, \big( v_j \, v_l \, A v_j - A v_j \, v_l \, v_j \big)   \, \Big] \ .
\end{align}
Inserting this expression again in definition \eqref{DefinitionNA} of $N_A$, we get
\begin{equation}
\label{NASquared}
N_{N_A} (w) \: = \: \sqrt{3}\, a^2\, i \, \sum\nolimits_l \, ( \, v_l \, w \, N_A(v_l) \: - \: N_A(v_l)\, w \, v_l \, )_0 \: = \: -\, 3\, a^4 \, ( E_1 \, + \, E_2 )_0 
\end{equation}
with
\begin{align}
E_1 \: &:= \: \tfrac{1}{3 a^2} \, \sum\nolimits_l \, \mathrm{Tr} (v_l \, u_A) \, ( v_l \, w \: - \: w \, v_l )  \nonumber  \linebr
E_2 \: &:= \: \sum\nolimits_{j, l} \, ( \, v_l \, w \, v_j \, v_l \, A v_j \: - \: v_l \, w \, A v_j \, v_l \, v_j \: - \: v_j \, v_l \, A v_j \, w \, v_l \: + \: A v_j \, v_l \, v_j \, w  \, v_l \, ) \ . \nonumber 
\end{align}
The first term is just
\[
E_1 \: = \:  \tfrac{1}{3 a^2} \, \sum\nolimits_l \, \tfrac{-1}{2 a^2} \,  \langle v_l \, u_A \rangle \, ( v_l \, w \: - \: w\, v_l )  \: = \: -\, \tfrac{1}{6 a^4} \, [u_A , w] \ .
\]
For the second term, we can sum over $l$ using \eqref{IdentitySchurGeneralMatrices} repeatedly. The result is:
\begin{align}
E_2 \: &= \: \sum\nolimits_j \, \Big\{ \, \tfrac{1}{6 a^2} \, \big( w \, v_j \, A v_j - w \, A v_j \, v_j - v_j \, A v_j \, w + A v_j \, v_j \, w \big) \linebr
& \qquad \qquad - \: \tfrac{1}{2 a^2} \, \big( \mathrm{Tr} (w \, v_j) \, A v_j - \mathrm{Tr} (w\, Av_j) \, v_j - \mathrm{Tr} (A v_j \, w) \, v_j + \mathrm{Tr} (v_j \, w) \, Av_j \big) \Big\}   \nonumber \linebr
&= \: \sum\nolimits_j  \, - \, \tfrac{1}{6 a^2} \, \big[ [v_j , A v_j] , w \big] \: - \: \tfrac{1}{a^2} \, \big( \mathrm{Tr} (w \, v_j) \, A v_j - \mathrm{Tr} (w \, A v_j) \, v_j \big)   \nonumber \linebr 
&= \:  - \, \tfrac{1}{6 a^4} \, [ u_A , w ] \: + \: \tfrac{1}{2\, a^4} (A w \,  + \, Aw  ) \ , \nonumber
\end{align}
where the last equality uses the definition \eqref{DefinitionUA} of $u_A$, the relation between the trace and $\langle , \rangle$ given in \eqref{AdInvariantProduct}, and the fact that $A$ is skew symmetric with respect to $\langle , \rangle$.

Therefore $E_1 + E_2 = - \frac{1}{3 a^4} \, [u_A , w] + \frac{1}{a^4} \, A w$. This matrix belongs to $\su(3)$, so the traceless projection in \eqref{NASquared} acts trivially, and we obtain $N_{N_A} (w) = [u_A , w] - 3 A w$. This proves the second identity in \eqref{LemmaNSquaredIdentities1}.

To prove the two identities \eqref{LemmaNSquaredIdentities2}, we can use lemma \ref{LemmaDecompositionRho} and the previous identities of this lemma to calculate that
\begin{equation}
N_{\rho^1_A} \; = \;  \tfrac{1}{2} \, N_{\mathrm{ad}_{u_A}} \: - \:  \tfrac{1}{2} \, N_{A} \: + \:  \tfrac{1}{2}  \, N_{N_A}   
\; =\; \tfrac{1}{2} \,\mathrm{ad}_{u_A}  \: - \:  \tfrac{3}{2}  \, A \: - \:  \tfrac{1}{2}  \, N_A    \; =\;   \rho^2_A \, -\,  A   \ ,  \nonumber 
\end{equation}
and similarly for $N_{\rho^2_A}$.
\end{proof}

\begin{proof}[\bf{Proof of proposition \ref{PropositionTrialityCompositions}}] The six identities in the proposition follow directly from the fact that $\rho^3_A = A$, as per \eqref{rho3}, from the linearity of the correspondences $A \mapsto \rho^r_A$, and from the three previous lemmas. For example, the lemmas imply that:
\begin{align}
\rho^1_{\rho^1_A} \ &= \ \tfrac{1}{2}\, \mathrm{ad}_{u_A} \: - \: \tfrac{1}{2}\, \rho^1_A   \: + \: \tfrac{1}{2}\, N_{\rho^1_A}    \ = \ \tfrac{1}{2}\, \mathrm{ad}_{u_A} \: - \: \tfrac{1}{2}\, \rho^1_A   \: + \: \tfrac{1}{2}\, (\rho^2_A - A)  \nonumber \linebr
&= \ \tfrac{1}{2}\, \mathrm{ad}_{u_A}  \: - \: \tfrac{1}{2}\, A \: - \: \tfrac{1}{2}\, N_A  \ = \ \rho^2_A   \nonumber   \displaybreak[0]  \\[1.3em]
\rho^1_{\rho^2_A} \ &= \ \tfrac{1}{2}\, \mathrm{ad}_{u_A} \: - \: \tfrac{1}{2}\, \rho^2_A   \: + \: \tfrac{1}{2}\, N_{\rho^2_A}    \ = \ \tfrac{1}{2}\, \mathrm{ad}_{u_A} \: - \: \tfrac{1}{2}\, \rho^2_A   \: + \: \tfrac{1}{2}\, (A - \rho^1_A )   \nonumber \linebr
&= \ \ \tfrac{1}{2}\, \mathrm{ad}_{u_A}  \: + \: \tfrac{1}{2}\, A \: - \: \tfrac{1}{2}\, (\mathrm{ad}_{u_A}  - A)  \ = \ A \ = \  \rho^3_A  \ ,   \nonumber
\end{align}
and similar calculations for the $\rho^2_{\rho^r_A}$.
\end{proof}

\section{Proofs for section \ref{SymmetriesModuliSpaces}}
\label{AppendixSymmetriesModuliSpaces}

\subsection*{Proof of theorem \ref{PropertiesTrialityRotationConnections}}

As observed after the definition of ${\mathcal T}$, under the identification of $TM$ with $M \times \su(3)$, formulas \eqref{CondensedConnections3} and \eqref{rho3} say that $\mathcal{T}$ is implemented by the transformation $A \mapsto \rho^1_A$ of connection representations. Identities \eqref{CyclicityTrialityRotations} are then a consequence of \eqref{CondensedConnections3} and the algebraic identities of elements of $\mathfrak{so}(\su(3))$:
\begin{equation}
\label{TripleCompositionIdentity}
\rho^1_{\rho^1_A} \: = \: \rho^2_A \qquad \qquad \qquad \rho^1_{\rho^1_{\rho^1_A}} \: = \: \rho^3_A \: = \: A \ ,
\end{equation}
which themselves come from proposition \ref{PropositionTrialityCompositions}.

We now prove statement $(a)$. Since the orbits of $\mathcal{T}$ have either one or three distinct elements, $\mathcal{T} \nabla = \nabla$ is equivalent to $\mathcal{T} \mathcal{T} \nabla = \mathcal{T} \nabla$, i.e. to the identity $\nabla^3 = \nabla^2$.  Then statement $(a)$ follows from \eqref{CondensedConnections3} and the algebraic fact that $\rho^1_A = \rho^2_A$ if and only if $A \in \mathrm{ad} (\su(3))$. The {\it if} part follows from lemma \ref{Restriction3Reps}. In the other sense, when $\rho^1_A = \rho^2_A$, lemma \ref{LemmaDecompositionRho} implies that $N_A =0$. Then \eqref{LemmaNSquaredIdentities1} implies that $A =  \mathrm{ad}_{u_A / 3} \in \mathrm{ad} (\su(3))$.

We now prove statement $(b)$. Start by observing that, in our global trivializations, a gauge transformation $\varphi$ is represented by a map $\hat{\varphi} : M \rightarrow \mathrm{SO}(\su(3))$. Since $M$ is simply connected, this has a lift $\tilde{\varphi} : M \rightarrow \mathrm{Spin}(\su(3))$ that covers $\hat{\varphi}$ through the double cover $\lambda : \mathrm{Spin}(8) \rightarrow \mathrm{SO}(8)$, unique up to sign. The lift $\tilde{\varphi}$ acts on $S = S^+ \oplus S^-$ through the spin representation, i.e. by Clifford multiplication by $\tilde{\varphi}(h) \in \mathrm{Spin}(8) \subset \mathrm{Cl}(8)$. As is well-known, this action preserves chirality and is compatible with Clifford multiplication \cite{LM, Bourguignon}, so the spinorial gauge transformation satisfies 
\begin{equation}
\label{CompatibilityGaugeTransClifford}
\tilde{\varphi} (X \cdot \psi) \; = \; (\varphi \, X) \cdot ( \tilde{\varphi} \, \psi ) \ .
\end{equation}
The gauge transformation $\varphi$ acts on connections by $\varphi (\nabla) = \varphi \circ \nabla \circ \varphi^{-1}$. Let $\nabla^S$ denote the spinorial lift of $\nabla$ to $S$. We will show that 
\begin{equation}
\label{SpinorialLiftGaugeTransfConnection}
(\varphi (\nabla) )^S \: = \: \tilde{\varphi} \circ \nabla^S \circ \tilde{\varphi}^{-1} \: = \: \tilde{\varphi} (\nabla^S)  \ .
\end{equation}
To recognize this, we use \eqref{CompatibilityGaugeTransClifford} and observe that
\begin{align}
\tilde{\varphi}\, \nabla^S_X \big[ \tilde{\varphi}^{-1}(Y \cdot \psi) \big] \: &= \: \tilde{\varphi} \Big[ \, (\nabla_X \varphi^{-1} Y) \cdot \tilde{\varphi}^{-1} \psi \, + \, (\varphi^{-1} Y) \cdot \nabla^S_X \tilde{\varphi}^{-1} \psi \, \Big]      \nonumber \linebr
&= \: \big( \varphi \nabla_X {\varphi}^{-1} Y \big) \cdot \psi \, + \, Y \cdot \big( \tilde{\varphi}\, \nabla^S_X\, \tilde{\varphi}^{-1} \psi \big) \ . \nonumber
\end{align}
Since the spinorial lift $(\varphi (\nabla) )^S$ is also compatible with Clifford multiplication, we obtain that the difference of connections $ \tilde{\varphi}(\nabla^S)_X -  (\varphi (\nabla) )^S_X$ is a $C^\infty$-linear endomorphism of $S$ that commutes with Clifford multiplication of spinors by vector fields. But the real spinor bundle $S$ is an irreducible Clifford module of $\mathrm{Cl}(8) \simeq M_{16}(\mathbb{R})$ \cite{LM}, so that scalar operator must be a real multiple of the identity. At the same time, both $\tilde{\varphi} (\nabla^S)_X$ and $(\varphi (\nabla) )^S_X$ are compatible with the inner product of spinors, so their difference must be skew-adjoint with respect to that product. This is possible only if the difference is the zero multiple of identity, and since $X$ is arbitrary, we conclude that $(\varphi (\nabla) )^S  = \tilde{\varphi} (\nabla^S)$.

To finish the proof of statement $(b)$, observe that ${\mathcal T}[ \varphi (\nabla)] := [\varphi (\nabla)]^2$, and according to \eqref{DefinitionThreeConnections3} the rotated connections $\nabla^2$ are the transports of the spinorial lifts $\nabla^+$ under the triality isometry ${\mathcal I}^+$. Thus, using the previous result that $(\varphi (\nabla) )^+  = \tilde{\varphi} (\nabla^+)$,
\begin{align}
(\varphi (\nabla))^2 &= ({\mathcal I}^+)^{-1} \circ (\varphi (\nabla))^+ \circ {\mathcal I}^+  =  ({\mathcal I}^+)^{-1} \circ \tilde{\varphi} \circ  \nabla^+  \circ \, \tilde{\varphi}^{-1} \circ {\mathcal I}^+ \nonumber \linebr
& =   \big[ ({\mathcal I}^+)^{-1}  \circ \tilde{\varphi} \circ  {\mathcal I}^+ \big] \circ \nabla^2 \circ \big[ ({\mathcal I}^+)^{-1} \circ \tilde{\varphi} \circ  {\mathcal I}^+ \big]^{-1} \ . 
 \end{align}  
Since ${\mathcal I}^+: TM \rightarrow S^+$ is an isometry and $\tilde{\varphi}$ preserves the spinor inner product, the composition $\varphi' := ({\mathcal I}^+)^{-1}  \circ \tilde{\varphi} \circ  {\mathcal I}^+$ is an automorphism of $TM$ that preserves $g$. It preserves the orientation because both ${\mathcal I}^+$ and $\varphi$ do, and the lift $\tilde{\varphi}$ operates through the half-spin representation which takes values in $\mathrm{SO}(\Delta^+)$. In other words, $\varphi'$ is a gauge transformation of $(TM, g)$.  
So the triality rotation ${\mathcal T}[ \varphi (\nabla)]$ is a gauge transformation of ${\mathcal T}(\nabla)$, and hence ${\mathcal T}$ descends to a map of the moduli spaces.  \qed

\begin{proof}[\bf Proof of proposition \ref{PropInvarianceNormCurvartureChernSimons}]
Define the tensor
\[
Q^\nabla (X,Y,Z,W) \; := \; \sum\nolimits_{i} \, g \big( R^{\nabla}(X, Y)V_i \, ,\, R^{\nabla}(Z, W) V_i \big) \ .
\]
Using formulas \eqref{ExplicitCurvature}, the fact that any $\rho^r_B \in \mathfrak{so}(\su(3))$ is antisymmetric with respect to $\langle, \rangle$, and then the auxiliary lemma \ref{LemmaTracesRho},
\begin{align}
Q^{\nabla^r} (X,Y,Z,W) \: &= \: \sum\nolimits_{i, j}  g \big( R^{\nabla^r}(X, Y) V_i \, , V_j \big) \;  g \big( R^{\nabla^r}(Z, W) V_i \, , V_j \big)     \nonumber \linebr
&= \: \sum\nolimits_{i, j}  \langle \, \rho^{r-1}_{F_{X,Y}} v_i  \, , \,  v_j \, \rangle   \;  \langle \, \rho^{r-1}_{F_{Z,W}} v_i  \, , \,  v_j \, \rangle    \nonumber \linebr
&= \: \sum\nolimits_{i}  \langle \, \rho^{r-1}_{F_{X, Y}} v_i \, , \,  \rho^{r-1}_{F_{Z,W}} v_i \, \rangle  \: = \: - \sum\nolimits_{i}   \langle \, \rho^{r-1}_{F_{Z,W}} \, \rho^{r-1}_{F_{X,Y}} v_i \, , \,  v_i \, \rangle      \nonumber \linebr
&= \: -  \mathrm{Tr}\big(  \rho^{r-1}_{F_{Z,W}} \, \rho^{r-1}_{F_{X,Y}} \big) \: = \: -  \mathrm{Tr}\big( F_{Z,W} \, F_{X,Y} \big)   \ .   \nonumber
 \end{align}
The last term depends only on $(g, \nabla)$, not on the index $r$, so we get that $Q^{\nabla^1} = Q^{\nabla^2} = Q^{\nabla^3}$. But the norm of the curvature of $\nabla^r$ is just $|\, R^{\nabla^r} \, |_g^2 = \sum_{ i,j} Q^{\nabla^r} (V_i,V_j, V_i,V_j) $, so it too is independent of the index $r$. The energy-momentum tensor $\Theta^{\nabla^r}$ can be constructed from $Q^{\nabla^r}$ and  $|\, R^{\nabla^r} \, |_g^2$, so it too is independent of $r$.

 Finally, let $\{B_a\}$ be a basis of the 23-dimensional space $\mathfrak{so}(\su(3))$. The 1-form $A^{\nabla^{r+1}}_X$ of \eqref{EndomorphismConnection} has values in $\mathfrak{so}(\su(3))$ and, using \eqref{CondensedConnections3}, can be written as $\sum_a A^a \, \rho^r_{B_a}$, where the $A^a$ are 1-forms with real values and the indices $r$ are taken modulo 3.
Using the definition of the Chern-Simons form and lemma \ref{LemmaTracesRho} we then have
 \begin{align}
  \mathrm{CS}(\nabla^{r+1}) \; &:= \;  \sum\nolimits_{a, b}  \, A^a \wedge \dd A^b \;  \mathrm{Tr}(\rho^r_{B_a} \,  \rho^r_{B_b} ) \:  + \:   \sum\nolimits_{a, b,c}  \,  \tfrac{2}{3}\, A^a\wedge A^b\wedge A^c \;  \mathrm{Tr}\big(\rho^r_{B_a} \, \rho^r_{B_b} \,  \rho^r_{B_c}   \big)   \nonumber \linebr
&= \: \sum\nolimits_{a, b}  \, A^a \wedge \dd A^b \;  \mathrm{Tr}(\rho^r_{B_a} \,  \rho^r_{B_b} ) \:  + \:   \sum\nolimits_{a, b,c}  \,  \tfrac{1}{3}\, A^a\wedge A^b\wedge A^c \;  \mathrm{Tr}\big(\rho^r_{B_a} \, [ \rho^r_{B_b} , \,  \rho^r_{B_c}] \,   \big)   \nonumber \linebr
&= \: \sum\nolimits_{a, b}  \, A^a \wedge \dd A^b \;  \mathrm{Tr}(B_a \,  B_b ) \:  + \:   \sum\nolimits_{a, b,c}  \,  \tfrac{1}{3}\, A^a\wedge A^b\wedge A^c \;  \mathrm{Tr}\big(B_a \, [ B_b , \,  B_c] \,   \big)   \nonumber  \ .
  \end{align}
The last term is again independent of the index $r$, so $\mathrm{CS}(\nabla^1) = \mathrm{CS}(\nabla^2) = \mathrm{CS}(\nabla^3)$.
\end{proof}

\begin{proof}[\bf Proof of lemma \ref{LemmaTracesRho}]
For $r=3$ the relations are tautological, since $\rho^3_A = A$. For $r =1$, we know that the correspondence $A \mapsto \rho^1_A$ is one of the spin representations of $\mathfrak{spin}(8) \simeq \mathfrak{so}(8)$ on the 8-dimensional space $\su(3) \simeq \Delta^+$, as discussed after \eqref{EndomorphismConnection}. In particular, for every $h \in \mathrm{Spin}(8)$, we have that $\rho^1_{\mathrm{Ad}_h A} =  \sigma_h \circ \rho^1_A  \circ \sigma_h^{-1}$, where $\sigma$ denotes the $\mathrm{Spin}(8)$ group representation on $\Delta^+$. Taking the traces of the endomorphisms of $\Delta^+$, we get that
\[
\mathrm{tr}\big( \rho^1_{\mathrm{Ad}_h A} \ \rho^1_{\mathrm{Ad}_h B} \big) \ = \ \mathrm{Tr}( \rho^1_A \, \rho^1_B ) \ .
\]
Thus $\beta(A, B) := \mathrm{tr}(\rho^1_A \rho^1_B)$ is a $\mathrm{Ad}$-invariant bilinear form on the simple Lie algebra $\mathfrak{spin}(8)$, and hence must be proportional to the Killing form, i.e. $\beta(A, B) = b\, \mathrm{tr}(AB)$ for some real constant $b$. But then formula \eqref{TripleCompositionIdentity} implies that
\[
\mathrm{tr}(A B) \: = \: \mathrm{tr}( \rho^1_{\rho^1_{\rho^1_A}} \, \rho^1_{\rho^1_{\rho^1_B}} ) \: = \: b \,  \mathrm{tr}( \rho^1_{\rho^1_A} \, \rho^1_{\rho^1_B} )  \: = \: b^2 \,  \mathrm{tr}( \rho^1_A \, \rho^1_B )  \: = \: b^3 \,  \mathrm{tr}(A B) \ ,
\]
and hence $b=1$. The proof for $\rho^2$ is similar.
The second identity in \eqref{TraceRhoRho} follows from the first identity and the fact that $[\rho^r_B , \rho^r_C] = \rho^r_{[B, C]}$, which is true because the correspondence $A \mapsto \rho^r_A$ is a spin representation.
 \end{proof}

  \subsection*{Proof of theorem \ref{TheorTrialityHiggsModuliSpace}}
  
  Let $\{V_j = \alpha (v_j^\LL)\}$ be our usual $g$-orthonormal trivialization of $TM$ associated to a $\langle , \rangle$-orthonormal basis $\{ v_j\}$ of $\su(3)$. It can by used to identify $TM$ with $M \times \su(3)$. Then a Higgs field $\phi$ of $g$-skew endomorphism of $TM$ is represented by the map $\hat{\phi}: M \rightarrow \mathfrak{so}(\su(3))$ determined by 
\begin{equation}
\label{RepVectorialHiggs}
\hat{\phi}(v_j) = \sum\nolimits_k \,  g(\phi\, V_j, V_k)\, v_k \ .
\end{equation}
At the same time, from the standard isomorphism between $ \mathfrak{so}(T_xM)$ and $ \mathfrak{spin}(T_xM)$, our $\phi$ induces a $\langle , \rangle$-skew endomorphism of $S^+$ given by 
\begin{equation}
\label{SpinorialHiggs}
\tilde{\phi}(\psi^+) \: = \: \tfrac{1}{4} \sum\nolimits_{j} \, V_j \cdot (\phi V_j) \cdot \psi^+ \: = \: \tfrac{1}{4} \sum\nolimits_{j,k} \, g(\phi\, V_j , V_k)\; V_j \cdot V_k \cdot \psi^+ \ .
\end{equation}
Then triality map ${\mathcal T}\phi$ has an alternative description, equivalent to definition \ref{DefinitionTrialityRotationHiggs}.
\begin{proposition}
\label{DefinitionTrialityRotationHiggs2}
Let $\phi$ be a Higgs field in $\HH_g$ and let $\tilde{\phi}$ denote the induced $\langle , \rangle$-skew endomorphism of $S^+$. Then
\[
 {\mathcal T} \phi \; = \; ({\mathcal I}^+)^{-1} \circ \tilde{\phi} \circ {\mathcal I}^+ \ ,
\]
where ${\mathcal I}^+: TM \rightarrow S^+$ is the isometry \eqref{DefinitionIsometry1}.
\end{proposition}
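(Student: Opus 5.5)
The plan is to compute both sides in the trivialization $TM \simeq M\times\su(3)$ given by the frame $\{V_j\}$, and show they are represented by the same map $M \to \mathfrak{so}(\su(3))$, namely $\rho^1_{\hat\phi}$. In this trivialization, definition \ref{DefinitionTrialityRotationHiggs} says that ${\mathcal T}\phi$ is represented by $\rho^1_{\hat\phi}$, where $\hat\phi$ is given by \eqref{RepVectorialHiggs} and the expansion $\phi=\sum_c\phi^c B_c$ is taken with respect to this identification. So it suffices to show that $({\mathcal I}^+)^{-1}\circ\tilde\phi\circ{\mathcal I}^+$ sends $V_i$ to the vector whose $\su(3)$-representative is $\rho^1_{\hat\phi}(v_i)$. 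In other words, we must check that $g\big(({\mathcal I}^+)^{-1}\tilde\phi\,{\mathcal I}^+ V_i, V_l\big) = \langle \rho^1_{\hat\phi} v_i, v_l\rangle$.

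First, track the isometry. By \eqref{DefinitionIsometry1} and \eqref{DefinitionVj}, ${\mathcal I}^+ V_i = \alpha^{-1}\alpha(v_i^\LL)$, which is the constant map $v_i$ as a section of $S^+$. Conversely, $({\mathcal I}^+)^{-1}$ sends a spinor with $\su(3)$-value $\psi$ to $\alpha(\psi^\LL)$, which in the $V_j$-frame has components $\langle\psi, v_j\rangle$. This follows from \eqref{Isometry1}, exactly as in the proof of proposition \ref{PropThreeConnections}. Hence both identifications (vectors via $Y\mapsto\sum_j g(Y,V_j)v_j$, spinors via $S^+=M\times\su(3)$) turn ${\mathcal I}^+$ into the identity map on $\su(3)$-valued functions. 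This is the same observation that produced \eqref{CondensedConnections3} from \eqref{DefinitionThreeConnections3}.

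Second, compute $\tilde\phi$ in the same model. Using \eqref{TraditionalCliffordMultiplication} and \eqref{Vjdokpsi}, we have $V_j\cdot V_k\cdot\psi^+ = -(v_k\ok\psi^+)\ok v_j$. This is the same identity used in the proof of proposition \ref{CurvatureRotatedConnections}. Substituting into \eqref{SpinorialHiggs} gives
\[
\tilde\phi(\psi^+) \;=\; -\tfrac14\sum\nolimits_{j,k} g(\phi V_j,V_k)\,(v_k\ok\psi^+)\ok v_j \;=\; -\tfrac14\sum\nolimits_j (\hat\phi\, v_j \ok\psi^+)\ok v_j \;=\; \rho^1_{\hat\phi}(\psi^+),
\]
where the middle step uses \eqref{RepVectorialHiggs} and linearity of $\ok$, and the last step is definition \eqref{rho1}. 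Combining this with the first step proves the claim. Equivalently, the argument is the algebraic content of \eqref{CondensedConnections3} with the connection form $A_X$ replaced by the skew endomorphism $\hat\phi$.

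The main obstacle is bookkeeping of conventions rather than any deep step. One must check three things: the index order in $g(\phi V_j,V_k)$ matches that in \eqref{EndomorphismConnection}, so that $\hat\phi$ plays exactly the role of $A_X$; the sign from $V_j\cdot V_k\cdot$ is $-(v_k\ok\cdot)\ok v_j$ and not its reverse; and $\phi^c$ in definition \ref{DefinitionTrialityRotationHiggs} means the coefficients of $\hat\phi$, not of $\phi$ in some other frame. I would verify these carefully against the derivation of \eqref{SpinorialConnectionsTrivialization}. That formula has the identical structure, with the coefficient $\frac14 g(\nabla_XV_j,V_k)$ there corresponding to $\frac14 g(\phi V_j,V_k)$ here, so the matching of signs can be read off directly.
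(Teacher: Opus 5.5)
Your proposal is correct and follows essentially the same route as the paper. The paper also uses two lemmas: first, $\tilde\phi$ is represented by $\rho^1_{\hat\phi}$, via $V_j\cdot V_k\cdot\psi^+ \leftrightarrow -(v_k\ok\psi^+)\ok v_j$; second, conjugation by ${\mathcal I}^+$ preserves representative maps because ${\mathcal I}^+(V_j)$ is the constant section $v_j$.
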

To prove this proposition we should be careful about the bundle endormorphisms and their representative maps $M \rightarrow \mathfrak{so}(\su(3))$. The $\langle , \rangle$-orthonormal basis $\{ v_j\}$ of $\su(3)$ determines both the $g$-orthonormal trivialization $\{V_j\}$ of $TM$ and a trivialization $\{\psi_j \}$ of $S^+$ which is orthonormal with respect to the spinor inner product $\langle , \rangle$. In fact, in the model $S^+$ of section \eqref{DefinitionSpinorBundles}, we have $\psi_j\, |_x = (x, v_j)$ for all $x\in M$. If $\xi$ is a $\langle , \rangle$-skew endomorphism of $S^+$, its representative map $M \rightarrow \mathfrak{so}(\su(3))$ is then
\begin{equation}
\label{RepSpinorialHiggs}
\hat{\xi}(v_j) = \sum\nolimits_k \,  \langle \xi\, \psi_j, \psi_k \rangle \, v_k  \ .
\end{equation}
\begin{lemma}
The induced spinor endomorphism $\tilde{\phi}$ has the same representative map $M \rightarrow \mathfrak{so}(\su(3))$ as the triality rotated field ${\mathcal T}\phi$.
\end{lemma}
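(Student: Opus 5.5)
We compute both representative maps in the basis $\{v_j\}$ and compare them entry by entry. On one side, definition \ref{DefinitionTrialityRotationHiggs} and the linearity of $A \mapsto \rho^1_A$ show that, with $\hat\phi$ as in \eqref{RepVectorialHiggs}, the map ${\mathcal T}\phi$ is represented by $\rho^1_{\hat\phi}$. Indeed, expanding $\phi = \sum_c \phi^c B_c$ in the identification of $TM$ with $M\times\su(3)$ given by $\{V_j\}$ means exactly $\hat\phi = \sum_c \phi^c B_c$. So it suffices to show that
\[
\langle \tilde\phi\,\psi_j , \psi_k\rangle \: = \: \langle \rho^1_{\hat\phi}\, v_j , v_k \rangle
\]
for all $j,k$.

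For the spinor side, we use the explicit model \eqref{TraditionalCliffordMultiplication} of Clifford multiplication. With $V_j = \alpha(v_j^\LL)$ and \eqref{Vjdokpsi}, we get $V_k\cdot\psi^+ = v_k\ok\psi^+ \in S^-$. Then $V_j\cdot(V_k\cdot\psi^+) = -(v_k\ok\psi^+)\ok v_j$, as already used in the proof of proposition \ref{CurvatureRotatedConnections}. Substituting into \eqref{SpinorialHiggs} gives, as a map $M\to\su(3)$,
\[
\tilde\phi(\psi^+) \: = \: -\tfrac14 \sum\nolimits_{j,k} g(\phi V_j, V_k)\, (v_k \ok \psi^+)\ok v_j .
\]
Since $\hat\phi(v_j) = \sum_k g(\phi V_j,V_k)\, v_k$, the inner sum over $k$ collapses to $(\hat\phi v_j)\ok\psi^+$. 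Hence $\tilde\phi(\psi^+) = -\tfrac14\sum_j [(\hat\phi v_j)\ok\psi^+]\ok v_j = \rho^1_{\hat\phi}(\psi^+)$ by \eqref{rho1}.

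Since $\psi_j$ is the constant map with value $v_j$ and $S^+$ carries the inner product $\langle , \rangle$, formula \eqref{RepSpinorialHiggs} gives $\hat{\tilde\phi}(v_j) = \sum_k \langle \rho^1_{\hat\phi} v_j, v_k\rangle v_k = \rho^1_{\hat\phi}(v_j)$. This equals the representative of ${\mathcal T}\phi$, as required. The same computation is essentially the one relating \eqref{SpinorialConnectionsTrivialization} to \eqref{CondensedConnections}, so the lemma is a pointwise version of that identity.

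The only delicate step is bookkeeping. We must confirm that the index placement in $g(\phi V_j,V_k)$ against $V_j\cdot V_k$ matches the convention $(A_X)(v_j) = \sum_k g(\nabla_X V_j,V_k)\,v_k$ used for $\rho^1$. We must also track the sign coming from $v\dok\psi^- = -\psi^-\ok\alpha^{-1}v$. If the indices came out transposed, the antisymmetry of $g(\phi V_j,V_k)$ would flip the sign, so this check is necessary. It works out because \eqref{SpinorialHiggs} has exactly the same form as the connection term $\tfrac14\sum g(\nabla_X V_j,V_k)V_j\cdot V_k\cdot\psi^+$, which the paper already identified with $\rho^1_{A_X}$.
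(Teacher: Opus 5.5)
Your proof is correct and follows the paper's argument: you model $V_j\cdot V_k\cdot\psi^+$ as $-(v_k\ok\psi^+)\ok v_j$, collapse the sum over $k$ into $\hat\phi(v_j)$ to get $\tilde\phi(\psi^+)=\rho^1_{\hat\phi}(\psi^+)$, and compare this with the definition of ${\mathcal T}\phi$. Your extra step of reading off the representative through the constant frame $\psi_j$ only makes explicit what the paper leaves implicit.
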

\begin{proof}
In the model \eqref{TraditionalCliffordMultiplication} of Clifford multiplication, the spinor $V_j \cdot V_k \cdot \psi^+$ corresponds to the map $-(v_k \ok \psi^+) \ok v_j$ with values in $\su(3)$. Hence, using \eqref{RepVectorialHiggs} and definition \eqref{rho1}, the transformed spinor $\tilde{\phi}(\psi^+)$ of \eqref{SpinorialHiggs} is represented by
\[
\tilde{\phi}(\psi^+) \: = \: - \tfrac{1}{4} \sum\nolimits_{j} \, \big( (\hat{\phi}\, v_j) \ok \psi^+ \big) \ok v_j \: = \: \rho^1_{\hat{\phi}} (\psi^+) \ .
\]
This can be compared to the definition \ref{DefinitionTrialityRotationHiggs} of ${\mathcal T} \phi$.
\end{proof}
Now the problem is that, even though ${\mathcal T} \phi$ and the spinorial $\tilde{\phi}$ and have the same representative maps, they are sections of different bundles. This can be solved by transporting $\tilde{\phi}$ to a vectorial endomorphism through the  isometry ${\mathcal I}^+: TM \rightarrow S^+$ of \eqref{DefinitionIsometry1}. This is the content of the next lemma, which finishes the proof of proposition \ref{DefinitionTrialityRotationHiggs2}.
\begin{lemma}
Let $\xi$ be a section of the bundle of $\langle , \rangle$-skew endomorphisms of $S^+$. Then $({\mathcal I}^+)^{-1} \circ \xi \circ {\mathcal I}^+$ is in $\HH_g$ and is represented by the same map $M \rightarrow \mathfrak{so}(\su(3))$ as $\xi$. 
\end{lemma}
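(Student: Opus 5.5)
Set $\eta := ({\mathcal I}^+)^{-1} \circ \xi \circ {\mathcal I}^+$. The plan is to check first that $\eta \in \HH_g$, and then to compute its representative map directly from definitions \eqref{RepVectorialHiggs} and \eqref{RepSpinorialHiggs}. The key observation is that ${\mathcal I}^+$ sends the $g$-orthonormal frame $\{V_j\}$ to the $\langle , \rangle$-orthonormal frame $\{\psi_j\}$ of $S^+$. Indeed, by \eqref{DefinitionIsometry1} and \eqref{DefinitionVj} we have ${\mathcal I}^+(V_j) = \alpha^{-1}(\alpha(v_j^\LL))$, which is the constant map with value $v_j$, that is, $\psi_j$.

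\emph{Skewness.} This follows from the isometry property \eqref{Isometry1}. For vector fields $Y, Z$,
\[
g(\eta Y, Z) \: = \: \langle {\mathcal I}^+ \eta Y , {\mathcal I}^+ Z \rangle \: = \: \langle \xi\, {\mathcal I}^+ Y , {\mathcal I}^+ Z \rangle \: = \: -\, \langle {\mathcal I}^+ Y , \xi\, {\mathcal I}^+ Z \rangle \: = \: -\, g(Y, \eta Z) \ ,
\]
where the last step uses \eqref{Isometry1} again, after writing $\xi\, {\mathcal I}^+ Z = {\mathcal I}^+ \eta Z$. So $\eta$ is a $g$-skew endomorphism of $TM$. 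It is smooth and $C^\infty$-linear because it is a composition of bundle maps, so $\eta \in \HH_g$.

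\emph{Representatives.} By \eqref{RepVectorialHiggs}, the representative of $\eta$ is $\hat\eta(v_j) = \sum_k g(\eta V_j, V_k)\, v_k$. Applying \eqref{Isometry1} and the relation ${\mathcal I}^+ V_j = \psi_j$ gives
\[
g(\eta V_j, V_k) \: = \: \langle {\mathcal I}^+ \eta V_j , {\mathcal I}^+ V_k \rangle \: = \: \langle \xi\, \psi_j , \psi_k \rangle \ .
\]
Comparing with \eqref{RepSpinorialHiggs} shows $\hat\eta = \hat\xi$ pointwise on $M$.

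The only point needing care, and the closest thing to an obstacle, is the identification ${\mathcal I}^+(V_j) = \psi_j$. It depends on $V_j$ being defined as $\alpha(v_j^\LL)$ and on ${\mathcal I}^+$ acting by $\alpha^{-1}$ on $\su(3)$-valued maps. Once that is established, everything else is a formal consequence of \eqref{Isometry1}.

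Combined with the preceding lemma, this gives proposition \ref{DefinitionTrialityRotationHiggs2}. Since ${\mathcal T}\phi$ and $({\mathcal I}^+)^{-1} \circ \tilde\phi \circ {\mathcal I}^+$ are both in $\HH_g$ and have the same representative map in the trivialization $\{V_j\}$, they are equal.
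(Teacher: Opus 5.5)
Your proof is correct and follows essentially the same route as the paper: both rest on ${\mathcal I}^+(V_j) = \psi_j$ and the isometry property of ${\mathcal I}^+$. The only difference is that the paper expands $\xi\,\psi_j$ in the frame $\{\psi_l\}$ and uses $g(V_l, V_k) = \delta_{lk}$, whereas you apply \eqref{Isometry1} directly; you also explicitly verify that $({\mathcal I}^+)^{-1}\circ\xi\circ{\mathcal I}^+$ is $g$-skew, which the paper leaves implicit.
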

\begin{proof}
Using \eqref{RepVectorialHiggs} and the fact that ${\mathcal I}^+(V_j) = \psi_j$, the endomorphism $({\mathcal I}^+)^{-1} \circ \xi \circ {\mathcal I}^+$ of $TM$ is represented by the map
\begin{align}
\big[ \widehat{({\mathcal I}^+)^{-1} \circ \xi \circ {\mathcal I}^+ } \big] (v_j) \: &=  \: \sum\nolimits_k \,  g\big(({\mathcal I}^+)^{-1} \, \xi \, \psi_j \,,\, V_k \big)\, v_k  \nonumber \linebr
&= \: \sum\nolimits_{k,l} \,  \langle \psi_l , \xi \, \psi_j \rangle\ g\big(({\mathcal I}^+)^{-1} \psi_l \, , \, V_k \big)\; v_k     \nonumber \linebr
&= \:   \sum\nolimits_{k,l} \,  \langle \psi_l , \xi\, \psi_j \rangle\;  g(V_l , V_k) \; v_k   \nonumber  \linebr
&=  \: \sum\nolimits_{k} \,  \langle \psi_k ,\xi \, \psi_j \rangle\;  v_k   \: =  \: \hat{\xi}(v_j)   \nonumber \ ,
\end{align}
where the last equality is \eqref{RepSpinorialHiggs}.
\end{proof}
We will now use proposition \ref{DefinitionTrialityRotationHiggs2} to finish the proof of theorem \ref{TheorTrialityHiggsModuliSpace}. Recall from the proof of theorem \ref{PropertiesTrialityRotationConnections}-(b) that the gauge transformation $\varphi$ on $TM$ can be lifted to a gauge transformation $\tilde{\varphi}$ on $S$ that preserves chirality and satisfies the compatibility \eqref{CompatibilityGaugeTransClifford} with Clifford multiplication. Thus, using definition \eqref{SpinorialHiggs} of $\tilde{\phi}$ and the fact that the gauge transform of $\phi$ is the endomorphism $\varphi(\phi) =  \varphi \circ \phi \circ \varphi^{-1}$ of $TM$,
\begin{align}
\tilde{\varphi} [ \, \tilde{\phi}(\psi^+) \, ] \, &= \, \tfrac{1}{4} \sum\nolimits_{j} \, (\varphi\, V_j) \cdot [\varphi (\phi V_j)] \cdot  \tilde{\varphi} (\psi^+) \, = \, \tfrac{1}{4} \sum\nolimits_{j} \, (\varphi\, V_j) \cdot [ (\varphi \, \phi) (\varphi \, V_j)] \cdot  \tilde{\varphi} (\psi^+)   \nonumber  \linebr
&= \, \tfrac{1}{4} \sum\nolimits_{j} \, (V'_j) \cdot [ (\varphi \, \phi) ( V'_j)] \cdot  \tilde{\varphi} (\psi^+)  \, =\, \widetilde{(\varphi \, \phi)} (\tilde{\varphi} \, \psi^+) \ .
\end{align}
Therefore, using this relation and using proposition \ref{DefinitionTrialityRotationHiggs2} two times,
\begin{align}
{\mathcal T} (\varphi \, \phi) \, &= \, ({\mathcal I}^+)^{-1} \circ \widetilde{(\varphi \, \phi)} \circ {\mathcal I}^+  \, = \, ({\mathcal I}^+)^{-1} \circ \tilde{\varphi} \circ \tilde{\phi} \circ \tilde{\varphi}^{-1} \circ {\mathcal I}^+  \nonumber  \linebr
 &= \,  \big[ ({\mathcal I}^+)^{-1} \circ \tilde{\varphi} \circ {\mathcal I}^+ \big] \circ ({\mathcal I}^+)^{-1} \circ \tilde{\phi}\circ {\mathcal I}^+  \circ \big[ ({\mathcal I}^+)^{-1} \circ \tilde{\varphi} \circ {\mathcal I}^+ \big]^{-1}     \nonumber  \linebr
 &= \,  \big[ ({\mathcal I}^+)^{-1} \circ \tilde{\varphi} \circ {\mathcal I}^+ \big] \circ ({\mathcal T} \phi) \circ \big[ ({\mathcal I}^+)^{-1} \circ \tilde{\varphi} \circ {\mathcal I}^+ \big]^{-1}   \nonumber  \linebr
 &= \, \varphi' \circ ({\mathcal T} \phi) \circ (\varphi')^{-1}  \,  =  \, \varphi' ({\mathcal T} \phi )   \ , \nonumber
\end{align}
where $\varphi' := ({\mathcal I}^+)^{-1} \circ \tilde{\varphi} \circ {\mathcal I}^+ $ is the same gauge transformation on $TM$ that appeared in the proof of statement (b) of theorem \ref{PropertiesTrialityRotationConnections}. This finishes the proof of theorem \ref{TheorTrialityHiggsModuliSpace}.

\section{Proofs for sections \ref{ThreeDiracOperators} and \ref{KLDerivatives}}
\label{AppendixKLDerivatives}

\subsection*{Proof of proposition \ref{AlternativeExplicitD1Adjoint}}

This is an adaptation of the proof that the classical Dirac operator is formally self-adjoint \cite{Bourguignon}. Using the definition of $\cD_r$ and standard properties of Clifford multiplication, write
\begin{align}
\label{cD1Aux1}
\langle \cD_r \psi^+ , \psi^- \rangle &=  \sum\nolimits_j \,  \langle V_j \cdot \nabla^r_{V_j} \psi^+ , \psi^- \rangle  =  -  \sum\nolimits_j \langle  \nabla^r_{V_j} \psi^+ , \, V_j \cdot \psi^- \rangle \nonumber \linebr
&= -  \sum\nolimits_j  \,  \partial_{V_j}  \langle  \psi^+ , \, V_j \cdot \psi^- \rangle  -   \langle  \psi^+ , \, \nabla^r_{V_j} (V_j \cdot \psi^-) \rangle \ .
\end{align}
The last step is just the $\langle , \rangle$-compatibility of the connection $\nabla^r$ on $S^+$. Now define a vector field $Y$ on $M$ by
\[
Y :=   \sum\nolimits_j  \langle  \psi^+ , \, V_j \cdot \psi^- \rangle  \,  V_j \ .
\]
Using the Levi-Civita connection $\nabla^g$ and a standard way to write the divergence, 
\begin{align}
\label{cD1Aux4}
\sum\nolimits_ j \partial_{V_j}  \langle  \psi^+ , \, V_j \cdot \psi^- \rangle \:& =\:  \sum\nolimits_ j \:   \partial_{V_j}  \, g ( Y , V_j)  \: =\:   \sum\nolimits_ j \:    g ( \nabla^g_{V_j} Y , V_j) \, +\,  g ( Y , \nabla^g_{V_j}  V_j)  \nonumber \linebr
&= \: \divergence_g Y \: + \:  \sum\nolimits_j \: \langle  \psi^+ , \, (\nabla^g_{V_j} V_j) \cdot \psi^- \rangle \ .
\end{align}
Moreover, theorem \ref{SpinorialLiftsVSTrialityTransports} says that the connections $\nabla^r$ on $S^\pm$ coincide with the spinorial lifts of $\nabla^r$ on $TM$. Thus, standard properties of Clifford multiplication allow us to expand
\begin{align}
 \label{cD1Aux2}
\nabla^r_{V_j} (V_j \cdot \psi^-) &=   (\nabla^r_{V_j} V_j) \cdot \psi^- +  V_j \cdot \nabla^r_{V_j} \psi^-  \nonumber \linebr
&=   \big[\nabla^g_{V_j} V_j + K^{\nabla^r}(V_j, V_j)  \big] \cdot \psi^- +  V_j \cdot \nabla^r_{V_j} \psi^-  \ ,
\end{align}
where $K^{\nabla^r}$ denotes the contorsion tensor of $\nabla^r$ on $TM$. 
The trace of $K^{\nabla^r}$ is related to the trace $\tau^{\nabla^r}$ of the usual torsion tensor $T^{\nabla^r}$ by
\begin{align}
\label{cD1Aux3}
\sum\nolimits_j g( K^{\nabla^r}(V_j, V_j) , X) =  \sum\nolimits_j g( T^{\nabla^r}(X, V_j) , V_j) =:  \tau^{\nabla^r}(X) \ ,
\end{align}
where $\tau$ is a 1-form on $M$. Thus, bringing \eqref{cD1Aux1}, \eqref{cD1Aux2}, \eqref{cD1Aux3} and \eqref{cD1Aux4} together, the two terms with $\nabla^g_{V_j} V_j$ cancel and we get 
\begin{equation}
\langle \cD_r \psi^+ , \psi^- \rangle =  -\, \divergence_g Y \; + \; \sum\nolimits_j \,  \big\langle  \psi^+ , \,  V_j \cdot \nabla^r_{V_j} \psi^-  \: + \:   \tau^{\nabla^r}(V_j)  \: V_j \cdot  \psi^- \big\rangle \ .
\end{equation}
Therefore, using Stokes' theorem, the $L^2$-adjoint of $\cD_r$ can be written as 
\begin{equation}
\cD_r^\dag  \psi^- =  \sum\nolimits_j \:  V_j \cdot \nabla^r_{V_j} \psi^- +   \tau^{\nabla^r}(V_j) \: V_j  \cdot \psi^- \ . \nonumber  
\end{equation}

\subsection*{Proofs for section \ref{KLDerivatives}}

\begin{proof}[\bf Proof of theorem \ref{TrialitiyLieDerivativeConnection}]
Regard vector fields as maps $M \rightarrow \su(3)$ through our usual correspondence $Y = \sum_j g(Y, V_j) \, v_j$. Then the triality rotated connections are given by 
\begin{equation}
\label{RepresentativeNablar}
\nabla^r_Y  Z \ = \  \partial_Y Z \, + \: \rho^{r-1}_{A_Y} (Z)           \ ,
\end{equation}
as in \eqref{CondensedConnections3}, with index $r$ taken modulo 3. Denote by $u^\LL$ and $u^\RR$ the left- and right-invariant vector fields on $M$ extending $u\in \su(3)$. Using the expression for $\Lie_{u^\LL}$ given in lemma \ref{LieDerivativesInvariantKillingFields} and the fact that the correspondence $A \mapsto \rho^{r-1}_A$ defined by \eqref{rho1} does not depend on the point in $M$, we get
\begin{align}
\Lie_{u^\LL}  ( \nabla^r_Y Z ) \: &= \: \partial_{u^\LL} \partial_{Y} Z \: + \: [u ,  \partial_{Y} Z ]\: +\: \partial_{u^\LL} ( \rho^{r-1}_{A_Y} (Z) ) \:+ \: [u ,  \rho^{r-1}_{A_Y} (Z) ]   \nonumber \linebr
 &=  \: \partial_{u^\LL} \partial_{Y} Z \:+ \: \partial_{Y}  [u ,   Z ] \:+ \: \rho^{r-1}_{(\partial_{u^\LL} A_Y)} (Z)  \:+ \: \rho^{r-1}_{A_Y} (\partial_{u^\LL}  Z)   \:+ \: \mathrm{ad}_u \circ \rho^{r-1}_{A_Y} (Z)   \nonumber \ .
\end{align}
At the same time, 
\[
\nabla^r_Y (\Lie_{u^\LL} Z ) \: = \: \partial_{Y} \partial_{u^\LL} Z \: + \:  \partial_{Y} [u , Z ] \:+ \: \rho^{r-1}_{A_Y} (\partial_{u^\LL}  Z) \: +\: \rho^{r-1}_{A_Y} ([u ,  Z])  \ .
\]
Thus, using the general property $ \partial_{X} \partial_{Y}  -  \partial_{Y} \partial_{X} =  \partial_{[X, Y]}$, the difference is
\begin{align}
\Lie_{u^\LL}  ( \nabla^r_Y Z ) \: - \:\nabla^r_Y (\Lie_{u^\LL} Z ) \: &= \: \partial_{[u^\LL, Y ]} Z  \:+\:  \rho^{r-1}_{(\partial_{u^\LL} A_Y)} (Z) \:+\:  \big[ \mathrm{ad}_u \, ,  \, \rho^{r-1}_{A_Y} \big] (Z)  \ .   \nonumber 
\end{align}
But lemma \ref{Restriction3Reps} says that $\rho^s_{\mathrm{ad}_u} = \mathrm{ad}_u$. Moreover, the correspondence $A \mapsto \rho^{r-1}_A$ is a vectorial or spin representation of $\mathfrak{spin}(8)$, as discussed after \eqref{EndomorphismConnection}, and in particular is a Lie algebra homomorphism. Hence the last term is
\[
\big[ \mathrm{ad}_u \, ,  \, \rho^{r-1}_{A_Y} \big]  = \big[ \rho^{r-1}_{\mathrm{ad}_u} \, ,  \, \rho^{r-1}_{A_Y} \big]  = \rho^{r-1}_{ [ \mathrm{ad}_u \, ,  \, A_Y ] } \ .
\]
Using the formula for $\nabla^r_{[u^\LL, Y ]} Z$ coming from \eqref{RepresentativeNablar}, we obtain
\begin{align}
\Lie_{u^\LL}  ( \nabla^r_Y Z )  - \nabla^r_Y (\Lie_{u^\LL} Z ) -  \nabla^r_{[u^\LL, Y ]} Z  &=  \rho^{r-1}_{(\partial_{u^\LL} A_Y)} (Z) +  \rho^{r-1}_{ [ \mathrm{ad}_u \, ,  \, A_Y ] }  (Z)   -   \rho^{r-1}_{  A_{[u^\LL , Y ]} }  (Z)      \nonumber \linebr
&=   \rho^{r-1}_{ \{\,  (\partial_{u^\LL} A_Y)  +  [ \mathrm{ad}_u \, ,  \, A_Y ]    -     A_{[u^\LL , Y ]} \, \} } (Z)  \nonumber  \ .
\end{align}
Since $\nabla^1 = \nabla$ and $\rho^3_A = A$, this identity with $r=1$ implies that $(\Lie_{u^\LL} \nabla)_Y =   (\partial_{u^\LL} A_Y)  +  [ \mathrm{ad}_u \, ,  \, A_Y ]    -     A_{[u^\LL , Y ]} $. Thus, now taking $r=2$, recalling definition \eqref{DefinitionTrialityRotationConnections} of the triality rotation of $\nabla$ and definition \eqref{DefinitionTrialityRotationHiggs} of triality rotation in $\HH_g$, we finally get that
\[
(\Lie_{u^\LL} \,  {\mathcal T} \nabla)_Y Z \: = \: (\Lie_{u^\LL} \,  \nabla^2)_Y Z  \: = \:  \rho^{1}_{(\Lie_{u^\LL} \nabla)_Y} (Z) \: = \:  {\mathcal T} [(\Lie_{u^\LL} \nabla)_Y ] \, (Z) \ .
\]
The proof of the formula for $(\Lie_{w^\RR} \,  {\mathcal T} \nabla)_Y$ is similar but simpler.
\end{proof}

\vspace{.3cm}

\begin{proof}[\bf Proof of lemma \ref{LieDerivativesInvariantKillingFields}]
Let $X$ be $u^\LL$ or $w^\RR$. The derivative  $\Lie_X Y$ is identified with the map $\sum_j g(\Lie_X  Y, V_j)  v_j$. Since $X$ is Killing both for $g$ and for the bi-invariant metric $\langle , \rangle$, we know that the automorphism $\alpha$ of \eqref{DefinitionAlpha} satisfies
\begin{equation}
\Lie_X \alpha \: =\: \Lie_X (\alpha^{-1})\: =\: 0 \ .
\end{equation}
This follows from the proof of proposition B.1 in \cite{Bap4}. Thus, using that $\Lie_X g = 0$, using relation \eqref{DefinitionAlpha} between $g$ and  $\langle , \rangle$, and using the definition $V_j = \alpha (v_j^\LL)$,
\begin{align}
 g(\Lie_X  Y, V_j)\: &= \: \partial_X \, g(Y, V_j)   - g(Y, \Lie_X V_j)\: =\: \partial_X  Y^j  \:-\:  \sum\nolimits_k g(Y, V_k)  \: g(V_k, \Lie_X V_j)   \nonumber \linebr
 &=\: \partial_X  Y^j  \: +  \: \sum\nolimits_k Y^k \: g( \Lie_X V_k,  V_j)\: =\: \partial_X  Y^j  \:+ \: \sum\nolimits_k Y^k \,  \big\langle \alpha^{-1} (\Lie_X V_k) ,  v^\LL_j  \big\rangle  \nonumber \linebr
 &=\: \partial_X  Y^j  \: + \:  \sum\nolimits_k Y^k \,  \langle \Lie_X v^\LL_k ,  v^\LL_j \rangle  =\: \partial_X  Y^j  \: + \:  \sum\nolimits_k Y^k \,  \langle [X, v^\LL_k] ,  v^\LL_j \rangle  \ .   \nonumber
\end{align}
But the Lie bracket of invariant vector fields always satisfies $[u^\LL ,  v_k^\LL] = [u ,  v_k]^\LL$ and  $[w^\RR ,  v_k^\LL] = 0$. Hence
\begin{align}
\sum\nolimits_j g(\Lie_{u^\LL}  Y, V_j)  \, v_j \: &= \: \sum\nolimits_j (\partial_{u^\LL}  Y^j) \, v_j  \: + \:  \sum\nolimits_{j, k} Y^k \,  \langle [u, v_k]^\LL ,  v^\LL_j \rangle \, v_j    \nonumber \linebr
 &=\:  \sum\nolimits_j (\partial_{u^\LL}  Y^j )\, v_j  \: + \:  \sum\nolimits_{j, k} Y^k \,  \langle [u, v_k] ,  v_j \rangle \, v_j      \nonumber \linebr
 &=\:  \sum\nolimits_j (\partial_{u^\LL}  Y^j )\, v_j  \: + \:  \sum\nolimits_{k} Y^k \,  [u, v_k]    \ ,   \nonumber  
\end{align}
and similarly $\sum\nolimits_j g(\Lie_{w^\RR}  Y, V_j)  \, v_j = \sum\nolimits_j (\partial_{w^\RR}  Y^j) \, v_j$.
\end{proof}

\vspace{.3cm}

\begin{proof}[\bf Proof of proposition \ref{ExplicitKLDerivativesNonKilling}]
Recall expression \eqref{DefinitionKLDerivative} for the Kosmann-Lichnerowicz derivative. Since $V_i \cdot V_j = - V_j \cdot V_i$ for $i \neq j$, we have
\begin{equation}
\label{KLDerivativesSimplification1}
\frac{1}{8}\,  \sum_{i, j} \big\{  g([X, V_i], V_j) - g([X, V_j], V_i) \big\} \,   V_i \cdot V_j  \cdot \psi \ = \  \frac{1}{4}\,  \sum_{i \neq j} \, g([X, V_i], V_j) \:   V_i \cdot V_j  \cdot \psi \ .
\end{equation}
Using that $V_j = \alpha (v_j^\LL)$ and definition \eqref{DefinitionAlpha} of $\alpha$,
\[
g(\,[u^\LL, V_i], V_j \, ) \: =\: \langle  \alpha^{-1}[u^\LL, \alpha \, v^\LL_i],\, v^\LL_j \rangle \: =\:  \langle\,  [u^\LL ,  v^\LL_i ] \, + \,  \alpha^{-1} (\Lie_{u^\LL} \alpha ) (v^\LL_i)  \, ,\, v^\LL_j \, \rangle \ .
\]
Thus, using that $[u^\LL ,  v^\LL_i ]  = [u ,  v_i ]^\LL$ and formula \eqref{TraditionalCliffordMultiplication} for Clifford multiplication, 
\[
  \sum_{i \neq j} \, g([X, V_i], V_j) \:   V_i \cdot V_j  \cdot \psi^+ \:=\:  -\,  \sum_{i \neq j} \,  \langle\,  [u ,  v_i ]^\LL \, + \,  \alpha^{-1} (\Lie_{u^\LL} \alpha ) (v^\LL_i)  \, ,\, v^\LL_j \, \rangle \:   ( v_j  \ok \psi^+ ) \ok v_i  \ .
\]
But \eqref{ImportantIdentityOkubo1} now says that
\begin{align}
\sum_{i \neq j} \,  \langle\,  [u ,  v_i ]^\LL \, ,\, v^\LL_j \, \rangle \:   ( v_j  \ok \psi^+ ) \ok v_i  \: &= \: \sum_{i , j} \,  \langle\,  [u ,  v_i ] \, ,\, v_j \, \rangle \:   ( v_j  \ok \psi^+ ) \ok v_i  \nonumber \linebr
&= \: \sum_{i} \,   ( [u ,  v_i ]  \ok \psi^+ ) \ok v_i  \: = \: -\, 4\, [u, \psi^+] \ .
\end{align}
Hence
\[
 \sum_{i \neq j} \, g([X, V_i], V_j) \:   V_i \cdot V_j  \cdot \psi^+ \: = \:  4\, [u, \psi^+] \, -\,   \sum_{i \neq j} \,  \langle\,  \alpha^{-1} (\Lie_{u^\LL} \alpha ) (v^\LL_i)  \, ,\, v^\LL_j \, \rangle \:   ( v_j  \ok \psi^+ ) \ok v_i  \ .
\]
Combining this expression with \eqref{KLDerivativesSimplification1} and the definition \eqref{DefinitionKLDerivative} of the Kosmann-Lichnerowicz derivative, we obtain the expression for $\cL_{u^\LL} \, \psi^+$ given in the proposition. The expression for $\cL_{u^\RR} \, \psi^+$ can be obtained similarly, after using that $[u^\RR ,  v^\LL_i ]  = 0$.

The expressions for $\cL_{u^\LL} \, \psi^-$ and $\cL_{u^\RR} \, \psi^-$ can also be obtained similarly,  using the appropriate entries in \eqref{TraditionalCliffordMultiplication} for $\psi^-$.
\end{proof}

\section{Explicit calculations of Dirac eigenvalues}
\label{AppendixExplicitExample}

\refstepcounter{subsection}
\subsection*{\thesubsection\quad  Computing $\sD_r$-eigenvalues}


Let $\{v_j \}$ be our usual $\langle , \rangle$-orthonormal basis of $\su (3)$ and write $f_{ijk} := \langle [v_i, v_j] , v_k \rangle$. These coefficients are totally antisymmetric. Define a new metric $g$ on $M$ by declaring that the rescaled vector fields $V_j = \mu_j \, v_j^\LL$, with $\mu_j \in \mathbb{R}^+$, are $g$-orthonormal. Then we have:
\begin{lemma}
\label{LemmaChristoffel}
The Levi-Civita connection $\nabla^g$ of the left-invariant metric $g$ satisfies
\begin{equation}
\label{ChristoffelSymbols}
g( \nabla^g_{V_i} V_j , V_k) \; = \; \Gamma_{ijk} \; = \; \frac{f_{ijk}}{2}\, \Big( \frac{\mu_i \mu_j}{\mu_k} \, - \, \frac{\mu_j \mu_k}{\mu_i} \, + \, \frac{\mu_k \mu_i}{\mu_j} \Big) \ .
\end{equation}
In particular, when $\mu_1 = \cdots = \mu_7 = 1$ and $\mu_8 = \mu^{-1}$, 
\begin{equation}
\label{ChristoffelSymbols2}
\Gamma_{ijk} \; = \;
\begin{cases}
\ \frac{1}{2}\, f_{ijk} & \text{if } \  i,j,k \neq 8 \ , \\
\ \frac{\mu}{2}\, f_{ijk} & \text{if } \ j = 8 \text{ or } k = 8 \ ,\\
\ \big( \frac{1}{\mu}  - \frac{\mu}{2}\big)\, f_{ijk} & \text{if } \ i = 8 \ .
\end{cases}
\end{equation}
\end{lemma}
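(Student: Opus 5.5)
Since the $V_j = \mu_j v_j^\LL$ are left-invariant, every inner product $g(V_i,V_j)$ is constant. The plan is to apply the Koszul formula for the Levi-Civita connection directly in this frame. With all derivative terms gone, it reduces to
\[
2\, g(\nabla^g_{V_i} V_j , V_k) \: = \: g([V_i,V_j],V_k) \: - \: g([V_j,V_k],V_i) \: + \: g([V_k,V_i],V_j) \ .
\]

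The brackets are computed from $[v_i^\LL, v_j^\LL] = [v_i,v_j]^\LL = \sum_l f_{ijl}\, v_l^\LL$. This gives $[V_i,V_j] = \mu_i\mu_j \sum_l f_{ijl}\, \mu_l^{-1} V_l$, and hence $g([V_i,V_j],V_k) = f_{ijk}\, \mu_i\mu_j/\mu_k$. The other two terms have the same form with the indices permuted cyclically. Because $f_{ijk}$ is totally antisymmetric, which follows from the Ad-invariance of $\langle , \rangle$, we have $f_{jki} = f_{kij} = f_{ijk}$. Substituting yields
\[
\Gamma_{ijk} \: = \: \tfrac{1}{2} f_{ijk}\big(\tfrac{\mu_i\mu_j}{\mu_k} - \tfrac{\mu_j\mu_k}{\mu_i} + \tfrac{\mu_k\mu_i}{\mu_j}\big) \ ,
\]
which is \eqref{ChristoffelSymbols}.

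For the special case we set $\mu_1=\dots=\mu_7=1$ and $\mu_8=\mu^{-1}$. First we need that the index $8$ occurs at most once among $i,j,k$ whenever $f_{ijk}\neq 0$; this holds because $f_{ijk}$ is totally antisymmetric, so it vanishes when any two indices coincide. The cases then follow by substitution:
\begin{itemize}
\item[(i)] If none of $i,j,k$ is $8$, the bracket equals $1$, so $\Gamma_{ijk} = \tfrac{1}{2} f_{ijk}$.
\item[(ii)] If $k=8$, the bracket is $\mu - \mu^{-1} + \mu^{-1} = \mu$. If $j=8$, it is $\mu^{-1} - \mu^{-1} + \mu = \mu$. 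In both cases $\Gamma_{ijk} = \tfrac{\mu}{2} f_{ijk}$.
\item[(iii)] If $i=8$, the bracket is $\mu^{-1} - \mu + \mu^{-1} = 2\mu^{-1} - \mu$, so $\Gamma_{8jk} = (\mu^{-1} - \tfrac{\mu}{2}) f_{8jk}$.
\end{itemize}
These are the three cases of \eqref{ChristoffelSymbols2}.

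The argument is routine throughout. The one place that needs care is the sign bookkeeping in the Koszul formula and in the cyclic permutations of $f$.
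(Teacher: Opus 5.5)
Your proof is correct and follows the paper's argument: the Koszul formula in the left-invariant frame, $[V_i,V_j]=\mu_i\mu_j[v_i,v_j]^{\LL}$ giving $g([V_i,V_j],V_k)=f_{ijk}\,\mu_i\mu_j/\mu_k$, the cyclic symmetry of $f_{ijk}$, and then substitution using that the index $8$ occurs at most once. The only cosmetic difference is how you get $g([V_i,V_j],V_k)$: you expand in the frame via $v_l^{\LL}=\mu_l^{-1}V_l$, whereas the paper uses $\alpha^{-1}v_k=\mu_k^{-1}v_k$.
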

\begin{proof}
The $V_j$ are left-invariant, so the derivatives in the Koszul formula drop out and
\[
2\, g( \nabla^g_{V_i} V_j , V_k) \; = \; g([V_i, V_j], V_k) \, - \, g([V_j, V_k], V_i) \, + \, g([V_k, V_i], V_j) \ .
\]
By \eqref{DefinitionAlpha} we have $g(u^\LL , V_k) = \langle \alpha^{-1} u , v_k\rangle = \langle u , \alpha^{-1} v_k\rangle = \mu_k^{-1} \langle u , v_k \rangle$, while  $[V_i, V_j] = \mu_i \mu_j [v_i,v_j]^\LL$. Hence $g([V_i,V_j],V_k) = \mu_i \mu_j \mu_k^{-1} f_{ijk}$, and \eqref{ChristoffelSymbols} follows using $f_{ijk} = f_{jki} = f_{kij}$. 
Substituting the values of $\mu_j$ into \eqref{ChristoffelSymbols} gives \eqref{ChristoffelSymbols2}, since at most one index can be $8$. 
\end{proof}

Now recall the linear maps $A_Y : M \rightarrow  \mathrm{End}( \su(3))$ defined in \eqref{EndomorphismConnection}. When $Y = V_i$ is a left-invariant vector field, 
$A_{V_i}$ is a constant endomorphism of $\su(3)$ determined by $A_{V_i} (v_j) =  \sum\nolimits_k \, \Gamma_{ijk} \, v_k $. It is convenient to separate its bi-invariant part, as in
\begin{equation}
\label{SeparationBiinvariantPartAVi}
A_{V_i}  \: = \:    \frac{1}{2} \,  \mathrm{ad}_{v_i} \, +  \, C_{V_i}   \ .
\end{equation}
Going through the three cases of \eqref{ChristoffelSymbols2}, the endomorphisms $C_{V_i}$ can be written as
\begin{equation}
\label{DefinitionCVi}
C_{V_i} (w) \; = \;
\begin{cases}
\ c_0 \, \big( \,\langle [v_i, w] , v_8 \rangle \,  v_8   +  \langle w, v_8  \rangle [v_i, v_8]   \, \big)  & \ \text{if } i \neq 8 \ , \\
\ c_8 \, [v_8, w] & \ \text{if } i = 8 \ ,
\end{cases}
\end{equation}
with the constants $c_0 := \tfrac{1}{2}(\mu - 1)$ and $c_8 := \tfrac{1}{2}( \tfrac{2}{\mu} - \mu - 1)$.

Define $\cD^{g_0}: S^+ \rightarrow S^-$ to be the classical chiral Dirac operator induced by the Levi-Civita connection of the bi-invariant metric $\langle , \rangle$ on $TM$. Let $\psi^+$ be a left-invariant spinor on $S^+$, i.e. a spinor represented by a constant map $M \rightarrow \su(3)$ in our model of $S^+$.
Then the definition of  $\cD_r$ and the separation \eqref{SeparationBiinvariantPartAVi} of $A_{V_i}$ into the bi-invariant part plus $C_{V_i}$ imply that
\begin{equation}
\label{DecompositioncD_r}
\cD_r \psi^+ \:  = \:  \cD^{g_0}  \psi ^+ \: + \:  \sum\nolimits_i \,  v_i \ok [ \rho^r_{C_{V_i}} (\psi^+) ] \ .
\end{equation}
The plan is to calculate the two terms on the right-hand side separately when the spinor $\psi^+ = \psi_0^+$ is the constant map $M \rightarrow \su(3)$ with value $v_8$.

\begin{lemma} 
\label{ExplicitEigenvaluesLemma1} The classical Dirac operator of the bi-invariant metric satisfies
\begin{equation}
\cD^{g_0}  \psi_0^+ \: = \:  -\, \frac{3}{2\, a} \, \psi_0^- \ .
\end{equation}
\end{lemma}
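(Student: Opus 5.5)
Here $\cD^{g_0}$ is the classical chiral Dirac operator of the bi-invariant metric, so $\alpha = 1$ and $V_j = v_j^\LL$. The bi-invariant Levi-Civita connection sits on the Cartan--Schouten line with $s = \tfrac12$, which gives $\Gamma_{ijk} = \tfrac12 f_{ijk}$ and $A_{V_i} = \tfrac12 \mathrm{ad}_{v_i}$. Since $\psi_0^+ = v_8$ is constant, the derivative terms $\partial_{V_j}\psi_0^+$ vanish. The plan is to evaluate the resulting algebraic expression with the Okubo identities of appendix \ref{AppendixOkuboIdentities}.

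By \eqref{CondensedConnections} and lemma \ref{Restriction3Reps}, the connection term for any of the three connections is $\rho^1_{\frac12 \mathrm{ad}_{v_i}}(\psi) = \tfrac12 [v_i, \psi]$. By the proof of proposition \ref{PropExplicitD} (in particular \eqref{Vjdokpsi}), the operator becomes
\[
\cD^{g_0} \psi_0^+ \; = \; \sum\nolimits_i v_i \ok \nabla_{V_i} \psi_0^+ \; = \; \tfrac{1}{2} \sum\nolimits_i v_i \ok [v_i, v_8] \ .
\]

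Identity \eqref{OkuboIdentity4} now gives $\sum_i v_i \ok [v_i, u] = -\tfrac{3}{a}\, u$. Therefore
\[
\cD^{g_0}\psi_0^+ \; = \; -\tfrac{3}{2a}\, v_8 \ .
\]
As a map $M \to \su(3)$, this is $-\tfrac{3}{2a}\psi_0^-$, since $\psi_0^- = v_8$ as well.

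The only delicate point is the bookkeeping. One should check that $\cD^{g_0}$ really is the operator with $A_{V_i} = \tfrac12 \mathrm{ad}_{v_i}$ in the identifications with $\alpha = 1$, which is consistent with the splitting \eqref{SeparationBiinvariantPartAVi}. One should also check the sign convention of \eqref{OkuboIdentity4}: as stated, $v_j \ok [v_j, u]$ carries the coefficient $-3/a$. Everything else is a one-line substitution.
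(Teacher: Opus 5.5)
Your proof is correct and is essentially the paper's argument: both reduce $\cD^{g_0}\psi_0^+$ to $\tfrac12\sum_i v_i \ok [v_i, v_8]$ and finish with \eqref{OkuboIdentity4}. The only cosmetic difference is that the paper evaluates the explicit formula for $\cD_3$, where $\rho^3_A = A$, after noting that the three operators coincide at the bi-invariant point; you work with $\cD_1$ directly and invoke lemma \ref{Restriction3Reps}, which is the same fact underlying that coincidence.
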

\begin{proof}
Since the metric is bi-invariant, the three operators $\cD^{g_0}_r$ coincide. Therefore, we will use $\cD^{g_0}_3$ to perform the calculation, because it involves only one $\ok$-product. For the bi-invariant metric  $\langle , \rangle$ we have $V_j = v_j^\LL$ and the Levi-Civita connection $\nabla^{g_0}$ satisfies
\[
\Gamma_{ijk}^{g_0} \: = \:  g_0 (\, \nabla^{g_0}_{v_i^\LL} \,  v_j^\LL,  \, v_k^\LL \, ) \: = \:  \frac{1}{2} \, \langle   [v_i, v_j] , v_k \rangle \ .
\]
Thus, applying formula \eqref{ExplicitDr} for $\cD_3$ and using that $\{ v_j\}$ is a $\langle , \rangle$-orthogonal basis of $\su(3)$, 
\begin{align}
\cD^{g_0}_3 \,  v_8 \: &= \:   \frac{1}{2} \,   \sum_{i, j, k} \, \langle   [v_i, v_j] , v_k \rangle \; \langle v_8 , v_j \rangle \  v_i \ok v_k    \nonumber  \linebr 
&= \:   \frac{1}{2} \,   \sum_{i} \,  v_i \ok  [v_i, v_8]   \: = \: -\, \frac{3}{2\, a} \: v_8  \nonumber  \ ,
\end{align}
where the last equality used identity \eqref{OkuboIdentity4}. The formula of the lemma follows because the chiral spinors $\psi_0^+$ and $\psi_0^-$ were both defined to correspond to the map $M\rightarrow \su(3)$ with constant value $v_8$.
\end{proof}

\begin{lemma} 
\label{ExplicitEigenvaluesLemma2}
The endomorphisms $\rho^r_{C_{V_i}}$ satisfy
\begin{align}
\rho^1_{C_{V_i}} (w) \: &= \:  \frac{c_0}{2} \,  ( v_8  \ok w )  \ok  [v_i, v_8]      \linebr
\rho^2_{C_{V_i}} (w) \: &= \:  \frac{c_0}{2} \, [v_i, v_8] \ok (w \ok v_8)     \nonumber  \linebr
\rho^3_{C_{V_i}} (w) \: &= \:  c_0\, \big(  \, \langle w, v_8 \rangle \, [v_i, v_8] \, -  \,  \langle w, [v_i, v_8] \rangle \, v_8 \, \big)   \nonumber
\end{align}
for all $w \in \su(3)$ and $i \neq 8$, while for $i= 8$ we have $\rho^r_{C_{V_8}} (w) = c_8 \, [v_8 , w]$.
\end{lemma}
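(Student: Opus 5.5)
The proof proceeds by direct substitution into the definitions \eqref{rho1}, \eqref{rho2}, \eqref{rho3}, treating the cases $i=8$ and $i\neq 8$ separately. For $i=8$ there is nothing to compute beyond quoting an earlier result. By \eqref{DefinitionCVi}, $C_{V_8}=c_8\,\mathrm{ad}_{v_8}$. Since $A\mapsto \rho^r_A$ is linear, lemma \ref{Restriction3Reps} gives $\rho^r_{C_{V_8}}=c_8\,\rho^r_{\mathrm{ad}_{v_8}}=c_8\,\mathrm{ad}_{v_8}$ for all $r$.

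For $i\neq 8$, abbreviate $z:=[v_i,v_8]$. Then \eqref{DefinitionCVi} reads
\[
C_{V_i}(w)=c_0\big(\langle [v_i,w],v_8\rangle v_8+\langle w,v_8\rangle z\big).
\]
By Ad-invariance of $\langle,\rangle$ we have $\langle [v_i,w],v_8\rangle=-\langle w,z\rangle$. Hence
\[
C_{V_i}=c_0\,\big(z\otimes v_8^\flat - v_8\otimes z^\flat\big),
\]
that is, $c_0$ times the elementary skew endomorphism $w\mapsto \langle w,v_8\rangle z-\langle w,z\rangle v_8$. This immediately gives the formula for $\rho^3_{C_{V_i}}=C_{V_i}$.

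For $\rho^1$, substitute this rank-two form into \eqref{rho1}. Summing over the orthonormal basis collapses $\sum_j \langle v_j,\cdot\rangle$, and we get
\[
\rho^1_{C_{V_i}}(w)=-\tfrac{c_0}{4}\big[(z\ok w)\ok v_8-(v_8\ok w)\ok z\big].
\]
Next apply the Clifford relation \eqref{CliffordOkubo}, $(z\ok w)\ok v_8+(v_8\ok w)\ok z=2\langle z,v_8\rangle w$. The right-hand side vanishes, because $\langle [v_i,v_8],v_8\rangle=0$ by Ad-invariance. Therefore $(z\ok w)\ok v_8=-(v_8\ok w)\ok z$, and the bracket equals $-2(v_8\ok w)\ok z$. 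This yields $\rho^1_{C_{V_i}}(w)=\tfrac{c_0}{2}(v_8\ok w)\ok z$, as claimed.

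The case of $\rho^2$ is identical in structure. From \eqref{rho2},
\[
\rho^2_{C_{V_i}}(w)=-\tfrac{c_0}{4}\big[v_8\ok(w\ok z)-z\ok(w\ok v_8)\big].
\]
The second Clifford relation in \eqref{CliffordOkubo}, $u\ok(v\ok w)+w\ok(v\ok u)=2\langle u,w\rangle v$ with $\langle v_8,z\rangle=0$, gives $v_8\ok(w\ok z)=-z\ok(w\ok v_8)$. Hence $\rho^2_{C_{V_i}}(w)=\tfrac{c_0}{2}\,z\ok(w\ok v_8)$.

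The only real obstacle is bookkeeping. One has to get the sign and orientation of the rank-two decomposition of $C_{V_i}$ right, namely which vector is paired with $v_j$ in $(Av_j)$. One also has to check that the orthogonality $\langle z,v_8\rangle=0$ kills the Clifford correction term in both the $\rho^1$ and $\rho^2$ computations. I would double-check the signs by verifying that the resulting maps are $\langle,\rangle$-skew, using the adjunction relations \eqref{AdjunctionOkubo}.
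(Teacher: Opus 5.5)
Your proof is correct and takes the same route the paper sketches: for $i\neq 8$ you rewrite $C_{V_i}$ via Ad-invariance as the skew rank-two map $w\mapsto c_0(\langle w,v_8\rangle z-\langle w,z\rangle v_8)$, substitute into \eqref{rho1}--\eqref{rho3}, collapse the orthonormal sums, and use $\langle z,v_8\rangle=0$ in \eqref{CliffordOkubo} to reach the stated formulas with the right signs. For $i=8$, invoking lemma \ref{Restriction3Reps} is exactly the paper's appeal to \eqref{ImportantIdentityOkubo1}, since that lemma is proved from those identities.
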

\begin{proof}
The expressions for $i \neq 8$ result from straightforward calculations using definitions \eqref{rho1} of the $\rho^r_A$; formula \eqref{DefinitionCVi} for the endomorphisms $C_{V_i}$; the fact that $\{ v_j\}$ is a $\langle , \rangle$-orthonormal basis of $\su(3)$; the Ad-invariance of  $\langle , \rangle$; and the Clifford-like relations \eqref{CliffordOkubo} for the Okubo product. The expressions given for $i= 8$ also use the simplifications provided by identities \eqref{ImportantIdentityOkubo1}. 
\end{proof}

\begin{lemma} 
\label{ExplicitEigenvaluesLemma3}
The endomorphisms $\rho^r_{C_{V_i}}$ satisfy
\begin{align}
\sum\nolimits_i \,  v_i \ok [ \rho^1_{C_{V_i}} (v_8) ] \: &= \:  0      \linebr
\sum\nolimits_i \,  v_i \ok [ \rho^2_{C_{V_i}} (v_8) ] \: &= \:  - \, \frac{3\, c_0}{2\, a} \: v_8       \nonumber   \linebr
\sum\nolimits_i \,  v_i \ok [ \rho^3_{C_{V_i}} (v_8) ] \: &= \:  - \, \frac{3\, c_0}{a} \: v_8   \ .   \nonumber
\end{align}
\end{lemma}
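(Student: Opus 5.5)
The plan is to substitute the explicit formulas of lemma \ref{ExplicitEigenvaluesLemma2} with $w = v_8$, and then collapse each sum over $i$ using the summation identities of proposition \ref{PropositionOkuboIdentities} and \eqref{OkuboIdentity4}.

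\emph{Reducing the range of $i$.} For $i = 8$, lemma \ref{ExplicitEigenvaluesLemma2} gives $\rho^r_{C_{V_8}}(v_8) = c_8\,[v_8, v_8] = 0$, so that term drops out for every $r$. For $i \neq 8$ the formulas of lemma \ref{ExplicitEigenvaluesLemma2} contain the factor $[v_i, v_8]$ and are linear in it, with no other dependence on $i$. That factor also vanishes at $i = 8$. Hence each sum over $i \neq 8$ can be rewritten as a sum over the full orthonormal basis $\{v_i\}$, which is what the identities of appendix \ref{AppendixOkuboIdentities} require.

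\emph{Case $r=3$.} Since $\langle v_8, v_8\rangle = 1$ and $\langle v_8, [v_i, v_8]\rangle = 0$ by Ad-invariance, we get $\rho^3_{C_{V_i}}(v_8) = c_0\,[v_i, v_8]$. Then \eqref{OkuboIdentity4} gives $\sum_i v_i \ok [v_i, v_8] = -\tfrac{3}{a}\, v_8$, so the sum equals $-\tfrac{3c_0}{a}\, v_8$.

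\emph{Case $r=1$.} Put $x := v_8 \ok v_8$. The sum is $\tfrac{c_0}{2}\sum_i v_i \ok \big(x \ok [v_i, v_8]\big)$. The second identity in \eqref{ImportantIdentityOkubo1}, with $u = v_8$ and $w = x$, turns this into $2c_0\,[v_8, x]$. By Ad-equivariance \eqref{AdInvariantOkubo},
\[
[v_8, v_8 \ok v_8] \; = \; [v_8, v_8]\ok v_8 \, + \, v_8 \ok [v_8, v_8] \; = \; 0 ,
\]
so the sum vanishes.

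\emph{Case $r=2$.} The sum is $\tfrac{c_0}{2}\sum_i v_i \ok \big([v_i, v_8] \ok x\big)$. The second identity in \eqref{ImportantIdentityOkubo2}, with $u = v_8$ and $w = x$, gives
\[
4\,[v_8, x] \, - \, \tfrac{3}{a}\, v_8 \ok x \; = \; -\tfrac{3}{a}\, v_8 \ok (v_8 \ok v_8),
\]
using $[v_8, x] = 0$ from the previous case. The Clifford-like relation \eqref{CliffordOkubo} with $u = v = w = v_8$ reads $2\, v_8 \ok (v_8 \ok v_8) = 2\langle v_8, v_8\rangle\, v_8$, so $v_8 \ok (v_8 \ok v_8) = v_8$. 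The sum is therefore $-\tfrac{3c_0}{2a}\, v_8$.

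There is no serious obstacle. The only points needing care are the bookkeeping that justifies extending the sums to all $i$, and matching each expression to the correct ordering in \eqref{ImportantIdentityOkubo1}–\eqref{ImportantIdentityOkubo2}. In particular, the factor $[v_i, v_8]$ must sit in the slot those identities expect, and this works because the $r=1$ and $r=2$ formulas of lemma \ref{ExplicitEigenvaluesLemma2} place it on opposite sides.
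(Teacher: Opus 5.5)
Your proof is correct and follows essentially the paper's route: substitute lemma \ref{ExplicitEigenvaluesLemma2} at $w=v_8$, extend the sums to the full basis, and collapse them with \eqref{OkuboIdentity4} and \eqref{ImportantIdentityOkubo1}. The differences are minor. The paper first checks the idempotency $v_8 \ok v_8 = v_8$ explicitly from $\lambda_8$, and for $r=2$ it uses the associator \eqref{AssociatorOkubo}. You instead use \eqref{ImportantIdentityOkubo2}, together with $[v_8, v_8\ok v_8]=0$ (Ad-equivariance) and $v_8\ok(v_8\ok v_8)=v_8$ (Clifford relation), so your argument holds verbatim for any unit vector in place of $v_8$.
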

\begin{proof}
The starting point is lemma \ref{ExplicitEigenvaluesLemma2} and the algebraic fact that $v_8 \ok v_8 = v_8$, which can be directly checked using the explicit expression for the Okubo product, definition \eqref{NormalizedBasissu(3)} of $v_8$, and the explicit expression for the Gell-Mann matrix $\lambda_8$. The calculation for $\rho^1$ also uses the simplifications provided by identities \eqref{ImportantIdentityOkubo1}. The calculation for $\rho^3$ also uses identity \eqref{OkuboIdentity4}. The calculation for $\rho^2$ uses the associator formula \eqref{AssociatorOkubo} and then identity \eqref{OkuboIdentity4}. We do not write down the detailed calculations here.
\end{proof}
Combining the decomposition \eqref{DecompositioncD_r} of $\cD_r$ with lemmas \ref{ExplicitEigenvaluesLemma1} and \ref{ExplicitEigenvaluesLemma3}, we then get
\begin{align}
\cD_1 \, \psi^+_0  \: &= \:  -\, \frac{3}{2\, a} \, \psi_0^-      \qquad \qquad \qquad \cD_2 \, \psi^+_0  \: = \:  - \, \frac{3}{2\, a} \, (c_0 +1) \: \psi_0^-       \nonumber   \linebr
\cD_3 \,\psi^+_0  \: &= \:  - \, \frac{3}{2\, a} \, (2\, c_0 +1) \: \psi_0^-   \ ,   \nonumber
\end{align}
where $c_0 = \tfrac{1}{2}(\mu - 1)$, recall.

To compute $\cD_r^\dag \psi^-_0$ we can now use a shortcut. Note that the adjoint operators $\cD_r^\dag$ commute with the Kosmann-Lichnerowicz derivatives along all Killing vectors fields on $(M, g)$, by proposition \ref{PropDiracCommutesWithKosmann}. Thus, restricted to the spinors identified with constant maps $M \rightarrow \su(3)$, the operator $\cD_r^\dag$ must preserve the $\mathrm{Ad}_{\mathrm{U}(2)}$-invariant subspace of $\su(3)$, which is the line spanned by $v_8$. In other words, $\cD_r^\dag \psi^-_0$ must be proportional to $\psi^+_0$. The constant of proportionality is determined by $\langle \cD_r^\dag \psi^-_0 ,  \psi^+_0 \rangle = \langle \psi^-_0, \cD_r \psi^+_0\rangle$, so it coincides with the constant of proportionality computed above for $\cD_r \psi^+_0$. This constant of proportionality is therefore the eigenvalue of $\sD_r$ acting on the spinor $\psi_0$ with $\psi_0^+ = \psi_0^- = v_8$.

\refstepcounter{subsection}
\subsection*{\thesubsection\quad  Computing the trace torsions $\tau_2$ and $\tau_3$}


In this section we compute the trace torsion vector fields $\tau_2$ and $\tau_3$, as given in \eqref{TorsionTraceFormulas}, for the Levi-Civita connection of $g$. We first prove that $\tau_3 = -\tau_2$ and then compute $\tau_2$.
\begin{proposition}
For the Levi-Civita connection of the metric $g$, we have $\tau_3 = - \tau_2$.
\end{proposition}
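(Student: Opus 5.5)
Since $g$ is left-invariant, remark \ref{RemarkTorsionLeftInvariantMetrics} removes the divergence terms from \eqref{TorsionTraceFormulas}. It therefore suffices to compare
\[
\tau_2 = -\tfrac14\sum_{j,k,l}\Gamma_{jkl}\,(v_l\ok v_j)\ok v_k ,\qquad \tau_3=-\tfrac14\sum_{j,k,l}\Gamma_{jkl}\, v_k\ok(v_j\ok v_l),
\]
and to show that their sum vanishes. I will use the Clifford-like relation \eqref{CliffordOkubo} in its right form, $u\ok(v\ok w)+w\ok(v\ok u)=2\langle u,w\rangle v$. Applied with $(u,v,w)=(v_k,v_j,v_l)$, it gives
\[
v_k\ok(v_j\ok v_l)=2\delta_{kl}v_j - v_l\ok(v_j\ok v_k).
\]
The term $2\delta_{kl}v_j$ is killed by $\Gamma_{jkk}=0$, which holds because $\nabla^g$ is metric. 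So $\tau_3=\tfrac14\sum\Gamma_{jkl}\,v_l\ok(v_j\ok v_k)$, and therefore
\[
\tau_2+\tau_3=\tfrac14\sum_{j,k,l}\Gamma_{jkl}\big(v_l\ok(v_j\ok v_k)-(v_l\ok v_j)\ok v_k\big)=-\tfrac14\sum_{j,k,l}\Gamma_{jkl}\,\mathrm{Ass}(v_l,v_j,v_k),
\]
where $\mathrm{Ass}$ denotes the associator.

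Next I substitute the associator formula \eqref{AssociatorOkubo}:
\[
\mathrm{Ass}(v_l,v_j,v_k)=4a^2[[v_l,v_k],v_j]-2\delta_{lj}v_k+2\delta_{jk}v_l .
\]
The $\delta_{jk}$ term drops because $\Gamma_{jjl}$ appears contracted with $\delta_{jk}$ as $\sum_j\Gamma_{jjl}$. For the Levi-Civita connection of a left-invariant metric on a unimodular group this equals $-\divergence_g V_l=0$, by the computation in the lemma on \eqref{ExpansionsDr}. Alternatively, it can be read off directly from \eqref{ChristoffelSymbols}, since $f_{jjl}=0$. The $\delta_{lj}$ term gives $\sum\Gamma_{jkj}v_k=-\sum\Gamma_{jjk}v_k=0$ by the same reasoning. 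This leaves
\[
\tau_2+\tau_3=-a^2\sum_{j,k,l}\Gamma_{jkl}\,[[v_l,v_k],v_j].
\]

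The main step is to show that this Lie-bracket sum vanishes, and here I use the explicit Christoffel symbols of lemma \ref{LemmaChristoffel}. By \eqref{ChristoffelSymbols}, $\Gamma_{jkl}=f_{jkl}\,\beta_{jkl}$, where $\beta_{jkl}=\tfrac12(\mu_j\mu_k/\mu_l-\mu_k\mu_l/\mu_j+\mu_l\mu_j/\mu_k)$ is symmetric in $(k,l)$. Meanwhile $[[v_l,v_k],v_j]$ is antisymmetric in $(k,l)$, and $f_{jkl}$ is also antisymmetric in $(k,l)$. So the summand $f_{jkl}\beta_{jkl}[[v_l,v_k],v_j]$ is symmetric under $k\leftrightarrow l$, and this symmetry does not by itself kill the sum. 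I therefore compute it directly. Expanding $[v_l,v_k]=\sum_m f_{lkm}v_m$ gives
\[
\sum_{j,k,l}\Gamma_{jkl}[[v_l,v_k],v_j]=-\sum_{j,k,l,m}\beta_{jkl}f_{jkl}f_{klm}[v_m,v_j].
\]
The summand is the coefficient $\sum_{k,l}\beta_{jkl}f_{jkl}f_{mkl}$ multiplied by $[v_m,v_j]$. This coefficient is a symmetric matrix $S_{jm}$, because $\beta_{jkl}$ depends only on $\mu_j$ and the unordered pair $\{\mu_k,\mu_l\}$, while $[v_m,v_j]$ is antisymmetric in $(j,m)$. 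Hence the total vanishes.

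The point I expect to be delicate is exactly this symmetry claim. In $\beta_{jkl}$ the index $j$ and the pair $(k,l)$ play different roles, so $S_{jm}$ must be built with $\mu_j$ appearing, and it is symmetric in $(j,m)$ only if $S_{jm}=0$ whenever $\mu_j\neq\mu_m$. This does hold: $S_{jm}$ is $\mathrm{Ad}_{U(2)}$-equivariant and therefore block-diagonal with respect to \eqref{E8Splitting}, and each block has constant $\mu_j$. So $S$ is symmetric, and $\tau_3=-\tau_2$ follows.
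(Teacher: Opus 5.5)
Your proof is correct. Its first half matches the paper's in substance, but the final cancellation is done by a different argument.

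\textbf{First half.} The paper combines $\tau_1=0$, $\tau_r=\sum_j\rho^{r-1}_{A_{V_j}}(v_j)$ and Lemma~\ref{Sum3Reps} to obtain $\tau_2+\tau_3=\sum_j[u_{A_{V_j}},v_j]$. You instead apply the Clifford relation to $\tau_3$ (using $\Gamma_{jkk}=0$), then the associator \eqref{AssociatorOkubo} (using $\sum_j\Gamma_{jjl}=0$). This is an inline re-derivation of Lemma~\ref{Sum3Reps}, which is itself proved from the associator. It lands on the same quantity $-a^2\sum\Gamma_{jkl}[[v_l,v_k],v_j]$.

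\textbf{Second half.} The paper splits $A_{V_j}=\tfrac12\mathrm{ad}_{v_j}+C_{V_j}$. It then kills $a^2\sum_{j,k}[[v_k,C_{V_j}v_k],v_j]$ using the explicit form \eqref{DefinitionCVi}, the Jacobi identity and the adjoint Casimir. You never introduce $C_{V_j}$. You write $\Gamma_{jkl}=f_{jkl}\beta_{jkl}$ and reduce everything to the symmetry of $S_{jm}=\sum_{k,l}\Gamma_{jkl}f_{mkl}$.

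\begin{itemize}
\item Within a block of constant $\mu_j$, symmetry of $S$ is automatic from the form of $\beta$.
\item The mixed $\mathfrak{u}(1)$--complement entries vanish by $\mathrm{Ad}_{U(2)}$-invariance, since $v_8$ spans the fixed vectors.
\item They can also be checked directly: for $j\neq 8$, $S_{j8}$ and $S_{8j}$ are constant multiples of $\sum_{k,l}f_{jkl}f_{8kl}=\tfrac{3}{a^2}\langle v_j,v_8\rangle=0$.
\end{itemize}

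\textbf{Comparison.} Your route explains the cancellation structurally. It shows the identity needs only that $g$ be diagonal with constant scaling on summands between which no $\mathrm{Ad}_{U(2)}$-invariant pairing exists, and it avoids the $c_0,c_8$ bookkeeping. The paper's route is more computational, but it reuses the $C_{V_j}$ machinery it needs anyway for the eigenvalues and for $\tau_2$.

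\textbf{Two small repairs.}
\begin{itemize}
\item The $\mathrm{Ad}_{U(2)}$-invariance of $S$ deserves a line of justification. The Koszul form of an $\mathrm{Ad}_{U(2)}$-invariant inner product is invariant, and $\alpha$ commutes with $\mathrm{Ad}_{U(2)}$.
\item Your phrase ``symmetric only if $S_{jm}=0$ whenever $\mu_j\neq\mu_m$'' should say ``if''. That vanishing is a sufficient condition for symmetry, not a necessary one.
\end{itemize}
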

\begin{proof}
The Levi-Civita connection $\nabla = \nabla^1$ has no torsion, so $\tau_1$ vanishes. Moreover, by remark \ref{RemarkTorsionLeftInvariantMetrics}, the divergences $\divergence_g V_j$ also vanish. Thus, from \eqref{PropTorsionTraceAux1} and the proof of proposition \ref{PropTorsionTrace}, we get that 
\begin{equation}
\label{FormulaTaur}
\tau_r \: = \:   \sum\nolimits_{j} \, \rho^{r-1}_{A_{V_j}} (v_j) \ .
\end{equation}
Combining these facts with the identity of lemma \ref{Sum3Reps} and formulas \eqref{TorsionTraceFormulas}, we have
\begin{align}
\tau_2 + \tau_3  \: &= \: \sum\nolimits_{j}  \, (\rho^1_{A_{V_j}} + \rho^2_{A_{V_j}} + \rho^3_{A_{V_j}}) (v_j)     \nonumber \linebr
&= \: \sum\nolimits_{j} \,  \mathrm{ad}_{u_j} (v_j)  \ , \nonumber 
\end{align}
where we have denoted $ u_j = u_{A_{V_j}}$. Now, using the definition of $u_A$ in lemma \ref{Sum3Reps} and the separation \eqref{SeparationBiinvariantPartAVi} of $A_{V_j}$, we get
\begin{align}
u_{A_{V_j}}  \: &= \: a^2 \sum\nolimits_k [v_k , A_{V_j} v_k] \: = \:  a^2 \sum\nolimits_k \tfrac{1}{2}  \, [v_k , [v_j,  v_k ]] \: +\:  [v_k , C_{V_j} v_k]   \nonumber \linebr
&= \:  \frac{3}{2}\,  v_j  \:+\:  a^2 \sum\nolimits_k  [v_k , C_{V_j} v_k] \ ,  \nonumber 
\end{align}
where the last equality comes from the Casimir \eqref{CasimirAdjointConstant} of the adjoint representation. Thus,
\begin{equation}
\label{OppositeWrAux1}
\tau_2 \, + \, \tau_3 \: = \: a^2\, \sum\nolimits_{j,k}  \big[\, [v_k , C_{V_j} v_k], v_j  \, \big] \ .
\end{equation}
The final step is to use the explicit expressions for the endomorphisms $C_{V_j}$ coming from \eqref{DefinitionCVi}. For $j \neq 8$, that expression and the $\langle , \rangle$-orthogonality of the  basis $\{ v_j\}$ imply that
\[
\sum\nolimits_{k}   [v_k , C_{V_j} v_k] \: = \:  -\, 2\, c_0 \, [\, v_8 , [v_8, v_j] \, ] \ .
\]
So from the Jacobi identity and the Casimir \eqref{CasimirAdjointConstant} of the adjoint representation, 
\begin{align}
\sum\nolimits_{j\neq 8}  \sum\nolimits_k \big[\, v_j ,  [v_k , C_{V_j} v_k]  \, \big]  \: &= \: -\, 2\, c_0 \sum\nolimits_{j}  \big[\, v_j ,  [\, v_8 , [v_8, v_j] \, ]  \, \big]   \nonumber \linebr
& = \: 2\, c_0 \sum\nolimits_{j}  \big[\, [v_8, v_j] ,  [v_j , v_8  ]  \, \big]  +  \big[\, v_8 ,  [\, [v_8, v_j] , v_j \, ]  \, \big] \: =\: 0 \ . \nonumber
\end{align}
For $j = 8$, the explicit expression of $C_{V_8}$ coming from \eqref{DefinitionCVi} says that
\[
\sum\nolimits_k \big[\, v_8 ,  [v_k , C_{V_8} v_k]  \, \big]  \: = \: c_8 \, \sum\nolimits_k \big[\, v_8 ,  [v_k , [v_8,  v_k] ]  \, \big]   \: = \: 0 \ ,
\]
where the last equality uses again the Casimir of the adjoint representation. Thus, the conclusion from \eqref{OppositeWrAux1} is that $\tau_3 = - \tau_2$.
\end{proof}

\begin{proposition}
For the Levi-Civita connection of the metric $g$, we have
 \[
 \tau_2 \:= \:   \frac{3}{4\, a} \, (1- \mu)  \,  v_8 \ .
 \]
\end{proposition}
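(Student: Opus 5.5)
The plan is to start from formula \eqref{FormulaTaur}, which was established in the proof of the previous proposition for the Levi-Civita connection of the left-invariant metric $g$. For $r=2$ it reads $\tau_2 = \sum_j \rho^1_{A_{V_j}}(v_j)$. I would then insert the separation \eqref{SeparationBiinvariantPartAVi}, $A_{V_j} = \tfrac12 \mathrm{ad}_{v_j} + C_{V_j}$, and use linearity of $A\mapsto\rho^1_A$ to split $\tau_2$ into a bi-invariant part and a correction part.

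The bi-invariant part vanishes by Lemma \ref{Restriction3Reps}. Indeed, $\rho^1_{\mathrm{ad}_{v_j}}(v_j) = [v_j,v_j] = 0$. The correction part with $j=8$ also vanishes: Lemma \ref{ExplicitEigenvaluesLemma2} gives $\rho^1_{C_{V_8}}(v_8) = c_8[v_8,v_8] = 0$. What remains is
\[
\tau_2 \: = \: \frac{c_0}{2}\sum\nolimits_{j\neq 8} (v_8\ok v_j)\ok[v_j,v_8] \ ,
\]
again by Lemma \ref{ExplicitEigenvaluesLemma2}. The sum may be extended to include $j=8$, since that term contains $[v_8,v_8]=0$.

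The main step is to evaluate this sum with Proposition \ref{PropositionOkuboIdentities}. I would expand $[v_j,v_8] = \sum_k \langle [v_j,v_8],v_k\rangle v_k$. Ad-invariance of $\langle , \rangle$ gives $\langle [v_j,v_8],v_k\rangle = \langle v_j,[v_8,v_k]\rangle$. Resumming over $j$ then turns the expression into
\[
\sum\nolimits_k \big(v_8\ok[v_8,v_k]\big)\ok v_k \ .
\]
This is exactly the left side of \eqref{ImportantIdentityOkubo2} with $u=w=v_8$, so it equals $4[v_8,v_8]-\tfrac{3}{a}\,v_8\ok v_8$.

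Finally, I would use $[v_8,v_8]=0$ together with the fact $v_8\ok v_8 = v_8$ noted in the proof of Lemma \ref{ExplicitEigenvaluesLemma3}. This yields $\tau_2 = -\tfrac{3c_0}{2a}\,v_8$. Substituting $c_0=\tfrac12(\mu-1)$ gives $\tau_2 = \tfrac{3}{4a}(1-\mu)\,v_8$, as stated.

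The step I expect to be most delicate is sign bookkeeping. Three places need care: the orientation of the bracket in $\rho^1_{C_{V_i}}$ from Lemma \ref{ExplicitEigenvaluesLemma2}, the Ad-invariance transfer of the bracket, and the matching of index placement with \eqref{ImportantIdentityOkubo2}. I would double-check the overall sign with the consistency test at $\mu=1$, where $\tau_2$ must vanish. I would also compare with the eigenvalue computation of Proposition \ref{PropositionExplicitEigenvalues}, where $\tau_2$ enters through \eqref{ExpansionsDr}.
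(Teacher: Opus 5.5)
Your proposal is correct and follows essentially the paper's route. You start from \eqref{FormulaTaur}, split $A_{V_j}=\tfrac12\mathrm{ad}_{v_j}+C_{V_j}$, discard the bi-invariant and $j=8$ pieces, resum to $\sum_k (v_8\ok[v_8,v_k])\ok v_k$, and evaluate this with \eqref{ImportantIdentityOkubo2} and $v_8\ok v_8=v_8$. The only difference is a small shortcut: you invoke Lemma \ref{ExplicitEigenvaluesLemma2} for $\rho^1_{C_{V_j}}$, which already merges the two $c_0$-terms via \eqref{CliffordOkubo}, whereas the paper expands both terms and evaluates the second one separately using \eqref{OkuboIdentity4}.
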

\begin{proof}
From formula \eqref{FormulaTaur}, the definition \eqref{rho1} of $\rho^1$, and the splitting \eqref{SeparationBiinvariantPartAVi} of $A_{V_i}$,
\begin{align}
4 \, \tau_2 \: &= \: 4\, \sum\nolimits_{j} \, \rho^1_{A_{V_j}} (v_j)    \: = \: - \sum\nolimits_{j,k} \, (( A_{V_j} v_k) \ok v_j) \ok v_k   \nonumber \linebr
&= \: -  \tfrac{1}{2} \sum\nolimits_{j,k} \, ([v_j, v_k] \ok v_j) \ok v_k  \: -\:  \sum\nolimits_{j,k} \, (( C_{V_j} v_k) \ok v_j) \ok v_k   \nonumber \linebr
&= \:  2 \sum\nolimits_{j} \,  [ v_j , v_j  ]    \: -\:  \sum\nolimits_{j,k} \, (( C_{V_j} v_k) \ok v_j) \ok v_k    \: = \:  - \sum\nolimits_{j,k} \, (( C_{V_j} v_k) \ok v_j) \ok v_k  \ . \label{CalculateW3Aux1} 
\end{align}
The penultimate equality used identity \eqref{ImportantIdentityOkubo1}. Now, for $j = 8$, the explicit expression of $C_{V_8}$ coming from \eqref{DefinitionCVi} implies that
\[
\sum\nolimits_{k} \, (( C_{V_8} v_k) \ok v_8) \ok v_k  \: = \:  c_8 \sum\nolimits_{k} \, ( [v_8 , v_k] \ok v_8) \ok v_k \: = \: 0 \ ,
\]
where the last equality used identity \eqref{ImportantIdentityOkubo1}. Thus, to calculate \eqref{CalculateW3Aux1}, we only need to consider the expression \eqref{DefinitionCVi} of $C_{V_j}$ for the case $j\neq 8$. But that particular expression also vanishes for $j=8$, so we can extend the resulting sum to the whole range of $j$: 
\begin{align}
\sum\nolimits_{j,k} \, [( C_{V_j} v_k) \ok v_j] \ok v_k  &=  c_0 \sum\nolimits_{j,k} \,  \big[ \,  \big( \,\langle [v_j, v_k] , v_8 \rangle \,  v_8   +  \langle v_k, v_8  \rangle [v_j, v_8]   \, \big)  \ok v_j   \,  \big]  \ok v_k  \  \nonumber \linebr 
&=  - c_0 \, \sum\nolimits_{j} \:  ( v_8 \ok v_j ) \ok  [v_j, v_8] \: +\:  c_0 \, \sum\nolimits_{j} \: ( [v_j, v_8]  \ok v_j   \, )  \ok v_8       \nonumber \linebr 
&=   - c_0 \, \sum\nolimits_{k} \:  ( v_8 \ok [v_8,  v_k ]) \ok v_k  \: +\:  \frac{3\, c_0}{a}\:  v_8 \ok v_8   \ . \nonumber
\end{align}
Here the second equality uses the $\langle , \rangle$-orthogonality of the basis $\{ v_j\}$, and the third equality uses the Ad-invariance of $\langle, \rangle$ and identity \eqref{OkuboIdentity4}. But $ v_8 \ok v_8 = v_8$, as described in the proof of lemma \ref{ExplicitEigenvaluesLemma3}. So applying identity \eqref{ImportantIdentityOkubo2}  and recalling \eqref{CalculateW3Aux1}, we get 
\[
-\, 4 \, \tau_2 \: = \: \sum\nolimits_{j,k} \, [( C_{V_j} v_k) \ok v_j] \ok v_k  \: =\:    \frac{3\, c_0}{a}\:  v_8 \ok v_8  \: +\:  \frac{3\, c_0}{a}\:  v_8 \ok v_8 \: =\:  \frac{6\, c_0}{a}\:  v_8  \ .
\]
The desired formula for $\tau_2$ then follows from the definition of $c_0$ in \eqref{DefinitionCVi}.
\end{proof}

\newpage


\renewcommand{\baselinestretch}{1.2}\normalsize

\phantomsection       

\addcontentsline{toc}{section}{References}

\renewcommand\doitext{doi:}

\vspace{1cm}


\begin{thebibliography}{99}    

\bibitem{Sapio} R. Sapio: On Spin(8) and triality: a topological approach, \textit{Expositiones Math.} \textbf{19} (2001), 143--161.

\bibitem{Baez} J. Baez: The octonions, \textit{Bull. Amer. Math. Soc.} \textbf{39} (2002), 145--205.

\bibitem{Harvey} F. Harvey: \textit{Spinors and calibrations}, Academic Press, 1990.

\bibitem{Hitchin} N. Hitchin: Stable forms and special metrics, in:  \textit{Contemp. Math.}, \textbf{288} (2001), 70--89.

\bibitem{Witt} F. Witt: Special metric structures and closed forms, PhD thesis, Univ. Oxford, 2005.

\bibitem{Witt2} F. Witt: Special metrics and triality, \textit{Advances Math.}, \textbf{219} (2008), 1972--2005.

\bibitem{Wang} M. Wang: Preserving parallel spinors under metric deformations, \textit{Indiana Univ. Math. J.} \textbf{40} (1991), 815--844, \doi{10.1512/iumj.1991.40.40037}.

\bibitem{BG} J. Bourguignon and P. Gauduchon: Spineurs, op\'erateurs de Dirac et variations de m\'etriques, \textit{Comm. Math. Phys.} \textbf{144} (1992), 581--599, \doi{10.1007/BF02099184}.

\bibitem{Slebarski} S. Slebarski: Dirac operators on a compact Lie group, \textit{Bull. London Math. Soc.} \textbf{17} (1985), 579--583.

\bibitem{Fegan} H. Fegan: The spectrum of the Dirac operator on a simply connected compact Lie group, \textit{Simon Stevin} \textbf{61} (1987), 97--108.

\bibitem{LT} A. Lai and K. Teh: Spectral action for a one-parameter family of Dirac operators on SU(2) and SU(3), \textit{J. Math. Phys.} \textbf{54} 022302 (2013).

\bibitem{Bap1} J. Baptista: Internal symmetries in Kaluza-Klein models,  \textit{J. High Energy Phys.} \textbf{2024} (2024), 178, \doi{10.1007/JHEP05(2024)178}.


\bibitem{Bap3} J. Baptista: Chiral interactions of fermions and massive gauge fields in Kaluza-Klein models, \textit{J. High Energy Phys.} \textbf{2026} (2026), 008, \doi{10.1007/JHEP05(2026)008}.

\bibitem{DVT} M. Dubois-Violette and I. Todorov: Exceptional quantum geometry and particle physics II, \textit{Nucl. Phys. B} \textbf{938} (2019), 751--761.

\bibitem{FureyHughes} C. Furey and M. Hughes: Three generations and a trio of trialities, \textit{Phys. Lett. B} \textbf{865} (2025), 139473.

\bibitem{Boyle} L. Boyle: The standard model, the exceptional Jordan algebra, and triality, \textit{J. Math. Phys.} \textbf{67} (2026), 071701.

\bibitem{Okubo78} S. Okubo: Pseudo-quaternion and pseudo-octonion algebras, \textit{Hadronic Journal} \textbf{1} (1978), 1250--1278.

\bibitem{OkuboBook} S. Okubo: \textit{Introduction to octonion and other non-associative algebras in physics}, Cambridge Univ. Press, 1995.

\bibitem{Elduque} A. Elduque: Composition Algebras, in:  \textit{Algebra and Applications 1} (2021), \doi{10.1002/9781119818175.ch2}.

\bibitem{FS79} T. Friedrich and S. Sulanke: Ein Kriterium f\"ur die formale Selbstadjungiertheit des Dirac-Operators, \textit{Coll. Mathematicae} \textbf{40} (1979), 239--247.

\bibitem{Agricola} I. Agricola, The Srní lectures on non-integrable geometries with torsion, \textit{Arch. Math.} \textbf{42} (2006).

\bibitem{Friedrich} T. Friedrich: Der erste Eigenwert des Dirac-Operators einer kompakten, Riemannschen Mannigfaltigkeit nichtnegativer Skalarkr\"ummung, \textit{Math. Nachrichten } \textbf{97} (1980), 117--146.

\bibitem{Puhle} C. Puhle: Riemannian manifolds with structure group PSU(3), \textit{J. London Math. Soc.} (2) \textbf{85} (2012), 79--100.

\bibitem{BGM} C. B\"ar, P. Gauduchon and A. Moroianu: Generalized cylinders in semi-Riemannian and spin geometry, \textit{Math. Zeitschrift} \textbf{249} (2005), 545--580, \doi{10.1007/s00209-004-0718-0}.

\bibitem{AHermann} A. Hermann: \textit{Dirac eigenspinors for generic metrics}, PhD thesis, Univ. Regensburg, 2012, \doi{10.5283/EPUB.25024}.

\bibitem{Bap4} J. Baptista: The geometry of CP symmetry in Kaluza-Klein models, \href{https://arxiv.org/abs/2601.08902}{\tt arXiv:2601.08902 [hep-th]}

\bibitem{Bourguignon} J. Bourguignon, O. Hijazi, J. Milhorat, A. Moroianu and S. Moroianu: \textit{A spinorial approach to Riemannian and conformal geometry}, European Math. Society, 2015, \doi{10.4171/136}.

\bibitem{LM} H. Lawson and M. Michelsohn: \textit{Spin geometry}, Princeton Univ. Press, 1989.

\bibitem{FI} T. Friedrich and S. Ivanov: Parallel spinors and connections with skew-symmetric torsion in string theory, \textit{Asian J. Math.} \textbf{6} (2002), 303--336.

\bibitem{Agricola2} I. Agricola, Connections on naturally reductive spaces, their Dirac operator and homogeneous models in string theory, \textit{Commun. Math. Phys.} \textbf{232} (2003), 535--563.

\bibitem{AgricolaF} I. Agricola and T. Friedrich, On the holonomy of connections with skew-symmetric torsion, \textit{Math. Ann.} \textbf{328} (2004), 711--748.

\bibitem{Ferreira} A. Ferreira: Riemannian geometry with skew torsion, PhD thesis, Univ. Oxford, 2010.

\bibitem{Bleecker} D. Bleecker: \textit{Gauge theory and variational principles}, Addison-Wesley, 1981.

\bibitem{CS} S. Chern and J. Simons: Characteristic forms and geometric invariants, \textit{Ann. of Math.} \textbf{99} (1974), 48--69.

\bibitem{Freed} D. Freed: Classical Chern-Simons theory, Part 2, \textit{Houston J. Math.} \textbf{28} (2002), 293--310.

\bibitem{Kosmann} Y. Kosmann: D\'eriv\'ees de Lie des spineurs, \textit{Ann. di Matematica Pura ed Applicata} \textbf{91} (1971), 317--395, \doi{10.1007/BF02428822}.

\bibitem{Lich} A. Lichnerowicz: Spineurs harmoniques, \textit{Comptes rendus Acad. Sc. Paris, groupe 1} \textbf{257} (1963), 7--9.

\bibitem{OT} T. Ochiai and T. Takahashi: The group of isometries of a left invariant Riemannian metric on a Lie group, \textit{Math. Ann.} \textbf{223} (1976), 91--96, \doi{10.1007/BF01360280}.

\bibitem{GoKo} S. Goldberg and S. Kobayashi: The conformal transformation group of a compact homogeneous Riemannian manifold,  \textit{Bull. Amer. Math. Soc.} \textbf{68} (1962), 378--381.

\bibitem{Hitchin2} N. Hitchin: Harmonic spinors, \textit{Advances Math.} \textbf{14} (1974), 1--55.

\bibitem{Bar92} C. B\"ar: The Dirac operator on homogeneous spaces and its spectrum on 3-dimensional lens spaces, \textit{Arch. Math.} \textbf{59} (1992), 65--79.

\bibitem{Ginoux} N. Ginoux: \textit{The Dirac spectrum}, Springer, 2009, \doi{10.1007/978-3-642-01570-0}.

\bibitem{Bar96} C. B\"ar: The Dirac operator on space forms of positive curvature, \textit{J. Math. Soc. Japan} \textbf{48} (1996), 69--83.

\bibitem{AB} B. Ammann and C. B\"ar: The Dirac operator on nilmanifolds and collapsing circle bundles, \textit{Ann. Global Anal. Geom.} \textbf{16} (1998), 221--253.



\end{thebibliography}
\end{document}